\renewcommand{\paragraph}{%
\@startsection{paragraph}{4}%
{\z@}{1.5ex \@plus 1.5ex \@minus .2ex}{-0.7em}%
{\normalfont\normalsize\bfseries}%
}
\setlist[itemize]{leftmargin=5mm}
\theoremstyle{plain}
\newtheorem{theorem}{Theorem}[section]
\newtheorem{lemma}[theorem]{Lemma}
\newtheorem{proposition}[theorem]{Proposition}
\newtheorem{problem}[theorem]{Problem}
\newtheorem{theoremA}{Theorem}[section]
\theoremstyle{definition}   
\newtheorem{definition}[theorem]{Definition}
\newtheorem{remark}[theorem]{Remark}
\newtheorem*{acknowledgements}{Acknowledgements}
\numberwithin{equation}{section}
\newcommand\RedeclareMathOperator{%
\@ifstar{\def\rmo@s{m}\rmo@redeclare}{\def\rmo@s{o}\rmo@redeclare}%
}
\newcommand\rmo@redeclare[2]{%
\begingroup \escapechar\m@ne\xdef\@gtempa{{\string#1}}\endgroup
\expandafter\@ifundefined\@gtempa
{\@latex@error{\noexpand#1undefined}\@ehc}%
\relax
\expandafter\rmo@declmathop\rmo@s{#1}{#2}}
\newcommand\rmo@declmathop[3]{%
\DeclareRobustCommand{#2}{\qopname\newmcodes@#1{#3}}%
}
\def\R{\mathbb{R}}
\def\C{\mathbb{C}}
\def\S{\mathbb{S}}
\def\P{\mathbb{P}}
\def\Z{\mathbb{Z}}
\def\E{\mathbb{E}}
\def\N{\mathbb{N}}
\def\H{\mathcal{H}}
\def\CC{\mathcal{C}}
\def\OO{\mathcal{O}}
\def\FF{\mathcal{F}}
\def\CB{\mathcal{B}}
\def\GG{\mathcal{G}}
\def\TT{\mathcal{T}}
\def\SS{\mathcal{S}}
\def\CK{\mathcal{K}}
\def\BE{\mathbf{E}}
\def\BP{\mathbf{P}}
\def\BO{\mathbf{\Omega}}
\def\BF{\mathcal{\mathbf{F}}}
\def\rmR{\mathrm{R}}
\def\rmP{\mathrm{P}}
\def\rmF{\mathrm{F}}
\def\rmC{\mathrm{C}}
\def\rmB{\mathrm{B}}
\def\rmU{\mathrm{U}}
\def\rmA{\mathrm{A}}
\def\rmS{\mathrm{S}}
\def\rmX{\mathrm{X}}
\def\rmZ{\mathrm{Z}}
\def\rmG{\mathrm{G}}
\def\rmN{\mathrm{N}}
\def\rmI{\mathrm{I}}
\def\cd{\mathfrak{c}}
\def\frkm{\mathfrak{m}}
\def\frkb{\mathfrak{b}}
\def\frkp{\mathfrak{p}}
\def\frkd{\mathfrak{d}}
\def\frks{\mathfrak{s}}
\def\bh{\mathbf{h}}
\def\rmD{\mathrm{D}}
\def\rmE{\mathrm{E}}
\def\rmT{\mathrm{T}}
\def\rmm{\mathrm{m}}
\def\eps{\upvarepsilon}
\def\tau{\uptau}
\def\eta{\upeta}
\def\sigma{\upsigma}
\def\delta{\updelta}
\def\xi{\upxi}
\def\d{\mathrm{d}}
\def\theta{\upvartheta}
\def\zeta{\upzeta}
\def\rho{\uprho}
\def\omega{\upomega}
\def\gamma{\upgamma}
\def\mu{\upmu}
\def\nu{\upnu}
\def\alpha{\upalpha}
\def\beta{\upbeta}
\def\chi{\upchi}
\def\pi{\uppi}
\def\phi{\upphi}
\def\psi{\uppsi}
\renewcommand{\tilde}[1]{\widetilde{#1}}
\renewcommand{\bar}[1]{\overline{#1}}
\DeclareMathOperator\loc{loc}
\DeclareMathOperator\dTV{\d_{\mathrm{TV}}}
\DeclareMathOperator\dKL{\d_{\mathrm{KL}}}
\RedeclareMathOperator\Im{Im}
\RedeclareMathOperator\Re{Re}
\DeclareMathOperator\midpoint{mid}
\DeclareMathOperator\Cov{\mathbb{C}ov}
\RedeclareMathOperator\deg{deg}
\DeclareMathOperator\rad{rad}
\DeclareMathOperator\sph{sph}
\DeclareMathOperator\Vol{Vol}
\DeclareMathOperator\law{law}
\DeclarePairedDelimiter\abs\lvert\rvert
\newcommand{\eqdef}{\stackrel{\mbox{\tiny\rm def}}{=}}
\newcommand{\eqlaw}{\stackrel{\mbox{\tiny\rm law}}{=}}
\definecolor{colorLink}{RGB}{0,100,162}
\definecolor{colorCite}{RGB}{8,124,100}
\title{Liouville Brownian motion and quantum cones in dimension $d > 2$}
\author{
	\begin{tabular}{c} Federico Bertacco\\ [-5pt] \small Imperial College London \end{tabular}
	\begin{tabular}{c} \\[-5pt]\small  \end{tabular}
	\begin{tabular}{c} \\[-5pt]\small  \end{tabular}
	\begin{tabular}{c} Ewain Gwynne\\ [-5pt] \small University of Chicago \end{tabular}
	\begin{tabular}{c} \\[-5pt]\small  \end{tabular}
	\begin{tabular}{c} \\[-5pt]\small  \end{tabular}
}
\date{ }
\begin{document}	
\maketitle

\begin{abstract}
\noindent
For $d > 2$ and $\gamma \in (0, \sqrt{2d})$, we study the Liouville Brownian motion associated with the whole-space log-correlated Gaussian field in $\mathbb{R}^d$. We compute its spectral dimension, i.e., the short-time asymptotics of the heat kernel along the diagonal, which, in contrast to the two-dimensional case, depends on both $\gamma$ and on the thickness of the starting point. Furthermore, for even dimensions $d > 2$, we show that the spherical average process of the whole-space log-correlated Gaussian field in $\mathbb{R}^d$ can be identified with the integral of a stationary Gaussian Markov process of order $(d-2)/2$. Exploiting this representation, we construct the higher-dimensional analogue of the $\beta$-quantum cone for $\beta \in (-\infty, Q)$, with $Q = d/\gamma + \gamma/2$. Lastly, for $\alpha = Q - \sqrt{Q^2-4}$, we prove that the law of the $d$-dimensional $\alpha$-quantum cone is invariant under shifts along the trajectories of the associated Liouville Brownian motion.
\end{abstract}

\tableofcontents

\section{Introduction}
In the past couple of decades, substantial research has focused on random geometry constructed by exponentiating the two-dimensional \emph{Gaussian free field} (GFF), commonly known as \emph{Liouville quantum gravity} (LQG). For a comprehensive overview, see, for instance, the recent surveys \cite{BP24, Sheffield2023, EwainRev}. However, most results in this field remain specific to two dimensions and have not yet been extended to higher dimensions. This restriction largely arises from the critical role of conformal invariance in two-dimensional results, as non-trivial conformal maps do not exist in higher dimensions. Nonetheless, recent papers \cite{Cercle, DGZExp, Sturm1, Sturm2} have begun to investigate higher-dimensional analogs of LQG for dimensions $d > 2$.

Roughly speaking, in this paper, we are interested in studying a conformally flat Riemannian metric tensor on $\mathbb{R}^d$, for $d \geq 2$, of the following form
\begin{equation}
\label{eq:metricTensor}
e^{\gamma h}(dx_1^2 + \cdots + dx_d^2) \;,
\end{equation}
where $\gamma \in (0, \sqrt{\smash[b]{2d}})$ is a coupling parameter,  $dx_1^2 + \cdots + dx_d^2$ denotes the Euclidean metric tensor on $\mathbb{R}^d$, and $h$ is the whole-space \emph{log-correlated Gaussian field} (LGF) on $\mathbb{R}^d$, i.e., it is the centred Gaussian random generalised function defined up to a global additive constant and with covariance of the form 
\begin{equation*}
\E\bigl[(h, \phi) (h, \psi)\bigr] = - \int_{\R^d} \int_{\R^d} \log\abs{x-y} \phi(x) \psi(y) dx dy \;,
\end{equation*}
where $\phi$ and $\psi$ are smooth, compactly supported functions with zero average\footnote{For $d = 2$, the whole-space LGF is the same as the whole-plane GFF.}. 
In what follows, we denote by $\bh$ the whole-space LGF, with the additive constant chosen so that its spherical average over the unit sphere centred at the origin is zero (see Section~\ref{sub:spherical} for more details on the spherical average of the LGF).

As in the two-dimensional case, the Riemannian metric tensor in \eqref{eq:metricTensor} is not directly interpretable pointwise due to $h$ being a random generalised function. The simplest object to construct is the area measure $\mu_{\bh, \gamma}$, which arises as a limit as $\eps \to 0$ of $\smash{\eps^{\gamma^2/2} e^{\gamma \bh_{\eps}(x)} dx}$, where $dx$ denotes the Lebesgue measure and $(\bh_{\eps})_{\eps > 0}$ is a collection of regularising fields given by the spherical average approximation. This construction falls under the theory of \emph{Gaussian multiplicative chaos} (GMC) (see e.g.\ \cite{Kahane, RVReview, BP24, AruRev}).
We highlight that the measure $\mu_{\bh, \gamma}$ satisfies a certain coordinate change formula. Suppose that $\phi$ is a conformal automorphism of $\R^d$, then for any Borel subset $A \subseteq \R^d$, it holds almost surely that
\begin{equation}
\label{eq:defQd}
\mu_{\bh, \gamma}\bigl(\phi(A)\bigr) = \mu_{\bh \circ \phi + Q \log |\phi'|, \gamma}\bigl(A\bigr) \;,  \qquad  \qquad Q \eqdef \frac{d}{\gamma} + \frac{\gamma}{2} \;.
\end{equation}
Essentially, \eqref{eq:defQd} implies that we can regard $(\R^d, \bh)$ and $(\R^d, \bh \circ \phi + Q \log |\phi'|)$ as two different parameterisations of the same $d$-dimensional manifold, analogously to two-dimensional setting \cite[Proposition~2.1]{DS11}. 

We also emphasise that recently, in \cite{DGZExp}, the authors established the tightness of a natural approximation scheme for the distance function associated to the metric tensor \eqref{eq:metricTensor}. We also refer to \cite{DGZPerc} for some recent results on thick points of the LGF in higher dimensions. 

In this paper, we continue the extension of LQG to higher dimensions. In particular, we study the canonical diffusion associated with the metric tensor \eqref{eq:metricTensor} which is compatible with the coordinate transformation rule \eqref{eq:defQd}. This diffusion represents the higher-dimensional analog of the \emph{Liouville Brownian motion} (LBM) constructed in \cite{Ber_LBM, GRV_LBM}. We emphasise that the higher-dimensional analogue of the LBM has already been studied in \cite{Sturm1}. For a discussion of the main differences with our setting, we refer to Section~\ref{sub:outline}.

For $\gamma  \in (0, \sqrt{2d})$, the $d$-dimensional LBM is a continuous strong Markov process. As such, we can consider the associated semigroup, referred to as the \emph{Liouville semigroup}. In Proposition~\ref{pr:existenceLHK}, we establish that the Liouville semigroup has a density, called the \emph{Liouville heat kernel} (LHK), with respect to the underlying GMC measure. In Theorem~\ref{th_specDim} below, we compute the spectral dimension of the LBM, i.e., the short-time asymptotic of the LHK along the diagonal, which, unlike the two-dimensional case studied in \cite{SpecDim}, depends on both $\gamma$ and the thickness of the starting point.

Next, for even\footnote{We refer to Remark~\ref{rem:RestrictionEven} for an explanation of the restriction to even dimensions.} dimensions $d > 2$, $\gamma \in (0, \sqrt{2d})$, and $\beta \in (-\infty, Q)$, we construct the $d$-dimensional analogue of the $\beta$-\emph{quantum cone}, which is the field obtained by ``zooming in'' in an appropriate way near the origin of $\bh - \beta \log\abs{\cdot}$ (see Theorem~\ref{th:convQuantum}). To construct the $d$-dimensional quantum cone, we characterise the spherical average process of the LGF in even dimensions $d > 2$ as the integral of a stationary Gaussian Markov process of order $(d-2)/2$. More precisely, in Theorem~\ref{th:identSphere}, we show that the vector of derivatives up to order $(d-2)/2$ of the spherical average process is a solution to a $(d-2)/2$-dimensional Langevin equation. With this characterisation in hand, the construction of the $d$-dimensional quantum cone, differently from the two-dimensional case, is non-trivial and forms the content of Theorem~\ref{th:convRecentred} below. Having constructed the $d$-dimensional quantum cone, we then proceed to show in Theorem~\ref{th:invarianceShift} that for $\alpha = Q - \sqrt{Q^2-4}$, the law of the $\alpha$-quantum cone is invariant under shifting along the trajectories of the corresponding LBM.

\section{Main results}
We now present our main results. First, we identify the spherical average process in any arbitrary even dimension. Next, we provide the result regarding the spectral dimension of the LBM. Finally, we construct the quantum cone in an arbitrary even dimension and examine its interaction with the corresponding LBM.

Throughout the rest of the paper, we let $h$ be a whole-space LGF modulo an additive constant defined on a probability space $(\Omega, \FF, \P)$. We denote by $\bh$ the whole-space LGF $h$ with the additive constant chosen such that its spherical average over the unit sphere centred at the origin is zero. Additionally, we consider a $d$-dimensional Brownian motion $(\BO, \BF, (B_t)_{t \geq 0}, (\BF_{t})_{t \geq 0}, (\BP_x)_{x \in \R^d})$, which is independent of $h$.

\subsection{Identification of the spherical average in arbitrary even dimension}
As shown in Section~\ref{sub:spherical} below, the spherical average process of the LGF $h$ around a point $x \in \R^d$, denoted by $(h_r(x))_{r > 0}$, is well-defined modulo a global additive constant. 
Moreover, we will see that the process $\smash{(h_{r}(x))_{r \geq 0}}$ is differentiable $\cd_d$ times, where
\begin{equation}
\label{eq:defcdd}
\cd_d \eqdef \frac{d-2}{2} \;.
\end{equation}
We define the following stochastic processes 
\begin{equation}
\label{eq:defVecDer}
\rmS_t \eqdef h_{e^{-t}}(x) - h_1(x)\;, \qquad \mathbf{S}_t \eqdef \bigl(\rmS_t^{(1)}, \ldots, \rmS_t^{(\cd_d)}\bigr)^{\top} \;, \qquad \forall \, t \in \R \;,
\end{equation}
where, for $k \in \{1, \ldots, \cd_d\}$, the process $(\rmS^{(k)}_t)_{t \in \R}$ denotes the $k$-th derivative of $\rmS$. We note that $(\rmS_t)_{t \in \R}$ is well-defined not just modulo an additive constant. It is well-known that in dimension $d = 2$, the process $(\rmS_t)_{t \in \R}$ has the same law of a standard two-sided Brownian motion (see e.g.\ \cite[Theorem~1.59]{BP24}). In our first result, for any arbitrary even dimension $d > 2$, we identify the process $(\rmS_t)_{t \in \R}$ as the integral of a stationary Gaussian Markov process of order $\cd_d$. Before proceeding, we recall the following definition. 

\begin{definition}
\label{def:companion}
For $p \geq 1$ and $(a_i)_{i = 0}^{p-1} \subset \R$, a $p \times p$ matrix $\mathbf{A}$ is the Frobenius companion matrix of the polynomial $\smash{p(\lambda) = \lambda^p + a_{p-1} \lambda^{p-1} + \cdots + a_1 \lambda + a_0}$, if it is of the following form
\begin{equation*}
\mathbf{A} = \begin{pmatrix}
0 & 1 & 0 & 0 & \cdots & 0 \\
0 & 0 & 1 & 0 & \cdots & 0 \\
0 & 0 & 0 & 1 & \cdots & 0 \\
\vdots & \vdots & \vdots & \vdots & \ddots & \vdots \\
0 & 0 & 0 & 0 & \cdots & 1 \\
-a_0 & -a_1 & -a_2 & \cdots & -a_{p-2} & - a_{p-1}
\end{pmatrix} \;.
\end{equation*} 
\end{definition}
We observe that, by construction, the characteristic polynomial of the Frobenius companion matrix is exactly equal to $p(\cdot)$.

\begin{theoremA}[Identification of the spherical average process]
\label{th:identSphere}
For $d > 2$ even and any $x \in \R^d$, the spherical average process $(h_r(x))_{r > 0}$ is $\cd_d$-times differentiable, where we recall \eqref{eq:defcdd}.
Consider the processes $(\rmS_t)_{t \in \R}$ and $(\mathbf{S}_t)_{t \in \R}$ as defined in \eqref{eq:defVecDer}. Then the multivariate process $(\mathbf{S}_{t})_{t \in \R}$ is a stationary Gaussian Markov process and it is the unique stationary solution of the following $\cd_d$-dimensional Langevin equation,
\begin{equation}
\label{eq:SDEpOrderMat}
\mathrm{d} \mathbf{S}_t = \mathbf{A} \mathbf{S}_t \mathrm{d}t + \mathbf{b} \mathrm{d} \mathbf{B}_t \;,
\end{equation}
where $(\mathbf{B}_t)_{t \in R}$ denotes a $\cd_d$-dimensional two-sided Brownian motion, $\mathbf{b}$ is the $\cd_d$-dimensional vector given by $\smash{\mathbf{b} = (0, \ldots, 0, 2^{d/2-1}\Gamma(d/2))^{\top}}$, and $\mathbf{A}$ is the $\cd_d \times \cd_d$ Frobenius companion matrix of the polynomial
\begin{equation*}
p(\lambda) = \prod_{k = 1}^{\cd_d} (\lambda + (d-2k)) \;.
\end{equation*}
Furthermore, the process $(\rmS_t)_{t \in \R}$ admits the following representation
\begin{equation}
\label{eq:defIntOU}
\rmS_t \eqlaw (d-2) \int_{-\infty}^{0} e^{2s}(1-e^{2s})^{\frac{d-4}{2}} \bigl(B_{s+t} - B_{s}\bigr) ds \;, \qquad \forall \, t \in \R \;, 
\end{equation}
where $(B_s)_{s \in \R}$ is a standard two-sided Brownian motion.
\end{theoremA}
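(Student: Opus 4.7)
The plan is to take $x = 0$ by translation invariance, first establish the integral representation \eqref{eq:defIntOU} by matching covariances, and then deduce the differentiability and Langevin equation from the analytic structure of the kernel $K(s) \eqdef (d-2) e^{2s}(1-e^{2s})^{(d-4)/2}$ on $(-\infty, 0]$.

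First I would compute the covariance of the spherical average. By rotational invariance of the LGF, for $0 < r \leq s$,
\[
\E[h_r(0)\, h_s(0)] + \mathrm{const} = -\int_{\S^{d-1}} \log\abs{r e_1 - s\omega}\, d\sigma(\omega).
\]
Factoring $\abs{r e_1 - s\omega}^2 = s^2\bigl(1 - 2(r/s)\cos\theta + (r/s)^2\bigr)$ and using the Gegenbauer generating function $(1 - 2ut + u^2)^{-(d-2)/2} = \sum_{n \geq 0} C_n^{(d-2)/2}(t)\, u^n$ together with the orthogonality of the $C_n^{(d-2)/2}$ on $[-1,1]$ with weight $(1-t^2)^{(d-3)/2}$ produces an explicit closed form for the covariance of $\rmS_t$. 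Separately, let $\widetilde{\rmS}_t$ be the right-hand side of \eqref{eq:defIntOU}: it is a well-defined centered Gaussian process for $d \geq 4$ even (integrability at $-\infty$ comes from the $e^{2s}$ factor, at $s = 0$ from $(d-4)/2 \geq 0$). Computing $\E[\widetilde{\rmS}_t \widetilde{\rmS}_u]$ by Fubini and substituting $v = e^{2s}$ reduces the integrand to a Beta-type kernel, which can be checked to match the covariance above. Equality of covariances identifies $(\rmS_t)_{t \in \R}$ and $(\widetilde{\rmS}_t)_{t \in \R}$ in law, yielding \eqref{eq:defIntOU}.

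Once the integral representation is in hand, the remaining statements follow from algebraic properties of $K$. By the binomial theorem,
\[
K(s) = \sum_{k=1}^{\cd_d} c_k\, e^{2ks}, \qquad c_k = (-1)^{k-1}(d-2)\binom{\cd_d - 1}{k-1},
\]
so $t \mapsto K(-t)$ is annihilated by $p(\partial)$ with $p(\lambda) = \prod_{k=1}^{\cd_d}(\lambda + 2k) = \prod_{k=1}^{\cd_d}(\lambda + (d-2k))$. The asymptotics $K(s) \sim (d-2)(-2s)^{\cd_d - 1}$ as $s \to 0^-$ give $K^{(j)}(0) = 0$ for $j = 0, \ldots, \cd_d - 2$ and $\abs{K^{(\cd_d - 1)}(0)} = (d-2) \cdot 2^{\cd_d - 1}(\cd_d - 1)! = 2^{d/2-1}\Gamma(d/2)$. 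Applying stochastic Fubini to \eqref{eq:defIntOU} yields, for $j = 1, \ldots, \cd_d - 1$,
\[
\rmS^{(j)}_t = \int_{-\infty}^t K^{(j-1)}(u - t)\, dB_u,
\]
with no boundary contribution thanks to the vanishing of $K^{(0)}, \ldots, K^{(\cd_d - 2)}$ at $0$; one further differentiation produces the It\^o noise term $\abs{K^{(\cd_d - 1)}(0)}\, dB_t$. Combined with $p(\partial)\, K(-\cdot) = 0$, this yields exactly the first-order vector SDE \eqref{eq:SDEpOrderMat} with $\mathbf{b} = (0, \ldots, 0, 2^{d/2-1}\Gamma(d/2))^\top$ and $\mathbf{A}$ the Frobenius companion matrix of $p$. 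Stationarity and the Gaussian Markov property of $\mathbf{S}_t$ follow from the linear structure of the SDE, and uniqueness of the stationary solution from the fact that $\mathbf{A}$ is Hurwitz (all roots of $p$ are strictly negative).

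The main obstacle is the matching of covariances in the first two steps: both the Gegenbauer expansion of the spherical-average covariance and the Beta-type integral for the covariance of \eqref{eq:defIntOU} are computable in closed form, but verifying that they produce the same function requires careful bookkeeping of normalization constants. Everything else follows essentially algebraically from the explicit form of the kernel $K$.
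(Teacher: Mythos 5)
Your proposal takes a genuinely different route from the paper, reversing the order of the two main steps. The paper first differentiates the covariance kernel of $\rmS$ twice, computes the Fourier transform of the resulting $\CK^{(1)}$ (via residue calculus plus the recursion \eqref{eq:firstIdCompSph}--\eqref{eq:secondIdCompSph}), obtains a rational power spectrum, and then reads off both the SDE coefficients and the integral representation from Lemma~\ref{lm:power}; \eqref{eq:defIntOU} appears at the end as a corollary. You instead propose to establish \eqref{eq:defIntOU} first by matching covariances and to deduce the SDE afterwards from the explicit kernel $K$. The second half of your argument is clean and correct: $K$ is a finite linear combination of $e^{2ks}$, $k=1,\ldots,\cd_d$, so $K(-\cdot)$ is annihilated by $p(\partial)$; the vanishing $K^{(j)}(0)=0$ for $j\le\cd_d-2$ eliminates all but the top boundary term under repeated stochastic differentiation; and $\abs{K^{(\cd_d-1)}(0)}=(d-2)\,2^{\cd_d-1}(\cd_d-1)!=2^{d/2-1}\Gamma(d/2)$ does give the correct $\mathbf b$, as does Hurwitz-ness of $\mathbf A$ for uniqueness (this is exactly Lemma~\ref{lm:stationaryGaussianMarkov}). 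A minor point: the intermediate formula $\rmS^{(j)}_t=\int_{-\infty}^t K^{(j-1)}(u-t)\,dB_u$ picks up a factor $(-1)^{j-1}$ from $\partial_t K(u-t)=-K'(u-t)$; the final constant is unaffected.

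The underspecified step is the covariance matching itself. The covariance of the spherical average involves $\log\abs{r_1^2+r_2^2-2r_1r_2\cos\theta}$, not $(1-2u\cos\theta+u^2)^{-(d-2)/2}$, so the Gegenbauer generating function and the $C_n^{(d-2)/2}$ orthogonality do not apply directly; the natural expansion of the logarithm is the Chebyshev one, $\log(1-2u\cos\theta+u^2)=-2\sum_{n\ge1}u^n\cos(n\theta)/n$, which is not in the $C_n^{(d-2)/2}$ basis. To make your plan rigorous one would either (a) differentiate the covariance once in each variable to eliminate the logarithm before expanding --- which is precisely what the paper's step producing $\CK^{(1)}$ does --- or (b) expand the log in Chebyshev form and integrate each term against $(\sin\theta)^{d-2}$ (a Beta-type integral), then independently compute $\E[\tilde\rmS_t\tilde\rmS_u]$ and match. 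Both variants are viable but the constant-matching is delicate, which is exactly why the paper passes to the power spectrum: there the identities \eqref{eq:firstIdCompSph}--\eqref{eq:secondIdCompSph} telescope the recursion and $\hat{\CK^{(1)}}$ emerges as a clean rational function of $\omega$, making the identification with the Langevin SDE and the normalization $b^2=2^{d-2}\Gamma(d/2)^2$ immediate.
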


We observe that for $d > 2$ even, the representation \eqref{eq:defIntOU} of the spherical average process indicates that $(\rmS_{t})_{t \in \R}$ acts as a ``smoothed-out'' version of a two-sided Brownian motion $(B_t)_{t \in \R}$. Moreover, as expected, increasing the dimension $d$ enhances the smoothness of $(\rmS_t)_{t \in \R}$ (see Figure~\ref{fig:sph}).

\begin{figure}[h]
\centering
\includegraphics[scale=0.35]{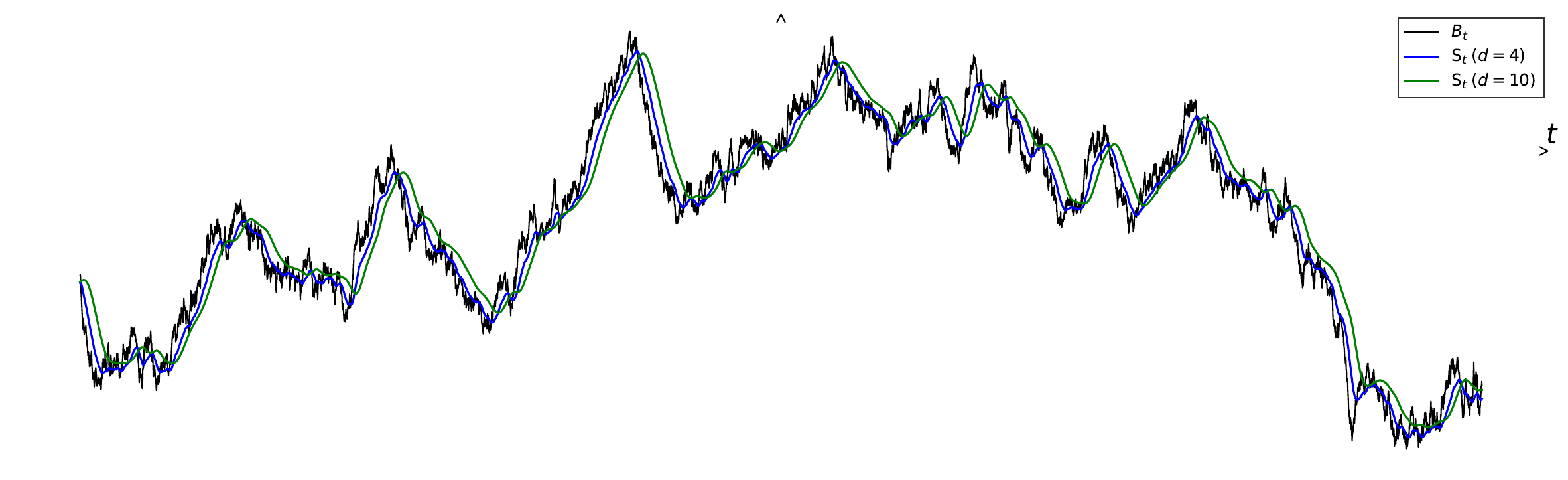}
\caption{\small The blue (resp.\ green) curve represents a simulation of the process $\smash{(\rmS_t)_{t \in \R}}$ in dimension $d = 4$ (resp.\ $d=10$), generated according to the representation given in \eqref{eq:defIntOU}. The black curve corresponds to the driving two-sided Brownian motion $(B_t)_{t \in \R}$.}
\label{fig:sph}
\end{figure}

\begin{remark}
We emphasise that if $d = 4$, then \eqref{eq:SDEpOrderMat} reduces to a one-dimensional SDE whose solution is the stationary \emph{Ornstein--Uhlenbeck} (OU) process introduced in \cite{OU}. For this reason, in the literature, solutions to the SDE of the form \eqref{eq:SDEpOrderMat} are usually called \emph{multivariate OU} processes or also \emph{continuous time autoregressive} processes \cite{GenOU}. 
\end{remark}

\begin{remark}
\label{rem:RestrictionEven}
The identification of the spherical average process is restricted to even dimensions since, in odd dimensions $d > 3$, the covariance kernel of the spherical average process does not admit a ``simple'' representation. At a high level, this is due to the fact that the whole-space LGF is the standard Gaussian on the homogeneous Sobolev space of order $d/2$, where the inner product is defined using the non-local operator $(-\Delta)^{d/2}$. We emphasise that, similarly, in \cite{Cercle, Sturm1}, the authors also restricted their analysis to even dimensions $d > 2$. In contrast, the construction of LBM and the tightness of approximations of the Riemannian distance function for \eqref{eq:metricTensor} in \cite{DGZExp} are not restricted to even dimensions, since these constructions are not reliant on any special exact solvability.
\end{remark}

\subsection{Spectral dimension of the LBM in arbitrary dimension}
The construction of LBM in dimensions $d > 2$ closely follows the approach used for its construction in dimension $d = 2$ \cite{Ber_LBM, GRV_LBM}, without introducing any significant conceptual difficulties. We also refer the reader to \cite{Sturm1} for a similar construction of the LBM in higher dimensions. 
For $\alpha \in (0, 2)$, we define the \emph{clock process} $(\rmF_{\bh, \alpha}(t))_{t \geq 0}$ by
\begin{equation}
\label{eq:clockLBM}
\rmF_{\bh, \alpha}(t) \eqdef \lim_{\eps \to 0} \int_{0}^t \eps^{\alpha^2/2} e^{\alpha \bh_{\eps}(B_s)} ds \;, \qquad \forall \, t \geq 0 \;. 
\end{equation} 
The convergence of the limit in the right-hand side of the above display falls within the scope of the theory of subcritical GMC measures, which guarantees that, for all $x \in \R^d$, the clock process is $\BP_x \otimes \P $-almost surely well-defined and non-trivial for all $\alpha \in (0, 2)$ (see Section~\ref{sub:LBMFixed} for more details). 

We then aim to define the LBM as 
\begin{equation*}
	\rmB_{\bh, \alpha, t} \eqdef B_{\rmF_{\bh, \alpha}^{-1}(t)} \;, \qquad \forall \, t \geq 0 \;.
\end{equation*}
Recall that we want the LBM to be compatible with the coordinate transformation rule \eqref{eq:defQd}. Specifically, as in the two-dimensional setting \cite[Theorem~1.4]{Ber_LBM}, if $\phi$ is a conformal automorphism of $\R^d$ that can be expressed as a composition of translations, dilations, and rotations, then it is desirable that 
\begin{equation}
\label{eq:conformalInvLBM1}
\bigl(\phi^{-1}(\rmB_{\bh, \alpha, t})\bigr)_{t \geq 0} \eqlaw \bigl(\bar{\rmB}_{\bh \circ \phi + Q \log \abs{\phi'}, \alpha, t})_{t \geq 0}\;,  \qquad \bar{\rmB}_{\bh \circ \phi + Q \log \abs{\phi'}, \alpha, t}	\eqdef \bar{B}_{\bar{\rmF}^{-1}_{\bh \circ \phi + Q \log \abs{\phi'}, \alpha}(t)} \;,
\end{equation}
with $(\bar{B}_t)_{t \geq 0}$ a Brownian motion on $\R^d$ and $\bar{\rmF}_{\bh \circ \phi + Q \log \abs{\phi}, \alpha}$ the associated clock process with respect to the field $\bh \circ \phi + Q \log \abs{\phi'}$. Thanks to the fact that in every dimension $d \geq 2$, Brownian motion exhibits two-dimensional scaling, we will show in Proposition~\ref{pr:conformalLBM} that for \eqref{eq:conformalInvLBM1} to hold, we need to impose that
\begin{equation}
\label{eq:relGammaAlpha}
\frac{2}{\alpha} + \frac{\alpha}{2} = Q \qquad \iff \qquad \alpha = \alpha(Q) = \alpha(d, \gamma) \eqdef Q - \sqrt{Q^2-4} \;,
\end{equation}
where $Q$ is defined in \eqref{eq:defQd}.
In particular, in dimension $d=2$, we have $\alpha = \gamma$, whereas in higher dimensions, $\alpha$ becomes a non-linear function of $\gamma$. We also introduce the critical threshold $\alpha_c = \alpha(\sqrt{2d})$ given by
\begin{equation}
\label{eq:alphaCrit}
	\alpha_c \eqdef \sqrt{\smash[b]{2d}} - \sqrt{2d-4} \;.
\end{equation}
We can now define the $d$-dimensional LBM associated to the whole-space LGF.
\begin{definition}
\label{def:LBM}
For $d \geq 2$ and $Q > 2$, the \emph{Liouville Brownian motion} associated to the $d$-dimensional LGF $\bh$ is the process defined as follows
\begin{equation*}
\rmB_{\bh, \alpha, t} \eqdef B_{\rmF_{\bh, \alpha}^{-1}(t)}\; \qquad \forall \, t \geq 0 \;,
\end{equation*}	
where $\alpha$ is related to $Q$ as specified in \eqref{eq:relGammaAlpha}, and $\rmF_{\bh, \alpha}$ is the clock process introduced in \eqref{eq:clockLBM}.
\end{definition}

We observe that, for all $Q > 2$, it holds that $\alpha \in (0, 2)$, ensuring that the clock process is both well-defined and non-trivial. In particular, the LBM with parameter $Q > \sqrt{\smash[b]{2d}}$ corresponds to the higher-dimensional analog of a \emph{subcritical} phase of LQG. When $Q = \sqrt{\smash[b]{2d}}$, it represents the analog of a \emph{critical} phase. Meanwhile, for $Q \in (2, \sqrt{\smash[b]{2d}})$, it corresponds to a \emph{supercritical} phase\footnote{In this phase, the coupling constant $\gamma$ is complex. It is expected that there is no area measure, and the corresponding metric exhibits infinite spikes, analogous to the supercritical LQG metric in two dimensions. For further details, see \cite[Problem~7.13]{DGZExp} and \cite{Super1, Super2, BGS_Super}.}.

To define the LBM as a Markov process, we need to establish that, $\P$-almost surely, the clock process $\rmF_{\bh, \alpha}$ can be defined for all starting points $x \in \R^d$ of the underlying Brownian motion. The placement of ``almost sure'' is critical here, as it presents challenges that differ fundamentally from constructing the process at a single fixed point. This fact will be explored further in Sections~\ref{sub:outline} and~\ref{sec:LMB}.
As we will discuss in more detail in Section~\ref{sub:outline}, in contrast to the two-dimensional case, the LBM can be constructed as a Markov process for all $Q > 2$, i.e., also in the critical and supercritical phases. However, for $Q \in (2, \sqrt{\smash[b]{2d}}]$, the LBM can only be defined for starting points outside a polar set of the $d$-dimensional Brownian motion.

In the subcritcal phase, i.e., $Q > \sqrt{\smash[b]{2d}}$, the transition semigroup $(\rmP_{\bh,\alpha, t})_{t \geq 0}$ associated with the LBM is absolutely continuous with respect to the GMC measure $\mu_{\bh, \alpha}$. More precisely, we have the following result which is proved in Section~\ref{sub:LBMMarkov}. 
\begin{proposition}
\label{pr:existenceLHK}
Let $d \geq 2$ and $Q > \sqrt{\smash[b]{2d}}$. Then the Liouville semigroup $(\rmP_{\bh, \alpha, t})_{t \geq 0}$ is $\P$-almost surely absolutely continuous with respect to $\mu_{\bh, \alpha}$, i.e., there exists a family of jointly measurable functions $(\frkp_{\bh, \alpha,t}(\cdot, \cdot))_{t \geq 0}$, called the Liouville heat kernel (LHK), such that 
\begin{equation*}
\rmP_{\bh,\alpha, t}[f](x) = \int_{\R^d} f(y) \frkp_{\bh,\alpha, t}(x, y) \mu_{\bh, \alpha}(dy) \;, \qquad \forall \, f \in \CB_b(\R^d, \R), \; \forall \, x \in \R^d, \;  \forall \, t \, \geq 0  \;,
\end{equation*}
where $\CB_b(\R^d, \R)$ denotes the collection of bounded Borel functions in $\R^d$.
\end{proposition}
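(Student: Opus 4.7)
The plan is to first prove, on a $\P$-full measure event and for each $(x, t) \in \R^d \times (0, \infty)$, that the transition measure $\rmP_{\bh, \alpha, t}(x, \cdot)$ is absolutely continuous with respect to $\mu_{\bh, \alpha}$, and then to pass to a jointly measurable version of the resulting Radon--Nikodym derivative via the Chapman--Kolmogorov relation. The essential input is that the clock process $(\rmF_{\bh, \alpha}(s))_{s \geq 0}$ is, $\P$-almost surely, a strictly increasing positive continuous additive functional of the underlying Brownian motion whose Revuz measure (with respect to Lebesgue) is $\mu_{\bh, \alpha}$; this is essentially contained in the construction of $\rmF_{\bh, \alpha}$ via \eqref{eq:clockLBM} and is valid throughout the subcritical regime $Q > \sqrt{\smash[b]{2d}}$.

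To establish absolute continuity, I would fix a Borel set $A \subseteq \R^d$ with $\mu_{\bh, \alpha}(A) = 0$ and $T > 0$. Performing the time-change $t = \rmF_{\bh, \alpha}(s)$ together with Fubini yields
\begin{equation*}
\int_0^T \rmP_{\bh, \alpha, t}[\I_A](x) \, dt = \BE_x\!\left[\int_0^{\rmF_{\bh, \alpha}^{-1}(T)} \I_A(B_s) \, d\rmF_{\bh, \alpha}(s)\right] \;.
\end{equation*}
Truncating the upper limit by a deterministic $S > 0$ and letting $S \to \infty$ afterwards, the right-hand side is dominated by $\BE_x[\int_0^S \I_A(B_s) \, d\rmF_{\bh, \alpha}(s)]$. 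The Revuz-type identity for the PCAF $\rmF_{\bh, \alpha}$ then gives
\begin{equation*}
\BE_x\!\left[\int_0^S \I_A(B_s) \, d\rmF_{\bh, \alpha}(s)\right] = \int_0^S \int_{\R^d} p^B_s(x, y) \I_A(y) \, \mu_{\bh, \alpha}(dy) \, ds = 0 \;,
\end{equation*}
where $p^B_s$ denotes the $d$-dimensional Gaussian transition density. Consequently $\rmP_{\bh, \alpha, t}[\I_A](x) = 0$ for Lebesgue-a.e.\ $t$, and the continuity in $t$ of $t \mapsto \rmP_{\bh, \alpha, t}[\I_A](x)$ (which follows from path-continuity of the LBM combined with bounded convergence) upgrades this to every $t > 0$. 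Working with a countable algebra of sets and a countable dense set of starting points, one then extracts a single $\P$-full measure event on which $\rmP_{\bh, \alpha, t}(x, \cdot) \ll \mu_{\bh, \alpha}$ for all $t > 0$ and all $x \in \R^d$.

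Once pointwise densities $\frkp_{\bh, \alpha, t}(x, \cdot)$ are in hand, joint measurability in $(t, x, y)$ is produced via the semigroup identity $\rmP_{\bh, \alpha, t+s} = \rmP_{\bh, \alpha, t} \circ \rmP_{\bh, \alpha, s}$, which translates into
\begin{equation*}
\frkp_{\bh, \alpha, t+s}(x, y) = \int_{\R^d} \frkp_{\bh, \alpha, t}(x, z) \, \frkp_{\bh, \alpha, s}(z, y) \, \mu_{\bh, \alpha}(dz) \;.
\end{equation*}
Choosing measurable representatives along a countable dense set of times and convolving via the above relation yields a jointly measurable version, as is standard for symmetric Markov semigroups.

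The main technical obstacle is the Revuz identity at the level of a single realisation of $\bh$, i.e.\ exchanging the $\eps \to 0$ limit from \eqref{eq:clockLBM} with the Brownian expectation $\BE_x$ without integrating $\bh$ out. The strategy is to approximate $\I_A$ by continuous test functions of compact support and control the error using the $L^p$-boundedness of the subcritical GMC approximants $\eps^{\alpha^2/2} e^{\alpha \bh_\eps}$ for some $p > 1$ (available precisely because $Q > \sqrt{\smash[b]{2d}}$), together with the uniform boundedness of the Brownian transition density $p^B_s(x, \cdot)$ on bounded subsets of $\R^d$ for $s$ in any compact subinterval of $(0, \infty)$.
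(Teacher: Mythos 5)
Your Revuz-identity computation in the first step is sound, and in fact gives a somewhat more direct route to the vanishing of the resolvent than the paper's: applying Lemma~\ref{lm:StrenghRevuz} with $f = \mathbbm{1}_A$ immediately yields $\int_0^T \rmP_{\bh, \alpha, t}[\mathbbm{1}_A](x)\, dt = 0$ for \emph{every} $x$ and $T$ whenever $\mu_{\bh, \alpha}(A) = 0$, whereas the paper first obtains the vanishing of $\rmR_{\bh, \alpha, \lambda}[\mathbbm{1}_A]$ only for $\mu_{\bh, \alpha}$-a.e.\ $x$ via the $\mu_{\bh, \alpha}$-symmetry \eqref{eq:invarianceTransition}, and must then separately establish continuity of $x \mapsto \rmR_{\bh, \alpha, \lambda}[\mathbbm{1}_A](x)$ (using Lemma~\ref{lm:couplingBB} and a uniform bound on $\BE_x[\rmF_{\bh, \alpha}(t)]$) so that vanishing on the full-support measure $\mu_{\bh, \alpha}$ forces vanishing everywhere.

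However, your next step has a genuine gap. You claim that $t \mapsto \rmP_{\bh, \alpha, t}[\mathbbm{1}_A](x)$ is continuous by path-continuity of the LBM combined with bounded convergence, and use this to upgrade from Lebesgue-a.e.\ $t$ to all $t$. This fails because $\mathbbm{1}_A$ is not a continuous function: along paths, $\mathbbm{1}_A(\rmB_{\bh, \alpha, t_n})$ need not converge to $\mathbbm{1}_A(\rmB_{\bh, \alpha, t_0})$, precisely on the (not yet controlled) event $\{\rmB_{\bh, \alpha, t_0} \in \partial A\}$. Trying instead to feed the a.e.-$t$ information through Chapman--Kolmogorov, $\rmP_{\bh, \alpha, t_0}[\mathbbm{1}_A](x) = \rmP_{\bh, \alpha, t_0 - s}[\rmP_{\bh, \alpha, s}[\mathbbm{1}_A]](x)$, is circular: one would need to know that the law of $\rmB_{\bh, \alpha, t_0 - s}$ does not charge $\mu_{\bh, \alpha}$-null sets, which is what you are trying to prove. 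The missing ingredient is the abstract theory of $\mu_{\bh, \alpha}$-symmetric Markov semigroups, specifically \cite[Theorem~4.2.4]{Fukushima_Symmetric}, which is exactly the device that converts resolvent absolute continuity into semigroup absolute continuity and also produces the jointly measurable version of the density, making your Chapman--Kolmogorov construction unnecessary. Separately, note that a ``countable algebra of sets'' cannot certify absolute continuity of a measure; this workaround is also not needed here, since Lemma~\ref{lm:StrenghRevuz} already holds on a single $\P$-full event simultaneously for all Borel $f$ and all $x$.
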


\begin{remark}
We emphasise that, a priori, for any $x \in \R^d$ and $t \geq 0$, we only know that $\frkp_{\bh,\alpha, t}(x, y)$ is well-defined for $\mu_{\bh, \alpha}$-almost every $y \in \R^d$. This is due to the fact that $\frkp_{\bh,\alpha, t}(x, \cdot)$ is defined as the density with respect to $\mu_{\bh, \alpha}$ of the semigroup associated with $\rmB_{\bh, \alpha, t}$ started from $x$.
\end{remark}

For $x \in \R^d$, one would like to define the spectral dimension of the LBM at $x \in \R^d$ as follows,
\begin{equation}
\label{eq:defSpecDimDef}
\frkd_{\bh,\alpha}(x) = \lim_{t \to 0} \frac{2 \log \frkp_{\bh,\alpha, t}(x,x)}{\abs{\log t}} \;.
\end{equation}
However, at present, we lack any regularity results for the LHK in arbitrary dimension $d > 2$, which means the limit on the right-hand side of \eqref{eq:defSpecDimDef} may not be well-defined. 
To address this issue, following \cite{SpecDim}, instead of treating the LHK on the diagonal as a function, we consider it as a measure on $\R^{+}$,
and we focus on the mass of this measure in the neighbourhood of $t = 0$.
The key insight is that the mass at $0$ of this measure can be determined via the behaviour of its Laplace transform. In turn, the Laplace transforms of the LHK are well-defined along the diagonal for all $x \in \R^d$ and can be represented as functionals of the GMC along the paths of Brownian bridges.

In order to make this rigorous, we need to introduce some notation. We use the convention that, for $x$, $y \in \R^d$ and $t > 0$, under $\BP_{x, y, t}$, the process $(B_s)_{s \in [0, t]}$ is a $d$-dimensional Brownian bridge of length $t$ from $x$ to $y$\footnote{We emphasise that, $\P$-almost surely, the clock process $\rmF_{\bh, \alpha}$ can be defined for all starting and end points $x, y \in \R^d$ and for any lifespan $t > 0$ of the underlying Brownian bridge (see Theorem~\ref{th:conLBMBB} below).}. We also denote by $p_t(\cdot, \cdot)$ the heat kernel of the $d$-dimensional Brownian motion.

\begin{definition}
Let $d \geq 2$ and $Q > \sqrt{\smash[b]{2d}}$. For $x \in \R^d$, for $\mu_{\bh, \alpha}$-almost every $y \in \R^d$, and for all $\chi \geq 0$, we define the following quantity:
\begin{equation}
\label{eq:defMellin}
\tilde{\frkm}_{\bh, \alpha, \chi}(x, y) \eqdef \int_0^{\infty} t^{\chi} e^{-t} \frkp_{\bh, \alpha, t}(x, y) dt \;. 
\end{equation}
Moreover, for $x$, $y \in \R^d$ and for all $\chi \geq 0$, we introduce the following quantity: 
\begin{equation}
\label{eq:defMellinTrue}
\frkm_{\bh, \alpha, \chi}(x, y) \eqdef \int_0^{\infty} \BE_{x, y, t}\bigl[(\rmF_{\bh,\alpha}(t))^{\chi}e^{-\rmF_{\bh, \alpha}(t)}\bigr] p_t(x, y) dt \;,
\end{equation}
with the notational convention that $\frkm_{\bh, \alpha, \chi}(x, x) = \frkm_{\bh, \alpha, \chi}(x)$. 
\end{definition}

A key result for the computation of the spectral dimension is that, thanks to Lemma~\ref{lm:keyBB} below, we have that, $\P$-almost surely,
\begin{equation}
\label{eq:keyRelSpecMain}
\tilde{\frkm}_{\bh, \alpha, \chi}(x, y) = \frkm_{\bh, \alpha, \chi}(x, y) \;, \qquad \forall \, x \in \R^d, \; \mu_{\bh, \alpha}\text{-a.e.\ } y \in \R^d \;,
\end{equation}
where we emphasise that the quantity on the right-hand side of the above expression is well-defined for all $x$, $y \in \R^d$.
We recall that our focus is on the behaviour of the quantity $\tilde{\frkm}_{\bh, \alpha, \chi}$ along the diagonal. However, as noted above, $\tilde{\frkm}_{\bh, \alpha, \chi}(x, y)$ is only well-defined for all $x \in \R^d$ and for $\mu_{\bh, \alpha}$-almost every $y \in \R^d$. In \cite{SpecDim}, this issue is addressed by showing that the mapping $\R^d \times \R^d \ni (x, y) \mapsto \frkm_{\bh, \alpha, \chi}(x, y)$ is continuous along the diagonal for all $\chi \geq 0$. Consequently, thanks to \eqref{eq:keyRelSpecMain}, $\tilde{\frkm}_{\bh, \alpha, \chi}(x, y)$ also admits a limit as $y \to x$ for all $x \in \R^d$ and for all $\chi \geq 0$.

In contrast to the two-dimensional case, in dimension $d > 2$, we expect that the quantity $\frkm_{\bh, \alpha, \chi}(x, y)$ does not admit a limit as $y \to x$ for some $\chi \geq 0$ and $x \in \R^d$. Roughly speaking, this is due to the fact that as will see below, the quantity $\frkm_{\bh, \alpha, \chi}(x)$ is infinite (resp.\ finite) for $\chi \in [0, \bar{\chi})$ (resp. $\chi > \bar{\chi}$), for some $\bar\chi$ depending on the thickness of $x$ (see Definition~\ref{def:thickPoints} below). Hence, since the set of thick points is dense, this implies that in any neighbourhood of $x$, there is a dense set of points where $\frkm_{\bh, \alpha, \chi}(\cdot)$ is infinite for some $\chi > \bar{\chi}$. 

The above consideration implies that we need an alternative approach to define $\tilde \frkm_{\bh, \alpha, \chi}(x, y)$ for all $x, y \in \R^d$. Given \eqref{eq:keyRelSpecMain}, a natural choice is to directly extend the definition of $\tilde \frkm_{\bh, \alpha, \chi}(x, y)$ to every $x, y \in \R^d$ by setting it equal to $\frkm_{\bh, \alpha, \chi}(x, y)$ as defined in \eqref{eq:defMellinTrue}. Using this convention, we define the spectral dimension of the LBM as follows.
\begin{definition}
\label{def:specDim}
Let $d \geq 2$, $Q > \sqrt{\smash[b]{2d}}$, and $x \in \R^d$. If there exists $\bar \chi > 0$ such that 
\begin{equation}
\label{eq:keyRelSpec} 
\frkm_{\bh, \alpha, \chi}(x) = \infty, \; \text{ for } \chi \in [0,  \bar{\chi}) \qquad \text{ and } \qquad 
\frkm_{\bh, \alpha, \chi}(x)  < \infty, \;  \text{ for } \chi > \bar{\chi} \;,
\end{equation}  
then the spectral dimension of $\rmB_{\bh, \alpha}$ at $x$ is $\frkd_{\bh,\alpha}(x) = 2(\bar{\chi} + 1)$.
Alternatively, if it holds that 
\begin{equation}
\label{eq:keyRelSpecTwoDim} 
\frkm_{\bh, \alpha, \chi}(x) = \infty, \; \text{ for } \chi = 0 \qquad \text{ and } \qquad 
\frkm_{\bh, \alpha, \chi}(x)  < \infty, \;  \text{ for } \chi > 0 \;,
\end{equation}
then the spectral dimension of $\rmB_{\bh, \alpha}$ at $x$ is $\frkd_{\bh,\alpha}(x) = 2$.
\end{definition}

Instead of computing the spectral dimension of the LBM associated to $\bh$, we compute the spectral dimension at the origin of the LBM associated to the field $\bh - \beta \log\abs{\cdot}$\footnote{We refer to the beginning of Section~\ref{sub:spectral} for an explanation of why the LBM associated with the field $\bh - \beta \log\abs{\cdot}$ can be constructed.}, for $\beta \in (-\infty, Q)$. More precisely, we have the following result. 
\begin{theoremA}[Spectral dimension]
\label{th_specDim}
Let $d \geq 2$, $Q > \sqrt{\smash[b]{2d}}$, and $\beta \in (-\infty, Q)$. It holds $\P$-almost surely that 
\begin{equation}
\label{eq:expressionSpecDim}
\frkd_{\bh - \beta \log\abs{\cdot}, \alpha}(0) = 2 + \frac{2(d-2)}{2 + \alpha^2/2 - \alpha\beta}\;,
\end{equation}
where we recall that $\alpha$ is related to $Q$ as specified in \eqref{eq:relGammaAlpha}.
\end{theoremA}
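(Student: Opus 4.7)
\emph{Plan.} Following the strategy of Andres--Kajino \cite{SpecDim} for the two-dimensional case, I analyse $\frkm_{\bh - \beta\log\abs{\cdot},\alpha,\chi}(0)$ defined in \eqref{eq:defMellinTrue} by controlling its behaviour near $t=0$ and near $t=\infty$, and identifying the critical exponent $\bar\chi$ from Definition~\ref{def:specDim}.

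\emph{Step 1 (scaling identity).} Applying the Brownian bridge scaling $(B_s)_{s \in [0,t]} \eqlaw (\sqrt{t}\, \tilde B_{s/t})_{s \in [0,t]}$ under $\BP_{0,0,t}$, together with the dilation covariance of the LGF $\bh(\sqrt{t}\,\cdot) - h_{\sqrt{t}}(0) \eqlaw \bh$ and the appropriate rescaling of the regularisation scale in the GMC limit \eqref{eq:clockLBM}, I would first establish
\begin{equation*}
\rmF_{\bh - \beta\log\abs{\cdot},\alpha}(t) \eqlaw t^{\eta}\, e^{\alpha h_{\sqrt{t}}(0)}\, G_t, \qquad \eta \eqdef 1 + \tfrac{\alpha^2}{4} - \tfrac{\alpha\beta}{2} = \tfrac{1}{2}(2+\alpha^2/2-\alpha\beta),
\end{equation*}
where $G_t$ is distributed as the unit-time clock process of $\bh - \beta\log\abs{\cdot}$ along $\tilde B$ from $0$ to $0$. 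Theorem~\ref{th:identSphere} provides the Gaussian law of $h_{\sqrt{t}}(0)$, in particular $\Var(h_{\sqrt{t}}(0)) = \tfrac{1}{2}\log(1/t) + O(1)$ as $t \to 0$, and gives access to its joint law with the lateral part of the field driving $G_t$.

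\emph{Step 2 (small-$t$ asymptotics).} The heart of the proof is to establish, up to slowly varying factors in $1/t$, the matching bounds
\begin{equation*}
\BE_{0,0,t}\!\bigl[\rmF(t)^\chi e^{-\rmF(t)}\bigr] \asymp t^{\chi\eta} \quad \text{as } t \to 0^+.
\end{equation*}
For the lower bound, restrict to the typical event $\{\abs{h_{\sqrt{t}}(0)} \leq 1\} \cap \{G_t \asymp 1\}$, on which $\rmF \asymp t^\eta$ and $e^{-\rmF} \asymp 1$; this event carries probability at least $c/\sqrt{\log(1/t)}$ by the Gaussian density of $h_{\sqrt{t}}(0)$ near zero (combined with a GMC lower tail estimate for $G_t$). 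For the upper bound, split on $\{\rmF \leq 1\}$ and its complement: on the complement, bound $x^\chi e^{-x} \leq C_\chi$ and control $\BP(\rmF > 1) \lesssim t^{\eta^2/\alpha^2}$ by a Gaussian tail estimate on $h_{\sqrt{t}}(0)$; on $\{\rmF \leq 1\}$, combine the damping $e^{-\rmF}$ with a Cameron--Martin shift for $h_{\sqrt{t}}(0)$ and standard GMC moment bounds for $G_t$ to obtain a matching $t^{\chi\eta}$ upper bound.

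\emph{Step 3 (integration).} Since $p_t(0,0) = c_d\, t^{-d/2}$, the integrand in \eqref{eq:defMellinTrue} behaves as $t^{\chi\eta - d/2}$ near $t=0$ (up to polylogarithmic corrections), which is integrable at $0$ iff $\chi\eta > (d-2)/2$, i.e., $\chi > \bar\chi \eqdef (d-2)/(2\eta) = (d-2)/(2+\alpha^2/2-\alpha\beta)$. Large-$t$ integrability follows from the exponential damping $e^{-\rmF}$ (using that $\rmF(t) \to \infty$ almost surely as $t \to \infty$, by ergodicity applied to Theorem~\ref{th:identSphere}) together with the decay $p_t \sim t^{-d/2}$. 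Plugging $\bar\chi$ into Definition~\ref{def:specDim} yields $\frkd_{\bh - \beta\log\abs{\cdot},\alpha}(0) = 2(\bar\chi+1)$, which is precisely \eqref{eq:expressionSpecDim}.

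\emph{Main obstacle.} The delicate step is the sharp upper bound in Step~2. A naive Gaussian moment computation using $\E[e^{\alpha\chi h_{\sqrt{t}}(0)}] = t^{-\alpha^2\chi^2/4}$ would suggest the enhanced scaling $\BE[\rmF^\chi e^{-\rmF}] \asymp t^{\chi\eta - \alpha^2\chi^2/4}$ and thereby produce the wrong threshold; it is essential that $e^{-\rmF}$ suppresses precisely the atypical field configurations (large values of $h_{\sqrt{t}}(0)$) responsible for this log-normal enhancement. A secondary difficulty is that for $d > 2$ the spherical average $h_{\sqrt{t}}(0)$ is not independent of the lateral part of the field (unlike in two dimensions where the Markov property of the GFF gives exact independence); the Gaussian regression structure supplied by Theorem~\ref{th:identSphere} must therefore be used throughout to track the joint behaviour of $h_{\sqrt{t}}(0)$ and $G_t$.
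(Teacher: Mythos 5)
Your Step~1 scaling identity is correct and is exactly the heuristic used in the paper (see the ``formal scaling argument'' preceding Lemmas~\ref{lm:SpecDimLower}--\ref{lm:SpecDimUpper}); the exponent $\eta = \tfrac12(2+\alpha^2/2 - \alpha\beta)$ and the resulting threshold $\bar\chi = (d-2)/(2\eta)$ are the right ones. However, Step~2 has a fundamental gap, and the ``main obstacle'' paragraph misidentifies where the actual difficulty lies.

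The assertion is $\P$-almost sure: one fixes a typical realisation of the field $\bar\bh$, and \emph{only then} integrates over $t$ and takes $\BE_{0,0,t}$, which is an expectation over the Brownian bridge alone. But in your Step~2 you treat $h_{\sqrt t}(0)$ as a Gaussian random variable -- using a tail estimate $\BP(\rmF > 1) \lesssim t^{\eta^2/\alpha^2}$ and a Cameron--Martin shift -- which implicitly averages over the field. Once $\bar\bh$ is fixed, $h_{\sqrt t}(0)$ is a deterministic function of $t$ (with the LIL-type a.s.\ control $h_{\sqrt t}(0)/\log(1/t) \to 0$, which is all the paper uses), so a Gaussian tail bound on it is not available in the quenched setting. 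Relatedly, your concern about the ``enhanced scaling'' $t^{\chi\eta - \alpha^2\chi^2/4}$ is only an issue if one bounds the \emph{annealed} expectation $\E\otimes\BE_{0,0,t}$; the paper never does this, and the damping $e^{-\rmF}$ plays no role whatsoever at small $t$ in the paper's argument -- it is discarded immediately via $u^\chi e^{-u}\leq C_\chi$ and is used only to handle the trivial $t\in[1,\infty)$ regime. Your claimed sharp two-sided estimate $\BE_{0,0,t}[\rmF^\chi e^{-\rmF}] \asymp t^{\chi\eta}$ is also too strong to hold pointwise in $t$ once the field is fixed: the integrand fluctuates by $t^{o_t(1)}$ in a way that the paper must circumvent by a Borel--Cantelli argument on a decreasing sequence of scales (Lemma~\ref{lm:SpecDimLower}), not by a fixed-$t$ asymptotic.

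What is actually missing from your proposal is the technical heart of the paper's argument: a $\P$-a.s., uniform-in-$t$ bound on the integer moments of the clock process when the Brownian path is confined to a small ball (Lemma~\ref{lm:keySpec}, proved in Appendix~\ref{ap:keySpec}). The proof of Lemma~\ref{lm:SpecDimUpper} restricts the Brownian bridge to the event $[B]_{t/2}\subseteq B(0,(t/2)^{1/2-\delta})$ (the complement has super-polynomially small probability), reduces $\chi$-th moments to integer moments via Jensen, and then controls $\BE_0[\rmF(t)^n\mathbbm 1_{\{\cdots\}}]$ by iterating the one-step potential bound $\sup_{x\in B(0,r)}\int_{B(0,r)}|x-y|^{2-d}\,\mu_{\bar\bh,\alpha}(dy)\lesssim r^{(2+\alpha^2/2-\alpha\beta)-\eps}$ (Lemma~\ref{lm:boundSupMass}). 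That potential bound in turn rests on a multi-scale, Borel--Cantelli argument for the GMC mass of small balls near a thick point (Lemma~\ref{lm:mainBoundTechNested}), which is a genuinely nontrivial ingredient that your proposal does not touch. Without something of this kind, one cannot convert the scaling heuristic in Step~1 into a $\P$-a.s.\ bound.
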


We observe that for $d = 2$, the spectral dimension is equal to $2$, independently of the choice of $\alpha$ and $\beta$. Furthermore, in dimension $d > 2$, the spectral dimension $\smash{\frkd_{\bh - \beta \log\abs{\cdot}, \alpha}(0)}$ coincides with the underlying dimension $d$ when $\beta = \alpha/2$. In the limit $\alpha \to 0$ (which corresponds to the limit $\gamma \to 0$), it holds that $\smash{\frkd_{\bh - \beta \log\abs{\cdot}, \alpha}(0) = d}$ for all $\beta \in (-\infty, Q)$. Additionally, for a fixed dimension $d$ and a fixed value $Q > \sqrt{\smash[b]{2d}}$, the spectral dimension is an increasing function of the parameter $\beta$.

We now discuss the implications of Theorem~\ref{th_specDim} for the spectral dimension of the LBM around the thick points of $\bh$. To this end, we first recall the definition of thick points. For $x \in \R^d$, let $(\bh_{\eps}(x))_{\eps > 0}$ denote the spherical average approximation of the whole-space LGF around $x$, as introduced in Section~\ref{sub:spherical} below.
\begin{definition}
\label{def:thickPoints}
For $\beta \geq 0$, a point $x \in \R^d$ is said to be $\beta$-thick if the following limit holds
\begin{equation*}
\lim_{\eps \to 0} \frac{\bh_{\eps}(x)}{\abs{\log \eps}} = \beta\;.
\end{equation*}
We denote the set of $\beta$-thick points of $\bh$ by $\TT_{\beta}$.
\end{definition}

Thanks to \cite[Theorem~4.2]{RVReview}, we know that the Hausdorff dimension of $\TT_{\beta}$ is almost surely equal to $d - \beta^2/2$, for $\beta \in [0, \sqrt{\smash[b]{2d}}]$. Moreover, when $\beta > \sqrt{\smash[b]{2d}}$, it holds $\P$-almost surely that $\TT_{\beta} = \emptyset$. Furthermore, for $\beta \in [0, \sqrt{2d}]$, the GMC measure $\mu_{\bh, \beta}$\footnote{Here, we are tacitly assuming that for $\beta = \sqrt{2d}$, the measure $\mu_{\bh, \beta}$ corresponds to the critical GMC measure introduced in \cite{GMC_critical}.} can be interpreted as the ``uniform measure on $\beta$-thick points'' (see \cite[Theorem~4.1]{RVReview}). Specifically, if $x$ is a point sampled from this measure (restricted to some bounded open set), then near $x$, the field $\bh$ locally looks like $\bh - \beta \log\abs{\cdot - x}$. Consequently, the behaviour of the field $\bh - \beta \log\abs{\cdot}$ near the origin is similar to the behaviour of $\bh$ near a ``typical'' $\beta$-thick point. Therefore, thanks to Theorem~\ref{th_specDim}, we expect that if $x \in \TT_{\beta}$, then $\frkd_{\bh, \alpha}(x) = 2 + 2(d-2)/(2 + \alpha^2/2 - \alpha\beta)$. This fact is formalised in the following remark.

\begin{remark}
A stronger version of Theorem~\ref{th_specDim} would assert that $\P$-almost surely, for all $\beta \in [0, \sqrt{\smash[b]{2d}}]$ and for all $x \in \TT_{\beta}$, it holds that $\frkd_{\bh, \alpha}(x) = 2 + 2(d-2)/(2 + \alpha^2/2 - \alpha\beta)$. While we believe this result to be true, proving such a stronger version would involve overcoming some technical difficulties.
\end{remark}

\subsection{Quantum cones in arbitrary even dimension}
For $d \geq 2$ even, $Q > \sqrt{\smash[b]{2d}}$, and $\beta \in (-\infty , Q)$, the $\beta$-quantum cone is, roughly speaking, the limiting field obtained by ``zooming in'' appropriately near the origin on a whole-space LGF with a $\beta$-log singularity at the origin. 
As for the two dimensional case, we will define the $\beta$-quantum cone field by specifying, separately, its averages on spheres centred at the origin, and what is left when we subtract these. The second of these components will have exactly the same law as the corresponding projection of the whole-space LGF, i.e., it is only the radial part which is different.

Consider the whole-space LGF $\bh$. For $b > 0$ a large constant, applying the coordinate change formula to the field $\bh - \beta \log\abs{\cdot} + b$, with respect to the conformal automorphism $\R^d \ni x \mapsto e^{-t} x$, for some $t \in \R$, we obtain the field $\bh^{t, b}$ given by
\begin{equation}
\label{eq:fieldHtb}
\bh^{t, b} \eqdef \bh(e^{-t}\cdot) -\beta \log\abs{e^{-t}\cdot} - Q t + b \;.
\end{equation}  
Now, recalling Theorem~\ref{th:identSphere}, the spherical average process $(\bh^{t, b}_{e^{-s}}(0))_{s \in \R}$ admits the following representation 
\begin{equation*}
\bh^{t, b}_{e^{-s}}(0) = \rmS_{s+t} + \beta s - (Q - \beta)t  + b \;, \qquad \forall \, s \in \R \;.
\end{equation*} 
In particular, by taking $s = 0$ in the previous expression, we get that $\bh^{t, b}_{1}(0) = \rmS_{t} - (Q-\beta) t + b$. Since $\beta \in (-\infty , Q)$, the drift term is always negative and so in particular, for each $b > 0$, there exists $\P$-almost surely a smallest $\sigma_{b} \in \R$ such that $\bh^{t, b}_{1}(0) = 0$, i.e., 
\begin{equation}
\label{eq:defTb}
\sigma_{b} \eqdef \inf\bigl\{t \in \R \, : \,  \rmS_t - (Q - \beta) t = - b\bigr\} \;.
\end{equation}
For each $b > 0$, we define the recentred process $(\rmS_{b, s})_{s \in \R}$ by letting
\begin{equation}
\label{eq:defRecProc}
\rmS_{b, s} \eqdef \rmS_{s + \sigma_{b}} - \rmS_{\sigma_{b}} \;, \qquad \forall \, s \in \R \;.
\end{equation}
The radial part of the $\beta$-quantum cone field is constructed by combining the limit of the process $(\rmS_{b, s})_{s \in \R}$ as $b \to \infty$ with an additional drift component.
\begin{theoremA}[Construction of the radial part of the quantum cone]
\label{th:convRecentred}
Let $d > 2$ be even, $Q > \sqrt{\smash[b]{2d}}$, and $\beta \in (-\infty , Q)$. For any fixed $\rmT \in \R$, the process $\smash{(\rmS_{b, s})_{s \geq \rmT}}$ as defined in \eqref{eq:defRecProc} converges in total variation distance to a process $\smash{(\rmS_{\infty, s})_{s \geq \rmT}}$ as $b \to \infty$.
\end{theoremA}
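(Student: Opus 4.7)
The plan is to exploit the Markov structure of the spherical average process provided by Theorem~\ref{th:identSphere} and reduce the total variation convergence to that of the landing distribution $\nu_b \eqdef \law(\mathbf{S}_{\sigma_b})$ on $\R^{\cd_d}$. First I would note that the pair $(\rmS_t, \mathbf{S}_t)$ is jointly Markov, that $\sigma_b$ is a stopping time in its natural filtration, and that the identity $\rmS_{s+\sigma_b} - \rmS_{\sigma_b} = \int_0^s \rmS^{(1)}_{u+\sigma_b}\,du$ shows that the forward trajectory $(\rmS_{b,s})_{s \geq 0}$ is a measurable functional of $(\mathbf{S}_{\sigma_b+s})_{s \geq 0}$. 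By the strong Markov property applied to the multivariate OU SDE \eqref{eq:SDEpOrderMat}, the conditional law of $(\mathbf{S}_{\sigma_b+s})_{s \geq 0}$ given $\mathbf{S}_{\sigma_b}$ is that of the multivariate OU started from $\mathbf{S}_{\sigma_b}$. Hence convergence in total variation of the forward part $(\rmS_{b,s})_{s \geq 0}$ is implied by convergence in total variation of the marginal $\nu_b$.

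\textbf{Landing distribution and backward part.} To show that $(\nu_b)_{b>0}$ is Cauchy in total variation, I would run a coupling argument between two copies of the stationary multivariate OU driven by appropriately synchronised Brownian motions, compare their first-passage times $\sigma_b$ and $\sigma_{b'}$ for $b<b'$, and exploit the exponential mixing of the hypoelliptic OU -- which has a smooth Gaussian transition density converging to its Gaussian invariant measure -- to produce a successful coupling of the $\mathbf{S}$-values at the respective hitting times after a regenerative interval. Combined with the fact that $\sigma_b \to \infty$ almost surely (which follows from the negative drift $-(Q-\beta)$), this yields a limit $\nu_\infty$ and the desired convergence on $[0,\infty)$. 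For the backward segment $(\rmS_{b,s})_{s \in [\rmT,0]}$ with $\rmT < 0$, I would use time-reversal: the detailed balance of the Gaussian invariant measure yields an explicit reverse-time Markov structure for $(\mathbf{S}_t)$, so the backward trajectory can be described as a reverse OU run from $\mathbf{S}_{\sigma_b}$ for time $|\rmT|$, conditioned on $\rmS_u - (Q-\beta)u > -b$ throughout $(\sigma_b+\rmT,\sigma_b)$. Because $\sigma_b \to \infty$ while the reverse-time drift of $\rmS_u - (Q-\beta)u$ pushes the process upward away from $-b$, and local fluctuations remain of order one by the integrated-OU representation \eqref{eq:defIntOU}, this conditioning becomes asymptotically trivial and can be removed by a Radon--Nikodym argument, giving joint total-variation convergence of $(\rmS_{b,s})_{s \geq \rmT}$.

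\textbf{Main obstacle.} The principal difficulty is the convergence of the landing distribution $\nu_b$, i.e.\ quantifying how fast the overshoot configuration $(\rmS^{(1)}_{\sigma_b}, \ldots, \rmS^{(\cd_d)}_{\sigma_b})$ at a deeply-negative linear barrier forgets the initial stationary state. For $d=2$ one has $\cd_d = 0$ and the step is vacuous, which is precisely why the higher-dimensional construction is genuinely more subtle than the two-dimensional one; in $d>2$ it amounts to a Palm-type analysis of first-passage events for a hypoelliptic multivariate diffusion, and the cleanest route is presumably to combine Meyn--Tweedie-style geometric ergodicity of the OU with the Gaussian explicit representation \eqref{eq:defIntOU}. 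A secondary technical point is the removal of the first-passage conditioning in the backward segment, which requires carefully using the time-reversibility of the stationary Gaussian OU to recast it as a forward problem amenable to the same coupling/Radon--Nikodym arguments.
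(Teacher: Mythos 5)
Your reduction of the forward part $(\rmS_{b,s})_{s\geq 0}$ to the landing configuration $\mathbf{S}_{\sigma_b}$ via the strong Markov property of the multivariate OU process is correct and in the same spirit as the paper's Lemma~\ref{lm:distanceLawOriginal}, although the paper is more economical: it never proves convergence of the landing distribution $\nu_b$. It shows only \emph{tightness} of $(\rmS_{b,\cdot})_{b>0}$ in $\CC^{\cd_d}_{\loc}$ (Proposition~\ref{pr:tightQC}), deduces tightness of the landing derivative vector $\mathbf{S}_{\sigma_b}$, and then applies the uniform mixing estimate of Lemma~\ref{lm:TVdistannceSpherical} (Pinsker/KL for the augmented Gaussian SDE \eqref{eq:SDEpOrderMatTot}) to conclude that after a fixed time $u$ the initial condition is washed out in total variation uniformly over $b$. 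The tightness proof itself is an entire section (Section~\ref{sub:tight}) and runs through an auxiliary process $\tilde{\rmS}$ built from a conditioned two-sided Brownian motion and its first-passage recentring invariance \eqref{eq:IVBM}--\eqref{eq:IVSPH}, which is quite different from the Meyn--Tweedie route you gesture at.

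The genuine gap is in your treatment of the backward segment. You claim that the first-passage conditioning on $(\sigma_b+\rmT,\sigma_b)$ ``becomes asymptotically trivial'' because the reverse-time drift of $\rmS_u-(Q-\beta)u$ pushes the process away from $-b$. This is false, and the $d=2$ case (where $\cd_d=0$ and, as you note, the forward step is vacuous) already makes it transparent: there $\rmS$ is a two-sided Brownian motion, the reverse-time process from the hitting level is again a Brownian motion started at $0$, and the constraint in recentred coordinates is $\rmS_{b,-t}+(Q-\beta)t>0$ for $t>0$ --- an event of probability \emph{zero} for a Brownian motion with positive drift started exactly at the boundary. The conditioning does not vanish; it converges to the conditioning of staying above the line $s\mapsto(Q-\beta)s$ for all $s<0$, which is precisely what makes the backward radial part of the $2d$ quantum cone a Doob-conditioned Brownian motion (cf.\ Remark~\ref{rm:QCd2} and \cite[Definition~4.10]{DMS21}). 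The same persistence of the conditioning occurs for $d>2$, so the proposed Radon--Nikodym removal argument would fail; at best the conditioning converges to a fixed nontrivial limiting conditioning, and proving that convergence is genuinely delicate. The paper avoids this difficulty entirely: after building the coupling of Lemma~\ref{lm:distanceLawOriginal} so that the two copies $\rmR_{\frkb,\cdot}$ and $\rmS_\cdot$ coincide on $[u,\infty)$ with high probability, it uses the algebraic identity $\rho_{\frkb,b}+\rho_{\frkb}=\rho_{b+\frkb}$ together with $\sigma_b\to\infty$ to conclude that $\rmS_{b,\cdot}$ and $\rmR_{b+\frkb,\cdot}$ agree on the whole window $[\rmT,\infty)$ (backward part included) on the same good event. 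This gives Cauchyness in total variation without ever isolating or removing the first-passage conditioning, which is exactly what your proposal would have to do explicitly.
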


In order to define the $d$-dimensional quantum cone, we need to observe that the homogeneous Sobolev space of order $d/2$, denote by $\H_{d}$ and introduced in Definition~\ref{def:HilbertSpace}, can be written as the direct sum of two subspaces. The first subspace is the closure of functions in $\SS(\R^d)$ that are radially symmetric about the origin, viewed modulo constants, and is denoted by $\H_{d, \rad}$. The second subspace consists of the closure of functions in $\SS(\R^d)$ that have mean zero on all spheres centred at the origin, and it is denoted by $\smash{\H_{d, \sph}}$ (see Lemma~\ref{lm:radialDeco} for more details).  
\begin{definition}
\label{def:quantumCone}
Let $d > 2$ be even, $\gamma \in (0, \sqrt{\smash[b]{2d}})$ and $\beta \in (-\infty , Q)$. 
The $d$-dimensional $\beta$-quantum cone field $\bh^{\star}_{\rad}$ is given by the sum $\smash{\bh^{\star} \eqdef \bh^{\star}_{\rad} + \bh^{\star}_{\sph}}$ where $\bh^{\star}_{\rad}$ is the function that is constant equal to $\rmS_{\infty, s} + \beta s$ on each sphere $\partial B(0, e^{-s})$ for $s \in \R$, and $\bh^{\star}_{\sph}$ is a field independent of $\bh^{\star}_{\rad}$ and that has the same law of the projection of the whole-space LGF onto $\smash{\H_{d, \sph}}$. 
\end{definition}

We emphasise that the above definition of the $d$-dimensional quantum cone field generalises the definition of the two-dimensional quantum cone introduced in \cite[Definition~4.10]{DMS21}. In particular, in what follows, for $d = 2$, we denote by $\bh^{\star}$ the quantum cone field introduced in \cite[Definition~4.10]{DMS21}.

We state the next result which identifies the $d$-dimensional $\beta$-quantum cone field as a local limit of a whole-space LGF with an additional $\beta$-log singularity at a fixed point of $\R^d$. This result is a straightforward consequence of our construction of the quantum cone field.

\begin{theoremA}[Quantum cone as a local limit]
\label{th:convQuantum}
Let $d \geq 2$ be even, $Q > \sqrt{\smash[b]{2d}}$, and $\beta \in (-\infty , Q)$. For $t \in \R$, $z \in \R^d$, and $b > 0$, consider the field $\bh^{t, b}$ defined as 
\begin{equation*}
	\bh^{t, b} \eqdef \bh(e^{-t} \cdot + z) - \beta \log\abs{e^{-t} \cdot} - Q t + b \;,
\end{equation*}
which is obtain by applying the coordinate change formula \eqref{eq:defQd} to the field $\bh - \beta \log\abs{\cdot - z}$ with respect to the conformal automorphism $\R^d \ni x \mapsto  e^{-t} x + z$. Furthermore, let $\sigma_{b}$ be such that 
\begin{equation}
\label{eq:defSigmaBTh}
\sigma_{b} \eqdef \inf\bigl\{t \in \R \, : \, \bh_1^{t, b}(0) = 0 \bigr\} \;.
\end{equation}
Then, for any $R > 0$, the field $\smash{\bh^{\sigma_{b}, b}}|_{B(0, R)}$ converges in total variation distance to ${\bh^{\star}}|_{B(0, R)}$ as $b \to \infty$.
\end{theoremA}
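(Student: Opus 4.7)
My plan is to decompose the field into its radial and spherical parts, observe that the spherical part has a law independent of $b$, and appeal to Theorem~\ref{th:convRecentred} for the radial part. I will argue in the case $z = 0$; the general case follows from translation invariance of the whole-space LGF modulo additive constant, which allows one to replace $\bh(\cdot + z)$ by $\bh$ at the cost of shifting the parameter $b$ by the a.s.\ finite quantity $\bh_1(z)$, a modification that does not affect the $b \to \infty$ limit.

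With $z = 0$, I would exploit the orthogonal decomposition $\HH_d = \HH_{d, \rad} \oplus \HH_{d, \sph}$ to write $\bh = \bh_{\rad} + \bh_{\sph}$ as a sum of independent Gaussian fields. The dilation $\phi_t(x) = e^{-t} x$ is radial, so it preserves this decomposition: the spherical projection of $\bh \circ \phi_t$ is exactly $\bh_{\sph} \circ \phi_t$, while the additive terms $-\beta \log|e^{-t}\cdot| - Qt + b$ appearing in \eqref{eq:fieldHtb} contribute only to the radial part. Scale invariance of the LGF modulo additive constants yields $\bh_{\sph} \circ \phi_t \eqlaw \bh_{\sph}$ for each deterministic $t$, and since $\bh_{\sph} \circ \phi_t$ is $\sigma(\bh_{\sph})$-measurable, it remains independent of $\bh_{\rad}$.

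The key step is to extend these properties from deterministic $t$ to the random time $\sigma_b$. Since $\sigma_b$ is a functional of the spherical average process $(\rmS_t)_{t \in \R}$, it is $\sigma(\bh_{\rad})$-measurable; conditioning on $\bh_{\rad}$ fixes $\sigma_b$ to a deterministic value and, by independence, does not affect the law of $\bh_{\sph} \circ \phi_{\sigma_b}$. It follows that $\bh_{\sph} \circ \phi_{\sigma_b}$ has the law of $\bh_{\sph}$ and is independent of $\bh_{\rad}$ (in particular of $\sigma_b$). At $t = \sigma_b$, the defining identity $\rmS_{\sigma_b} - (Q - \beta) \sigma_b = -b$ simplifies the spherical averages of the radial part to
\begin{equation*}
\bh^{\sigma_b, b}_{e^{-s}}(0) = \rmS_{s + \sigma_b} + \beta s - (Q - \beta)\sigma_b + b = \rmS_{b, s} + \beta s\,,
\end{equation*}
matching the radial profile in Definition~\ref{def:quantumCone} with $\rmS_b$ in place of $\rmS_\infty$.

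To conclude, I would restrict to $B(0, R)$: the radial part of $\bh^{\sigma_b, b}|_{B(0, R)}$ is determined by $(\rmS_{b, s})_{s \geq -\log R}$, and Theorem~\ref{th:convRecentred} applied with $T = -\log R$ gives TV convergence to $(\rmS_{\infty, s})_{s \geq -\log R}$. The spherical part has a $b$-independent law and is independent of the radial part in both the pre-limit and the limit, so TV convergence of the radial components propagates to the full fields via the measurable ``sum of independent components'' map. The main obstacle is the conditioning step for the spherical part at the random time $\sigma_b$: preserving both the marginal law and the independence of $\bh_{\sph} \circ \phi_{\sigma_b}$ under the random dilation requires carefully combining scale invariance of $\bh_{\sph}$ with the orthogonality of the radial and spherical subspaces. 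Everything else is essentially bookkeeping once Theorem~\ref{th:convRecentred} is in hand.
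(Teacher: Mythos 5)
Your proof follows the same route as the paper's: decompose into radial and spherical components, observe that by the defining relation for $\sigma_b$ the radial profile of $\bh^{\sigma_b, b}$ becomes $\rmS_{b, s} + \beta s$, apply Theorem~\ref{th:convRecentred} for the radial part, and note that the law of the spherical part is unchanged. You are actually more explicit than the paper on the spherical-part step: the paper asserts in one line that the shifting and rescaling do not affect the law of the projection onto $\H_{d, \sph}$, whereas you supply the reason (condition on $\bh_{\rad}$, which fixes $\sigma_b$; scale invariance and independence then give that $\bh_{\sph}\circ\phi_{\sigma_b}$ has the law of $\bh_{\sph}$ and is independent of $\bh_{\rad}$). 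That argument is correct for $z = 0$ and fills in a detail the paper leaves implicit.

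One subtlety, shared with the paper, is worth flagging for $z \neq 0$. Since $\bh_{e^{-t}}(z) = h_{e^{-t}}(z) - h_1(0) = \bigl(h_{e^{-t}}(z) - h_1(z)\bigr) + \bh_1(z)$, the stopping time $\sigma_b$ is the hitting time of the \emph{random} level $-(b + \bh_1(z))$ for the spherical average process around $z$, and $\bh_1(z)$ is an a.s.\ finite random variable correlated with the field. In particular, it is not measurable with respect to the radial part around the relevant centre, so neither the paper's $\eqlaw \rmS_{b,s}+\beta s$ nor your conditioning step applies verbatim after the translation reduction: $\tilde\bh_{\rad}$ does not determine $\bh_1(z)$, hence does not determine $\sigma_b$. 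The statement is still true because $\sigma_b \to -\infty$ and the mixing underlying Theorem~\ref{th:convRecentred} (via Lemma~\ref{lm:TVdistannceSpherical}) washes out any a.s.\ finite shift of the level, but a fully rigorous argument would make this step explicit. Since the paper's own proof skips over the same point, this is not a defect particular to your write-up.
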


Finally, we show that the law of the $d$-dimensional quantum cone $\bh^{\star}$ is invariant in law modulo conformal coordinate change under shifting along the trajectories of the LBM. Before proceeding, we first define the LBM associated with the quantum cone field. Using the same notation as above, we recall that the process $(B_t)_{t \geq 0}$ is a $d$-dimensional Brownian motion independent of the quantum cone field $\bh^{\star}$. 

\begin{definition}
\label{def:LBMQuantum}
For $d \geq 2$ even, $Q > \sqrt{2d}$, and $\beta \in (-\infty , Q)$, the \emph{$d$-dimensional Liouville Brownian motion}  associated to the $d$-dimensional $\beta$-quantum cone field $\bh^{\star}$ is the process defined as follows
\begin{equation*}
\rmB_{\bh^{\star}\! , \alpha, t} \eqdef B_{\rmF_{\bh^{\star}\!, \alpha}^{-1}(t)}\; \qquad \forall \, t \geq 0 \;,
\end{equation*}	
where $\alpha$ is related to $Q$ as specified in \eqref{eq:relGammaAlpha}, and $\rmF_{\bh^{\star}\!, \alpha}$ is the clock process introduced in \eqref{eq:clockLBM} but with $\bh$ replaced by $\bh^{\star}$. 
\end{definition}

We emphasise that the well-posedness and non-triviality of the clock process $\rmF_{\bh^{\star}\!, \alpha}$ follows from the fact that the clock process associated with the whole-space LGF $\bh$ is well-defined, combined with the observation that $\bh$ and $\bh^{\star}$ can be coupled together so that $\bh - \bh^{\star}$ is a continuous function (except at the origin). The only non-trivial aspect is that, due to the log-singularity at the origin of the quantum cone field, there is a possibility that $\rmB_{\bh^{\star}\!, \alpha}$ started from the origin might become ``stuck''. This possibility, however, is ruled out at the beginning of Section~\ref{sub:LBMQuantum}.

With Definition~\ref{def:LBMQuantum} in hand, we can state our last main result.
\begin{theoremA}[Invariance under shifts along the LBM]
\label{th:invarianceShift}
Let $d \geq 2$ be even, $Q > \sqrt{2d}$, and let $\alpha$ be related to $Q$ as specified in \eqref{eq:relGammaAlpha}. Let $\bh^{\star}$ be a $d$-dimensional $\alpha$-quantum cone field and $(\rmB_{\bh^{\star}\!, \alpha, t})_{t \in \R}$ be a two-sided LBM with respect to $\bh^{\star}$ started from the origin at time $0$. Then, for all $t \in \R$, there exists a random $R > 0$ such that 
\begin{equation*}
\bigl(\bh^{\star}(\rmB_{\bh^{\star}\!, \alpha, t} + R \cdot) + Q \log R, R^{-1}(\rmB_{\bh^{\star}\!, \alpha, \cdot} - \rmB_{\bh^{\star}\!, \alpha, t})\bigr) \eqlaw (\bh^{\star}, \rmB_{\bh^{\star}\!, \alpha}) \;.
\end{equation*}
\end{theoremA}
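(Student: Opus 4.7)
The plan combines three ingredients: a scaling identity for the LBM clock process (reflecting the defining relation $Q = 2/\alpha + \alpha/2$), the two-sided Markov property of the LBM, and the local-limit characterization of the quantum cone given by Theorems~\ref{th:convRecentred} and~\ref{th:convQuantum}.

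I would first establish a scaling lemma. For a generalised function $\bh$, a point $z \in \R^d$, and $R > 0$, set $\bh'(x) := \bh(z + R x) + Q \log R$. Using the definition \eqref{eq:clockLBM} of the clock process together with the spherical-average identity $\bh'_\eps(x) = \bh_{R\eps}(z + Rx) + Q \log R$, the substitution $u = R^2 s$, and the replacement $\eps' = R\eps$, one obtains
\begin{equation*}
\rmF_{\bh', \alpha}(t) = R^{\alpha Q - 2 - \alpha^2/2}\, \rmF^{z}_{\bh, \alpha}(R^2 t) \;,
\end{equation*}
where $\rmF^{z}_{\bh, \alpha}$ denotes the clock process for $\bh$ with underlying Brownian motion started at $z$. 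The exponent vanishes precisely under \eqref{eq:relGammaAlpha}, so $\rmB_{\bh',\alpha, s} = R^{-1}(\rmB^{z}_{\bh,\alpha, s} - z)$, and the identity extends to the two-sided LBM. I would then define the random rescaling factor intrinsically: setting $z := \rmB_{\bh^{\star}\!, \alpha, t}$, take $R > 0$ to be the unique scalar such that the spherical average of $\bh^{\star}(z + R\cdot) + Q \log R$ over $\partial B(0,1)$ vanishes. Equivalently, in terms of $\tau := -\log R$, this is the smallest $\tau$ with $\bh^{\star}_{e^{-\tau}}(z) - Q \tau = 0$; existence follows because $z$ is an $\alpha$-thick point of $\bh^{\star}$ (the LBM occupation measure is the $\alpha$-GMC measure $\mu_{\bh^{\star}\!,\alpha}$, which is supported on $\alpha$-thick points), so $\bh^{\star}_{e^{-s}}(z)/s \to \alpha$ as $s \to \infty$, dominated by the drift $Q s$ since $Q > \alpha$.

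The core step is to show that, with this choice of $R$, the field $\bh^{\star}(z + R\cdot) + Q \log R$ has the law of an $\alpha$-quantum cone. Granting this, the theorem follows: by the two-sided Markov property of LBM, $(\rmB_{\bh^{\star}\!, \alpha, t+s} - z)_{s \in \R}$ conditional on the past is a two-sided LBM for $\bh^{\star}$ started from $z$, and by the scaling lemma it becomes the LBM for the rescaled field once divided by $R$. For the field identification, I would use the decomposition of the quantum cone into its radial and spherical parts and treat them separately. The spherical part is handled by the rotational invariance of the underlying LGF projection together with the fact that shifts produce mutually absolutely continuous fields on bounded domains away from the origin. The radial part, i.e.\ the spherical average process of $\bh^{\star}(z + R\cdot) + Q \log R$ around the new origin, is identified with the process $(\rmS_{\infty, s} + \alpha s)_{s \in \R}$ by invoking Theorems~\ref{th:identSphere} and~\ref{th:convRecentred}: the spherical average process of $\bh^{\star}$ around any point inherits the stationary Markov structure of order $\cd_d$, and the first-passage choice of $R$ is exactly the higher-dimensional analogue of $\sigma_b$ in the limit $b \to \infty$, producing the recentred process $\rmS_{\infty, \cdot}$ by Theorem~\ref{th:convRecentred}.

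The main obstacle is the field identification in the core step: the point $z = \rmB_{\bh^{\star}\!, \alpha, t}$ is not a stopping time of the underlying Brownian motion and depends nonlocally on both $\bh^{\star}$ and $B$ through the clock process, so a direct strong-Markov argument at a fixed point does not apply. The natural route is to approximate the LBM by time-changes of $B$ via regularised clock processes --- for which the position at time $t$ has an explicit joint density with the field --- and apply a Girsanov-style reweighting that reflects the quantum typicality of the LBM position. The result then follows by passing to the total-variation limit in the regularisation and invoking the convergence already established in Theorems~\ref{th:convRecentred} and~\ref{th:convQuantum}.
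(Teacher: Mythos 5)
Your proposal correctly sets up the scaling identity (Propositions~\ref{pr:coordinate_change_clock} and~\ref{pr:conformalLBM} in the paper) and correctly isolates the real difficulty: $z=\rmB_{\bh^{\star}\!,\alpha,t}$ at a \emph{fixed} time $t$ is a highly nonlocal functional of $(\bh^{\star},B)$ and not a stopping time, so one cannot directly argue that the field around $z$, suitably recentred and rescaled, is an $\alpha$-quantum cone. But the remedy you suggest — a ``Girsanov-style reweighting that reflects the quantum typicality of the LBM position'', followed by a total-variation limit through a regularisation of the clock — does not close this gap. The quantum-typicality input (\cite[Corollary~3.17]{BP24}, which is indeed a Girsanov/Cameron--Martin statement) says that the LBM position sampled at a \emph{uniformly random} time, weighted by Brownian occupation, is a $\mu_{\bh,\alpha}$-sample; it says nothing directly about the position at a \emph{fixed} time $t$. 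Conditioning on the exact clock value $\rmF_{\bh^{\star}\!,\alpha}^{-1}(t)$ is a singular event, and there is no explicit joint density for $(\bh^{\star},\rmB_{\bh^{\star}\!,\alpha,t})$ to reweight against; the regularised clocks do not obviously make this tractable. So the ``core step'' in your argument is a genuine gap, not a routine limiting exercise.

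The paper's proof sidesteps this via a different device. It does not analyse the $\alpha$-quantum cone directly; it starts from a plain LGF $\bh$, lets $\rmU$ be \emph{uniform} on the LBM's time interval between exit times of $B(0,1)$, and invokes quantum typicality at the random position $\rmB_{\bh,\alpha,\rmU}$. Proposition~\ref{pr:augmJointConv} (the joint version of Theorem~\ref{th:convQuantum}) then gives that the recentred/rescaled field-and-LBM pair converges in law to $(\bh^{\star},\rmB_{\bh^{\star}\!,\alpha})$ as $b\to\infty$. The key observation — which is the piece entirely missing from your sketch — is that replacing $\rmU$ by $\rmU+e^{-\alpha b}t$ changes nothing in total variation as $b\to\infty$, because $\rmU$ has a density. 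Running the same recentring/rescaling at the shifted time therefore yields the \emph{same} limit $(\bh^{\star},\rmB_{\bh^{\star}\!,\alpha})$, and unwinding the conformal change of coordinates shows this second limit is precisely the pair obtained from the first by translating time by $t$, translating space by $\rmB_{\bh^{\star}\!,\alpha,t}$, and rescaling by the random factor $R$ (which equals $e^{\sigma_{b,t}-\sigma_b}$ in the prelimit, matching your intrinsic first-passage description only in the $b\to\infty$ limit). The invariance then drops out from equality of the two limits, with no need to ever directly characterise the conditional law of $\bh^{\star}$ around $\rmB_{\bh^{\star}\!,\alpha,t}$.

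A minor additional issue: your intrinsic definition of $R$ via the smallest $\tau$ with $\bh^{\star}_{e^{-\tau}}(z)-Q\tau=0$ requires knowing that the spherical average process around the random point $z$ has the same regularity and drift structure as around a fixed point, and that such a $\tau$ exists almost surely. This is not automatic when $z$ is $(\bh^{\star},B)$-measurable, and is again the nonlocality problem wearing a different hat. The paper avoids it because the prelimit recentring is done around $\rmB_{\bh,\alpha,\rmU}$ for the plain LGF, where the singularity structure is supplied by quantum typicality before passing to the limit.
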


Theorem~\ref{th:invarianceShift} indicates that for LBM on an $\alpha$-quantum cone, the environment as seen from the particle is stationary, modulo  spatial scaling. This property holds only for the specific value $\alpha = Q - \sqrt{Q^2 - 4}$. Consequently, it appears particularly natural to consider LBM starting from a point of thickness $\alpha$, where, according to Theorem~\ref{th_specDim}, the spectral dimension equals $(4d - 2\alpha^2) / (4 - \alpha^2)$.

We emphasise that, for $d=2$, Theorem~\ref{th:invarianceShift} was known to experts but, to the best of our knowledge, had not been written down before.

\subsection{Outline and main differences with the two-dimensional case}
\label{sub:outline}
After introducing the necessary background and preliminary material in Section~\ref{sec:preliminaries}, we proceed in Section~\ref{sec:identification} to prove Theorem~\ref{th:identSphere}, which concerns the identification of the spherical average process. To the best of our knowledge, this characterisation is new. 
The proof relies on the expression for the covariance of the spherical average of the LGF provided in \cite[Section~11.1]{FGFRev}, along with the observation that, in even dimensions $d > 2$, the vector of derivatives $(\mathbf{S}_t)_{t \in \R}$, as defined in \eqref{eq:defVecDer}, is a stationary Gaussian process. 
From this fact, we can infer that the multivariate process $(\mathbf{S}_t)_{t \in \R}$ must satisfy a suitable Langevin equation of the form \eqref{eq:SDEpOrderMat}. The precise identification of the coefficient matrix $\mathbf{A}$ and the diffusion vector $\mathbf{b}$ is achieved by comparing the power spectrum of the first derivative $\smash{(\rmS^{(1)}_t)_{t \in \R}}$ with that of solutions to SDEs of the form \eqref{eq:SDEpOrderMat}.

In Section~\ref{sec:LMB}, we present the construction of the $d$-dimensional LBM. First, we provide additional details on the construction of the LBM starting from a fixed point and prove its conformal invariance. Then, we focus on the construction of the LBM as a Markov process, building on the approach used for the two-dimensional case with minor technical adjustments. We also highlight that in \cite{Sturm1}, the authors construct the higher-dimensional analogue of the LBM on a general $d$-dimensional manifold. One of the main differences in our construction is the canonical identification of the appropriate exponent $\alpha \in (0, 2)$ required for the LBM to be conformally invariant in the sense of \eqref{eq:conformalInvLBM1}. Moreover, differently form \cite[Theorem~5.7]{Sturm1}, in the subcritical regime, we are able to provide an identification of the additive functional $\rmF_{\bh, \alpha}$ simultaneously from all staring points and not only for almost every point of $\R^d$. 

Furthermore, we identify a new phase transition in the higher-dimensional setting that does not arise when $d = 2$. Indeed, as discussed in more detail in Section~\ref{sec:LMB}, to define the LBM as a proper Markov process, we need to establish that, almost surely, the clock process $\rmF_{\bh, \alpha}$ can be defined for all starting points $x \in \R^d$ of the underlying Brownian motion. 
Roughly speaking, in dimension $d > 2$, this simultaneous convergence is possible if the measure $\mu_{\bh, \alpha}$ is regular enough to ensure that the mapping 
\begin{equation*}
B(0, R) \ni x \mapsto \int_{B(0, R)} \abs{x-y}^{2-d} \mu_{\bh, \alpha}(dy) 
\end{equation*}
is a continuous function of $x$ for all $R > 0$\footnote{This result is essentially a consequence of the theory of traces of Dirichlet forms developed in \cite{Fukushima_Symmetric}.}.
To analyse the continuity property of the above mapping, for $Q > 2$ and $\beta \in [0, \sqrt{\smash[b]{2d}}]$, suppose that $x \in B(0, R)$ is a $\beta$-thick point of $\bh$. Then, by using the regularity properties of the measure $\mu_{\bh, \alpha}$ (see Lemma~\ref{lm:scalingThick}), we have that
\begin{equation*}
\int_{B(0, R)} \abs{x-y}^{2-d} \mu_{\bh, \alpha}(dy) \approx 	\sum_{n \in \N_0} e^{-n(2 + \alpha^2/2 - \alpha \beta)} \;.
\end{equation*}
Therefore, in order to guarantee the convergence of the integral on the left-hand side of the above display, we need to require that
\begin{equation}
\label{eq:mustPotSatConv}
2+\alpha^2/2-\alpha\beta > 0 \;.	
\end{equation}

\begin{itemize}
\item In the subcritical case, i.e.\ $Q > \sqrt{\smash[b]{2d}}$, or equivalently $\alpha \in (0, \alpha_c)$, then condition \eqref{eq:mustPotSatConv} is satisfied for all $\beta \in [0, \sqrt{\smash[b]{2d}}]$. Hence, we can define the LBM starting simultaneously from all points of $\R^d$. 
\item In the critical case, i.e.\ $Q = \sqrt{\smash[b]{2d}}$, or equivalently $\alpha = \alpha_c$, the condition \eqref{eq:mustPotSatConv} is satisfied for all $\beta \in [0, \sqrt{\smash[b]{2d}})$. 
\item In the supercritical case, i.e.\ $Q \in (2,\sqrt{\smash[b]{2d}})$, or equivalently $\alpha \in (\alpha_c, 2)$, the condition \eqref{eq:mustPotSatConv} is satisfied only for $\beta \in [0, Q)$.
\end{itemize}
Recalling that the set of $\beta$-thick points $\TT_{\beta}$ has Hausdorff dimension $d - \beta^2 / 2$, we note that for all $\beta > 2$, the set $\TT_{\beta}$ has Hausdorff dimension strictly less than $d - 2$. Consequently, by \cite[Theorem~8.20]{Peres}, $\TT_{\beta}$ is a polar set for $d$-dimensional Brownian motion. Thus, the LBM will never hit points of thickness greater than $2$ unless it is started from one of them. Therefore, for all $Q > 2$, once the LBM is started, it will never get ``stuck''.

In Section~\ref{sub:spectral}, we prove Theorem~\ref{th_specDim}, which establishes the spectral dimension of the LBM for $Q > \sqrt{\smash[b]{2d}}$.   
Unlike the two-dimensional case, where the spectral dimension is everywhere equal to two, the higher-dimensional case requires new techniques. A central component of the proof is Lemma~\ref{lm:keySpec}, which studies how the integer moments of the conditional law given the underlying field $\bh$ of the clock process $\rmF_{\bh - \beta \log\abs{\cdot}, \alpha}(t)$ scale as $t \to 0$. Proving this lemma, detailed in Appendix~\ref{ap:keySpec}, requires careful control of the GMC measure of small balls centred near thick points of the LGF (see Lemma~\ref{lm:mainBoundTechNested}).

Finally, in Section~\ref{sec:quantum}, we prove Proposition~\ref{pr:MainSpec} and construct the $d$-dimensional quantum cone. We recall that in the two-dimensional case the construction of the quantum cone is straightforward and relies on the fact that the spherical average process $(\rmS_t)_{t \in \R}$ is a two-sided Brownian motion. As a result, the fact that the recentered process $\rmS_{b, \cdot}$, defined in \eqref{eq:defRecProc}, has a limit as $b \to \infty$ is essentially a consequence of the strong Markov property of $(\rmS_{t})_{t \in \R}$\footnote{See Remark~\ref{rm:QCd2} below for further details}. 
However, in even dimensions $d >2$, the proof of Proposition~\ref{pr:MainSpec} is non-trivial and is divided into two parts. First, by using the representation \eqref{eq:defIntOU} of the spherical average process, we show that the collection of processes $(\rmS_{b, \cdot})_{b > 0}$ is tight. Second, by leveraging the Markovianity of the multivariate process $(\rmS_t, \mathbf{S}_t)_{t \in \R}$ and a suitable mixing-type result, we prove that the limit as $b \to \infty$ of $(\rmS_{b, \cdot})_{b > 0}$ exists with respect to the total variation distance. The section concludes with the proofs of Theorems~\ref{th:convQuantum}~and~\ref{th:invarianceShift}.

\subsection{Some open problems}
\label{sub:open}
We conclude this section by listing some problems suggested by this work. 

\paragraph{Scaling limit of random walk on Voronoi tessellations}
For $d >2$, we recall that the Voronoi tessellations of a locally finite set of points $S \subset \R^d$, is the partition of $\R^d$ into cells $(\CC_s)_{s \in S}$ such that every point in the tile $\CC_s$ is closer to $s$ than any other point $s' \in S$, i.e., 
\begin{equation*}
\CC_s \eqdef \bigl\{x \in \R^d \, : \,  \abs{x-s}\leq \abs{x-s'} \;\; \forall \, s' \in S \setminus\{s\}\bigr\}.
\end{equation*}
For a bounded domain $D \subseteq \R^d$, $\gamma \in (0, \sqrt{\smash[b]{2d}})$, and each $n \in \N$, sample $n$ points from the measure $\mu_{\bh, \gamma}$ restricted to $D$ and normalised to be a probability measure. These points are then taken as the centres of the Voronoi cells. Two Voronoi cells are considered adjacent if they share a $(d-1)$-dimensional facet. We define a random walk on the adjacency graph of the Voronoi cells, with conductances given by $a(x, y) = \Vol_{d-1}(\CC_x \cap \CC_y) / \abs{x - y}$, where $\Vol_{d-1}$ represents the $(d-1)$-dimensional Lebesgue measure. In \cite[Proposition~1.1 and Remark~1.2]{AhmedEwainRW}, it is shown that this random walk converges to a time-changed Brownian motion.

Inspired by \cite{GB_LBM}, where the authors showed that the random walk on mated-CRT maps converges in law to the LBM with respect to the uniform topology, it would be interesting to establish a similar result in the setting described above.

\begin{problem}
Prove that the scaling limit of random walk on adjacency graph of the Voronoi cells with centres sampled from $d$-dimensional $\gamma$-GMC and with conductances as specified above is given by the LBM constructed with clock process $\rmF_{\bh, \alpha}$, with $\alpha$ related to $\gamma$ as specified in \eqref{eq:relGammaAlpha}.
\end{problem}

We emphasise that if the points in Voronoi tessellations are sampled according to the measure $\mu_{\bh, \gamma}$, then we expect the random walk to converge to the LBM constructed with clock process $\rmF_{\bh,\alpha}$.  One reason for this is that we expect that the scaling limit of the random walk should satisfy the same coordinate change rule as the measure $\mu_{\bh, \gamma}$ when we scale space, since if we preserve the measure $\mu_{\bh, \gamma}$ then we also preserve the Voronoi tessellations. 

\paragraph{Scaling exponent inside the exponential of the LHK}
In \cite{LHKScaling}, the authors studied the short-time asymptotic of the LHK in dimension $d = 2$ associated with a Dirichlet GFF on $[0, 1]^2$. They proved that, for all $x, y \in [0, 1]^2$, it holds that
\begin{equation*}
\lim_{t \to 0} \frac{\log\abs{\log \frkp_{\bh, \alpha, t}(x, y)}}{\abs{\log t}} = \frac{1}{d_{\gamma}- 1 } \;,	
\end{equation*}
where $d_{\gamma}$ denotes the fractal dimension of $\gamma$-LQG. This leads us to the following problem.
\begin{problem}
Obtain the correct scaling exponent inside the exponential of the LHK in arbitrary dimensions $d >2$. More precisely, for all $x$, $y \in \R^d$, compute the following limit,
\begin{equation*}
	\lim_{t \to 0} \frac{\log\abs{\log \frkp_{\bh, \alpha, t}(x, y)}}{\abs{\log t}}\;.
\end{equation*}
\end{problem}

\paragraph{Identification of the the spherical average process for the quantum cone}
In Theorem~\ref{th:convRecentred}, we established that the family of processes $(\rmS_{b, \cdot})_{b > 0}$ converges in total variation distance to a unique limiting process $(\rmS_{\infty, s})_{s \in \R}$. This limiting process is then used to define the radial part of the $d$-dimensional quantum cone. However, this approach is not entirely satisfactory, as it relies on a limiting procedure. 
\begin{problem}
Provide an explicit representation of the process $(\rmS_{\infty, s})_{s \in \R}$.
\end{problem}

\paragraph{Odd dimensions}
In this paper, we provide a characterisation of the spherical average process of the whole-space LGF and construct the $d$-dimensional quantum cone for even dimensions $d > 2$. In even dimensions, the fractional Laplacian $(-\Delta)^{d/2}$ is a local operator. However, in odd dimensions, it ceases to be local, leading to a series of complications. It would be interesting to extend our results to odd dimensions. 
\begin{problem}
Provide a characterisation of the spherical average process of the whole-space LGF and/or construct the $d$-dimensional quantum cone for odd dimensions $d > 2$.
\end{problem}

\begin{acknowledgements}
Part of this work was carried out during F.B.'s visit to the University of Chicago in March 2024, supported by the Doris Chen Mobility Award, and during the workshop ``Two-Dimensional Random Geometry'' at the Institute for Mathematical and Statistical Innovation (IMSI) in Chicago in July 2024. We thank M.\ Hairer and A. Bou-Rabee for helpful discussions. F.B. gratefully acknowledges financial support from the Royal Society through Prof.\ M.\ Hairer's Research Professorship Grant RP$\backslash$R1$\backslash$191065. E.G. was partially supported by NSF grant DMS-2245832.
\end{acknowledgements}

\section{Background and preliminaries}	
\label{sec:preliminaries}
In this section, we gather some preliminary results and definitions.
We begin by presenting the basic and recurrent notation that will be used throughout the paper. In Section~\ref{sub:LGF}, we rigorously define the whole-space LGF, and in Section~\ref{sub:spherical}, we introduce its spherical average approximation. In Section~\ref{sub:GMCChange}, we introduce the GMC measure and establish some of its main properties. Finally, in Section~\ref{sub:SDEs}, we record some results on multidimensional SDEs.

\subsection*{Basic and recurrent notation}
\label{sub:basic}
We write $\N = \{1,2, \ldots\}$ and $\N_0 = \{0, 1, 2, \ldots\}$. We let $\R^{+} = (0, \infty)$, $\R^{-} = (-\infty, 0)$, $\R^{+}_0 = [0, \infty)$, and $\R^{-}_0 = (-\infty, 0]$. Given $n \in \N$, we write $[n] = \{1, \ldots, n\}$ and $[n]_0 = \{0, 1, \ldots, n\}$. 
For $d \geq 2$, we denote by $\S^{d-1}$ the unit sphere centred at the origin in $\R^d$, and by $\abs{\S^{d-1}}$ its $(d-1)$-dimensional volume. 
We denote by $\CB(\R^d)$ the collection of Borel subsets of $\R^d$. We let $\CB(\R^d, \R)$ be the space of $\R$-valued Borel functions in $\R^d$.
For $a$, $b \in \R$, we write $a \lesssim b$ (resp.\ $a \gtrsim b$), if there exists a constant $c > 0$, independent of the quantities of interest, such that $a \leq c b$ (resp.\ $a \geq c b$). 
For $k \in \N_0$, we write $\smash{\CC^{k}_{\loc}(\R)}$ for the space of functions on $\R$ that are locally $k$-times continuously differentiable, equipped with the uniform local metric. We write $\dTV$ for the total variation distance between two measures.

\subsection{The whole-space log-correlated Gaussian field}
\label{sub:LGF}
For $d \geq 2$, we denote by $\SS(\R^d)$ the Schwartz space equipped with the usual topology. The space of tempered distribution $\SS'(\R^d)$ is then defined to be the space of continuous linear functionals from $\SS(\R^d)$ to $\C$. We define the Fourier transform $\FF[\phi]$ and inverse Fourier transform $\FF^{-1}[\phi]$ of a Schwartz function $\phi$ on $\R^d$ by
\begin{equation*}
	\FF[\phi](\xi) = \frac{1}{(2\pi)^{d/2}} \int_{\R^d} \phi(x) e^{-i \xi \cdot x} dx \;, \qquad \FF^{-1}[\phi](x) = \frac{1}{(2\pi)^{d/2}} \int_{\R^d} \phi(\xi) e^{i \xi \cdot x} d\xi \;,
\end{equation*}
where the prefactor $\smash{(2 \pi)^{-d/2}}$ is chosen so that that it makes the Fourier transform a unitary transformation on the complex function space $L^2(\R^d)$.
As a shorthand, we will also write $\widehat \phi$ in place of $\FF[\phi]$.  We define $\SS_0(\R^d) \subset \SS(\R^d)$ to be the set of Schwartz functions $\phi$ such that $\widehat \phi(0) = 0$, or equivalently,
\begin{equation*}
	\SS_0(\R^d) \eqdef \Biggl\{\phi \in \SS(\R^d) \, : \, \int_{\R^d} \phi(x) dx = 0\Biggr\} \;.
\end{equation*}
We equip $\SS_0(\R^d)$ with the topology inherited from $\SS(\R^d)$ and denote by $\SS'_0(\R^d)$ its topological dual. We note that each element of $\SS(\R^d)$ can be interpreted as an element of $\SS'_0(\R^d)$ by considering its equivalence class modulo global additive constant. 

\begin{definition}
\label{def:HilbertSpace}
The homogeneous Sobolev space of order $d/2$, denoted by $\H_d$, is the Hilbert space completion of the space of functions $\smash{f \in \SS(\R^d) \subset \SS'_0(\R^d)}$ such that $\smash{\xi \mapsto \abs{\xi}^{d/2} \widehat{f}(\xi) \in L^2(\R^d)}$ with respect to the following inner product\footnote{Here the choice of normalisation is such that the covariance kernel for the LGF is given by $-\log\abs{x-y}$ rather than a multiple of it.}
\begin{equation}
\label{eq:defInnerProd}
(f, g)_{d} \eqdef \frac{1}{2^{d-1} \pi^{d/2} \Gamma(d/2)} \int_{\R^d} \abs{\xi}^{d} \widehat{f}(\xi) \overline{\widehat{g}(\xi)} d \xi \;.
\end{equation}
\end{definition}

We emphasise that if $d \geq 2$ is even, then the inner product \eqref{eq:defInnerProd} can be conveniently rewritten as follows
\begin{equation*}
(f, g)_{d} = \bigl(f, (-\Delta)^{d/2} g\bigr) =  \bigl((-\Delta)^{d/2} f, g\bigr) \;,
\end{equation*}	
and in particular, if $d = 2$, then the above inner product coincides with the Dirichlet inner product. 

\begin{definition}
\label{def:LGFRd}
For $d \geq 2$, the whole-space LGF is a random element $h \in \SS_0'(\R^d)$, i.e., a random distribution modulo a global additive constant, whose law is centred Gaussian with the following covariance 
\begin{equation*}
\E\bigl[(h, \phi) (h, \psi)\bigr] = - \int_{\R^d} \int_{\R^d} \log\abs{x-y} \phi(x) \psi(y) dx dy \;, \qquad \forall \, \phi, \psi \in \SS_0(\R^d) \;.
\end{equation*}
Equivalently\footnote{We refer to \cite[Proposition~1.1]{LGFRev} for a proof of the equivalence of these two definitions.}, the whole-space LGF is the standard Gaussian on the Hilbert space $\H_d$, i.e., 
\begin{equation}
\label{eq:defLGFSeries}
h = \sum_{i = 1}^{\infty} \alpha_i f_i \;,
\end{equation}
where $\alpha_i$ are i.i.d.\ centred normal random variables and $(f_i)_{i = 1}^{\infty}$ is an orthonormal basis for the Hilbert space $\H_d$. The sum converges $\P$-almost surely in the space $\SS'_0(\R^d)$.
\end{definition}
  
An important feature of the whole-space LGF is that it is conformally invariant in every dimension $d \geq 2$.
\begin{lemma}[{\cite[Proposition~1.2]{LGFRev}}]
\label{lm:LGFconformal}
If $\phi$ is a conformal automorphism of $\R^d$, then the law of $h \circ \phi$ agrees with the law of $h$.
\end{lemma}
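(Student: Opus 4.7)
The plan is to show that $h \circ \phi$ is a centred Gaussian random distribution modulo constants whose covariance agrees with that of $h$ on the test function space $\SS_0(\R^d)$; by Definition~\ref{def:LGFRd}, this suffices to conclude equality in law. I would first give precise meaning to $h \circ \phi$ via the duality formula $(h \circ \phi, \psi) \eqdef (h, \phi_* \psi)$ for $\psi \in \SS_0(\R^d)$, where $\phi_* \psi(y) \eqdef \psi(\phi^{-1}(y))\lvert\mathrm{Jac}_{\phi^{-1}}(y)\rvert$, and then verify by a straightforward change of variables that $\phi_*$ preserves the zero-mean subspace $\SS_0(\R^d)$ (and is a continuous linear map on it), so that the right-hand side is well defined.

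Next I would compute the covariance of the pulled-back field. Performing the change of variables $x = \phi(u)$, $y = \phi(v)$ inside the double integral defining the covariance kernel of $h$ yields
\begin{equation*}
\E\bigl[(h \circ \phi, \psi)(h \circ \phi, \psi')\bigr] = -\int_{\R^d}\!\int_{\R^d} \log\lvert \phi(u) - \phi(v)\rvert \, \psi(u) \psi'(v) \, du \, dv \;.
\end{equation*}
The key analytic input is the classical M\"obius identity valid for any conformal automorphism $\phi$ of $\R^d$:
\begin{equation*}
\log\lvert \phi(u) - \phi(v)\rvert \;=\; \log\lvert u - v\rvert + \tfrac{1}{2}\log\lvert\phi'(u)\rvert + \tfrac{1}{2}\log\lvert\phi'(v)\rvert \;,
\end{equation*}
where $\lvert\phi'\rvert$ denotes the conformal scaling factor (not the full Jacobian determinant). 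I would verify this on the generators of the M\"obius group: translations and rotations, where both sides reduce to $\log\lvert u-v\rvert$; dilations $x \mapsto \lambda x$, where $\lvert\phi'\rvert = \lambda$; and the unit inversion $\phi(x) = x/\lvert x\rvert^2$, for which the explicit formula $\lvert\phi(u)-\phi(v)\rvert = \lvert u-v\rvert / (\lvert u \rvert \lvert v\rvert)$ combined with $\lvert\phi'(x)\rvert = 1/\lvert x\rvert^2$ delivers the claim. The identity is then preserved under composition thanks to the conformal chain rule $\lvert(\phi_2 \circ \phi_1)'(u)\rvert = \lvert\phi_2'(\phi_1(u))\rvert \cdot \lvert\phi_1'(u)\rvert$.

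Substituting this identity back, the covariance of $h\circ\phi$ splits into two pieces. The first is exactly $-\int\int\log\lvert u-v\rvert \psi(u)\psi'(v)\, du\,dv = \E[(h,\psi)(h,\psi')]$. The second equals
\begin{equation*}
-\tfrac{1}{2}\biggl[\int_{\R^d}\!\log\lvert\phi'(u)\rvert\psi(u)\,du \cdot \int_{\R^d}\!\psi'(v)\,dv \;+\; \int_{\R^d}\!\psi(u)\,du \cdot \int_{\R^d}\!\log\lvert\phi'(v)\rvert\psi'(v)\,dv\biggr] \;,
\end{equation*}
which vanishes because $\psi, \psi' \in \SS_0(\R^d)$ have mean zero --- this is precisely the reason the whole-space LGF is naturally defined modulo a global additive constant, as the conformal ``anomaly'' produced by the change of coordinates is a constant on the sub-$\sigma$-algebra generated by the mean-zero test functions. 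Matching covariances of two centred Gaussian fields then yields $h \circ \phi \eqlaw h$. The only nontrivial ingredient is the M\"obius identity above; this is the main obstacle only in the sense that one must fix the correct normalisation of $\lvert\phi'\rvert$ and check it on the generators, after which the remainder of the argument is a direct computation exploiting the zero-mean condition.
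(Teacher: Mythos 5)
The paper gives no proof of this lemma; it simply cites \cite[Proposition~1.2]{LGFRev}, so there is nothing internal to compare against. Your argument is the standard one and the two key steps --- the M\"obius identity $\log\lvert\phi(u)-\phi(v)\rvert = \log\lvert u-v\rvert + \tfrac12\log\lvert\phi'(u)\rvert + \tfrac12\log\lvert\phi'(v)\rvert$ (correctly verified on the generators and propagated by the conformal chain rule) and the observation that the resulting rank-one anomaly is annihilated by the mean-zero condition on test functions --- are exactly right and are in fact the conceptual content behind the ``modulo global additive constant'' caveat.

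One claim in your write-up is not quite accurate and deserves a fix. You assert that $\phi_*$ preserves $\SS_0(\R^d)$, but for the inversion $\phi(x) = x/\lvert x\rvert^2$ one has $\phi_*\psi(y) = \psi(y/\lvert y\rvert^2)\lvert y\rvert^{-2d}$, which decays only like $\psi(0)\lvert y\rvert^{-2d}$ as $y \to \infty$ whenever $\psi(0) \neq 0$. This is integrable (so the mean-zero condition and the covariance double integral survive unchanged), but it is \emph{not} Schwartz, so $\phi_*\psi \notin \SS_0(\R^d)$. To make the pairing $(h,\phi_*\psi)$ rigorous you either need to enlarge the test space (e.g.\ work in the $\H_d$ Hilbert-space formulation from Definition~\ref{def:HilbertSpace}, where the pullback acts as an isometry by the same computation), or restrict to smooth test functions compactly supported away from the origin --- on which inversion pushforward does act nicely --- and conclude by density. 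With that repair the proof is complete.
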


\begin{remark}
We recall that in dimension $d >2$, Liouville's theorem asserts that every conformal automorphism of $\R^d$ can be expressed as a composition of translations, dilations, rotations and inversions, i.e., they are M\"obius transformations. This is in contrast with what happens in dimension $d = 2$, where by the Riemann mapping theorem, all simply connected planar domains are conformally equivalent.
\end{remark}

We conclude this section with the following decomposition result, which will play a fundamental role in constructing the $d$-dimensional quantum cone.
\begin{lemma}[Radial decomposition]
\label{lm:radialDeco}
For $d \geq 2$, let $\smash{\H_{d, \rad}(\R^d)}$ be the subspace of $\smash{\H_d}$ obtained as the closure of functions in $\SS(\R^d)$ which are radially symmetric about the origin, viewed modulo constants. Let $\smash{\H_{d, \sph}}$ be the subspace of $\smash{\H}_d$ obtained as the closure of functions in $\SS(\R^d)$ which have mean zero about all spheres centred at the origin. Then, it holds that
\begin{equation*}
\H_d = \H_{d, \rad} \oplus \H_{d, \sph} \;.
\end{equation*}
\end{lemma}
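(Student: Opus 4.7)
\textbf{Proof proposal for Lemma~\ref{lm:radialDeco}.} The plan is to construct the decomposition on the dense subspace $\SS(\R^d) \subset \H_d$, establish orthogonality with respect to the inner product $(\cdot,\cdot)_d$, and then extend to all of $\H_d$ by continuity. Define the \emph{spherical averaging operator} $\Pi_{\rad}$ on $\SS(\R^d)$ by
\[
(\Pi_{\rad} f)(x) \eqdef \frac{1}{\abs{\S^{d-1}}} \int_{\S^{d-1}} f(\abs{x} \theta) \, d\sigma(\theta) \;,
\]
so that $\Pi_{\rad} f$ is radial and $\Pi_{\sph} f \eqdef f - \Pi_{\rad} f$ has zero mean on every sphere centred at the origin. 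A routine check using smoothness of $f$ at the origin (the spherical average is even in $\abs{x}$) shows that $\Pi_{\rad}$ maps $\SS(\R^d)$ into the space of radial Schwartz functions, and hence $\Pi_{\sph}$ maps $\SS(\R^d)$ into Schwartz functions with vanishing spherical means.

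Next, I would observe that $\Pi_{\rad}$ commutes with the Fourier transform, i.e., $\widehat{\Pi_{\rad} f} = \Pi_{\rad} \widehat{f}$; this follows from the $\operatorname{O}(d)$-equivariance of $\FF$ and Fubini, since $\Pi_{\rad}$ is averaging over the rotation group acting on $\R^d$. Using this, the orthogonality of the decomposition in the $(\cdot,\cdot)_d$ inner product reduces to the following Fourier-side computation: for $f, g \in \SS(\R^d)$, passing to polar coordinates $\xi = r\omega$ with $r > 0$, $\omega \in \S^{d-1}$,
\[
(\Pi_{\rad} f, \Pi_{\sph} g)_d \propto \int_0^\infty r^{2d-1} \overline{(\Pi_{\rad}\widehat{g})(r)}\Bigl( \int_{\S^{d-1}} \Pi_{\sph}\widehat{f}(r\omega)\, d\sigma(\omega) \Bigr) dr = 0 \;,
\]
since $\Pi_{\sph}\widehat{f}$ has zero spherical mean at every radius. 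The same calculation yields the Pythagorean identity $\norm{f}_d^2 = \norm{\Pi_{\rad} f}_d^2 + \norm{\Pi_{\sph} f}_d^2$, which in particular shows that $\Pi_{\rad}$ and $\Pi_{\sph}$ are contractions on $\SS(\R^d)$ in the $\H_d$-norm.

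Finally, I would extend the decomposition to $\H_d$ by density. Given $f \in \H_d$, choose $f_n \in \SS(\R^d)$ with $f_n \to f$ in $\H_d$. By the contractivity just established, $(\Pi_{\rad} f_n)_n$ and $(\Pi_{\sph} f_n)_n$ are Cauchy in $\H_d$, with limits in $\H_{d,\rad}$ and $\H_{d,\sph}$ respectively, whose sum is $f$. Combined with the orthogonality carried over to the closures by continuity of the inner product, this gives $\H_d = \H_{d,\rad} \oplus \H_{d,\sph}$.

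The main technical obstacle will be the verification that $\Pi_{\rad}$ preserves the Schwartz class, since one must check smoothness of $\Pi_{\rad} f$ at the origin (where polar coordinates degenerate). This is standard: because $f$ is smooth, its spherical average is a smooth even function of $\abs{x}$, hence smooth in $x$. Once this is in hand, the rest is a clean Fourier-side computation, and the "modulo constants" subtlety in the definitions of $\H_d$ and $\H_{d,\rad}$ is automatic since constants vanish under the weighted $L^2$ pairing in \eqref{eq:defInnerProd}.
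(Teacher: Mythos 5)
Your proof is correct and takes essentially the same approach as the paper: spherical averaging gives the decomposition on $\SS(\R^d)$, orthogonality is established on the Fourier side in polar coordinates, and the result is extended to $\H_d$ by a density/Cauchy argument. The only cosmetic difference is that you package the paper's two Fourier-side facts --- that $\widehat{f_{\rad}}$ is radial and that $\widehat{f_{\sph}}$ has vanishing spherical means --- into the single statement that $\Pi_{\rad}$ commutes with $\FF$, which is a slightly cleaner way of recording the same content.
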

\begin{proof}
We start by checking the orthogonality condition. Let $f_{\rad} \in \SS(\R^d)$ be a function which is radially symmetric about the origin, and $f_{\sph} \in \SS(\R^d)$ be a function which has mean zero about all spheres centred at the origin. We verify that the inner product $(f_{\rad}, f_{\sph})_d$ defined in \eqref{eq:defInnerProd} is equal to zero. To this end, we note that for all $\theta \in \S^{d-1}$ and $r > 0$, it holds that 
\begin{equation}
\label{eq:sphericalDeco2}
\widehat{f}_{\rad}(r \theta) = (2\pi)^{-\frac{d}{2}} \int_{\R^d} f_{\rad}(x)	e^{-i (r \theta) \cdot x} dx = (2\pi)^{-\frac{d}{2}} \int_{\R^d} f_{\rad}(x)	e^{-i (r e_1) \cdot x} dx = \widehat{f}_{\rad}(r e_1) \;,
\end{equation}
where $e_1$ denotes the first unit vector.
Moreover, for any $r > 0$, by letting $\phi_r:\R^d \to \R$ be the radial function defined as
\begin{equation*}
\phi_r(x) = \phi_r(\abs{x}) \eqdef \int_{\S^{d-1}} e^{i (r \theta) \cdot x} d \sigma(\theta) \;,	
\end{equation*}
thanks to Fubini's theorem, it holds that  
\begin{align}
\int_{\S^{d-1}} \overline{\widehat{f}_{\sph}(r \theta)} d \sigma(\theta)	 
& = (2\pi)^{-\frac{d}{2}} \int_{\S^{d-1}} \Biggl(\int_{\R^d} f_{\sph}(x) e^{i (r \theta) \cdot x} dx\Biggr) d \sigma(\theta)	\nonumber \\
& = (2\pi)^{-\frac{d}{2}} \int_{\R^d} f_{\sph}(x) \phi_r(\abs{x}) dx \nonumber \\
& = (2\pi)^{-\frac{d}{2}} \int_0^{\infty} s^{d-1} \phi_r(s)\Biggl( \int_{\S^{d-1}} f_{\sph}(s \theta) d \sigma(\theta) \Biggr) ds  \nonumber \\
& = 0 \;. \label{eq:sphericalDeco3}
\end{align}
Thus, by letting $a(d)$ denote the prefactor in the definition \eqref{eq:defInnerProd} of the inner product $(\cdot, \cdot)_d$, and by applying \eqref{eq:sphericalDeco2} and \eqref{eq:sphericalDeco3}, we obtain that:
\begin{align*}
(f_{\rad}, f_{\sph})_d = a(d) \int_{\R^d} \abs{\xi}^{d} \widehat{f}_{\rad}(\xi) \overline{\widehat{f}_{\sph}(\xi)} d \xi 
& = a(d) \int_{0}^{\infty} \int_{\S^{d-1}} r^{2d-1} \widehat{f}_{\rad}(r \theta) \overline{\widehat{f}_{\sph}(r \theta)} d r d \sigma(\theta) \\
& \overset{\eqref{eq:sphericalDeco2}}{=} a(d) \int_{0}^{\infty} r^{2d-1} \widehat{f}_{\rad}(r e_1) \Biggl(\int_{\S^{d-1}} \overline{\widehat{f}_{\sph}(r \theta)} d \sigma(\theta) \Biggr) d r \\
& \overset{\eqref{eq:sphericalDeco3}}{=} 0 \;. 
\end{align*}
By definition of $\smash{\H_{d, \rad}(\R^d)}$ and $\smash{\H_{d, \sph}(\R^d)}$, these two spaces are therefore orthogonal with respect to $(\cdot, \cdot)_d$.

We now check the spanning condition. Consider a function $\smash{f \in \SS(\R^d) \subset \SS'_0(\R^d)}$ such that $\xi \mapsto \abs{\xi}^{d/2} \widehat{f}(\xi) \in L^2(\R^d)$. Let $f_{\rad}$ be the function which on a given sphere $\partial B(0, r)$ is given by the average of $f$ on $\partial B(0,r)$. Let $f_{\sph} = f - f_{\rad}$. Then $f = f_{\rad} + f_{\sph}$, with $f_{\rad} \in \H_{d, \rad}(\R^d)$, and $f_{\sph} \in \H_{d, \sph}(\R^d)$. Now, if $f \in \H_{d}$, there exists a collection $(f_n)_{n \in \N} \subset \SS(\R^d)$ such that $\smash{\xi \mapsto \abs{\xi}^{d/2} \widehat{f_n}(\xi) \in L^2(\R^d)}$ for each $n \in \N$, and $f = \lim_{n \to \infty} f_n$ with respect to the norm induced by the inner product $(\cdot, \cdot)_d$. Moreover, by orthogonality of $\H_{d, \rad}$ and $\H_{d, \sph}$ the sequences $(f_{n, \rad})_{n \in \N}$ and  $(f_{n, \sph})_{n \in \N}$ are each Cauchy, and $\lim_{n \to \infty} f_{n, \rad} = f_{\rad} \in \H_{d, \rad}$ and $\lim_{n \to \infty} f_{n, \sph} = f_{\sph} \in \H_{d, \sph}$ with respect to the norm induced by the inner product $(\cdot, \cdot)_d$. Therefore, $f = f_{\rad} + f_{\sph}$ thus proving that $\H_d = \H_{d, \rad} \oplus\H_{d, \sph}$.
\end{proof}

\subsection{The spherical average}
\label{sub:spherical}
In this section, we introduce the spherical average process of the whole-space LGF. Most of the material presented here is drawn from \cite[Section~11]{FGFRev}. However, due to the significance of the spherical average in this paper, we provide a summary of the main concepts and results.

For a continuous function $f: \R^d \to \R$, we define its spherical average process around $x \in \R^d$ as the function $(0, \infty) \ni r \mapsto f_{r}(x)$ given by
\begin{equation*}
f_r(x) \eqdef \frac{1}{\abs{\S^{d-1}}} \int_{\S^{d-1}} f(x + r \sigma) d \sigma \;.
\end{equation*} 
A straightforward calculation shows that for all $\phi \in \CC_c^{\infty}(\R^{+})$ it holds that 
\begin{equation*}
\int_0^{\infty} f_r(x) \phi(r) dr = \frac{1}{\abs{\S^{d-1}}} \int_{\R^d} f(x) \frac{\phi(\abs{z-x})}{\abs{z-x}^{d-1}} dz \;.
\end{equation*}
This motivates the following definition.

\begin{definition}
For $h$ the whole-space LGF and $x \in \R^d$, we define the \emph{spherical average} around $x$ modulo a global additive constant $h_{\cdot}(x)$ by letting
\begin{equation*}
(h_{\cdot}(x), \phi) \eqdef \abs{\S^{d-1}}^{-1} \Biggl(h, z \mapsto \frac{\phi(\abs{z-x})}{\abs{z-x}^{d-1}}\Biggr) \;, \qquad \forall \phi \in \CC_c^{\infty}(\R^{+}) \cap \SS_0(\R)\;.
\end{equation*}
\end{definition}
We observe that if $\phi \in \CC_{c}^{\infty}(\R^{+}) \cap \SS_{0}(\R^d)$, then the mapping $z \mapsto \phi(\abs{z-x})/\abs{z-x}^{d-1}$ is in $\SS_0(\R^d)$ and so the previous definition makes sense. 
In order to find the covariance kernel of the spherical average process, we fix $\phi$, $\psi \in \CC_c^{\infty}(\R^{+}) \cap \SS_0(\R)$ and we observe that for $x_1$, $x_2 \in \R^d$, it holds that 
\begin{align*}
\E\bigl[(h_{\cdot}(x_1), \phi) (h_{\cdot}(x_2), \psi)\bigr] 
& = \frac{1}{\abs{\S^{d-1}}^{2}} \E\Biggl[\Biggl(h, z \mapsto \frac{\phi(\abs{z - x_1})}{\abs{z - x_1}^{d-1}}\Biggr)\Biggl(h, z \mapsto \frac{\psi(\abs{z-x_2})}{\abs{z-x_2}^{d-1}}\Biggr) \Biggr] \\
& = - \frac{1}{\abs{\S^{d-1}}^{2}} \int_{\R^d} \int_{\R^d} \log\abs{z_1 - z_2} \frac{\phi(\abs{z_1 - x_1}) \psi(\abs{z_2 - x_2})}{\abs{z_1 - x_1}^{d-1}\abs{z_2 -x_2}^{d-1}} dz_1 dz_2 \\
& = - \frac{1}{\abs{\S^{d-1}}^{2}} \int_{\R} \int_{\R} \Biggl(\int_{\S^{d-1}} \int_{\S^{d-1}} \log\abs{(x_1+r_1 \omega) - (x_2 + r_2 \sigma)} d\sigma d\omega\Biggr) \phi(r_1) \psi(r_2) d r_1 d r_2 \;,
\end{align*}
from which we can deduce that, for all $r_1$, $r_2 \in \R^{+}$,  
\begin{equation}
\label{eq:covarianceSphericalGen}
\CK_{r_1, r_2}(x_1, x_2) \eqdef 	\E\bigl[h_{r_1}(x_1) h_{r_2}(x_2)\bigr] = - \frac{1}{\abs{\S^{d-1}}^{2}}  \int_{\S^{d-1}}  \int_{\S^{d-1}} \log\abs{(x_1 + r_1 \omega) - (x_2 + r_2 \sigma)} d\sigma d\omega \;.
\end{equation}
In particular, if $x = x_1 = x_2 \in \R^d$, by a calculation based on spherical symmetries, we can further simplify the above integrals and obtain that 
\begin{equation}
\label{eq:covSphereDiag}
\CK_{r_1, r_2}(x, x) = -c(d) \int_{0}^{\pi} \log\bigl(r_1^2 + r_2^2 - 2 r_1 r_2 \cos \theta\bigr) \bigl(\sin \theta\bigr)^{d-2} d\theta \;, \qquad \forall \, r_1, r_2 \in \R^{+}\;,
\end{equation}
where, for $d \geq 2$, the constant $c(d) > 0$ is given by 
\begin{equation*}
c(d) \eqdef \frac{\Gamma(d/2)}{2 \sqrt{\pi} \Gamma((d-1)/2)} \;.
\end{equation*}
 
We conclude this subsection with the following result regarding the regularity of the spherical average process.
\begin{proposition}
\label{pr:SmothSpherical}
For $d \geq 2$ even and $x \in \R^d$, there exists a modification of $(h_{r}(x))_{r \geq 0}$ which is differentiable $\cd_d$ times, where we recall \eqref{eq:defcdd}. Moreover, the $\cd_d$-th derivative is $\delta$-H\"older continuous for all $\delta \in (0, 1/2)$. 
\end{proposition}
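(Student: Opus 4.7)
The plan is to exploit the explicit covariance formula \eqref{eq:covSphereDiag}, locate the precise regularity of the covariance kernel in the logarithmic variable $t=-\log r$, and then apply Kolmogorov's continuity theorem to the top-order mean-square derivative.

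Since $d$ is even, expanding $(\sin\theta)^{d-2}=[(1-\cos 2\theta)/2]^{\cd_d}$ writes $(\sin\theta)^{d-2}=\sum_{j=0}^{\cd_d} c_j \cos(2j\theta)$ for explicit constants $c_j$. Combining this with the classical identity $\log(1-2\rho\cos\theta+\rho^2)=-2\sum_{k\geq 1}\rho^k \cos(k\theta)/k$ for $|\rho|<1$ and integrating term by term reduces \eqref{eq:covSphereDiag} (in the variables $t_i=-\log r_i$) to
\begin{equation*}
\CK_{e^{-t_1},e^{-t_2}}(x,x)=B\min(t_1,t_2)+\phi(|t_1-t_2|), \qquad \phi(u)=\pi c(d)\sum_{j=1}^{\cd_d}\frac{c_j}{2j}\,e^{-2ju},
\end{equation*}
with $B=2\pi c(d) c_0$. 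The key arithmetic input is the vanishing of $(\sin\theta)^{d-2}$ to order $d-2$ at $\theta=0$: differentiating the identity $(\sin\theta)^{d-2}=\sum_j c_j\cos(2j\theta)$ an even number of times $2k$ and evaluating at $\theta=0$ gives
\begin{equation*}
\sum_{j=0}^{\cd_d} c_j\, j^{2k}=0 \quad\text{for } k=0,1,\ldots,\cd_d-1, \qquad \sum_{j=0}^{\cd_d} c_j\, j^{2\cd_d}=(-1)^{\cd_d}(d-2)!/2^{d-2}\neq 0.
\end{equation*}

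Next, using the distributional formula $\partial_{t_1}\partial_{t_2}[g(|t_1-t_2|)]=-g''(|t_1-t_2|)-2g'(0^+)\delta(t_1-t_2)$, induction on $k$ shows that, for every $k=1,\ldots,\cd_d$, the iterated mixed partial $\partial_{t_1}^k\partial_{t_2}^k\CK$ is a locally bounded function rather than a genuine distribution: at each stage the candidate delta-function coefficient is, up to a non-zero multiplicative factor, exactly $\sum_j c_j\, j^{2k-2}$, which vanishes by the identities above — the $k=1$ step absorbing the Brownian contribution from $B\min(t_1,t_2)$, and subsequent steps cancelling the odd-derivative-at-zero terms coming from $\phi$. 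This proves that $\rmS_t \eqdef h_{e^{-t}}(x)-h_1(x)$ is $\cd_d$-times mean-square differentiable, and that $\rmS^{(\cd_d)}$ is a stationary centred Gaussian process with covariance $\Cov(\rmS^{(\cd_d)}_t,\rmS^{(\cd_d)}_s)=(-1)^{\cd_d}\phi^{(2\cd_d)}(|t-s|)$, itself a linear combination of the exponentials $e^{-2j|t-s|}$.

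Finally, the non-vanishing identity above forces $\phi^{(2\cd_d+1)}(0^+)\neq 0$ with the correct sign, and a direct computation gives $\Var(\rmS^{(\cd_d)}_t-\rmS^{(\cd_d)}_s)=C|t-s|+O(|t-s|^2)$ with $C>0$ as $|t-s|\to 0$. Since $\rmS^{(\cd_d)}$ is Gaussian this yields $\E[|\rmS^{(\cd_d)}_t-\rmS^{(\cd_d)}_s|^{2p}]\lesssim|t-s|^p$ for every $p\geq 1$, so Kolmogorov's continuity theorem produces a modification of $\rmS^{(\cd_d)}$ that is $\delta$-Hölder continuous for every $\delta\in(0,1/2)$. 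Iteratively antidifferentiating this modification — with joint Gaussianity ensuring that the integrated processes coincide almost surely with the mean-square derivatives $\rmS^{(\cd_d-1)}, \ldots, \rmS$ — gives a modification of the full vector $(\rmS_t, \rmS^{(1)}_t, \ldots, \rmS^{(\cd_d)}_t)$, and transferring back to the variable $r=e^{-t}$ yields the desired modification of $(h_r(x))_{r>0}$. The main obstacle is the distributional bookkeeping in the induction: one must verify that the delta-function coefficients cancel \emph{exactly} at each order $k\leq \cd_d$ and first reappear at order $\cd_d+1$, which reduces precisely to the arithmetic identities coming from the vanishing order of $(\sin\theta)^{d-2}$ at the origin.
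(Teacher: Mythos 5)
Your argument is correct, and it takes a genuinely different route from the paper. The paper's own proof of Proposition~\ref{pr:SmothSpherical} is essentially a pointer: differentiability is attributed to Theorem~\ref{th:identSphere} (whose proof in Section~\ref{sec:identification} is itself carried out under the assumption that the process is differentiable), and the H\"older regularity of the top derivative is outsourced to a Kolmogorov--Chentsov argument in \cite[Proposition~11.1]{FGFRev}. You instead give a self-contained derivation working directly from the explicit covariance \eqref{eq:covSphereDiag}.

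The key structural observation you exploit — that after the substitution $t=-\log r$ the covariance takes the form $\min(t_1,t_2)+\phi(|t_1-t_2|)$ with $\phi$ a finite linear combination of $e^{-2ju}$, and that the Taylor vanishing of $(\sin\theta)^{d-2}$ to order $d-2$ at $\theta=0$ translates into the moment identities $\sum_j c_j\,j^{2k}=0$ for $k<\cd_d$ — is exactly what makes the $\delta$-function coefficients cancel at each differentiation order up to $\cd_d$. I verified that $\phi'(0^+)=1/2$ (so the Brownian delta from the $\min$ term is absorbed at the first step), that the subsequent odd derivatives $\phi^{(2k-1)}(0^+)$ vanish for $k\le\cd_d$ precisely because of those identities, and that $\phi^{(2\cd_d+1)}(0^+)\ne 0$ gives the increment variance $\Var(\rmS^{(\cd_d)}_t-\rmS^{(\cd_d)}_s)\sim C|t-s|$ with $C>0$, which is what Kolmogorov--Chentsov needs. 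The recentering from $h_{e^{-t}}(x)$ to $\rmS_t$ only adds terms depending on a single variable, so it does not affect the mixed partials — worth a sentence in a fully written-up version, but not a gap.

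Compared with the paper's route, yours has two advantages: it avoids the apparent circularity of invoking Theorem~\ref{th:identSphere} to establish regularity that Theorem~\ref{th:identSphere}'s proof already presupposes, and it makes the source of the threshold $\cd_d=(d-2)/2$ completely transparent (the order of vanishing of $(\sin\theta)^{d-2}$). The paper's approach, relying on \cite{FGFRev}, keeps the preliminaries short. Your argument is also closer in flavour to the power-spectrum computation the paper later performs in the proof of Theorem~\ref{th:identSphere}, but via a real-variable cosine expansion rather than Fourier-analytic manipulation of $\widehat{\CK^{(1)}}$; the two are different ways of extracting the same arithmetic.
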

\begin{proof}
For $d \geq 2$ even and $x \in \R^d$, the fact that $(h_{r}(x))_{r \geq 0}$ is differentiable $\cd_d$ times is a consequence of Theorem~\ref{th:identSphere}. We refer to \cite[Proposition~11.1]{FGFRev} for a proof (based on Kolmogorov--Chentsov continuity theorem) of the fact that that the $\cd_d$-th derivative is $\delta$-H\"older continuous for all $\delta \in (0, 1/2)$.
\end{proof}

\begin{remark}
We remark that for $d \geq 2$ odd and $z \in \R^d$, it is expected that the spherical average process $(h_{r}(z))_{r \geq 0}$ is differentiable $(d-3)/2$ times.
\end{remark}

We record here a straightforward result regarding the variance of the spherical average process, the proof of which involves a simple computation based on \eqref{eq:covSphereDiag}.
\begin{lemma}
\label{lm:varianceSpherical}
For any $x \in \R^d$, it holds that 
\begin{equation*}
\E\bigl[\bigl(h_{e^{-t}}(x) - h_1(x)\bigr)^2\bigr] = \abs{t} + 2 c(d) \int_0^{\pi} \log\Biggl(\frac{1+e^{-2\abs{t}} - 2e^{-\abs{t}} \cos \theta}{2 - 2 \cos(\theta)}\Biggr) \bigl(\sin \theta\bigr)^{d-2} d \theta \;, \qquad \forall \, t \in \R\;.
\end{equation*}	
\end{lemma}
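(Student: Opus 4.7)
My plan is to derive the formula by a direct computation starting from the expression \eqref{eq:covSphereDiag} for the covariance kernel $\CK_{r_1,r_2}(x,x)$ on the diagonal, and expanding the squared increment via bilinearity:
\begin{equation*}
\E\bigl[(h_{e^{-t}}(x)-h_1(x))^2\bigr] = \CK_{e^{-t},e^{-t}}(x,x) - 2\CK_{e^{-t},1}(x,x) + \CK_{1,1}(x,x).
\end{equation*}
For the two diagonal terms I would use the factorisation $r^2 + r^2 - 2r^2\cos\theta = 2r^2(1-\cos\theta)$ to split off the radial part of the logarithm; this produces $-\log r^2 = 2t$ (resp.\ $0$) times the prefactor $c(d)\int_0^\pi (\sin\theta)^{d-2}d\theta$, leaving behind the common angular integral $c(d)\int_0^\pi \log(2-2\cos\theta)(\sin\theta)^{d-2}d\theta$.

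The key auxiliary computation is the normalisation identity
\begin{equation*}
2c(d)\int_0^\pi (\sin\theta)^{d-2} d\theta = 1,
\end{equation*}
which I would verify using the standard Beta-function formula $\int_0^\pi (\sin\theta)^{d-2}d\theta = \sqrt{\pi}\,\Gamma((d-1)/2)/\Gamma(d/2)$ together with the definition of $c(d)$. Once this is in place, the constant $\log(2-2\cos\theta)$ contributions from $\CK_{e^{-t},e^{-t}}$ and $\CK_{1,1}$ combine exactly with the $-2\CK_{e^{-t},1}$ term to produce the ratio $\log\bigl((1+e^{-2t}-2e^{-t}\cos\theta)/(2-2\cos\theta)\bigr)$ inside the integral, while the diagonal pull-out contributes a clean $+t$. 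This yields the formula with $t$ (not $|t|$) when $t > 0$.

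The last step, which is the only place a small subtlety enters, is to show that the same formula (with the argument $e^{-t}$ replaced by $e^{-|t|}$) holds also for $t < 0$. Here I would use the elementary identity
\begin{equation*}
1+e^{-2t}-2e^{-t}\cos\theta = e^{-2t}\bigl(1 + e^{2t} - 2e^{t}\cos\theta\bigr) \qquad \text{for } t \in \R,
\end{equation*}
so that for $t < 0$ one can pull a factor $\log(e^{-2t}) = 2|t|$ out of the logarithm of the cross term. Combined with the $t = -|t|$ coming from the pull-out in $\CK_{e^{-t},e^{-t}}$, the net radial contribution becomes $-|t| + 2|t| = |t|$, and the remaining angular integral is precisely the claimed ratio with $e^{-t}$ replaced by $e^{-|t|}$. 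No step should present any real obstacle; the only thing to be careful about is bookkeeping of signs of $t$ in the logarithm, which is why I isolate the factoring identity as the final ingredient.
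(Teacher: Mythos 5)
Your proposal is correct and matches the paper's own route: the paper explicitly states that the lemma follows from ``a simple computation based on \eqref{eq:covSphereDiag},'' and your computation — expanding the variance of the increment via bilinearity, factoring $r_1^2+r_2^2-2r_1r_2\cos\theta = 2r^2(1-\cos\theta)$ on the diagonal, invoking the Beta-function normalisation $2c(d)\int_0^\pi(\sin\theta)^{d-2}d\theta = 1$, and handling $t<0$ by pulling $e^{-2t}$ out of the cross-term logarithm — is exactly that computation, carried out carefully including the sign bookkeeping that makes $|t|$ appear.
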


\subsection{Some properties of GMC measures}
\label{sub:GMCChange}
In what follows, for a point $x \in \R^d$, we denote by $(\bh_{\eps}(x))_{\eps > 0}$ the spherical average process of $\bh$ around $x$, i.e., $\bh_{\eps}(x) = h_{\eps}(x) - h_1(0)$.
For $\gamma \in (0, \sqrt{\smash[b]{2d}})$, we define the subcritical GMC measure associated to $\bh$ by letting
\begin{equation}
\label{eq:defLQGSphere}
\mu_{\bh, \gamma}(dx) \eqdef \lim_{\eps \to 0} \mu_{\bh, \gamma, \eps}(dx) \eqdef \lim_{\eps \to 0} \eps^{\frac{\gamma^2}{2}} e^{\gamma \bh_{\eps}(x)} dx \;,
\end{equation}
where $dx$ denotes the Lebesgue measure on $\R^d$. It is well-known (see e.g.\ \cite{Berestycki_Elementary, BP24}) that the limit on the right-hand side of the above display converges in $\P$-probability in the space of non-negative locally finite measures on $\R^d$ equipped with the topology of weak convergence. 

As we observed in Lemma~\ref{lm:LGFconformal}, the whole-space LGF is conformally invariant, and so it is natural to inquire how this invariance property is reflected in the associated GMC measures. It turns out that these measures are \emph{conformally covariant}, i.e., their laws are related under conformal mappings by a correction term that accounts for the domain's inflation caused by the mapping. 

\begin{proposition}[Coordinate change formula for $\mu_{\bh, \gamma}$]
\label{pr:coordinate_change_measure}
For $d > 2$ and $\gamma \in (0, \sqrt{\smash[b]{2d}})$, consider a conformal automorphism $\phi$ of $\R^d$ which can be expressed as a composition of translations, dilations and rotations. Then, it holds $\P$-almost surely that 
\begin{equation}
\label{eq:conCovLQG}
\mu_{\bh, \gamma}(\phi(A)) = \mu_{\bh \circ \phi + Q \log |\phi'|, \gamma}(A) \;, \qquad \forall \, A \in \CB(\R^d) \;,
\end{equation}
where $Q$ is defined as in \eqref{eq:defQd}.
\end{proposition}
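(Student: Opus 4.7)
Since the class of conformal automorphisms of $\R^d$ generated by translations, dilations, and rotations is closed under composition, and since both sides of \eqref{eq:conCovLQG} transform compatibly under composition (the chain rule gives $|(\phi_1 \circ \phi_2)'| = |\phi_1' \circ \phi_2| \cdot |\phi_2'|$, so $Q \log |(\phi_1 \circ \phi_2)'| = Q \log |\phi_1'| \circ \phi_2 + Q \log |\phi_2'|$), it suffices to verify the identity separately for the three building blocks. The plan is to work directly with the spherical average approximation $\mu_{\bh, \gamma, \eps}(dx) = \eps^{\gamma^2/2} e^{\gamma \bh_\eps(x)} dx$ from \eqref{eq:defLQGSphere}, perform the change of variables $y = \phi(x)$ in the integral $\int_{\phi(A)} \mu_{\bh,\gamma,\eps}(dy)$, rewrite the spherical average of $\bh$ at $\phi(x)$ in terms of a spherical average of the transformed field $\tilde{\bh} \eqdef \bh \circ \phi + Q \log |\phi'|$ at $x$, and then pass to the limit as $\eps \to 0$.

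The cases of translations and rotations are essentially immediate: for $\phi$ an isometry, we have $|\phi'| \equiv 1$, so $\tilde{\bh} = \bh \circ \phi$, the Jacobian of $\phi$ is $1$, and the spherical average transforms as $\bh_\eps(\phi(x)) = \tilde{\bh}_\eps(x)$ (using rotational invariance of the integration over $\partial B(0,\eps)$ in the rotation case). The dilation case is where the scaling parameters interact nontrivially. For $\phi(x) = \lambda x$ with $\lambda > 0$, a direct computation from the definition of the spherical average gives the identity
\begin{equation*}
\bh_\eps(\lambda x) = \tilde{\bh}_{\eps/\lambda}(x) - Q \log \lambda, \qquad \tilde{\bh} = \bh \circ \phi + Q \log \lambda.
\end{equation*}
Substituting $y = \lambda x$ in $\int_{\phi(A)} \eps^{\gamma^2/2} e^{\gamma \bh_\eps(y)} dy$ produces a Jacobian factor $\lambda^d$, an exponential factor $e^{-\gamma Q \log \lambda} = \lambda^{-\gamma Q}$, and then the rewriting $\eps^{\gamma^2/2} = \lambda^{\gamma^2/2} (\eps/\lambda)^{\gamma^2/2}$. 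The prefactor collapses thanks to the crucial identity
\begin{equation*}
d - \gamma Q + \frac{\gamma^2}{2} = 0, \qquad \text{i.e., } \gamma Q = d + \frac{\gamma^2}{2},
\end{equation*}
which is exactly the definition of $Q$ in \eqref{eq:defQd}. Setting $\eps' = \eps/\lambda$, the remaining integral is precisely $\mu_{\tilde{\bh}, \gamma, \eps'}(A)$.

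The remaining point is to upgrade this identity between approximations to an $\P$-almost sure identity between the limiting measures. Here the observation is that both $\mu_{\bh,\gamma}$ and $\mu_{\tilde{\bh},\gamma}$ are constructed from the \emph{same} realisation of $\bh$ via the circle-average regularisation, so the exact equality $\mu_{\bh,\gamma,\eps}(\phi(A)) = \mu_{\tilde{\bh}, \gamma, \eps/\lambda}(A)$ holds for every $\eps > 0$ on a full-measure event (the event on which $\bh$ admits a version of its circle averages that is jointly continuous in $(x,\eps)$, which is standard). Passing to a subsequence $\eps_n \to 0$ along which both regularised measures converge vaguely $\P$-a.s.\ to their respective limits yields the pointwise identity for Borel sets $A$ whose image $\phi(A)$ is a continuity set of $\mu_{\bh,\gamma}$; a standard monotone-class/$\pi$-$\lambda$ argument then extends this to all Borel sets $A \in \CB(\R^d)$ simultaneously on a single $\P$-full-measure event.

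The main obstacle is the last step, namely arguing that the $\P$-almost sure equality holds simultaneously for all $A \in \CB(\R^d)$ rather than on a set of full measure depending on $A$. This is resolved by noting that the identity at the level of the regularisations holds for all $A$ on one common event (it is a pathwise computation for a fixed realisation of $\bh$), so the only place randomness enters is the vague convergence along a subsequence, which is a convergence of measures and hence of their values on a generating family of compact continuity sets; this is enough to conclude the identity as measures.
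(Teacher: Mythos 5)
Your proof takes essentially the same approach as the paper: perform the change of variables at the level of the spherical-average regularisation $\mu_{\bh,\gamma,\eps}$, use the identity $\bh_\eps(\lambda x) = (\bh\circ\phi)_{\eps/\lambda}(x)$ for the dilation $\phi(x)=\lambda x$, and observe that the exponents collapse precisely because $\gamma Q = d + \gamma^2/2$. The paper handles only the dilation explicitly and remarks that rotations and translations are analogous, while you make the reduction to generators explicit via the chain rule for $|\phi'|$; you also spell out the passage from the exact pre-limit identity to the almost-sure identity of the limiting measures, which the paper leaves implicit. Both additions are correct and consistent with the paper's (terser) argument.
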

\begin{proof}
Let $c > 0$ and consider the field $\bh^c \eqdef \bh(c \,\cdot)$. For any $\eps > 0$ and $x \in \R^d$, we have the following relation between the spherical averages: $\bh^c_{\eps}(x) = \bh_{c \eps}(c x)$. For $A \in \CB(\R^d)$, we have that 
\begin{equation*}
\mu_{\bh^c \!, \gamma}(A) 
= \lim_{\eps \to 0} \eps^{\gamma^2/2} \int_A e^{\gamma \bh^c_{\eps}(x)} dx
= \lim_{\eps \to 0} \eps^{\gamma^2/2} \int_A e^{\gamma \bh_{c \eps}(c x)} dx 
= \lim_{\delta \to 0} (\delta/c)^{\gamma^2/2} c^{-d} \int_{c A} e^{\gamma \bh_{\delta}(x)} dx \;,
\end{equation*}
where in the last equality we set $\delta = c \eps$ and we did the change of variables $x \mapsto x/c$. We note that the term on the right-hand side of the above display equals $c^{-d - \gamma^2/2} \mu_{\bh, \gamma}(c A)$, and so we have that 
\begin{equation*}
c^{d + \gamma^2/2}  \mu_{\bh^c\!, \gamma}(A)  =  \mu_{\bh, \gamma}(c A) \qquad \iff \qquad \mu_{\bh^c  + Q \log c, \gamma}(A) = \mu_{\bh, \gamma}(c A)\;.
\end{equation*}
The same argument also works if instead of just a scaling we have a composition of a scaling, rotation, and translation.
\end{proof}

\subsection{Some results on multidimensional SDEs}
\label{sub:SDEs}
In this section, we collect some standard results on SDEs that will play a key role in our analysis, especially in Section~\ref{sec:identification} for the proof of Theorem~\ref{th:identSphere}.

For $p \geq 1$ and $(a_i)_{i = 0}^{p-1} \subset \R$, we let $\mathbf{A}$ be the Frobenius companion matrix of the polynomial $p(\lambda) = \lambda^p + a_{p-1} \lambda^{p-1} + \cdots + a_1 \lambda + a_0$ as specified in Definition~\ref{def:companion}. Furthermore, for some $b \in \R$, we consider the $p$-dimensional vector $\smash{\mathbf{b} = (0, 0, \ldots, 0, b)^{\top}}$. We are interested in $p$-dimensional Langevin equations of the following form,
\begin{equation}
\label{eq:SDEpOrderMatGen}
\mathrm{d} \mathbf{X}_t = \mathbf{A} \mathbf{X}_t \mathrm{d}t + \mathbf{b} \mathrm{d} \mathbf{B}_t \;,
\end{equation}
where $(\mathbf{B}_t)_{t \in \R}$ denotes a $p$-dimensional two-sided Brownian motion and $\smash{\mathbf{X}_t = (\rmX_t, \rmX_t^{(1)}, \ldots, \rmX_t^{(p-2)}, \rmX_t^{(p-1)})^{\top}}$.

\begin{remark}
Roughly speaking, one should have in mind that \eqref{eq:SDEpOrderMatGen} is the systems of first order SDEs associated to the following $p$-th order SDE,
\begin{equation*}
\rmX^{(p)}_t + a_{p-1} \rmX^{(p-1)}_t + \cdots + a_{1} \rmX^{(1)}_t + a_0 \rmX_t = b \xi_t \;,
\end{equation*}
where $(\xi_t)_{t \in \R}$ is a white noise. 
\end{remark}

\begin{lemma}[{\cite[Theorem~5.6.7]{KSBook}}]
\label{lm:stationaryGaussianMarkov}
Consider the setting described above. If all the eigenvalues of the matrix $\mathbf{A}$ are negative real numbers, then the SDE \eqref{eq:SDEpOrderMatGen} admits a unique stationary Gaussian Markov solution $(\mathbf{X}_t)_{t \in \R}$.	
\end{lemma}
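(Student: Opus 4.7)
The plan is to construct the stationary solution explicitly via variation of constants and then verify all four required properties (existence, Gaussianity, Markovianity, uniqueness) in turn.

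First I would write the general solution of the linear SDE \eqref{eq:SDEpOrderMatGen}. For any $s < t$, the variation of constants formula gives
\begin{equation*}
\mathbf{X}_t = e^{\mathbf{A}(t-s)} \mathbf{X}_s + \int_s^t e^{\mathbf{A}(t-u)} \mathbf{b} \, d\mathbf{B}_u \;.
\end{equation*}
Under the hypothesis that every eigenvalue of $\mathbf{A}$ has strictly negative real part, the matrix norm $\|e^{\mathbf{A}\tau}\|$ decays exponentially as $\tau \to \infty$. This makes it natural to define the candidate stationary solution by pushing the initial time to $-\infty$:
\begin{equation*}
\mathbf{X}_t \eqdef \int_{-\infty}^{t} e^{\mathbf{A}(t-u)} \mathbf{b} \, d\mathbf{B}_u \;,
\end{equation*}
where the improper stochastic integral converges in $L^2$ by the exponential decay of the integrand in the It\^o isometry.

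Next I would verify the desired properties of this candidate. Gaussianity is immediate since $(\mathbf{X}_t)_{t \in \R}$ is a vector-valued stochastic integral of a deterministic integrand against Brownian motion. A change of variable $u \mapsto u - h$ shows that the law of $(\mathbf{X}_{t+h})_{t \in \R}$ coincides with that of $(\mathbf{X}_t)_{t \in \R}$ for every $h \in \R$, giving stationarity. The covariance matrix of $\mathbf{X}_t$ is
\begin{equation*}
\Sigma = \int_{0}^{\infty} e^{\mathbf{A} u} \mathbf{b}\mathbf{b}^{\top} e^{\mathbf{A}^{\top} u} \, du \;,
\end{equation*}
and an integration by parts confirms that $\Sigma$ is the unique solution of the Lyapunov equation $\mathbf{A}\Sigma + \Sigma \mathbf{A}^{\top} = -\mathbf{b}\mathbf{b}^{\top}$. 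The Markov property is then inherited from the variation of constants formula above: for $s < t$, the increment $\mathbf{X}_t - e^{\mathbf{A}(t-s)} \mathbf{X}_s$ depends only on the Brownian increments on $[s,t]$, which are independent of $\mathcal{F}_s$.

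The final and main step is uniqueness. If $(\mathbf{Y}_t)_{t \in \R}$ were another stationary solution driven by the same Brownian motion, then the difference $\mathbf{D}_t \eqdef \mathbf{X}_t - \mathbf{Y}_t$ would satisfy $\mathbf{D}_t = e^{\mathbf{A}(t-s)} \mathbf{D}_s$ for all $s \leq t$. Taking $s \to -\infty$ and using that stationarity forces $\mathbf{D}_s$ to have a distribution bounded in probability while $\|e^{\mathbf{A}(t-s)}\| \to 0$, one concludes $\mathbf{D}_t = 0$ almost surely. For uniqueness in law among solutions driven by a possibly different Brownian motion, the same argument applies at the level of covariance matrices: any stationary Gaussian solution must have covariance satisfying the Lyapunov equation above, whose solution is unique under the spectral hypothesis on $\mathbf{A}$, and the finite-dimensional distributions are determined by the covariance. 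The main obstacle in the argument is making the limit $s \to -\infty$ rigorous in a way that applies to any stationary solution, not merely the one built by the integral representation; this is handled through the spectral decay of $e^{\mathbf{A}(t-s)}$ together with a tightness argument coming from stationarity.
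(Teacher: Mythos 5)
The paper does not prove this lemma; it delegates entirely to the citation \cite[Theorem~5.6.7]{KSBook}. Your proof supplies the standard argument behind that reference: variation of constants, the integral representation of the stationary solution obtained by pushing the initial time to $-\infty$, the Lyapunov equation for the stationary covariance, and a tightness argument for uniqueness. The proof is correct, and you correctly observe that only the decay $\|e^{\mathbf{A}\tau}\| \to 0$ is needed, which holds under the weaker hypothesis that eigenvalues have negative real part (the lemma's hypothesis of negative real eigenvalues is a special case).

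One small point worth tightening in the uniqueness-in-law step: a stationary Gaussian process is determined by its mean (which your argument shows is zero) and its full autocovariance function $\E[\mathbf{X}_t\mathbf{X}_s^{\top}]$, not just the one-time marginal covariance $\Sigma$. You should observe that the SDE forces $\E[\mathbf{X}_t\mathbf{X}_s^{\top}] = e^{\mathbf{A}(t-s)}\Sigma$ for $s < t$, so the full autocovariance is indeed pinned down once $\Sigma$ is. This is a one-line addition, and with it the argument is complete.
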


Before stating the next result, we recall that the power spectrum of a stationary Gaussian process $(\rmX_t)_{t \in \R}$ is defined as the Fourier transform of its covariance kernel.
\begin{lemma}
\label{lm:power}
Consider the setting described above and assume that all the eigenvalues of the matrix $\mathbf{A}$ are negative real numbers.
Let $\smash{(\mathbf{X}_t)_{t \in \R}}$ be the unique stationary Gaussian Markov solution to the SDE \eqref{eq:SDEpOrderMatGen}. Then, the power spectrum of the first component $\smash{(\rmX_t)_{t \in \R}}$ is given by
\begin{equation*}
\widehat{\CK}(\omega) = \frac{1}{\sqrt{2 \pi}} \frac{b^2}{[\sum_{k = 0}^{p} a_k (i \omega)^k] [\sum_{k = 0}^{p} a_k (-i \omega)^k]} \;, \qquad \forall \, \omega \in \R \;,
\end{equation*}
where $a_p = 1$.
Moreover, if we assume that $\smash{(-\lambda_k)_{k = 1}^p \subset \R^{-}}$ are the distinct eigenvalues of the matrix $\mathbf{A}$, then the process $\smash{(\rmX_t)_{t \in \R}}$ admits the following representations 
\begin{equation*}
\rmX_t = b \sum_{k=1}^{p} c_k \int_{-\infty}^{t} e^{-\lambda_k (t-s)} dB_s	\;, \qquad \forall \, t \in \R \;,
\end{equation*}
where, for each $k \in [p]$, we set $c_k \eqdef (\prod_{j = 1, \, j \neq k}^p (\lambda_j - \lambda_k))^{-1}$.	
\end{lemma}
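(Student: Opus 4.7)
The plan is to produce a single closed form for the first component $\rmX_t$ via variation of constants, and then to compute both the pathwise representation and the power spectrum from this. By Lemma \ref{lm:stationaryGaussianMarkov}, the assumption that all eigenvalues $-\lambda_k$ of $\mathbf{A}$ are negative guarantees a unique stationary Gaussian Markov solution; by the variation-of-constants formula this solution has the explicit form
\begin{equation*}
\mathbf{X}_t = \int_{-\infty}^{t} e^{\mathbf{A}(t-s)} \mathbf{b}\, \mathrm{d} B_s\;,
\end{equation*}
where $B$ denotes the scalar Brownian motion driving the last coordinate (all other coordinates of $\mathbf{b}$ vanish). Thus $\rmX_t = \int_{-\infty}^{t} g(t-s)\,\mathrm{d}B_s$ with $g(\tau) \eqdef (e^{\mathbf{A}\tau}\mathbf{b})_1$, and the whole problem reduces to identifying the scalar kernel $g$.

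The key observation is that because $\mathbf{A}$ is a Frobenius companion matrix, the vector $e^{\mathbf{A}\tau}\mathbf{b}$ has the stacked-derivatives structure $(g(\tau), g'(\tau), \ldots, g^{(p-1)}(\tau))^\top$, and $g$ solves the scalar ODE $g^{(p)} + a_{p-1} g^{(p-1)} + \cdots + a_0 g = 0$ with initial data $g(0) = g'(0) = \cdots = g^{(p-2)}(0) = 0$ and $g^{(p-1)}(0) = b$, read off from $\mathbf{b} = (0,\ldots,0,b)^\top$. Since the characteristic polynomial factors as $\prod_{k=1}^{p}(\lambda + \lambda_k)$ with the $\lambda_k$ distinct, the general solution is $g(\tau) = \sum_{k=1}^p \beta_k e^{-\lambda_k \tau}$, and matching the initial conditions amounts to solving a Vandermonde system in the $\beta_k$. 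Using the Lagrange interpolation identity
\begin{equation*}
\sum_{k=1}^{p} \frac{(-\lambda_k)^{j}}{\prod_{\ell \neq k}(\lambda_\ell - \lambda_k)} = \delta_{j, p-1}\;, \qquad 0 \le j \le p-1\;,
\end{equation*}
one verifies that the unique choice is $\beta_k = b\, c_k$ with $c_k = 1/\prod_{j \ne k}(\lambda_j - \lambda_k)$, which yields the claimed representation.

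For the power spectrum, there are two natural routes. The direct one is to plug the representation $g(\tau) = b \sum_k c_k e^{-\lambda_k \tau}$ into $\CK(\tau) = b^2 \int_{0}^{\infty} g(s)g(s+|\tau|)\,\mathrm{d}s$, compute termwise to get a sum of exponentials $\sum_{j,k} \frac{c_j c_k}{\lambda_j + \lambda_k} e^{-\lambda_k |\tau|}$, and take its Fourier transform; after reassembling via the partial fraction identity $\sum_{k} \frac{c_k}{\lambda_k - i\omega} = 1/\prod_k(\lambda_k - i\omega)$, this collapses to the stated product form. The slicker route is to pass \eqref{eq:SDEpOrderMatGen} through the Fourier transform: since $g$ is causal with polynomial decay data, its Fourier transform is the transfer function $\widehat g(\omega) = b/p(i\omega) = b/\sum_{k=0}^p a_k (i\omega)^k$, and $\rmX$ is the convolution of $g$ with (normalised) white noise of spectral density $1/\sqrt{2\pi}$, so the output spectrum is $|\widehat g(\omega)|^2/\sqrt{2\pi}$, exactly matching the claim once $|\cdot|^2$ is written as $[\sum a_k (i\omega)^k][\sum a_k (-i\omega)^k]$.

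The only real technical point is the normalisation, since the paper uses the symmetric Fourier convention and since white noise is a tempered distribution rather than a function. The cleanest way to avoid any ambiguity is to use the representation $\rmX_t = \int g(t-s)\,\mathrm{d}B_s$ to directly compute $\CK(\tau)$ and then apply Plancherel to identify its Fourier transform with $|\widehat g|^2/\sqrt{2\pi}$; the stated formula is independent of which convention is chosen, provided one is consistent throughout. This, together with the Lagrange-interpolation computation of the previous paragraph, completes the proof.
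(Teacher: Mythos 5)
Your proof is correct. The paper does not actually supply a proof of this lemma; it simply defers to Papoulis, Section~10.3. Your argument is a self-contained derivation along the standard lines: variation of constants gives the stationary solution $\mathbf{X}_t = \int_{-\infty}^t e^{\mathbf{A}(t-s)}\mathbf{b}\,\mathrm{d}B_s$, the companion structure forces $e^{\mathbf{A}\tau}\mathbf{b}$ to be the stacked derivatives of a scalar solution $g$ of $p(D)g = 0$ with initial data $(0,\ldots,0,b)$, the distinct-eigenvalue assumption plus the Lagrange interpolation identity $\sum_k (-\lambda_k)^j/\prod_{\ell\neq k}(\lambda_\ell-\lambda_k) = \delta_{j,p-1}$ pins down $g(\tau) = b\sum_k c_k e^{-\lambda_k\tau}$, and the spectrum is then $|\mathcal{F}[g]|^2$ up to normalisation, collapsing via the partial fraction identity $\sum_k c_k/(\lambda_k+i\omega) = 1/p(i\omega)$. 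This all checks out.

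The only place where you could sharpen the write-up is the normalisation bookkeeping. You first state $\widehat g(\omega) = b/p(i\omega)$ without the $\tfrac{1}{\sqrt{2\pi}}$ prefactor (so implicitly using the unnormalised transform for $g$), but then want $\widehat{\CK}$ in the paper's symmetric convention. Your closing remark that "the stated formula is independent of which convention is chosen" is not literally true — the $\tfrac{1}{\sqrt{2\pi}}$ prefactor in the claim is precisely a manifestation of the paper's symmetric convention — but the underlying computation is correct: the relation $\CK(\tau) = \int_0^\infty g(u)g(u+|\tau|)\,\mathrm{d}u$ plus the cross-correlation theorem gives $\mathcal{F}_{\mathrm{unnorm}}[\CK] = |\mathcal{F}_{\mathrm{unnorm}}[g]|^2$, and dividing by $\sqrt{2\pi}$ converts to the paper's $\widehat{\CK}$. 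It would be cleaner to fix one convention at the outset and use it throughout, rather than appealing to a claimed convention-independence.
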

\begin{proof}
These facts are standard and we refer to \cite[Section~10.3]{Pap} for a proof.	
\end{proof}

We finish this section with a mixing-type result that will be used in the construction of the quantum cone in Section~\ref{sec:quantum}. We consider the augmented $(p+1)$-dimensional SDE of the following form,   
\begin{equation}
\label{eq:SDEpOrderMatTot}
d (\bar \rmX_t, \mathbf{X}_t)^{\top}
= 
\begin{pmatrix}
0 & (1, 0, \ldots, 0) \\
\mathbf{0} & \mathbf{A}
\end{pmatrix} 
(\bar \rmX_t, \mathbf{X}_t)^{\top} dt + 
\begin{pmatrix}
0 \\
\mathbf{b}
\end{pmatrix} 
d\mathbf{B}_t \;, \qquad (\bar \rmX_0, \mathbf{X}_0)^{\top} = \mathbf{x}\;,
\end{equation}
where $\mathbf{A}$ and $\mathbf{b}$ are as specified above, $(\mathbf{B}_t)_{t \geq 0}$ is a $(p+1)$-dimensional Brownian motion, and $\mathbf{x} = (\bar x, x_0, x_1, \ldots, x_{p-1}) \in \R^{p+1}$. In other words, $(\mathbf{X}_t)_{t \geq 0}$ is a solution to the Langevin equation \eqref{eq:SDEpOrderMatGen} with initial condition at time $0$ given by $(x_0, x_1, \ldots, x_{p-1})$, and $(\bar \rmX_t)_{t \geq 0}$ is the integral of $(\rmX_t)_{t \geq 0}$ starting from $\bar x$.

\begin{lemma}
\label{lm:TVdistannceSpherical}
Consider the setting described above and assume that all the eigenvalues of the matrix $\mathbf{A}$ are distinct and negative real numbers.
For $\smash{\mathbf{x} = (\bar x, x_0, x_1, \ldots, x_{p-1})}$, $\smash{\mathbf{y} = (\bar y, y_0, \ldots, y_{p-1}) \in \mathbb{R}^{p + 1}}$, let $\smash{(\bar \rmX_{\mathbf{x}, t}, \mathbf{X}_{\mathbf{x}, t})_{t \geq 0}}$ and $\smash{(\bar \rmX_{\mathbf{y}, t}, \mathbf{X}_{\mathbf{y}, t})_{t \geq 0}}$ be the solutions to the SDE \eqref{eq:SDEpOrderMatTot} with initial conditions $\mathbf{x}$ and $\mathbf{y}$, respectively. Then, for any $R > 0$, it holds that 
\begin{equation*}
\lim_{t \to \infty} \sup_{\mathbf{x}, \mathbf{y} \in B(0, R)} \dTV\bigl[(\bar \rmX_{\mathbf{x}, t}, \mathbf{X}_{\mathbf{x}, t}), (\bar \rmX_{\mathbf{y}, t}, \mathbf{X}_{\mathbf{y}, t})\bigr] = 0 \;.
\end{equation*}
\end{lemma}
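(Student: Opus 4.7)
The plan is to exploit the fact that $(\bar{X}_{\mathbf{x}, t}, \mathbf{X}_{\mathbf{x}, t})_{t \geq 0}$ is a Gaussian process whose mean $\mathbf{m}_{\mathbf{x}}(t)$ depends affinely on the initial condition $\mathbf{x}$, while its covariance matrix $\Sigma(t)$ does not depend on $\mathbf{x}$ (since \eqref{eq:SDEpOrderMatTot} is a linear SDE with additive noise). Writing $\mathbf{M}$ for the drift block matrix of \eqref{eq:SDEpOrderMatTot}, linearity gives $\mathbf{m}_{\mathbf{x}}(t)-\mathbf{m}_{\mathbf{y}}(t)=e^{t\mathbf{M}}(\mathbf{x}-\mathbf{y})$, so that by Pinsker's inequality applied to two shifted Gaussians with common covariance, it is enough to prove
$$
\sup_{\mathbf{x},\mathbf{y}\in B(0,R)}\bigl(\mathbf{m}_{\mathbf{x}}(t)-\mathbf{m}_{\mathbf{y}}(t)\bigr)^{\top}\Sigma(t)^{-1}\bigl(\mathbf{m}_{\mathbf{x}}(t)-\mathbf{m}_{\mathbf{y}}(t)\bigr)\to 0 \qquad \text{as } t\to\infty.
$$

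The first task is to bound the mean difference. Since all eigenvalues of $\mathbf{A}$ are strictly negative, the $\mathbf{X}$-block of $\mathbf{m}_{\mathbf{x}}(t)-\mathbf{m}_{\mathbf{y}}(t)$ equals $e^{t\mathbf{A}}(\mathbf{X}_{\mathbf{x},0}-\mathbf{X}_{\mathbf{y},0})$ and decays exponentially fast, uniformly over $\mathbf{x},\mathbf{y}\in B(0,R)$. The $\bar{X}$-component equals $(\bar{x}-\bar{y})+\int_{0}^{t} \mathbf{e}_1^{\top}e^{s\mathbf{A}}(\mathbf{X}_{\mathbf{x},0}-\mathbf{X}_{\mathbf{y},0})\,ds$, which by the same exponential decay converges to a finite deterministic constant $c(\mathbf{x},\mathbf{y})$ bounded by a constant depending only on $R$.

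The second and principal task is to analyse $\Sigma(t)^{-1}$. Writing $\Sigma(t)$ in $1{+}p$ block form with the $\bar{X}$-variance in the upper-left corner, we need to verify: (i) the covariance of $\mathbf{X}_t$ converges to the positive-definite stationary covariance $\Sigma_{\infty}$ associated with the stationary solution in Lemma~\ref{lm:stationaryGaussianMarkov} (equivalently, the solution of the Lyapunov equation $\mathbf{A}\Sigma_{\infty}+\Sigma_{\infty}\mathbf{A}^{\top}+\mathbf{b}\mathbf{b}^{\top}=0$); (ii) the cross-covariances between $\bar{X}_t$ and $\mathbf{X}_t$ remain bounded in $t$; and (iii) $\Var(\bar{X}_{\mathbf{x},t})\sim ct$ as $t\to\infty$ for a strictly positive constant $c$. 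Items (i) and (ii) are direct consequences of the exponential mixing of the $\mathbf{X}$-process combined with the stationary representation of Lemma~\ref{lm:power}. Granted (i)--(iii), the Schur complement formula shows that the $(\bar{X},\bar{X})$-entry of $\Sigma(t)^{-1}$ behaves like $1/(ct)$, while the remaining principal block converges to $\Sigma_{\infty}^{-1}$ and the off-diagonal entries stay bounded.

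Combining these ingredients, the displayed quadratic form splits into a $\bar{X}$-contribution of order $O(c(\mathbf{x},\mathbf{y})^2/t)$ and cross/$\mathbf{X}$-contributions controlled by the exponentially decaying factor $e^{t\mathbf{A}}(\mathbf{X}_{\mathbf{x},0}-\mathbf{X}_{\mathbf{y},0})$; both tend to zero uniformly for $\mathbf{x},\mathbf{y}\in B(0,R)$, yielding the claim. The main technical obstacle is item (iii): proving that $\Var(\bar{X}_{\mathbf{x},t})$ grows \emph{linearly} in $t$, not more slowly. This will follow from a Green--Kubo-type identity of the form $\Var\bigl(\int_{0}^{t} X_{s}\,ds\bigr)\sim t\int_{\R}\Cov(X_0,X_s)\,ds$ for the stationary solution, combined with the fact that by Lemma~\ref{lm:power} this total integrated autocovariance is (up to a positive constant) the power spectrum at $\omega=0$, namely $b^{2}/a_{0}^{2}>0$, where $a_0=\prod_k \lambda_k\neq 0$ is the product of the (strictly negative) eigenvalues of $\mathbf{A}$. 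A short transient argument absorbs the contribution of the non-stationary initial segment.
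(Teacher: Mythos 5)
Your proposal is correct and follows essentially the same route as the paper: Pinsker's inequality, the Gaussian Kullback--Leibler divergence as a quadratic form in the mean difference and $\Sigma_t^{-1}$, the block-triangular structure of the drift to deduce that the $\mathbf{X}$-component of the mean difference decays exponentially while the $\bar\rmX$-component stays $O(1)$, and then the scaling of $\Sigma_t$. Where the paper simply asserts (``one can easily check'') that the $(1,1)$-entry of $\Sigma_t$ grows like $t$ and that consequently the first row and column of $\Sigma_t^{-1}$ decay like $1/t$, you justify these steps more explicitly via the Schur complement formula and a Green--Kubo/power-spectrum computation of the linear-growth coefficient $b^2/a_0^2>0$ — a clean way to nail down the one non-obvious point.
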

\begin{proof}
Fix $R > 0$ and let $\smash{\mathbf{x}, \mathbf{y} \in B(0, R)}$. To establish the result, we use Pinsker's inequality \cite[Lemma~2.5]{Pinsker}, which states that the total variation distance between two probability distributions is bounded above by the square root of their \emph{Kullback--Leibler} (KL) divergence. Specifically, we have that   
\begin{equation}
\label{eq:Pinsker}
\dTV\bigl[(\bar \rmX_{\mathbf{x}, t}, \mathbf{X}_{\mathbf{x}, t}), (\bar \rmX_{\mathbf{y}, t}, \mathbf{X}_{\mathbf{y}, t})\bigr] \leq \sqrt{\frac{1}{2} \dKL\bigl[(\bar \rmX_{\mathbf{x}, t}, \mathbf{X}_{\mathbf{x}, t}) || (\bar \rmX_{\mathbf{y}, t}, \mathbf{X}_{\mathbf{y}, t})\bigr]} \;.
\end{equation}
We observe that, for each $t \geq 0$, $(\bar \rmX_{\mathbf{x}, t}, \mathbf{X}_{\mathbf{x}, t})$ and $(\bar \rmX_{\mathbf{y}, t}, \mathbf{X}_{\mathbf{y}, t})$ are multivariate Gaussians with different means but the same covariance matrix. This follows directly from the fact that 
\begin{equation}
\label{eq:repAug}
(\bar \rmX_{\mathbf{x}, t}, \mathbf{X}_{\mathbf{x}, t}) = e^{\bar{\mathbf{A}} t} \mathbf{x} + \int_0^t e^{\bar{\mathbf{A}} (t-s)} \, \bar{\mathbf{b}} \, d B_s \;, \qquad \forall \, t \geq 0 \;,	
\end{equation} 
where $\bar{\mathbf{A}}$ and $\bar{\mathbf{b}}$ denote the augmented drift matrix and diffusion vector in the SDE \eqref{eq:SDEpOrderMatTot}. Naturally, a similar representation holds for $(\bar \rmX_{\mathbf{y}, t}, \mathbf{X}_{\mathbf{y}, t})_{t \geq 0}$. Therefore, the KL divergence between the law of $(\bar \rmX_{\mathbf{x}, t}, \mathbf{X}_{\mathbf{x}, t})_{t \geq 0}$ and the law of $(\bar \rmX_{\mathbf{y}, t}, \mathbf{X}_{\mathbf{y}, t})_{t \geq 0}$ is simply given by\footnote{We observe that the matrix $(\bar{\mathbf{b}}, \bar{\mathbf{A}} \, \bar{\mathbf{b}}, \bar{\mathbf{A}}^2 \, \bar{\mathbf{b}}, \ldots, \bar{\mathbf{A}}^p \, \bar{\mathbf{b}})$ has full rank, and so, thanks to Kalman rank condition (see e.g.\ \cite[Theorem~1.16]{Coron}), the inverse of the covariance matrix $\Sigma_t$ is well-defined for all $t > 0$.}
\begin{equation}
\label{eq:CameronMartin}
\dKL\bigl[(\bar \rmX_{\mathbf{x}, t}, \mathbf{X}_{\mathbf{x}, t}) || (\bar \rmX_{\mathbf{y}, t}, \mathbf{X}_{\mathbf{y}, t})\bigr] = \frac{1}{2} (\rmm_{\mathbf{x}, t} - \rmm_{\mathbf{y}, t}) \Sigma_t^{-1} (\rmm_{\mathbf{x}, t} - \rmm_{\mathbf{y}, t})^{\top} \;, 
\end{equation}
where here we set 
\begin{equation*}
\rmm_{\mathbf{x}, t} \eqdef \E\bigl[(\bar \rmX_{\mathbf{x}, t}, \mathbf{X}_{\mathbf{x}, t})\bigr] \;, \qquad \rmm_{\mathbf{y}, t} \eqdef \E\bigl[(\bar \rmX_{\mathbf{y}, t}, \mathbf{X}_{\mathbf{y}, t})\bigr]  \;, \qquad \Sigma_t \eqdef \Cov\bigl((\bar \rmX_{\mathbf{x}, t}, \mathbf{X}_{\mathbf{x}, t})\bigr) \;.
\end{equation*}

Thanks to \eqref{eq:Pinsker}, it suffices to prove that the right-hand side of \eqref{eq:CameronMartin} converges to zero as $t \to \infty$. Using the representation \eqref{eq:repAug} and applying It\^o's isometry, we have that 
 \begin{equation*}
\rmm_{\mathbf{x}, t} = e^{\bar{\mathbf{A}} t} \mathbf{x} \;, \qquad \rmm_{\mathbf{y}, t} = e^{\bar{\mathbf{A}} t} \mathbf{y} \;, \qquad \Sigma_t = \int_0^t (e^{\bar{\mathbf{A}} s} \bar{\mathbf{b}}) (e^{\bar{\mathbf{A}} s} \bar{\mathbf{b}})^{\top} ds \;.
\end{equation*}
For all $t \geq 0$, due to the block-upper triangular structure of the matrix $\bar{\mathbf{A}}$, we have that 
\begin{equation}
\label{eq:repExpBarA}
e^{\bar{\mathbf{A}} t} = \begin{pmatrix}
1 & \int_0^t (e^{\mathbf{A} s} e_1)^{\top} ds \\
\mathbf{0} & e^{\mathbf{A}t} 
\end{pmatrix} \;.
\end{equation}
Now, let $\mathbf{D} = \text{diag}(-\lambda_1, \ldots, -\lambda_p)$ denote the diagonal matrix of the negative eigenvalues of $\mathbf{A}$, and let $\mathbf{V}$ represent the matrix of the corresponding eigenvectors. We then have that 
\begin{equation*}
e^{\mathbf{A}t} = \mathbf{V} e^{\mathbf{D} t} \mathbf{V}^{-1} = \sum_{k = 1}^p e^{-\lambda_k t} \mathbf{v}_k \mathbf{v}_k^{-1}\;,
\end{equation*}
where $\mathbf{v}_k$ and $\mathbf{v}_k^{-1}$ denote the $k$-th column of $\mathbf{V}$ and the $k$-th row of $\mathbf{V}^{-1}$, respectively. Thus, each entry of $e^{\mathbf{A} t}$ can be expressed as a linear combination of the terms $(e^{-\lambda_k t})_{k = 1}^p$, which, for brevity, we denote by $\langle e^{-\lambda_k t}\rangle_{k = 1}^p$. In particular, this observation implies that each entry of the vector in the top-right position of \eqref{eq:repExpBarA} is given by an $\OO(1)$ term plus a term of the form $\langle e^{-\lambda_k t}\rangle_{k = 1}^p$. Furthermore, recalling that only the last entry of $\bar{\mathbf{b}}$ is non-zero, we deduce that 
\begin{equation*}
e^{\bar{\mathbf{A}} s} \bar{\mathbf{b}} = \bigl(\OO(1) + \langle e^{-\lambda_k s} \rangle_{k=1}^p, \langle e^{-\lambda_k s} \rangle_{k=1}^p, \ldots, \langle e^{-\lambda_k s} \rangle_{k=1}^p \bigr)^{\top} \;.	
\end{equation*}
Combining these considerations, we obtain the following two facts: 
\begin{enumerate}[start=1,label={{{(P\arabic*})}}]
	\item \label{it:P1} The difference in means $\rmm_{\mathbf{x}, t} - \rmm_{\mathbf{y}, t}$ is such that its first component consists of an $\OO(1)$ term (depending on $R$), while all remaining components converge to zero exponentially fast as $t \to \infty$.
	\item \label{it:P2} The covariance matrix  $\Sigma_t$ is such that its top-left entry consists of an $\OO(t)$ term plus terms that vanish as $t \to \infty$, while all the remaining entries consist of an $\OO(1)$ term plus terms that vanish as $t \to \infty$.
\end{enumerate}
One can easily check that \ref{it:P2} implies that the inverse $\Sigma_t^{-1}$ has elements in its first row and first column that decay to zero as $1/t$ as $t \to \infty$, while all other entries consist of $\OO(1)$ terms. Combining these observations with \ref{it:P1}, we conclude that the right-hand side of \eqref{eq:CameronMartin} converges to zero as $t \to \infty$, thereby proving the claim.
\end{proof}

\section{Identification of the spherical average in arbitrary even dimension}
\label{sec:identification}
In dimension $d = 2$, it is well known that for any $x \in \mathbb{R}^d$, the process $(h_{e^{-t}}(x) - h_1(x))_{t \in \mathbb{R}}$ has the distribution of a standard two-sided Brownian motion. 
The aim of this section is to establish the characterisation of the spherical average process in any arbitrary even dimension $d > 2$, as stated in Theorem~\ref{th:identSphere}. 

\begin{proof}[Proof of Theorem~\ref{th:identSphere}]
We start by verifying that the multivariate process $(\mathbf{S}_t)_{t \in \R}$ defined in \eqref{eq:defIntOU} is the stationary solution to the $\cd_d$-dimensional SDE of the form \eqref{eq:SDEpOrderMat}, with the matrix $\mathbf{A}$ and the vector $\mathbf{b}$ as specified in the theorem's statement. Since, the covariance kernel of the process $(\rmS_t)_{t \in \R}$ is explicitly known, thanks to Lemma~\ref{lm:power}, it suffices to check that the power spectrum of the first derivative process $\smash{(\rmS^{(1)}_t)_{t \in \R}}$ coincides with that of the first component of the stationary solution of the aforementioned SDE.

We begin by noting that, assuming the process $(\rmS_t)_{t \in \R}$ is differentiable, the following holds, 
\begin{equation*}
\E\bigl[\rmS_t \rmS_s\bigr] = \int_0^t \int_0^s \E\bigl[\rmS^{(1)}_u \rmS^{(1)}_v\bigr] dv du\;, \qquad \forall \, t, \, s \in \R\;.
\end{equation*}
Hence, to compute the covariance of the first derivative $\rmS^{(1)}$, it suffices to take the second mixed derivative of the covariance kernel of $\rmS$, as specified in \eqref{eq:covSphereDiag}. Proceeding in this manner, we obtain that
\begin{equation*}
\CK^{(1)}(t, s) \eqdef \E\bigl[\rmS^{(1)}_t \rmS^{(1)}_s\bigr]  = c(d) \int_0^{\pi} \frac{1 - \cos \theta \cosh\abs{t-s}}{(\cos \theta - \cosh\abs{t-s})^2} (\sin \theta)^{d-2} d \theta\;, \qquad \forall \, s, t \in \R \;.
\end{equation*}
Since, the covariance kernel $\CK^{(1)} :\R \times \R \to \R$ only depends on the distance $\abs{t-s}$, with a slight abuse of notation, we define the function $\CK^{(1)} :\R \to \R$ by letting
\begin{equation*}
\CK^{(1)}(u) \eqdef c(d) \int_0^{\pi} \frac{1 - \cos \theta \cosh u}{(\cos \theta - \cosh u)^2} (\sin \theta)^{d-2} d \theta\;, \qquad \forall \, u \in \R\;.
\end{equation*}
Our goal now is to compute the Fourier transform of $\CK^{(1)}$. To this end, for $\theta \in [0, \pi]$, we define the function $\psi_{\theta}: \R \to \R$ by letting 
\begin{equation*}
\psi_{\theta}(u) \eqdef \frac{1 - \cos \theta \cosh u}{(\cos \theta - \cosh u)^2} \;, \qquad \forall \, u \in \R \;.
\end{equation*}
By applying the change of variable $u \mapsto e^{u}$ and performing some elementary integral computations, we find that  
\begin{equation*}
\widehat\psi_{\theta}(\omega) 
= \frac{1}{\sqrt{2 \pi}} \int_{-\infty}^{\infty} \frac{1 - \cos \theta \cosh u}{(\cos \theta - \cosh u)^2} e^{-i u \omega} du 
= \frac{1}{\sqrt{2 \pi}} \frac{2 \pi \omega}{e^{\pi \omega} - e^{-\pi \omega}} \bigl(e^{(\theta - \pi) \omega} + e^{-(\theta - \pi) \omega}\bigr)\;,
\end{equation*}
and so it holds that 
\begin{align*}
\widehat{\CK^{(1)}}(\omega) = c(d) \int_{0}^{\pi} \widehat\psi_{\theta}(\omega) (\sin \theta)^{d-2}  d\theta = \frac{1}{\sqrt{2 \pi}} \frac{c(d) 2 \pi \omega}{e^{\pi \omega} - e^{-\pi \omega}} \int_{0}^{\pi} \bigl(e^{(\theta - \pi) \omega} + e^{- (\theta - \pi) \omega}\bigr)(\sin \theta)^{d-2}  d \theta \;.
\end{align*}
Furthermore, we have the following identity 
\begin{equation}
\label{eq:firstIdCompSph}
\int_0^{\pi} (e^{(\theta - \pi) \omega} + e^{- (\theta -\pi) \omega})(\sin \theta)^{d-2} d \theta = \frac{(d-2) (d-3)}{\omega^2 + (d-2)^2} \int_0^{\pi} (e^{(\theta - \pi) \omega} + e^{- (\theta - \pi) \omega})(\sin \theta)^{d-4} d \theta \;,
\end{equation}
and also
\begin{equation}
\label{eq:secondIdCompSph}
	\int_0^{\pi} (e^{(\theta - \pi) \omega} + e^{-(\theta - \pi) \omega})(\sin \theta)^2 d \theta = \frac{2 (e^{\pi \omega} - e^{-\pi \omega})}{\omega(\omega^2+4)} 	\;.
\end{equation}
The identity \eqref{eq:firstIdCompSph} is derived using integration by parts, whereas \eqref{eq:secondIdCompSph} follows from some elementary integral computations. Therefore, using \eqref{eq:firstIdCompSph} and \eqref{eq:secondIdCompSph}, we find that 
\begin{equation*}
\widehat{\CK^{(1)}}(\omega) = \frac{1}{\sqrt{2 \pi}} \frac{2^{d-2}\Gamma(d/2)^2}{\prod_{k=1}^{\frac{d-2}{2}} \bigl(\omega^2 + (d-2 k)^2\bigr)} \;, \qquad \forall \, \omega \in \R \;,
\end{equation*}
where we used the identity $c(d) 2 \pi (d-2)! = 2^{d-2}\Gamma(d/2)^2$. Now, noting that we can write  
\begin{equation*}
\prod_{k=1}^{\frac{d-2}{2}} \bigl(\omega^2 + (d-2 k)^2\bigr) = \Biggl[\prod_{k=1}^{\frac{d-2}{2}} \bigl((d-2 k) + i \omega\bigr)\Biggr]\Biggl[\prod_{k=1}^{\frac{d-2}{2}} \bigl((d-2 k) - i\omega\bigr)\Biggr] \;,
\end{equation*}
thanks to Lemma~\ref{lm:power}, the multivariate process $(\mathbf{S}_t)_{t \in \R}$ is a solution to the $\cd_d$-dimensional SDE of the form \eqref{eq:SDEpOrderMat} where the coefficients $\smash{(a_k)_{k = 0}^{\cd_d}}$ of the matrix $\mathbf{A}$ are chosen in such a way that its eigenvalues are $\smash{(-\lambda_k)_{k=1}^{\cd_d}}$ where 
\begin{equation*}
	\lambda_k \eqdef d-2k \;, \qquad \forall \, k \in [\cd_d] \;,
\end{equation*}
and the coefficient $b = c(d) (d-2)! \pi$. We introduce the function $g:\R \to \R$ given by
\begin{equation*}
	g(s) \eqdef (d-2)e^{2s}(1-e^{2s})^{\frac{d-4}{2}}\;, \qquad \forall \, s \in \R \;.
\end{equation*}
Then, thanks again to Lemma~\ref{lm:power}, the process $(\rmS^{(1)}_t)_{t \in \R}$ admits the following representation,
\begin{equation}
\label{eq:eqDerSphere}
\rmS^{(1)}_t  = 2^{d/2-1} \Gamma(d/2) \sum_{k=1}^{\cd_d} c_k \int_{-\infty}^{t} e^{-(d-2k) (t-s)} dB_s = \int_{-\infty}^{t} g(s-t) d B_s \;, \qquad \forall \, t \in \R \;,
\end{equation}
where the second equality follows by recalling that $c_k = (\prod_{j = 1, , j \neq k}^{\cd_d} (\lambda_j - \lambda_k))^{-1}$ and a straightforward computation. For $t \geq 0$, by taking the integral on both sides of \eqref{eq:eqDerSphere}, we get that
\begin{align*}
\rmS_t & = \int_0^t\int_{-\infty}^{u} g(s-u)d B_s \, du
= \int_{-\infty}^t\int_{\max(0, s)}^t g(s-u) du \, d B_s 
= \int_{-\infty}^0 g(s) (B_{t+s} - B_s) ds
\end{align*}
where the first equality follows from stochastic Fubini's theorem, and the second equality follows from stochastic integration by parts and rearrangements. The same expression can be derived for $t < 0$, and so the claim follows.
\end{proof}

\section{The LBM in arbitrary dimension}
\label{sec:LMB}
In this section, we focus on the construction of the LBM and study its main properties. First, in Section~\ref{sub:LBMFixed}, we provide additional details on constructing the LBM starting from a fixed point, and we prove that if $\alpha \in (0, 2)$ is chosen according to \eqref{eq:relGammaAlpha}, then the LBM is compatible with the coordinate change rule described in Proposition~\ref{pr:coordinate_change_measure}.
Next, in Section~\ref{sub:LBMMarkov}, we construct the $d$-dimensional LBM for all $Q > 2$ as a Markov process. To achieve this, we closely follow the procedure established in \cite{GRV_LBM}, which itself builds upon \cite{Brosamler}. In particular, a key part of the construction involves potential analysis of the GMC measure $\mu_{\bh, \alpha}$. We then focus on the subcritical phase, i.e., $Q > \sqrt{\smash[b]{2d}}$, where we identify the additive functional used in the definition of the LBM and prove that its transition semigroup admits a density with respect to $\mu_{\bh, \alpha}$.
Finally, in Section~\ref{sub:spectral}, we prove Theorem~\ref{th_specDim}, i.e., we compute the spectral dimension of the LBM.

We recall that $(\BO, \BF, (B_t)_{t \geq 0}, (\BF_{t})_{t \geq 0}, (\BP_x)_{x \in \R^d})$ is a $d$-dimensional Brownian motion, independent of the whole-space LGF $h$ defined on the probability space $(\Omega, \FF, \P)$. Furthermore, for $t \geq 0$, we introduce the following occupation measures
\begin{equation}
\label{eq:defSigma}
\sigma(A) = \int_0^{\infty} \mathbbm{1}_{\{B_s \in A\}} ds\;, \qquad \sigma_t(A) = \int_0^{t} \mathbbm{1}_{\{B_s \in A\}} ds \;, \qquad \forall \, A \in \CB(\R^d), \; \forall \, t \geq 0 \;.
\end{equation}
In this section, we restrict our attention to the case $d > 2$, since the proofs of the corresponding two-dimensional results can be found in \cite{GRV_LBM, Ber_LBM, SpecDim}.
Moreover, we assume throughout this section that the parameters $Q > 2$ and $\alpha \in (0, 2)$ satisfy the relation specified in \eqref{eq:relGammaAlpha}. 

\subsection{The LBM from a fixed point}
\label{sub:LBMFixed}
As noted in the introduction, for a fixed point $x \in \R^d$ and $Q > 2$, the LBM $(\rmB_{\bh, \alpha, t})_{t \geq 0}$, as defined in Definition~\ref{def:LBM}, is constructed by time-changing the standard $d$-dimensional Brownian motion $(B_t)_{t \geq 0}$ started from $x$. In particular, constructing the LBM from a fixed point is a straightforward task that falls under the framework of the theory of GMC measures. 

Specifically, for all $x \in \R^d$, we observe that the measure $\sigma_t$ defined in \eqref{eq:defSigma} has $\BP_x$-almost surely \emph{effective dimension} $2$ (regardless of the ambient dimension $d \geq 2$), i.e., $2$ is the largest non-negative real number $\frks$ such that for all $R > 0$ and all $\eps>0$, 
\begin{equation*}
\int_{B(0, R) \times B(0, R)} \frac{1}{\abs{x-y}^{\frks - \eps}} \sigma_t(dx) \sigma_t(dy) < \infty \;,
\end{equation*}
We refer to \cite[Lemma~5.1]{BerMulti} for a proof of this fact. 
Therefore, for $\alpha \in (0, 2)$, if we define the GMC measure associated with with respect to $\sigma$ as
\begin{equation}
\label{eq:defNuAlpha}
\nu_{\bh, \alpha}(dx) \eqdef \lim_{\eps \to 0} \eps^{\frac{\alpha^2}{2}} e^{\alpha \bh_{\eps}(x)} \sigma(dx) \;.
\end{equation}
it is well-known (see \cite{Berestycki_Elementary, BP24, GRV_LBM}) that for all $x \in \R^d$, the limit in the above expression converges $\BP_x$-almost surely in $\P$-probability with respect to the topology of weak convergence in the space of non-negative, locally finite measures on $\R^d$.
Hence, to rigorously construct the LBM starting from $x$, it suffices to note that the measure $\nu_{\bh, \alpha}$ is related to the clock process $\rmF_{\bh, \alpha}$, as defined in \eqref{eq:clockLBM}, through the following relationship
\begin{equation*}
	\rmF_{\bh, \alpha}(t) = \nu_{\bh, \alpha}\bigl([B]_{t}\bigr)\;, \qquad \forall \, t \geq 0 \;,
\end{equation*}
where $[B]_t$ represents the trajectory of the Brownian motion up to time $t$.

In order to prove the conformal invariance of the LBM, we need the following auxiliary result.
\begin{proposition}[Coordinate change formula for $\nu_{\bh, \alpha}$]
\label{pr:coordinate_change_clock}
Consider the same setting as in Proposition~\ref{pr:coordinate_change_measure}. For $Q > 2$, let $\alpha \in (0, 2)$ be as specified in \eqref{eq:relGammaAlpha}. Then, for all $x \in \R^d$, it holds $\BP_x \otimes \P$-almost surely that
\begin{equation}
\label{eq:coordinate_change_clock}
\nu_{\bh, \alpha}(\phi(A)) = \nu_{\bh \circ \phi + Q \log |\phi'|, \alpha}(A) \;, \qquad \forall \, A \in \CB(\R^d) \;.
\end{equation}
\end{proposition}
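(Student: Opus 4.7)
The strategy mirrors that of Proposition~\ref{pr:coordinate_change_measure}, with one crucial modification: the Lebesgue measure appearing in the definition of $\mu_{\bh, \gamma}$ is replaced here by the Brownian occupation measure $\sigma$ appearing in the definition of $\nu_{\bh, \alpha}$. Under a spatial dilation by $c > 0$, the Lebesgue measure scales as $c^d$ (which is what yields the relation $Q = d/\gamma + \gamma/2$), whereas the Brownian occupation measure scales as $c^2$ because of the diffusive scaling of Brownian motion. This ``$c^d$ versus $c^2$'' swap is precisely what replaces the exponent $d/\gamma + \gamma/2$ by $2/\alpha + \alpha/2$ and motivates the definition of $\alpha$ in \eqref{eq:relGammaAlpha}. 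Since any $\phi$ of the form considered decomposes into translations, rotations, and dilations, and since $\abs{\phi'} \equiv 1$ for translations and rotations (under which both the spherical averages of $\bh$ and the law of Brownian motion are invariant up to the obvious coupling), it suffices to verify the claim for pure dilations $\phi(y) = c y$ with $c > 0$.

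For a dilation, the plan is to work at the level of the approximating measures $\nu_{\bh,\alpha,\eps}(dy) \eqdef \eps^{\alpha^2/2} e^{\alpha \bh_\eps(y)} \sigma(dy)$ and pass to the limit, the convergence being supplied by the standard GMC theory. Performing the change of variables $y = c z$ in $\nu_{\bh,\alpha}(cA)$ combines three ingredients. First, the spherical-average identity $\bh_\eps(c z) = (\bh \circ \phi)_{\eps/c}(z)$, which follows by a direct unwinding of the definition of the spherical average and a change of variables on $\S^{d-1}$. Second, the Brownian scaling: if $B$ is a $d$-dimensional Brownian motion, then $\bar B_t \eqdef c^{-1} B_{c^2 t}$ is again a $d$-dimensional Brownian motion, so that $\sigma^B(cA) = c^2 \sigma^{\bar B}(A)$ as random measures. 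Third, the substitution $\delta = \eps/c$ turns the prefactor $\eps^{\alpha^2/2}$ into $c^{\alpha^2/2} \delta^{\alpha^2/2}$. Combining these three ingredients and letting $\eps \to 0$ yields
\begin{equation*}
\nu_{\bh, \alpha}(c A) = c^{2 + \alpha^2/2}\, \nu_{\bh(c\,\cdot), \alpha}(A)\,,
\end{equation*}
where the occupation measure implicit in the right-hand side is that of $\bar B$. On the other hand, since $\abs{\phi'} \equiv c$, we have $\bh \circ \phi + Q \log \abs{\phi'} = \bh(c\,\cdot) + Q \log c$, and a constant shift of the field multiplies the associated measure by $e^{\alpha Q \log c} = c^{\alpha Q}$. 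The two sides therefore agree exactly when $2 + \alpha^2/2 = \alpha Q$, i.e.\ $Q = 2/\alpha + \alpha/2$, which is precisely \eqref{eq:relGammaAlpha}.

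The main subtlety is the bookkeeping around the two Brownian motions appearing in the identity. Because of the time change $\bar B_t = c^{-1} B_{c^2 t}$, the occupation measure on the right-hand side is most naturally expressed in terms of a rescaled Brownian motion whose starting point differs from that of $B$ by a factor of $c$; the cleanest formulation of the identity therefore rests on an explicit coupling between the two Brownian motions, under which the displayed equality becomes a pathwise change of variables. A secondary (and easier) point is the justification for interchanging the limit $\eps \to 0$ with the change of variables, which follows from the $\P$-a.s.\ convergence of the GMC approximants together with the boundedness of the multiplicative factor $c^{2 + \alpha^2/2}$.
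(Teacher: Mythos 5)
Your proposal is correct and follows essentially the same argument as the paper's: reduce to pure dilations, work at the level of the approximating measures $\eps^{\alpha^2/2} e^{\alpha\bh_\eps}\,\sigma(dx)$, combine the spherical-average identity $\bh_\eps(cz) = (\bh\circ\phi)_{\eps/c}(z)$ with the diffusive scaling $\sigma^B(cA) = c^2\,\sigma^{\bar B}(A)$ for $\bar B_t = c^{-1}B_{c^2 t}$, and match the resulting exponent $c^{2+\alpha^2/2}$ with the factor $c^{\alpha Q}$ coming from the shift $Q\log c$, which pins down \eqref{eq:relGammaAlpha}. Your closing remark about the coupling between $B$ and $\bar B$ is well-taken: the paper states the occupation-measure scaling only in law but uses the identity pathwise, and the precise reading of \eqref{eq:coordinate_change_clock} (needed later in Proposition~\ref{pr:conformalLBM}) is exactly as a pathwise change of variables along the coupled trajectories, which you have made explicit.
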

\begin{proof}
This proof is completely analogous to the proof of Proposition~\ref{pr:coordinate_change_measure}. However, we provide the details in order to highlight where the relation \eqref{eq:relGammaAlpha} between $Q$ and $\alpha$ originates from.  

We start by observing that, for every $d \geq 2$, the measure $\sigma$ defined in \eqref{eq:defSigma} has two-dimensional scaling, i.e., if $\phi: \R^d \to \R^d$ is a conformal automorphism which can be expressed as a composition of translations, dilations and rotations of $\R^d$, then for all Borel subsets $A \subseteq \R^d$,
\begin{equation*}
\sigma(\phi(A)) \eqlaw |\phi'|^2 \sigma(A) \;.
\end{equation*}

Now, let $c > 0$ and consider the field $\bh^c \eqdef \bh(c \,\cdot)$. For any $\eps > 0$ and $x \in \R^d$, we have the following relation between the spherical averages: $\bh^c_{\eps}(x) = \bh_{c \eps}(c x)$. For $A \in \CB(\R^d)$, we have that 
\begin{equation*}
\nu_{\bh^c, \alpha}(A) 
= \lim_{\eps \to 0} \eps^{\alpha^2/2} \int_A e^{\alpha \bh^c_{\eps}(x)} \sigma(dx)
= \lim_{\eps \to 0} \eps^{\alpha^2/2} \int_A e^{\alpha \bh_{c \eps}(c x)} \sigma(dx) 
= \lim_{\delta \to 0} (\delta/c)^{\alpha^2/2} c^{-2} \int_{c A} e^{\alpha \bh_{\delta}(x)} \sigma(dx) \;,
\end{equation*}
where in the last equality we set $\delta = c \eps$, we did the change of variables $x \mapsto x/c$, and we used  use the two-dimensional scaling of the occupation measure of the Brownian motion. We note that the term on the right-hand side of the above display equals $c^{-2 - \alpha^2/2} \nu_{\bh, \alpha}(c A)$, and so we have that 
\begin{equation*}
c^{2 + \alpha^2/2}  \nu_{\bh^c, \alpha}(A)  =  \nu_{\bh, \alpha}(cA) \qquad \iff \qquad \nu_{\bh^c  + Q \log c, \alpha, t}(A) = \nu_{\bh, \alpha}(c A)\;.
\end{equation*}
where here we used the fact that, for $\alpha = \alpha(Q)$ as in \eqref{eq:relGammaAlpha}, it holds that $\alpha Q = 2 + \alpha^2/2$. Finally, as for the proof of Proposition~\ref{pr:coordinate_change_measure}, we observe that the same argument also works if instead of just a scaling we have a composition of a scaling, rotation, and translation.
\end{proof}

\subsubsection{Conformal invariance of the LBM}
\label{sub:conformalLBM}
We now have all the necessary elements to prove that if $\alpha \in (0, 2)$ is chosen as specified in \eqref{eq:relGammaAlpha}, then the LBM is compatible with the coordinate change rule.

\begin{proposition}
\label{pr:conformalLBM}
Consider the same setting as in Proposition~\ref{pr:coordinate_change_measure}. For $Q > 2$, let $\alpha \in (0, 2)$ be as specified in \eqref{eq:relGammaAlpha}. Then, for all $x \in \R^d$, it holds $\BP_x \otimes \P$-almost surely that
\begin{equation*}
\bigl(\phi^{-1}(\rmB_{\bh, \alpha, t})\bigr)_{t \geq 0} = \bigl(\bar{\rmB}_{\bh \circ \phi + Q \log \abs{\phi'}, \alpha, t})_{t \geq 0} \quad \text{ where } \quad \bar{\rmB}_{\bh \circ \phi + Q \log \abs{\phi'}, \alpha, t}	\eqdef \bar{B}_{\bar{\rmF}^{-1}_{\bh \circ \phi + Q \log \abs{\phi'}, \alpha}(t)} \;,
\end{equation*}
with $(\bar{B})_{t})_{t \geq 0}$ a Brownian motion on $\R^d$ and $\bar{\rmF}_{\bh \circ \phi + Q \log \abs{\phi}, \alpha}$ the associated clock process with respect to the field $\bh \circ \phi + Q \log \abs{\phi'}$. 
\end{proposition}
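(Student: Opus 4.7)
The plan is to reduce the statement to the case where $\phi$ is a pure dilation, since the formulas are straightforward for translations and rotations (both $\bh$ and Brownian motion are invariant in law, and $|\phi'| = 1$, so the identity \eqref{eq:conformalInvLBM1} collapses to a change of starting point). By composition, it then suffices to treat $\phi(x) = c x$ for some $c > 0$.

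For the dilation case, I would introduce the natural candidate Brownian motion $\bar{B}_u \eqdef \phi^{-1}(B_{c^2 u}) = c^{-1} B_{c^2 u}$, which is a standard $d$-dimensional Brownian motion started from $\phi^{-1}(x)$, and compute its occupation measure. The two-dimensional scaling $\bar\sigma(A) = c^{-2}\sigma(\phi(A))$ follows directly from the definition. This scaling is the only place the ambient dimension enters -- it does not depend on $d$ -- and is precisely the reason why \eqref{eq:relGammaAlpha} takes the form it does.

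The core computation is then to rerun the argument of Proposition~\ref{pr:coordinate_change_clock}, but using $\bar\sigma$ in place of $\sigma$ in the definition of the GMC. Expanding
\begin{equation*}
\bar{\nu}_{\bh \circ \phi + Q \log c, \alpha}(A) = \lim_{\eps \to 0} \eps^{\alpha^2/2} \int_A e^{\alpha \bh_{c \eps}(\phi(x)) + \alpha Q \log c} \, \bar\sigma(dx),
\end{equation*}
applying the change of variable $y = \phi(x)$ together with $\bar\sigma(A) = c^{-2}\sigma(\phi(A))$, and setting $\delta = c\eps$ produces the prefactor $c^{\alpha Q - 2 - \alpha^2/2}$. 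This exponent vanishes precisely when $\alpha Q = 2 + \alpha^2/2$, i.e.\ exactly when \eqref{eq:relGammaAlpha} holds; this is the mechanism that selects the correct $\alpha$. One then obtains the key identity $\bar{\nu}_{\bh\circ\phi + Q\log c,\alpha}(A) = \nu_{\bh,\alpha}(\phi(A))$, $\BP_x\otimes\P$-a.s.

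With the measure identity in hand, the conclusion is a short chain of substitutions. Evaluating both sides on the trajectory $[\bar{B}]_u$, and noting that $\phi([\bar{B}]_u) = [B]_{c^2 u}$ by construction, gives
\begin{equation*}
\bar{\rmF}_{\bh \circ \phi + Q\log|\phi'|,\alpha}(u) = \rmF_{\bh,\alpha}(c^2 u), \qquad \bar{\rmF}^{-1}_{\bh \circ \phi + Q\log|\phi'|,\alpha}(t) = c^{-2} \rmF^{-1}_{\bh,\alpha}(t),
\end{equation*}
and plugging into the definition of $\bar{\rmB}$ yields $\bar{\rmB}_{\bh\circ\phi + Q\log|\phi'|,\alpha,t} = c^{-1} B_{\rmF^{-1}_{\bh,\alpha}(t)} = \phi^{-1}(\rmB_{\bh,\alpha,t})$. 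The only delicate point is ensuring that all the almost-sure statements (convergence of $\eps^{\alpha^2/2} e^{\alpha \bar\bh_\eps}\bar\sigma$, the identification in Proposition~\ref{pr:coordinate_change_clock}, and the trajectory identity) can be made to hold simultaneously on a single event of full $\BP_x\otimes\P$-measure; this is standard but deserves a brief mention. I expect the main conceptual point, rather than a genuine obstacle, to be the verification that the exponent $\alpha Q - 2 - \alpha^2/2$ vanishes exactly under \eqref{eq:relGammaAlpha} -- this is what forces the choice $\alpha = Q - \sqrt{Q^2-4}$ and distinguishes the higher-dimensional setting from $d=2$ where one can just take $\alpha = \gamma$.
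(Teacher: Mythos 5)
Your proposal is correct and takes essentially the same route as the paper: pass to a dilation, set $\bar{B}_u = c^{-1}B_{c^2 u}$, use the almost-sure scaling $\bar\sigma(A) = c^{-2}\sigma(\phi(A))$ to identify the clock process of $\bar{B}$ with respect to $\bh\circ\phi + Q\log|\phi'|$ with a time-changed copy of $\rmF_{\bh,\alpha}$, and then invert. The one point where you are slightly more careful than the paper is worth noting: the paper's Proposition~\ref{pr:coordinate_change_clock} is stated as an almost-sure identity between $\nu_{\bh,\alpha}(\phi(A))$ and $\nu_{\bh\circ\phi+Q\log|\phi'|,\alpha}(A)$, yet its proof invokes the relation $\sigma(\phi(A)) \eqlaw |\phi'|^2\sigma(A)$, which holds only in law; the statement is implicitly to be read with the right-hand side taken over the occupation measure $\bar\sigma$ of $\bar{B}$, which is exactly what makes the equality hold pathwise. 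By reproving the measure identity directly with $\bar\sigma$ in place of $\sigma$ and using the pathwise identity $\bar\sigma(A) = c^{-2}\sigma(\phi(A))$, you sidestep this ambiguity and make the almost-sure conclusion in Proposition~\ref{pr:conformalLBM} transparent.
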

\begin{proof}
For $c > 0$, define $(\bar{B}_t)_{t \geq 0} \eqdef (c^{-1} B_{c^{2} t})_{t \geq 0}$ and consider the dilation $\phi: \R^d \to \R^d$ give by $\phi(x) = c x$. Then, for all $t \geq 0$, it holds that 
\begin{equation*}
\phi^{-1}\bigl(\rmB_{\bh, \alpha, t}\bigr) = c^{-1} B_{\rmF^{-1}_{\bh, \alpha}(t)} = \bar{B}_{c^{-2} \rmF^{-1}_{\bh, \alpha}(t)} = \bar{B}_{\rmF_{\bh, \alpha, c}^{-1}(t)}\;,
\end{equation*}
where we set $\rmF_{\bh, \alpha, c}(t) \eqdef \rmF_{\bh, \alpha}(c^{2} t)$. Furthermore, for all $t \geq 0$, thanks to Proposition~\ref{pr:coordinate_change_clock}, we observe that
\begin{equation*}
\rmF_{\bh, \alpha, c}(t) =  \nu_{\bh}([B]_{c^2 t}) \overset{\eqref{eq:coordinate_change_clock}}{=} \nu_{\bh\circ \phi + Q \log\abs{\phi'}, \alpha}(c^{-1} [B]_{c^{2} t}) = \bar{\nu}_{\bh\circ \phi + Q \log\abs{\phi'}, \alpha}([\bar{B}]_t) = \bar{\rmF}_{\bh\circ \phi + Q \log\abs{\phi'}, \alpha}(t) \;,
\end{equation*}
where $\bar{\nu}_{\bh \circ \phi + Q \log\abs{\phi'}, \alpha}$ denotes the GMC with respect to the field $\bh \circ \phi + Q \log\abs{\phi'}$ along the trajectory of the Brownian motion $(\bar{B}_t)_{t \geq 0}$.
Finally, as for the proof of Proposition~\ref{pr:coordinate_change_clock}, we observe that the same argument also works if instead of just a scaling we have a composition of a scaling, rotation, and translation.
\end{proof}

\begin{remark}
The reason why, in Proposition~\ref{pr:coordinate_change_clock}, we only considered conformal automorphism of $\R^d$ which can be expressed as a composition of translations, dilations and rotations, instead of a general conformal maps, is that the Brownian motion in dimension three and higher is not conformally invariant. Indeed, consider the inversion map $\phi(z) =  z/\abs{z}^2$ and let $(B_t)_{t \geq 0}$ be a Brownian motion on $\R^d$ for $d > 3$. Then of course $(\phi(B_t))_{t \geq 0}$ cannot be expressed as a time-change of a Brownian motion. 
\end{remark}

\subsection{The LBM as a Markov process}
\label{sub:LBMMarkov}
In this section we construct the $d$-dimensional LBM as a diffusion process for all $Q > 2$. We begin by recalling some definitions and results that will be useful in what follows.
\begin{definition}
A \emph{positive continuous additive functional} (PCAF) $(\rmA_t)_{t \geq 0}$ is a $\BF_t$-adapted continuous functional with values in $[0, \infty]$ such that there exists a set $\Lambda \in \BF_{\infty}$ and a polar set $\rmN \subseteq \R^d$ for the $d$-dimensional Brownian motion for which $\BP_x(\Lambda) = 1$, for all $x \in \R^d \setminus \rmN$, and 
\begin{equation*}
\rmA_{t+s}(\omega) = \rmA_t(\omega) + \rmA_s(\omega) \circ \theta_{t}(\omega) \;, \qquad \forall \, \omega \in \Lambda \;, \; \forall \, s, t\geq 0\;,
\end{equation*}
where $(\theta)_{t \geq 0}$ denotes the family of shift mappings on $\Omega$, i.e., $B_{t+s} = B_t \circ \theta_s$, for all $s$, $t \geq 0$. Furthermore, if $\rmN = \emptyset$, then $(\rmA_t)_{t \geq 0}$ is called a PCAF in the \emph{strict sense}.
\end{definition}

\begin{definition}
A Borel measure $\rho_{\rmA}$ on $\R^d$ is called the\footnote{The fact that the Revuz measure of a PCAF is unique follows from the general theory of symmetric Markov processes, see e.g.\ \cite[Theorem~A.3.5]{Fukushima_Symmetric}.} \emph{Revuz measure} of a PCAF $(\rmA_t)_{t \geq 0}$ if for any non-negative Borel function $f : \R^d \to [0, \infty]$ it holds that 
\begin{equation*}
\int_{\R^d} f(x) \rho_{\rmA}(dx) = \lim_{t \to 0} \frac{1}{t} \int_{\R^d} \BE_x\Biggl[\int_0^t f(B_s) d \rmA_s\Biggr] dx \;.
\end{equation*}
\end{definition}

\subsubsection{Potential theory of the GMC measure in arbitrary dimension $d > 2$}
For each $R > 0$, we let $G_R : B(0, R) \times B(0, R) \to \R$ be the Green's function of the Laplacian on the ball $B(0, R)$ with zero boundary conditions, i.e., 
\begin{equation}
\label{eq:defGreenBall}
- \Delta G_R(x, \cdot) = \delta(x - \cdot)\;, \qquad G_R(x, \cdot)_{|\partial B(0, R)} = 0 \;.
\end{equation}

\begin{definition}
The $R$-potential of a positive Borel measure $\rho$ on $\R^d$ is the function $\mathrm{g}_R(\rho) : B(0, R)\to \R \cup \{\infty\}$ defined as follows 
\begin{equation}
\label{eq:potential}
\mathrm{g}_R[\rho](x) \eqdef \int_{B(0, R)} G_R(x, y) \rho(dy) \;, \qquad \forall \, x \in B(0, R) \;.
\end{equation}
Furthermore, the proper domain of $\mathrm{g}_R(\rho)$ is the set $\rmG_R[\rho] \subseteq B(0, R)$ given by
\begin{equation*}
\rmG_R[\rho] \eqdef \bigl\{x \in B(0, R) \, : \, \mathrm{g}_R[\rho](x) < \infty \bigr\} \;.
\end{equation*}
\end{definition}
It is a standard fact in potential theory that $B(0, R) \setminus \rmG_R[\rho]$ is a polar set for the $d$-dimensional Brownian motion (see e.g.\ \cite[page~247]{Brosamler}). 

In order to associate a PCAF to the measure $\mu_{\bh, \alpha}$, we begin with the fact such a measure does not charge polar sets. In what follows, for $R > 0$, we denote by $\mu_{\bh, \alpha}|_{B(0, R)}$ the restriction of the measure $\mu_{\bh, \alpha}$ to the ball $B(0, R)$.
\begin{lemma}[{\cite[Theorem~5.2]{Sturm1}}]
\label{lm:noChargePolar}
Let $d > 2$, $Q > 2$, and $R > 0$. Then, $\P$-almost surely, the measure $\mu_{\bh, \alpha}|_{B(0, R)}$ does not charge polar sets of the $d$-dimensional Brownian motion.
\end{lemma}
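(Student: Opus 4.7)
The plan is to reduce the statement to a pointwise potential estimate for $\mu_{\bh,\alpha}$ and then verify that estimate using the thick-point scaling of GMC already alluded to in the introduction.

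First, I would invoke Kakutani's theorem: for $d \geq 3$, a Borel set $A \subset \R^d$ is polar for $d$-dimensional Brownian motion if and only if it has vanishing Newtonian $(d-2)$-capacity, i.e.\ every finite positive Borel measure $\rho$ supported on $A$ has infinite Newtonian $(d-2)$-energy
\begin{equation*}
E_{d-2}(\rho) \eqdef \iint \abs{x-y}^{2-d} \rho(dx) \rho(dy) \;.
\end{equation*}
Next, I would observe that it is enough to establish the following pointwise statement: $\P$-almost surely, the potential
\begin{equation*}
U_{\mu}(x) \eqdef \int_{B(0,R)} \abs{x-y}^{2-d} \mu_{\bh,\alpha}(dy)
\end{equation*}
is finite for $\mu_{\bh,\alpha}$-almost every $x \in B(0, R)$. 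Indeed, if some polar $A \subset B(0,R)$ had $\mu_{\bh,\alpha}(A) > 0$, then by monotone convergence one could find $N$ large enough so that $A_N \eqdef \{x \in A \, : \, U_\mu(x) \leq N\}$ has positive $\mu$-mass; the restriction $\rho = \mu_{\bh,\alpha}|_{A_N}$ would then satisfy $E_{d-2}(\rho) = \int U_\rho(x)\, \rho(dx) \leq N \rho(A_N) < \infty$, contradicting Kakutani's criterion. So $\mu_{\bh,\alpha}$ cannot charge $A$.

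The pointwise estimate on $U_\mu$ is where the thick-point analysis from the introduction enters. Standard GMC theory ensures that $\mu_{\bh,\alpha}$ is $\P$-almost surely supported on points of thickness at most $\alpha$, i.e.\ on $\bigcup_{\beta \leq \alpha} \TT_\beta$. At a point $x \in \TT_\beta$ with $\beta \leq \alpha$, the scaling of the GMC around $x$ gives $\mu_{\bh,\alpha}(B(x, 2^{-n})) \lesssim 2^{-n(d + \alpha^2/2 - \alpha\beta + o(1))}$, so a dyadic decomposition yields
\begin{equation*}
U_\mu(x) \lesssim \sum_{n \geq 0} 2^{n(d-2)} \mu_{\bh,\alpha}(B(x, 2^{-n})) \lesssim \sum_{n \geq 0} 2^{-n(2 + \alpha^2/2 - \alpha\beta + o(1))} \;.
\end{equation*}
Since $\beta \leq \alpha$, the exponent is bounded below by $2 - \alpha^2/2$, which is positive precisely when $\alpha < 2$, i.e.\ when $Q > 2$. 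The geometric series therefore converges, and $U_\mu(x) < \infty$ for $\mu_{\bh,\alpha}$-a.e.\ $x$, which completes the reduction.

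The hard part will be to promote the informal scaling $\mu_{\bh,\alpha}(B(x,2^{-n})) \approx 2^{-n(d + \alpha^2/2 - \alpha\beta)}$ at $\beta$-thick points to a genuinely summable upper bound, with an $o(1)$-error controlled uniformly over the relevant range of $\beta$ and over dyadic scales. This is exactly the role played in the paper by the nested-ball control of the GMC near thick points; once such a bound is in place, the argument is forced by the single inequality $2 + \alpha^2/2 - \alpha\beta > 0$ at $\beta = \alpha$, which coincides with the range $Q > 2$ postulated in the lemma.
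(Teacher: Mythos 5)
The paper does not prove this lemma; it cites \cite[Theorem~5.2]{Sturm1} and moves on. So there is no ``paper's own proof'' to compare against directly, but your capacity-theoretic argument is exactly the standard route by which this type of result is established (in \cite{Sturm1}, and earlier in \cite{GRV_LBM} for $d=2$), and it is consistent with the potential-theoretic bookkeeping that the paper itself carries out in Lemma~\ref{lm:boundPCAF}, Remark~\ref{rm:PCAFnotStrict}, and Lemma~\ref{lm:boundSupMass}. Your reduction is correct: by Kakutani's theorem (and inner regularity of capacity), if a polar $A$ had $\mu_{\bh,\alpha}(A)>0$ then restricting to $A_N = \{x\in A : U_\mu(x)\le N\}$ would produce a nonzero finite measure of finite Riesz $(d-2)$-energy supported on a polar set, a contradiction; so it suffices that $U_\mu<\infty$ at $\mu_{\bh,\alpha}$-a.e.\ point, which your dyadic bound delivers. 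Two small refinements worth noting. First, the correct heuristic is that $\mu_{\bh,\alpha}$-a.e.\ point has thickness \emph{exactly} $\alpha$ (the rooted/Peyri\`ere measure picture), not ``at most $\alpha$''; this is what you actually use when you plug $\beta=\alpha$ in the exponent, and luckily the worst case coincides. Second, your worry about controlling the $o(1)$ errors uniformly over $\beta$ is unnecessary for this argument: you only need a pointwise statement at $\mu$-a.e.\ $x$, so for each such $x$ one applies the scaling bound with $\beta=\alpha$ (equivalently, via the coordinate-change formula at $x$ together with Lemma~\ref{lm:scalingThick} with $\beta=\alpha$), and the single inequality $2+\alpha^2/2-\alpha^2 = 2-\alpha^2/2>0$ for $\alpha\in(0,2)$, i.e.\ $Q>2$, closes the series. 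It is also worth observing that the more naive argument via $\E[E_{d-2}(\mu|_{B(0,R)})]<\infty$ would only cover $\alpha^2<2$, so your thick-point approach is indeed needed to reach the full range $Q>2$.
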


Letting $\tau_R$ denote the first time the Brownian motion $(B_t)_{t \geq 0}$ exits the ball $B(0, R)$, it follows from Lemma~\ref{lm:noChargePolar} and \cite[Proposition~3.2]{Brosamler} that, for all $Q > 2$, we can associate to the measure $\mu_{\bh, \alpha}|_{B(0, R)}$ a unique PCAF $(\rmF_{\bh, \alpha, R}(t))_{t \geq 0}$ such that the process
\begin{equation}
\label{eq:introPCAF}
\bigl(\mathrm{g}_R[\mu_{\bh,\alpha}](B_{t \wedge \tau_R}) - \mathrm{g}_R[\mu_{\bh,\alpha}](B_{0}) + \rmF_{\bh, \alpha, R}(t)\bigr)_{t \geq 0}
\end{equation}
is a mean-zero martingale under $\BP_x$ for all $x \in \rmG_R[\mu_{\bh, \alpha}]$.

\begin{lemma}
\label{lm:boundPCAF}
Let $d > 2$, $Q > \sqrt{\smash[b]{2d}}$, and $R > 0$. Then, it holds $\P$-almost surely that,
\begin{equation}
\label{eq:boundgR}
\sup_{x \in B(0, R)} \mathrm{g}_R[\mu_{\bh, \alpha}](x) < \infty \;,
\end{equation}
or in other words $\rmG_R[\mu_{\bh, \alpha}] = B(0, R)$. Therefore, for $Q > \sqrt{2d}$, $\rmF_{\bh, \alpha, R}$ is a PCAF in the strict sense.
\end{lemma}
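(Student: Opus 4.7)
The plan is to reduce the claim to a uniform dyadic estimate on GMC masses of balls, for which the subcriticality $Q > \sqrt{2d}$ is exactly the input that makes the resulting geometric series converge. The Green's function on $B(0,R)$ has the standard pointwise bound
\begin{equation*}
G_R(x,y) \leq C_{d,R}\bigl(1 + \abs{x-y}^{2-d}\bigr) \;, \qquad \forall\, x, y \in B(0, R) \;,
\end{equation*}
so it suffices to show that $\P$-almost surely,
\begin{equation*}
\sup_{x \in B(0, R)} \int_{B(0,R)} \abs{x-y}^{2-d} \mu_{\bh, \alpha}(dy) < \infty \;.
\end{equation*}
Then the total mass $\mu_{\bh, \alpha}(B(0,R))$ contributes a finite additive term, giving \eqref{eq:boundgR}. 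The second conclusion (that $\rmF_{\bh, \alpha, R}$ is strict) is immediate from \eqref{eq:boundgR}, since then $\rmG_R[\mu_{\bh, \alpha}] = B(0,R)$, and in the definition of PCAF the exceptional polar set can be taken empty.

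For the main estimate, I would split the integral along the dyadic annuli $A_n(x) \eqdef B(x, 2^{-n}) \setminus B(x, 2^{-n-1})$, obtaining
\begin{equation*}
\int_{B(0, R)} \abs{x-y}^{2-d} \mu_{\bh, \alpha}(dy) \lesssim \mu_{\bh, \alpha}(B(0,R)) + \sum_{n \geq 0} 2^{n(d-2)} \mu_{\bh, \alpha}\bigl(B(x, 2^{-n})\bigr) \;.
\end{equation*}
Thus the task reduces to a uniform upper bound on the mass of small balls. The input I would use is the classical modulus of continuity for subcritical GMC (in the spirit of \cite{Berestycki_Elementary, RVReview} and adapted to the log-correlated setting, as in \cite{DGZPerc}): for any $\eta > 0$, $\P$-almost surely there exists a random constant $C_\eta < \infty$ such that
\begin{equation*}
\mu_{\bh, \alpha}\bigl(B(x, r)\bigr) \leq C_\eta \, r^{\,d + \alpha^2/2 - \alpha(\sqrt{2d} + \eta)} \;, \qquad \forall\, x \in B(0, R), \; \forall\, r \in (0, 1) \;.
\end{equation*}
The exponent $d + \alpha^2/2 - \alpha\sqrt{2d}$ reflects the maximal thickness $\sqrt{2d}$ of the LGF and is non-negative; substituting this bound into the dyadic sum yields the geometric series
\begin{equation*}
\sum_{n \geq 0} 2^{-n(2 + \alpha^2/2 - \alpha(\sqrt{2d} + \eta))} \;,
\end{equation*}
which converges provided $2 + \alpha^2/2 - \alpha\sqrt{2d} > 0$, for $\eta$ small enough. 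Using the identity $\alpha Q = 2 + \alpha^2/2$ from \eqref{eq:relGammaAlpha}, this positivity is equivalent to $\alpha(Q - \sqrt{2d}) > 0$, i.e.\ exactly the subcritical assumption $Q > \sqrt{2d}$. This is also consistent with the heuristic \eqref{eq:mustPotSatConv} discussed in the introduction.

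The main obstacle is establishing the uniform ball-mass bound above with the sharp exponent. Since $\mu_{\bh, \alpha}$ is constructed via spherical averages of the whole-space LGF, one standard route is to apply a Kahane-type convexity argument together with a multiscale/chaining argument over a fine net in $B(0,R)$, using Borell--TIS to control the supremum of the regularised field and tighten the bound; alternatively, one can invoke directly the multifractal bounds for GMC measures associated to log-correlated fields already present in the literature. Once this uniform bound is in hand, the rest of the proof is the elementary dyadic decomposition sketched above.
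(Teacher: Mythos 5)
Your proposal is correct and follows essentially the same route as the paper's proof: bound $G_R(x,y)$ by a multiple of $\abs{x-y}^{2-d}$, decompose $B(0,R)$ into dyadic annuli around $x$, invoke a uniform modulus-of-continuity bound for the GMC mass of small balls with exponent $d + \alpha^2/2 - \sqrt{2d}\alpha - \eps$, and observe that the subcriticality $Q > \sqrt{2d}$ (equivalently $2 + \alpha^2/2 - \sqrt{2d}\alpha > 0$) makes the resulting geometric series converge. The one ingredient you leave to the literature (the uniform ball-mass estimate with the sharp exponent determined by the maximal thickness $\sqrt{2d}$) is exactly what the paper isolates and proves separately as Lemma~\ref{lm:modContLQG0}.
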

\begin{proof}
Fix $d > 2$, $Q > \sqrt{\smash[b]{2d}}$, and $R > 0$. By recalling that for any $x$, $y \in B(0, R)$, the quantity $G_R(x, y)$ is bounded by a multiple of $\abs{x-y}^{2-d}$, it suffices to show that 
\begin{equation*}
\sup_{x \in B(0, R)} \int_{B(0, R)} \abs{x-y}^{2-d} \mu_{\bh,\alpha}(dy) < \infty \;.
\end{equation*}
By using Lemma~\ref{lm:modContLQG0}, for any $x \in B(0, R)$ and $\eps > 0$, it holds that 
\begin{align*}
\int_{B(0, R)} \abs{x-y}^{2-d} \mu_{\bh,\alpha}(dy) 
& = \sum_{n \in \N_0} \int_{\{y \in B(0, R) \, : \, \abs{x - y} \in (e^{-(n+1)}2R, e^{-n}2R]\}} \abs{x-y}^{2-d} \mu_{\bh,\alpha}(dy)  \\
& \lesssim \sum_{n \in \N_0} e^{-n(2-d)} \mu_{\bh,\alpha}\bigl(B(x, e^{-(n-1)}2R)\bigr) \\
& \lesssim \sum_{n \in \N_0} e^{-n(2 + \alpha^2/2 - \sqrt{\smash[b]{2d}}\alpha - \eps)} \;,
\end{align*}
where the random implicit constants depend only on $R$. 
The desired result follows since $2 + \alpha^2/2 - \sqrt{\smash[b]{2d}}\alpha > 0$ for all $Q > \sqrt{\smash[b]{2d}}$ and by taking $\eps > 0$ small enough. 	
\end{proof}

\begin{remark}
\label{rm:PCAFnotStrict}
We expect that if $Q \in (2, \sqrt{\smash[b]{2d}}]$, then the set $\smash{B(0, R) \setminus \rmG_R[\mu_{\bh, \alpha}]}$ is $\P$-almost surely \emph{not} empty. Indeed, for $\beta \in [Q, \sqrt{\smash[b]{2d}}]$, the set $\TT_{\beta} \cap B(0, R)$ is $\P$-almost surely non-empty, and if $x \in \TT_{\beta} \cap B(0, R)$, then it should hold that $\mathrm{g}_R[\mu_{\bh, \alpha}](x) = \infty$. 
To see this, we recall that the field $\bh$ around a typical point $x \in \TT_{\beta}$ locally looks like $\bh - \beta\log\abs{\cdot - x}$.
In particular, using Lemma~\ref{lm:scalingThick} and following the same reasoning as in the proof of Lemma~\ref{lm:boundPCAF}, for any $\eps > 0$, we obtain that
\begin{equation*}
\int_{B(0, R)} \abs{x-y}^{2-d} \mu_{\bh - \beta\log\abs{\cdot - x},\alpha}(dy) \gtrsim \sum_{n \in \N_0} e^{-n(2 + \alpha^2/2 - \beta \alpha + \eps)}\;,
\end{equation*} 
and since by assumption $2 + \alpha^2/2 - \beta \alpha \leq 0$, the sum on the right-hand side of the above display diverges. We also emphasise that for any $\beta \in (2, \sqrt{\smash[b]{2d}}]$, the set of $\beta$-thick points $\TT_{\beta}$ has Hausdorff dimension equals to $d - \beta^2/2$, which is strictly less than $d - 2$. Consequently, by \cite[Theorem~8.20]{Peres}, $\TT_{\beta}$ is a polar set for the standard $d$-dimensional Brownian motion. Combining these observations, we expect that for $\smash{Q \in (2, \sqrt{\smash[b]{2d}}]}$, $\smash{\rmF_{\bh, \alpha, R}}$ is a PCAF \emph{not} in the strict sense.
\end{remark}

\subsubsection{Identification of the PCAF in the subcritical regime}
We now limit ourself to the subcritical regime, i.e., $Q > \sqrt{\smash[b]{2d}}$, and we identify the PCAF $\rmF_{\bh, \alpha, R}$ introduced in \eqref{eq:introPCAF}. We emphasise that in \cite{Sturm1}, the authors provide an identification of the PCAF for almost every choice of the starting point for all $Q > 2$.

To do so, we first need to introduce some notation. In what follows, recalling \eqref{eq:defLGFSeries}, we define $\smash{h_{(n)} \eqdef \sum_{i=1}^n \alpha_i f_i}$ and we let $\smash{\bh_{(n)} \eqdef h_{(n)} - h_{(n), 1}(0)}$ where $h_{(n), 1}(0)$ denotes the spherical average of $h_{(n)}$ over the unit sphere centred at the origin.
We define the function $\rmC:\R^d \to \R$ by letting
\begin{equation}
\label{eq:CR}
\log \rmC(x) \eqdef \lim_{\eps \to 0} \E\bigl[\bh_{\eps}(x)^2\bigr] + \log\eps \;, \qquad \forall \, x \in \R^d \;,
\end{equation}
and, for $\alpha \in (0, 2)$, we introduce the measure $\mu_{\bh, \alpha, (n)}$ by letting
\begin{equation}
\label{eq:defmuN}
\mu_{\bh, \alpha, (n)}(dx) = e^{\alpha^2 \bh_{(n)}(x) - \frac{1}{2} \alpha^2 \E[\bh_{(n)}(x)^2] + \frac{1}{2}\gamma^2 \log \rmC(x)} dx \;.
\end{equation}
It is a standard fact (see e.g.\ \cite[Lemma~2.9]{BP24}) that the measure $\mu_{\bh, \alpha}$ introduced in \eqref{eq:defLQGSphere} is $\P$-almost surely the weak limit for $n \to \infty$ of the measure $\mu_{\bh, \alpha, (n)}$. In what follows, with a slight abuse of notation, we denote by $\mu_{\bh, \alpha, (\infty)}$ the limiting measure $\mu_{\bh, \alpha}$.

For $R > 0$, $Q > \sqrt{\smash[b]{2d}}$, and $n \in \N$, we observe that the mapping $B(0, R) \ni x \mapsto \mathrm{g}_R[\mu_{\bh,\alpha, (n)}](x)$ is $\P$-almost surely bounded and continuous. This follows easily since, for each $n \in \N$, the measure $\mu_{\bh,\alpha, (n)}$ is absolutely continuous with respect to the Lebesgue measure.  
It is well-known (see e.g.\ \cite[page~467]{Bass}), that we can associate to the measure $\mu_{\bh, \alpha, (n)}$ a unique PCAF $\rmF_{\bh,\alpha, (n), R}$ such that the process 
\begin{equation*}
\bigl(\mathrm{g}_R[\mu_{\bh,\alpha, (n)}](B_{t \wedge \tau_R}) - \mathrm{g}_R[\mu_{\bh,\alpha, (n)}](B_{0}) + \rmF_{\bh, \alpha, (n), R}(t)\bigr)_{t \geq 0}
\end{equation*}
is a mean-zero martingale under $\BP_x$ for all $x \in B(0, R)$. We also emphasise that, for $n \in \N$, we have the following explicit representation of the PCAF $\smash{\rmF_{\bh, \alpha, (n), R}}$,
\begin{equation*}
\rmF_{\bh, \alpha, (n), R}(t) = \int_0^{t \wedge \tau_R} e^{\alpha^2 \bh_{(n)}(B_s) - \frac{1}{2} \alpha^2 \E[\bh_{(n)}(B_s)^2] + \frac{1}{2}\alpha^2 \log \rmC(B_s)} ds \;, \qquad \forall \, t \geq 0\;.
\end{equation*}

For any $z \in \R^d$, we denote by $\mu_{\bh, \alpha}^{z}$ the shifted measure $\mu_{\bh, \alpha}^{z}(\cdot) = \mu_{\bh, \alpha}(z + \cdot)$. We have the following result which is the higher dimensional analogue of \cite[Proposition~2.3]{GRV_LBM}. 
\begin{proposition}
\label{pr:contPCAF}
Let $d > 2$, $Q > \sqrt{\smash[b]{2d}}$, and $R > 0$. Then,  it holds $\P$-almost surely that:
\begin{enumerate}[start=1,label={{{(G\arabic*})}}]
\item \label{it:G1} As $n \to \infty$, it holds that $\sup_{x \in B(0, R)}\abs{\mathrm{g}_R[\mu_{\bh,\alpha, (n)}](x) - \mathrm{g}_R[\mu_{\bh, \alpha}](x)}\to 0$. 
\item \label{it:G2} The map $B(0, R) \ni x \mapsto \mathrm{g}_R[\mu_{\bh,\alpha}](x)$ is continuous.
\item \label{it:G3} For any $z_0 \in \R^d$, as $z \to z_0$ it holds $\sup_{x \in B(0, R)}\abs{\mathrm{g}_R[\mu^{z}_{\bh,\alpha}](x) - \mathrm{g}_R[\mu^{z_0}_{\bh,\alpha}](x)} \to 0$. 
\end{enumerate}
\end{proposition}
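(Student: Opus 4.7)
My plan is to prove all three claims via a single two-step decomposition of the potential. For $\delta > 0$, write
\begin{equation*}
\mathrm{g}_R[\rho](x) = \int_{B(x,\delta) \cap B(0,R)} G_R(x,y) \rho(dy) + \int_{B(0,R) \setminus B(x,\delta)} G_R(x,y) \rho(dy)\;,
\end{equation*}
and handle the two pieces differently. Using $G_R(x,y) \lesssim \abs{x-y}^{2-d}$ together with the dyadic decomposition already carried out in the proof of Lemma~\ref{lm:boundPCAF}, the first (singular) piece will be made uniformly small provided $\rho$ satisfies a small-ball estimate of the form $\rho(B(x,r)) \lesssim r^{2+\alpha^2/2-\sqrt{\smash[b]{2d}}\alpha-\eps}$ uniformly in $x \in B(0,R)$; the second (regular) piece is the integral of a jointly continuous bounded function and is amenable to standard continuity and weak-convergence arguments. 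The preliminary step is therefore to establish such a small-ball estimate $\P$-almost surely and simultaneously for $\rho = \mu_{\bh,\alpha}$ (which is Lemma~\ref{lm:modContLQG0}), for $\rho = \mu_{\bh,\alpha,(n)}$ with a constant independent of $n$, and for $\rho = \mu_{\bh,\alpha}^z$ with a constant uniform as $z$ ranges in a compact neighbourhood of $z_0$.

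Granted this uniform bound, \ref{it:G2} follows by fixing $\eta > 0$, choosing $\delta$ so that the singular contribution is at most $\eta$ uniformly in $x$, and applying dominated convergence to the regular piece using the joint uniform continuity of $G_R$ on $\{(x,y) \in B(0,R)^2 : \abs{x-y} \geq \delta\}$. For \ref{it:G1} one applies the same decomposition to both $\mu_{\bh,\alpha,(n)}$ and $\mu_{\bh,\alpha}$: the singular pieces are uniformly small in $x$ and $n$ by the preliminary step, and the regular pieces converge uniformly in $x$ by combining the weak convergence $\mu_{\bh,\alpha,(n)} \to \mu_{\bh,\alpha}$ with the equicontinuity of $\{G_R(x,\cdot) : x \in B(0,R)\}$ restricted to points at distance at least $\delta$ from $x$, together with compactness of $B(0,R)$. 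For \ref{it:G3}, as $z \to z_0$ the translated measure $\mu_{\bh,\alpha}^z$ converges weakly to $\mu_{\bh,\alpha}^{z_0}$ on any fixed ball (since spatial translation is continuous in the weak topology on finite Borel measures on $\R^d$), and the same split-and-dominate procedure yields the required uniform-in-$x$ convergence.

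The main obstacle I foresee is the $n$-uniform small-ball estimate for $\mu_{\bh,\alpha,(n)}$: the covariance of $\bh_{(n)}$ depends on the chosen orthonormal basis $(f_i)$ of $\H_d$ and in general differs from $-\log\abs{x-y}$ by a term whose $n$-uniform control on $B(0,R)$ is not automatic. I would resolve this by selecting a convenient martingale-type basis (for instance one built from the radial/spherical decomposition of Lemma~\ref{lm:radialDeco}) for which $\bh - \bh_{(n)}$ is independent of $\bh_{(n)}$ and itself log-correlated, so that the full-field estimate of Lemma~\ref{lm:modContLQG0} transfers to $\mu_{\bh,\alpha,(n)}$ via Gaussian conditioning; alternatively, the moment computation underlying Lemma~\ref{lm:modContLQG0} can be run directly for $\mu_{\bh,\alpha,(n)}$ to yield an $n$-uniform bound, with the limit handled via Fatou. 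A secondary but milder point for \ref{it:G3} is tracking the $z$-dependence of the normaliser $\rmC$ from \eqref{eq:CR}, which is explicit and only contributes a bounded multiplicative factor that is itself continuous on the compact window under consideration.
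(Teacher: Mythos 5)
Your strategy is essentially the paper's: the paper also splits $\mathrm{g}_R[\rho](x)$ into a near-diagonal (singular) piece and a far-field (regular) piece, controls the former uniformly by a small-ball estimate combined with $G_R(x,y)\lesssim\abs{x-y}^{2-d}$, and handles the latter via weak convergence plus compactness (Arzel\`a--Ascoli). The only cosmetic difference is that the paper uses a smooth cutoff $\theta_\delta$ rather than the sharp indicator $\mathbbm{1}_{B(x,\delta)}$; the smooth version avoids the side issue of whether $\mu_{\bh,\alpha}$ charges spheres when invoking weak convergence for the regular piece, but otherwise the argument is the same.

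The one place where your proposal over-worries is the ``main obstacle'' of an $n$-uniform small-ball estimate for $\mu_{\bh,\alpha,(n)}$: this is exactly what Lemma~\ref{lm:modContLQG0} already gives, since it is stated \emph{simultaneously} for all $n\in\N\cup\{\infty\}$ with a random constant that does not depend on $n$. So you do not need to argue it afresh. Also, the ingredient you invoke to transfer the estimate (that $\bh-\bh_{(n)}$ is independent of $\bh_{(n)}$) holds automatically for \emph{any} orthonormal basis expansion $h=\sum_i\alpha_if_i$ with $\alpha_i$ i.i.d.\ Gaussians, so there is no need to build a special radial/spherical basis; what is basis-dependent is only whether the tail $\bh-\bh_{(n)}$ is itself log-correlated, which is not actually required. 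Your handling of \ref{it:G3}, including the continuity of the normaliser $\rmC$ and the observation that the small-ball estimate for $\mu^z_{\bh,\alpha}$ uniformly near $z_0$ is just Lemma~\ref{lm:modContLQG0} on a slightly larger ball, is correct. Overall the proposal is sound and closely parallel to the paper's proof.
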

\begin{proof}
We begin by proving \ref{it:G1}. The proof closely follows the argument presented in \cite[Proposition~2.3]{GRV_LBM}. It is included here for completeness and to show that the techniques of \cite{GRV_LBM} do not heavily depend on the ambient dimension $d \geq 2$. With a slight abuse of notation, we denote the limiting measure $\mu_{\bh, \alpha}$ as $\mu_{\bh, \alpha, (\infty)}$. We consider a smooth function $\theta : \R^d \to \R$ such that $\theta(x) = 1$ for $\abs{x} \leq 1$ and $\theta(x) = 0$ for $\abs{x} \geq 2$. For $\delta > 0$, we then set $\smash{\theta_{\delta}(x) \eqdef \theta(x/\delta)}$ and $\smash{\bar{\theta}_{\delta}(x) \eqdef 1 - \theta_{\delta}(x)}$. Then, for any $n \in \N \cup \{\infty\}$, it holds that 
\begin{equation*}
\mathrm{g}_R[\mu_{\bh,\alpha, (n)}](x) = \underbrace{\int_{B(0, R)} G_R(x, y) \theta_{\delta}(x-y) \mu_{\bh,\alpha, (n)}(dy)}_{\rmI_{(n), \delta}(x)} + \underbrace{\int_{B(0, R)} G_R(x, y) \bar{\theta}_{\delta}(x-y) \mu_{\bh,\alpha, (n)}(dy)}_{\bar{\rmI}_{(n), \delta}(x)}\;.
\end{equation*}
We start by showing that the family of maps $B(0, R) \ni x \mapsto \rmI_{(n), \delta}(x)$ converges uniformly to $0$ as $\delta \to 0$, i.e., we need to verify that 
\begin{equation*}
\lim_{\delta \to 0 } \sup_{n \in \N \cup \{\infty\}}\sup_{x \in (0,R)} \int_{B(0, R)} \abs{x-y}^{2-d}\theta_{\delta}(x-y) \mu_{\bh,\alpha, (n)}(dy) = 0 \;. 
\end{equation*}
This holds since, by using Lemma~\ref{lm:modContLQG0}, for $n \in \N \cup \{\infty\}$, $x \in B(0,R)$, and any $\eps > 0$, it holds that 
\begin{align*}
\int_{B(0, R)} \abs{x-y}^{2-d}\theta_{\delta}(x-y) \mu_{\bh,\alpha, (n)}(dy)
& \leq \sum_{k \geq \lfloor \log(R/(2\delta)) \rfloor} \int_{\{y \in B(0, R) \, : \, \abs{x - y} \in (e^{-(k+1)}R, e^{-k}R]\}} \abs{x-y}^{2-d} \mu_{\bh,\alpha, (n)}(dy)  \\
& \lesssim \sum_{k \geq \lfloor \log(R/(2\delta)) \rfloor} e^{-k(2-d)} \mu_{\bh,\alpha, (n)}\bigl(B(x, e^{-(k-1)}R)\bigr) \\
& \lesssim \sum_{k \geq \lfloor \log(R/(2\delta)) \rfloor} e^{-k(2 + \alpha^2/2 - \sqrt{\smash[b]{2d}} \alpha - \eps)}  \;,
\end{align*}
where the (random) implicit constants depend only on $R$. For all $Q > \sqrt{\smash[b]{2d}}$ the latter series converges to $0$ as $\delta \to 0$ by taking $\eps > 0$ small enough.
Therefore, to conclude, it suffices to show that for each fixed $\delta > 0$, the family $\bar{\rmI}_{(n), \delta}$ converges uniformly on $B(0, R)$ to $\smash{\bar{\rmI}_{(\infty), \delta}}$ as $n \to \infty$. The fact that this convergence holds pointwise is a direct consequence of the fact that the measure $\mu_{\bh, \alpha, (n)}$ converges weakly to $\smash{\mu_{\bh, \alpha}}$ as $n \to \infty$. Hence, it suffices to show that the family $\smash{(\bar\rmI_{(n), \delta})_{n \in \N}}$ is relatively compact for the topology of uniform convergence in $B(0, R)$. This follows since for each $\delta > 0$, the mapping $B(0, R) \times B(0, R) \ni (x, y) \mapsto G_{R}(x, y) \bar\theta_d(x-y)$ is uniformly continuous in $B(0, R) \times B(0, R)$, the fact that $\sup_{n \in \N} \mu_{\bh, \alpha, (n)}(B(0, R)) <  \infty$, and Arzel\'a--Ascoli theorem.

Finally, the proofs of \ref{it:G2} and \ref{it:G3} rely on similar ideas and are entirely analogous to the proofs of the corresponding properties established in \cite[Proposition~2.3]{GRV_LBM}. Therefore, their proofs are omitted.
\end{proof}
 
\begin{lemma}
\label{lm:convClockR}
Let $d > 2$, $Q > \sqrt{\smash[b]{2d}}$, and $R > 0$. Then, it holds $\P$-almost surely for all $x \in B(0, R)$ and $\eta > 0$ that
\begin{equation*}
\lim_{n \to \infty} \BP_x\Biggl(\sup_{t \geq 0}\abs{\rmF_{\bh, \alpha, (n), R}(t) - \rmF_{\bh, \alpha, R}(t)} \geq \eta\Biggr) = 0\;.
\end{equation*}
\end{lemma}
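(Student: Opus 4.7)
The plan is to leverage the martingale representation \eqref{eq:introPCAF} together with Doob's maximal inequality. Writing $M^{(n)}_t$ and $M^{(\infty)}_t$ for the martingales in \eqref{eq:introPCAF} associated to $\mu_{\bh,\alpha,(n)}$ and $\mu_{\bh,\alpha}$, and setting $D_n \eqdef \mathrm{g}_R[\mu_{\bh,\alpha,(n)}] - \mathrm{g}_R[\mu_{\bh,\alpha}]$, the starting observation is the pathwise identity
\begin{equation*}
\rmF_{\bh,\alpha,(n),R}(t) - \rmF_{\bh,\alpha,R}(t) = \bigl(M^{(n)}_t - M^{(\infty)}_t\bigr) - D_n(B_{t \wedge \tau_R}) + D_n(B_0)\;.
\end{equation*}
Thanks to \ref{it:G1} of Proposition~\ref{pr:contPCAF}, $\sup_{y \in B(0,R)}|D_n(y)| \to 0$ $\P$-almost surely, so the non-martingale contribution tends to zero uniformly in $t$, and it suffices to establish that $\sup_{t \geq 0}|M^{(n)}_t - M^{(\infty)}_t|$ converges to zero in $\BP_x$-probability.

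Since $M^{(n)} - M^{(\infty)}$ is a martingale that remains constant after $\tau_R$ (as $\mathrm{g}_R[\rho]$ vanishes on $\partial B(0, R)$), one has
\begin{equation*}
M^{(n)}_\infty - M^{(\infty)}_\infty = -D_n(x) + \bigl(\rmF_{\bh,\alpha,(n),R}(\tau_R) - \rmF_{\bh,\alpha,R}(\tau_R)\bigr)\;.
\end{equation*}
Doob's $L^2$ maximal inequality then reduces the problem to establishing the $L^2$ convergence $\BE_x[(\rmF_{\bh,\alpha,(n),R}(\tau_R) - \rmF_{\bh,\alpha,R}(\tau_R))^2] \to 0$. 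To this end, I would invoke the standard Revuz product identity, obtained from integration by parts and the strong Markov property: for two positive continuous additive functionals $\rmA$ and $\rmB$ with Revuz measures $\rho_A$ and $\rho_B$,
\begin{equation*}
\BE_x[\rmA_{\tau_R}\rmB_{\tau_R}] = \int_{B(0,R)} G_R(x,y)\,\mathrm{g}_R[\rho_A](y)\,\rho_B(dy) + \int_{B(0,R)} G_R(x,y)\,\mathrm{g}_R[\rho_B](y)\,\rho_A(dy)\;.
\end{equation*}
Applied to the four cross-terms arising from the expansion of the square, this yields the compact expression
\begin{equation*}
\BE_x\bigl[(\rmF_{\bh,\alpha,(n),R}(\tau_R) - \rmF_{\bh,\alpha,R}(\tau_R))^2\bigr] = 2\int_{B(0,R)} G_R(x,y)\,D_n(y)\,(\mu_{\bh,\alpha,(n)} - \mu_{\bh,\alpha})(dy)\;,
\end{equation*}
and bounding the signed measure via $|\mu_{\bh,\alpha,(n)} - \mu_{\bh,\alpha}| \leq \mu_{\bh,\alpha,(n)} + \mu_{\bh,\alpha}$ controls this by $2\sup_{y \in B(0,R)}|D_n(y)| \cdot (\mathrm{g}_R[\mu_{\bh,\alpha,(n)}](x) + \mathrm{g}_R[\mu_{\bh,\alpha}](x))$.

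The right-hand side tends to zero by combining Proposition~\ref{pr:contPCAF}\ref{it:G1} (for the prefactor) with Lemma~\ref{lm:boundPCAF} (to control $\mathrm{g}_R[\mu_{\bh,\alpha}](x)$ uniformly, and hence $\mathrm{g}_R[\mu_{\bh,\alpha,(n)}](x)$ for $n$ large, via the triangle inequality). Stringing these three ingredients together gives the claimed uniform convergence in probability. The most delicate step is the Revuz-type $L^2$ identity, which requires careful handling of two distinct Revuz measures together with the boundary behaviour of $G_R$; once this is in place, the remainder of the argument is routine and relies only on facts already established in the subcritical regime.
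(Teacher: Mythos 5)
Your argument is correct, and it essentially reconstructs in full detail what the paper dispatches with a single citation: the paper's proof of this lemma consists of the sentence ``This result is a consequence of (G1) in Proposition~4.8 and [Bass, Proposition~2.1]'', and the martingale decomposition, Doob's $L^2$ maximal inequality, and the Revuz product formula you invoke are precisely the engine inside Bass's general convergence result for additive functionals whose potentials converge uniformly. Your derivation of the compact $L^2$ identity
\begin{equation*}
\BE_x\bigl[(\rmF_{\bh,\alpha,(n),R}(\tau_R) - \rmF_{\bh,\alpha,R}(\tau_R))^2\bigr] = 2\int_{B(0,R)} G_R(x,y)\,D_n(y)\,(\mu_{\bh,\alpha,(n)} - \mu_{\bh,\alpha})(dy)
\end{equation*}
is correct: expanding the square and applying the two-functional Revuz formula to the three terms yields exactly this telescoping. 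The only point worth making slightly more explicit is the claim that $D_n(B_{\tau_R}) = 0$, i.e.\ that the potentials $\mathrm{g}_R[\mu_{\bh,\alpha,(n)}]$ and $\mathrm{g}_R[\mu_{\bh,\alpha}]$ vanish continuously at $\partial B(0,R)$; this does hold here because, by Lemma~\ref{lm:boundPCAF} together with \ref{it:G1}, all of these potentials are bounded, which permits a dominated-convergence argument against the boundary behaviour of $G_R$. Modulo that remark, what you wrote is a complete and faithful unpacking of the reference the paper relies on, rather than a genuinely different route.
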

\begin{proof}
This result is a consequence of \ref{it:G1} in Proposition~\ref{pr:contPCAF} and \cite[Proposition~2.1]{Bass}.
\end{proof}

We are finally ready to identify the PCAF of the LBM on the whole space and to state its main properties. 
\begin{theorem}
\label{th:conLBM}
Let $d > 2$ and $Q > \sqrt{\smash[b]{2d}}$. Then, $\P$-almost surely, there exists a unique PCAF in the strict sense $(\rmF_{\bh, \alpha}(t))_{t \geq 0}$ such that, for all $R > 0$, 
\begin{equation*}
\rmF_{\bh, \alpha}(t) = \rmF_{\bh, \alpha, R}(t) \;, \qquad \forall \, t < \tau_R \;.
\end{equation*}
Furthermore, the following properties hold $\P$-almost surely:
\begin{enumerate}[start=1,label={{{(B\arabic*})}}]
\item \label{it:B1}  The Revuz measure of $\rmF_{\bh, \alpha}$ is $\mu_{\bh, \alpha}$.
\item \label{it:B2}  For all $x \in \R^d$, $T> 0$, and $\eta > 0$, the following limit holds
\begin{equation*}
\lim_{n \to \infty} \BP_x\Biggl(\sup_{t \leq T}\abs{\rmF_{\bh, \alpha, (n)}(t) - \rmF_{\bh, \alpha}(t)} \geq \eta\Biggr) = 0\;.
\end{equation*}
\item \label{it:B3}  For all $x \in \R^d$, it holds $\BP_x$-almost surely that the map $[0, \infty) \ni t \mapsto \rmF_{\bh, \alpha}(t)$ is strictly increasing. 
\item \label{it:B4}  For all $x \in \R^d$, it holds $\BP_x$-almost surely that $\lim_{t \to \infty} \rmF_{\bh,\alpha}(t) = \infty$. 
\item \label{it:B5}  For each $T > 0$, and for any bounded continuous function $\mathscr{F}: \CC([0,T], \R^{+}_0) \to \R$, the mapping $\R^d \ni x \to \BE_{x}[\mathscr{F}(\rmF_{\bh, \alpha}(\cdot))]$ is continous.
\end{enumerate}
\end{theorem}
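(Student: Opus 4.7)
The plan is to concatenate the localised PCAFs $\rmF_{\bh, \alpha, R}$ from Lemma~\ref{lm:boundPCAF} into a single global PCAF. I would first establish consistency: for $R_1 < R_2$ and $t < \tau_{R_1}$, that $\rmF_{\bh, \alpha, R_1}(t) = \rmF_{\bh, \alpha, R_2}(t)$. At finite mollification level this is trivial, since the approximating PCAF has the explicit representation $\rmF_{\bh, \alpha, (n), R_j}(t) = \int_0^{t \wedge \tau_{R_j}} e^{\alpha^2 \bh_{(n)}(B_s) - \frac{1}{2}\alpha^2 \E[\bh_{(n)}(B_s)^2] + \frac{1}{2}\alpha^2 \log \rmC(B_s)} ds$, which manifestly agrees for the two radii on $\{t < \tau_{R_1}\}$. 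Passing $n \to \infty$ via Lemma~\ref{lm:convClockR} transfers the identity to the limiting PCAFs $\rmF_{\bh, \alpha, R_j}$. Defining $\rmF_{\bh, \alpha}(t) \eqdef \rmF_{\bh, \alpha, R}(t)$ for any $R$ with $\tau_R > t$ then yields a well-defined strict-sense PCAF (since each $\rmF_{\bh, \alpha, R}$ is strict by Lemma~\ref{lm:boundPCAF}), and uniqueness is inherited from uniqueness at each level $R$.

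Properties~\ref{it:B1}--\ref{it:B3} then follow in a relatively straightforward manner. The Revuz measure identification~\ref{it:B1} is inherited from the Revuz measure of $\rmF_{\bh, \alpha, R}$ being $\mu_{\bh, \alpha}|_{B(0, R)}$ (by the construction via~\eqref{eq:introPCAF}) together with monotone convergence as $R \to \infty$. For~\ref{it:B2}, fix $T > 0$ and $\eta > 0$, choose $R$ so that $\BP_x(\tau_R \leq T) < \eta/2$, and observe that on $\{\tau_R > T\}$ the global PCAFs $\rmF_{\bh, \alpha, (n)}$ and $\rmF_{\bh, \alpha}$ coincide with their $R$-localised versions, so Lemma~\ref{lm:convClockR} applies. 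For~\ref{it:B3}, since $\mu_{\bh, \alpha}$ has full support $\P$-a.s., on any interval $[t_1, t_2]$ the Brownian path $\BP_x$-a.s.\ visits some open set carrying positive $\mu_{\bh, \alpha}$-mass, and the Revuz characterisation combined with Lemma~\ref{lm:noChargePolar} yields $\rmF_{\bh, \alpha}(t_2) > \rmF_{\bh, \alpha}(t_1)$.

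For~\ref{it:B5}, I would localise on $\{\tau_R > T\}$ with $R$ large so that this event has probability close to one, uniformly in the starting point $x$ over a compact set. On this event $\rmF_{\bh, \alpha}$ coincides with $\rmF_{\bh, \alpha, R}$, whose dependence on $x$ is controlled by the uniform continuity of the Green potential $x \mapsto \mathrm{g}_R[\mu_{\bh, \alpha}](x)$ from~\ref{it:G2} of Proposition~\ref{pr:contPCAF}, combined with the martingale characterisation of $\rmF_{\bh, \alpha, R}$ in~\eqref{eq:introPCAF} and a coupling of Brownian motions started from nearby points using the Feller property. This reduces the continuity claim to the continuity-in-$x$ of the potential, already available.

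The main obstacle is~\ref{it:B4}, the divergence $\rmF_{\bh, \alpha}(t) \to \infty$ as $t \to \infty$. Unlike in dimension $d = 2$, Brownian motion in $d \geq 3$ is transient, so one cannot rely on recurrence to force the clock to accumulate indefinitely in any single bounded region. The plan is to first compute $\BE_x[\rmF_{\bh, \alpha, R}(\tau_R)] = \int_{B(0,R)} G_R(x, y) \mu_{\bh, \alpha}(dy)$, and show that this grows to $+\infty$ with $R$: as $R \to \infty$ the Green functions $G_R(x, \cdot)$ converge pointwise to the whole-space kernel $c_d |x - \cdot|^{2-d}$, and by the scale covariance of the whole-space GMC (Proposition~\ref{pr:coordinate_change_measure}), the contributions from dyadic annuli $B(0, 2^{k+1}) \setminus B(0, 2^k)$ do not decay fast enough for the integral $\int_{\R^d} |x - y|^{2-d} \mu_{\bh, \alpha}(dy)$ to be finite. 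To upgrade divergence in expectation to $\BP_x$-a.s.\ divergence, I would decompose the clock as a sum of contributions from successive dyadic shells, exploit the strong Markov property at the hitting times of these shells together with near-independence across well-separated scales of the GMC, and conclude via a second-moment or Borel--Cantelli argument on the lower-bound side.
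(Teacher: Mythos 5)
Your construction of the global PCAF by consistency of the localised PCAFs $\rmF_{\bh, \alpha, R}$, and your treatment of~\ref{it:B1},~\ref{it:B2},~\ref{it:B3}, and~\ref{it:B5}, are in line with the paper's argument, which delegates these to the two-dimensional proof in \cite{GRV_LBM} with the coupling Lemma~\ref{lm:couplingBB} and Proposition~\ref{pr:contPCAF} as the $d>2$ inputs. (For~\ref{it:B3}, the Revuz characterisation is an averaged statement, so by itself it does not give pathwise strict monotonicity; what one really invokes is the standard fact that a PCAF whose Revuz measure has full support has full fine support, hence is strictly increasing. This is a small imprecision, not a gap.)

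Where you genuinely diverge from the paper is~\ref{it:B4}. You propose: compute $\BE_x[\rmF_{\bh,\alpha,R}(\tau_R)] = \mathrm{g}_R[\mu_{\bh,\alpha}](x)$ and show this potential diverges as $R\to\infty$ (correct, via the dyadic-annulus decomposition and scale covariance), then upgrade to $\BP_x$-a.s.\ divergence via second moments plus near-independence of the GMC across scales plus Borel--Cantelli. That route is viable — since $\alpha < \alpha_c < \sqrt{d}$ one is in the $L^2$ phase so the second moment is finite, and near-independence across well-separated scales of a log-correlated field is a known tool — but it is substantially more technical, and the key upgrade from expectation-divergence to a.s.\ divergence is left at the level of a sketch. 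The paper takes a cleaner path: reduce, via Lemma~\ref{lm:couplingBB}, to a fixed starting point $x=0$; then write, using the coordinate-change formula for $\nu_{\bh,\alpha}$ (Proposition~\ref{pr:coordinate_change_clock}), $\rmF_{\bh,\alpha}(\tau_n) = n^{\alpha Q} e^{\alpha\bh_n(0)}\nu_{\bh^n,\alpha}(n^{-1}[B]_{\tau_n})$ where $\bh^n := \bh(n\cdot) - \bh_n(0)\eqlaw\bh$. The prefactor $n^{\alpha Q}e^{\alpha\bh_n(0)}$ tends to $+\infty$ $\P$-a.s.\ by Borel--Cantelli since $\bh_n(0)$ has variance of order $\log n$; the remaining factor has law independent of $n$ and is a.s.\ positive by~\ref{it:B3}, so a downward-continuity-of-measures argument gives $\limsup_n \rmF_{\bh,\alpha}(\tau_n) = \infty$ with probability at least $1-\eps$ for every $\eps>0$. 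This avoids second moments entirely and replaces your near-independence input with exact scale covariance, which is the more efficient tool here. So your plan for~\ref{it:B4} is not wrong, but it is harder than it needs to be and, as written, the crucial a.s.\ upgrade step is not supplied.
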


The proof of Theorem~\ref{th:conLBM} relies, among other things, on the following standard coupling result for $d$-dimensional Brownian motions which is a consequence of the strong-Markov property. Consider a Brownian motion $W$ independent of $B$ and defined on the same probability space. We define the collection of stopping times $(\tau_i)_{i \in \{1, \ldots, d\}}$ inductively by letting
\begin{equation*}
	\tau_1 \eqdef \inf\bigl\{t \geq 0 \, : \, B^{(1)}_t = W^{(1)}_t\bigr\}\;, \qquad  \tau_i \eqdef \inf\bigl\{t \geq \tau_{i-1} \, : \, B^{(i)}_t = W^{(i)}_t\bigr\}\;, \quad \forall \, i \in \{2, \ldots, d\}\;,
\end{equation*}
where $B^{(i)}$ and $W^{(i)}$ denote the $i$-th coordinate of $B$ and $W$, respectively. Under, $\BP_{x, y}$ the law of $(B, W)$ is that of two independent $d$-dimensional Brownian motions started from $(x, y)$.  We have the following generalisation of \cite[Lemma~2.9]{GRV_LBM}.
\begin{lemma}
\label{lm:couplingBB}
Consider the situation described above. Under $\BP_{x, y}$, the process $\bar B$ given by
\begin{equation*}
\bar B_t \eqdef \begin{cases}
(W^{(1)}_t, \ldots, W^{(d)}_t)\;, \qquad & \text{ for } t \leq \tau_1 \;, \\		
(B^{(1)}_t, W^{(2)}_t, \ldots, W^{(d)}_t)\;, \qquad & \text{ for } t \in (\tau_1, \tau_2] \;, \\		
\hfill \vdots\\
(B^{(1)}_t, \ldots, B^{(d-1)}_t, W^{(d)}_t)\;, \qquad & \text{ for } t \in (\tau_{d-1}, \tau_{d}] \;, \\
(B^{(1)}_t, \ldots, B^{(d)}_t)\;, \qquad & \text{ for } t > \tau_d \;, 
\end{cases}
\end{equation*}
is a $d$-dimensional Brownian motion started from $y$. Furthermore, for all $\eta > 0$, it holds that
\begin{equation*}
	\lim_{\delta \to 0} \sup_{x, y \in \R^d, \, \abs{x-y}\leq \delta}\BP_{x, y}(\tau_d > \eta) = 0 \;, \qquad \BP_{x, y}(\tau_d < \infty) = 1\;.
\end{equation*}
\end{lemma}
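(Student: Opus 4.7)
First, I would observe that the path $t\mapsto \bar B_t$ is continuous by construction, since at each $\tau_i$ the $i$-th coordinates of the two adjacent pieces agree by the definition of $\tau_i$, and $\bar B_0=W_0=y$. To establish that $\bar B$ is a $d$-dimensional Brownian motion started at $y$, my plan is to argue iteratively. Set $V^{(0)}:=W$ and, for $i\in\{1,\dots,d\}$, let $V^{(i)}$ coincide with $V^{(i-1)}$ on $[0,\tau_i]$ and replace the $i$-th coordinate by $B^{(i)}$ on $(\tau_i,\infty)$, so that $V^{(d)}=\bar B$. With $\GG_t:=\sigma(B_s,W_s:s\le t)$, each $\tau_i$ is a $(\GG_t)$-stopping time, and the strong Markov property of the $2d$-dimensional Brownian motion $(B,W)$ at $\tau_i$ gives that its post-$\tau_i$ increments form a $2d$-dimensional Brownian motion independent of $\GG_{\tau_i}$. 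Projecting onto the coordinates that $V^{(i)}$ follows after $\tau_i$ (namely $B^{(1)},\dots,B^{(i)}$ and $W^{(i+1)},\dots,W^{(d)}$), the post-$\tau_i$ increments of $V^{(i)}$ form a $d$-dimensional Brownian motion independent of $\GG_{\tau_i}$; combined with the induction hypothesis that $V^{(i-1)}$ restricted to $[0,\tau_i]$ is a Brownian motion, this yields that $V^{(i)}$ is a $d$-dimensional Brownian motion started at $y$. Taking $i=d$ gives the first assertion.

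For the two quantitative statements on $\tau_d$, I would introduce the coordinate differences $D^{(i)}_t:=B^{(i)}_t-W^{(i)}_t$. Since $B^{(i)}$ and $W^{(i)}$ are independent standard Brownian motions, $(D^{(i)})_{i=1}^d$ is a $d$-dimensional Brownian motion of variance $2$ starting at $x-y$, and $\tau_i$ is the first time after $\tau_{i-1}$ at which $D^{(i)}$ hits $0$. In particular $\tau_i$ depends only on $D^{(1)},\dots,D^{(i)}$, so $D^{(i+1)}$ is independent of $\tau_i$. The explicit hitting-time density for one-dimensional Brownian motion immediately gives
\begin{equation*}
\sup_{|x-y|\le\delta}\BP_{x,y}(\tau_1>\eta/d)\longrightarrow 0\qquad\text{as }\delta\to 0.
\end{equation*}
For $i\ge 1$, the strong Markov property applied to $(D^{(j)})_{j=1}^d$ at $\tau_i$ shows that, conditionally on $\GG_{\tau_i}$, $\tau_{i+1}-\tau_i$ is the first hitting time of $0$ by a one-dimensional Brownian motion of variance $2$ starting at $D^{(i+1)}_{\tau_i}$; conditionally on $\tau_i=s$, this starting point is Gaussian with mean $x_{i+1}-y_{i+1}$ and variance $2s$. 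A Chebyshev-type bound combined with the inductive control on $\tau_i$ then yields $\sup_{|x-y|\le\delta}\BP_{x,y}(\tau_{i+1}-\tau_i>\eta/d)\to 0$ as $\delta\to 0$, and a union bound over $i$ produces the claimed uniform estimate. Finally, $\BP_{x,y}(\tau_d<\infty)=1$ follows immediately from the recurrence of one-dimensional Brownian motion, which makes each increment $\tau_{i+1}-\tau_i$ almost surely finite.

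The main obstacle I anticipate is the uniform control of the intermediate ``starting points'' $D^{(i+1)}_{\tau_i}$ over $\{|x-y|\le\delta\}$: although $\tau_i\to 0$ in probability as $|x-y|\to 0$, one must handle the Gaussian tail of $D^{(i+1)}_{\tau_i}$ conditionally on $\tau_i$ in order to conclude that $\tau_{i+1}-\tau_i$ is also small in probability uniformly in $(x,y)$. Once this uniform tail control is in place, the inductive chain of hitting-time estimates closes and both remaining claims follow.
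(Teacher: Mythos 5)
The paper does not actually prove this lemma: it is stated as a standard coupling fact, ``a consequence of the strong-Markov property'', and a generalisation of \cite[Lemma~2.9]{GRV_LBM}. Your proof correctly fills in an argument along exactly the route the paper indicates. One point is worth sharpening in the first part: the inductive gluing implicitly requires that each $V^{(i-1)}$ be a Brownian motion \emph{with respect to the joint filtration} $(\GG_t)_{t\ge 0}$ and not merely in law, because $\tau_i$ is a $(\GG_t)$-stopping time but need not be a stopping time for the natural filtration of $V^{(i-1)}$ alone; otherwise the ``paste an independent Brownian motion after $\tau_i$'' step is not justified. The quickest way to secure this (and to avoid the induction altogether) is L\'evy's characterisation: each coordinate $\bar B^{(j)}_t = W^{(j)}_{t\wedge\tau_j} + B^{(j)}_{t\vee\tau_j} - B^{(j)}_{\tau_j}$ is a continuous $(\GG_t)$-martingale, being the splice at the stopping time $\tau_j$ of two $(\GG_t)$-martingales that agree there, it has quadratic variation $\langle \bar B^{(j)}\rangle_t = t$, and $\langle \bar B^{(j)}, \bar B^{(k)}\rangle \equiv 0$ for $j\neq k$ since the coordinates involved come from disjoint collections of independent Brownian motions. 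Your treatment of the second part is sound and identifies the right technical point: the differences $D^{(i)} = B^{(i)} - W^{(i)}$ are independent one-dimensional Brownian motions of variance $2$, $\tau_i$ is a measurable function of $(D^{(1)},\ldots,D^{(i)})$ and hence independent of $D^{(i+1)}$, and the induction closes by combining the one-dimensional hitting-time tail $\BP(T_a>t)\lesssim |a|/\sqrt{t}$ with the estimate $\E\bigl[\,|D^{(i+1)}_{\tau_i}|\,\mathbbm{1}_{\{\tau_i\le\eta'\}}\bigr]\lesssim \delta + \sqrt{\eta'}$, sending $\delta\to 0$ and then $\eta'\to 0$; almost-sure finiteness of each $\tau_{i+1}-\tau_i$ is indeed just recurrence of one-dimensional Brownian motion.
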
 

\begin{proof}[Proof of Theorem~\ref{th:conLBM}]
The proof of this result is similar to the proof of \cite[Theorem~2.7]{GRV_LBM}. Existence and uniqueness of the PCAF are immediate consequences of the previous discussions. For \ref{it:B1}, we refer to the proof of \cite[Theorem~2.7]{GRV_LBM} and references therein. For \ref{it:B2}, this is an immediate consequence of Lemma~\ref{lm:convClockR} and the fact that we can take $R$ large enough to make $\P(\tau_R < T)$ arbitrarily small. Items \ref{it:B3} and \ref{it:B5} can be proved in the exact same way as in \cite[Theorem~2.7]{GRV_LBM} by using Lemma~\ref{lm:couplingBB}.

It remains to prove \ref{it:B4}. In this case, the proof differs from \cite{GRV_LBM} since in that paper, the authors leverage on the exponential decay of correlation of the massive GFF. We note that it suffices to check that $\P$-almost surely for a fixed $x \in \R^d$, it holds that $\BP_x(\lim_{t \to \infty} \rmF_{\bh,\alpha}(t) = \infty) = 1$. To get the desired result, it is then sufficient to apply Lemma~\ref{lm:couplingBB}. Therefore, we take $x = 0$ and our goal is to show that $\lim_{t \to \infty} \rmF_{\bh,\alpha}(t) = \infty$, $\P \otimes \BP_0$-almost surely. To this end, it suffices to show that $\lim_{n \to \infty} \rmF_{\bh,\alpha}(\tau_n) = \infty$, $\P \otimes \BP_0$-almost surely. For all $n \in \N$, by the scale invariance of the law of $\bh$ modulo an additive constant, we have that $\smash{\bh^n \eqdef \bh(n \cdot) - \bh_{n}(0) \eqlaw \bh}$. Consequently, by Proposition~\ref{pr:coordinate_change_clock}, it follows that 
\begin{equation*}
\rmF_{\bh,\alpha}(\tau_n) = \nu_{\bh, \alpha}\bigl([B]_{\tau_n}\bigr) \eqlaw n^{\alpha Q} e^{\alpha \bh_{n}(0)} \nu_{\bh^n, \alpha}\bigl(n^{-1}[B]_{\tau_n}\bigr)\;, 
\end{equation*}
where we recall that the measure $\nu_{\bh, \alpha}$ is defined in \eqref{eq:defNuAlpha}.
For any $c > 0$, using the fact that $\bh_{n}(0)$ is a centred Gaussian with variance of order $\log n$ (see Lemma~\ref{lm:varianceSpherical}), we have that 
\begin{equation*}
\sum_{n \in \N} \P\bigl(n^{\alpha Q} e^{\alpha \bh_{n}(0)} \leq c\bigr) < \infty \;,
\end{equation*}
and so, thanks to the Borel--Cantelli lemma, we have that 
\begin{equation}
\label{eq:pointOneExplode}
\P\Bigl(\lim_{n \to \infty} n^{\alpha Q} e^{\alpha \bh_{n}(0)} = \infty\Bigr) = 1 \;.	
\end{equation}
Moreover, we note that for each $n \in \N$, the law of $\nu_{\bh^n, \alpha}(n^{-1}[B]_{\tau_n})$ does not depend on $n$, and also, thanks to \ref{it:B3}, for any $\eps > 0$, there exists $\delta > 0$ such that $\P\otimes\BP_0(\nu_{\bh^n, \alpha}(n^{-1}[B]_{\tau_n}) \geq \delta) \geq 1 - \eps$, for all $n \in \N$. In particular, thanks to the downward continuity of measures, we have that 
\begin{equation}
\label{eq:pointTwoExplode}
\P\otimes\BP_0 \Biggl(\limsup_{n \to \infty} \bigl(\nu_{\bh^n, \alpha}(n^{-1}[B]_{\tau_n} \bigr) \geq \delta  \Biggr) \geq \limsup_{n \to \infty} \P\otimes\BP_0\bigl(\nu_{\bh^n, \alpha}(n^{-1}[B]_{\tau_n}) \geq \delta \bigr) \geq 1 -\eps \;.
\end{equation}
Hence, thanks to \eqref{eq:pointOneExplode} and \eqref{eq:pointTwoExplode} and by arbitrariness of $\eps > 0$, we have that $\P\otimes\BP_0(\lim_{n \to \infty} \rmF_{\bh,\alpha}(\tau_n) = \infty) = 1$, from which the desired result follows. 
\end{proof}

We note that the continuity of $\rmF_{\bh, \alpha}$ ensures that the LBM does not become trapped in certain regions of $\R^d$. Additionally, the strict monotonicity of $\rmF_{\bh, \alpha}$ guarantees that the LBM has no discontinuities, and since $\rmF_{\bh, \alpha}$ approaches infinity as $t \to \infty$, the LBM has an infinite lifetime, i.e., it will never ``reach infinity'' in a finite amount of time.

In what follows, we will also require the following strengthened version of the fact that the measure $\mu_{\bh, \alpha}$ is the Revuz measure associated with the PCAF $\rmF_{\bh, \alpha}$.
\begin{lemma}
\label{lm:StrenghRevuz}
Let $d > 2$ and $Q > \sqrt{\smash[b]{2d}}$. Then, $\P$-almost surely, for all $x \in \R^d$ and all Borel measurable functions $\eta: \R_0^{+} \to \R_0^{+}$  and $f:\R^d \to \R_0^{+}$ the following equality holds 
\begin{equation}
\label{eq:StrenghRevuz}
\BE_x \Biggl[\int_0^{\infty} \eta(t) f(B_t) d \rmF_{\bh, \alpha}(t) \Biggr] = \int_0^{\infty} \int_{\R^d} \eta(t) f(y) p_{t}(x, y) \mu_{\bh, \alpha}(dy) dt \;,
\end{equation}
where $p_t(\cdot, \cdot)$ denotes the heat kernel of the $d$-dimensional Brownian motion (see \eqref{eq:EuclideanHeat} below).
\end{lemma}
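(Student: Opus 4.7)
The plan is to view \eqref{eq:StrenghRevuz} as a functional Revuz-type formula for the PCAF $\rmF_{\bh, \alpha}$, which, by item \ref{it:B1} of Theorem~\ref{th:conLBM}, has Revuz measure $\mu_{\bh, \alpha}$, and, by Lemma~\ref{lm:boundPCAF}, is a PCAF in the strict sense (so that identities hold pointwise for every starting point $x \in \R^d$). First I would recast the claim, at each fixed $x \in \R^d$, as the equality of two Borel measures on $\R_0^+ \times \R^d$ defined for $A$ Borel by
\begin{equation*}
\nu_x^{(1)}(A) \eqdef \BE_x\Biggl[\int_0^{\infty} \mathbbm{1}_A(s, B_s)\, d\rmF_{\bh, \alpha}(s)\Biggr], \qquad \nu_x^{(2)}(A) \eqdef \int_0^{\infty}\!\int_{\R^d} \mathbbm{1}_A(s, y)\, p_s(x, y)\, \mu_{\bh, \alpha}(dy)\, ds.
\end{equation*}
Both measures are $\sigma$-finite on the generating $\pi$-system of rectangles $[0, T] \times B$: for $\nu_x^{(2)}$ this is clear since $\int_{\R^d} p_s(x, y)\, dy = 1$ and $\mu_{\bh, \alpha}$ is locally finite, while for $\nu_x^{(1)}$ it follows from $\BE_x[\rmF_{\bh, \alpha}(T)] < \infty$, itself obtained by comparison with the mollified clocks. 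A standard $\pi$-$\lambda$ argument then reduces \eqref{eq:StrenghRevuz} to the rectangle identity
\begin{equation}
\label{eq:stdRevuzProp}
\BE_x\Biggl[\int_0^T \mathbbm{1}_B(B_s)\, d\rmF_{\bh, \alpha}(s)\Biggr] = \int_0^T\!\int_B p_s(x, y)\, \mu_{\bh, \alpha}(dy)\, ds, \qquad \forall\, x \in \R^d, \; T > 0, \; B \in \CB(\R^d).
\end{equation}

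To establish \eqref{eq:stdRevuzProp}, I would work with the mollified PCAFs $\rmF_{\bh, \alpha, (n)}$ introduced before Proposition~\ref{pr:contPCAF}. Since for each $n \in \N$ the measure $\mu_{\bh, \alpha, (n)}$ admits a continuous density $\phi_n$ with respect to Lebesgue measure, one has $d\rmF_{\bh, \alpha, (n)}(s) = \phi_n(B_s)\, ds$, and Fubini's theorem immediately yields the pointwise mollified identity
\begin{equation*}
\BE_x\Biggl[\int_0^T \mathbbm{1}_B(B_s)\, d\rmF_{\bh, \alpha, (n)}(s)\Biggr] = \int_0^T\!\int_B p_s(x, y)\, \mu_{\bh, \alpha, (n)}(dy)\, ds, \qquad \forall\, x \in \R^d.
\end{equation*}
I would then pass to the limit $n \to \infty$. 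The right-hand side converges to the desired target thanks to the boundedness and rapid spatial decay of $p_s(x, \cdot)$ combined with weak convergence of $\mu_{\bh, \alpha, (n)}$ to $\mu_{\bh, \alpha}$ on each ball $B(0, R)$ and dominated convergence in $s$. For the left-hand side, I would use the uniform convergence in probability of $\sup_{t \leq T}\abs{\rmF_{\bh, \alpha, (n)}(t) - \rmF_{\bh, \alpha}(t)}$ provided by item \ref{it:B2} of Theorem~\ref{th:conLBM}, coupled with uniform $L^{1+\delta}(\BP_x \otimes \P)$ bounds on $\rmF_{\bh, \alpha, (n)}(T)$, to upgrade convergence in probability to $L^1$ convergence.

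The main obstacle I anticipate is securing those uniform moment bounds, i.e., $\sup_n \BE_x \otimes \E[\rmF_{\bh, \alpha, (n)}(T)^{1+\delta}] < \infty$ for some $\delta > 0$. In the subcritical regime $Q > \sqrt{\smash[b]{2d}}$ (equivalently $\alpha < \alpha_c$), this should follow from Kahane's convexity inequality applied to the GMC along the Brownian trajectory, in the same spirit as \cite[Section~2]{GRV_LBM}: since $\alpha < 2$, $(1+\delta)$-moments of the subcritical GMC on bounded sets exist uniformly in the regularisation. Once \eqref{eq:stdRevuzProp} is established at every starting point $x \in \R^d$, the measure-theoretic reduction from the first paragraph directly promotes it to \eqref{eq:StrenghRevuz} for arbitrary nonnegative Borel functions $\eta$ and $f$.
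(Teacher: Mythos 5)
The paper proves this lemma by citation: it invokes \cite[Proposition~2.5]{AndresKajino} for $d=2$ and observes that, given the $\P$-a.s.\ potential bound $\sup_{x\in B(0,R)}\mathrm{g}_R[\mu_{\bh,\alpha}](x) < \infty$ of Lemma~\ref{lm:boundPCAF}, the argument carries over to $d>2$ verbatim. Your proposal instead reconstructs the argument from scratch, and the skeleton is sound: recasting \eqref{eq:StrenghRevuz} as an equality of measures, reducing via a $\pi$--$\lambda$ argument to the rectangle identity \eqref{eq:stdRevuzProp}, establishing the mollified identity for $\rmF_{\bh,\alpha,(n)}$ by Fubini, and passing to the limit using Theorem~\ref{th:conLBM}\ref{it:B2}. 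That is essentially the standard Revuz machinery, and it is compatible with the paper's approach.

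The gap is in the closing paragraph. You propose to upgrade the $\BP_x$-convergence in probability to $L^1$-convergence by way of uniform $L^{1+\delta}(\BP_x\otimes\P)$ bounds obtained from Kahane's convexity inequality. But such bounds control moments averaged over the randomness of the field $\bh$; a Fubini argument would then yield the rectangle identity \eqref{eq:stdRevuzProp} only for $\P$-a.e.\ realisation of the field and, at best, for Lebesgue-a.e.\ $x$. This is strictly weaker than the statement of the lemma, which --- as the paper repeatedly emphasises --- requires a \emph{single} $\P$-full event on which the identity holds \emph{simultaneously for every} $x\in\R^d$. Kahane's inequality does not deliver the $\P$-a.s.\ (in the field) uniform-in-$n$-and-$x$ control of $\BE_x[\rmF_{\bh,\alpha,(n)}(T)]$ that this requires. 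The correct ingredient is exactly the one the paper names: by Lemma~\ref{lm:boundPCAF} one has, $\P$-a.s., $\sup_{x\in B(0,R)}\mathrm{g}_R[\mu_{\bh,\alpha}](x)<\infty$, and by Proposition~\ref{pr:contPCAF}\ref{it:G1} the mollified potentials converge uniformly on $B(0,R)$, so that $\P$-a.s.\ one controls $\sup_n\sup_{x\in B(0,R)}\BE_x[\rmF_{\bh,\alpha,(n),R}(\infty)]$. Combining this with \ref{it:B2} (e.g.\ via Scheffé's lemma, once one has the convergence of the total masses $\BE_x[\rmF_{\bh,\alpha,(n),R}(\infty)]\to\BE_x[\rmF_{\bh,\alpha,R}(\infty)]$ from \ref{it:G1}) supplies the $L^1(\BP_x)$-convergence for every $x$ on a single $\P$-full event, which is what makes the ``for all $x$'' assertion go through. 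Replacing the Kahane step with this potential-theoretic uniformity closes the argument.
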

\begin{proof}
The result is proved for the two-dimensional case in \cite[Proposition~2.5]{AndresKajino}. Given the bound on the potential of the measure $\mu_{\bh, \alpha}$ provided in Lemma~\ref{lm:boundPCAF}, the proof generalises verbatim to any arbitrary dimensions $d > 2$. 
\end{proof}

We conclude this section with the following result, which is analogous to Theorem~\ref{th:conLBM}, but with the Brownian motion replaced by a Brownian bridge, and which will be useful for the study of the spectral dimension. We recall that we use the convention that for $x$, $y \in \R^d$ and $t > 0$, under $\BP_{x, y, t}$, the process $(B_s)_{s \in [0, t]}$ is a $d$-dimensional Brownian bridge of length $t$ from $x$ to $y$.

\begin{theorem}
\label{th:conLBMBB}
Let $d > 2$ and $Q > \sqrt{\smash[b]{2d}}$. Then, $\P$-almost surely, for all $x$, $y \in \R^d$ and $t \geq 0$, the following properties hold:
\begin{enumerate}[start=1,label={{{(D\arabic*})}}]
\item \label{it:D1} The following limit holds
\begin{equation*}
\lim_{n \to \infty} \BP_{x, y, t}\Biggl(\sup_{s \leq t}\abs{\rmF_{\bh, \alpha, (n)}(s) - \rmF_{\bh, \alpha}(s)} \geq \eta\Biggr) = 0\;.
\end{equation*}
\item \label{it:D2} It holds $\BP_{x, y, t}$-almost surely that the map $[0, t] \ni s \mapsto \rmF_{\bh, \alpha}(s)$ is strictly increasing and continuous.  
\item \label{it:D3} For each bounded continuous function $\mathscr{F}: \CC([0,t]) \to \R$, the mapping $\R^d \times \R^d \ni (x, y) \to \E_{x, y, t}[\mathscr{F}(\rmF_{\bh, \alpha}(\cdot))]$ is continous. 
\end{enumerate}
\end{theorem}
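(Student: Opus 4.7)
The plan is to reduce each of the three assertions to the corresponding statement for the $d$-dimensional Brownian motion (Theorem~\ref{th:conLBM}) via two standard facts about the Brownian bridge. First, time-reversal: under $\BP_{x,y,t}$, the process $(B_{t-s})_{s \in [0,t]}$ is a Brownian bridge from $y$ to $x$, i.e.\ has law $\BP_{y,x,t}$. Second, absolute continuity: for any $\delta \in (0,t)$, the restriction of $\BP_{x,y,t}$ to $\BF_{t-\delta}$ is absolutely continuous with respect to $\BP_x$, with Radon--Nikodym derivative $p_{\delta}(B_{t-\delta}, y)/p_t(x,y)$. Because the additive functional $\rmF_{\bh,\alpha}$ is a pathwise construction from the underlying Brownian trajectory (and similarly for $\rmF_{\bh,\alpha,(n)}$), these two facts let one transport path-level statements from $\BP_x$ to $\BP_{x,y,t}$.

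For \ref{it:D1} and \ref{it:D2}, I would split $[0,t]$ into $[0,t/2]$ and $[t/2,t]$. On $[0,t/2]$, the Radon--Nikodym density $p_{t/2}(B_{t/2},y)/p_t(x,y)$ has finite moments of every order (by the Gaussian form of the heat kernel), so the $\BP_x$-probability convergence in \ref{it:B2} of $\sup_{s \leq t/2}|\rmF_{\bh,\alpha,(n)}(s) - \rmF_{\bh,\alpha}(s)|$ transfers to $\BP_{x,y,t}$-convergence by a standard absolute-continuity and uniform-integrability argument. Strict monotonicity and continuity on $[0,t/2]$ likewise transfer from \ref{it:B3} since absolute continuity preserves $\BP_x$-almost-sure events. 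Applying the same argument to the time-reversed bridge covers $[t/2,t]$, and concatenating at the midpoint yields both \ref{it:D1} and \ref{it:D2} on the full interval $[0,t]$.

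For \ref{it:D3}, I would use the midpoint decomposition of the bridge: conditionally on $B_{t/2} = z$, the paths $(B_s)_{s \in [0,t/2]}$ and $(B_{t/2+s})_{s \in [0,t/2]}$ are independent Brownian bridges of length $t/2$ from $x$ to $z$ and from $z$ to $y$ respectively, and the law of $B_{t/2}$ is Gaussian with density $z \mapsto p_{t/2}(x,z)p_{t/2}(z,y)/p_t(x,y)$. Writing
\begin{equation*}
\BE_{x,y,t}\bigl[\mathscr{F}(\rmF_{\bh,\alpha}(\cdot))\bigr] = \int_{\R^d} \frac{p_{t/2}(x,z)\,p_{t/2}(z,y)}{p_t(x,y)} \, \BE_{x,z,t/2}\otimes\BE_{z,y,t/2}\bigl[\mathscr{F}(\rmF_{\bh,\alpha}(\cdot))\bigr]\, dz
\end{equation*}
reduces the problem to continuity in the starting and ending points of a bridge whose length is bounded away from $0$. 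Applying once more the absolute continuity with respect to a Brownian motion (started either at the left endpoint or, by time-reversal, at the right endpoint), together with \ref{it:B5}, shows that the integrand is jointly continuous in $(x,y,z)$. Gaussian tail bounds on the density, combined with the uniform bound \eqref{eq:boundgR} on the potential of $\mu_{\bh,\alpha}$ (which provides a $(x,y)$-locally-uniform bound on $\BE_{x,y,t}[\rmF_{\bh,\alpha}(t)]$), allow dominated convergence in $z$, yielding \ref{it:D3}.

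The main obstacle is behaviour near the endpoints of the bridge: the naive density $p_\delta(B_{t-\delta},y)/p_t(x,y)$ blows up as $\delta \to 0$, so one cannot directly transfer $\sup_{s \leq t}$-statements from $\BP_x$ to $\BP_{x,y,t}$. Splitting at the midpoint and invoking time-reversal neatly circumvents this, but verifying that the family of midpoint densities is uniformly integrable as $(x,y)$ ranges over a compact subset of $\R^d \times \R^d$ — and that the locally uniform bound on the Green potential of $\mu_{\bh,\alpha}$ from Lemma~\ref{lm:boundPCAF} really does dominate the Gaussian tail — is the key technical step on which the whole argument rests.
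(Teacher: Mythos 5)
Your proposal is essentially correct and implements exactly what the paper asserts with its one-line deferral (``reproduce the arguments of the proof of Theorem~\ref{th:conLBM} with minor modifications''): time-reversal of the bridge plus absolute continuity of $\BP_{x,y,t}$ with respect to $\BP_x$ away from the terminal endpoint, combined with a midpoint split, is precisely the standard way to transfer items \ref{it:B2}, \ref{it:B3}, \ref{it:B5} to the bridge setting, and the ``minor modification'' the paper alludes to is the bridge analogue of the coupling Lemma~\ref{lm:couplingBB}, which your appeal to \ref{it:B5} plus absolute continuity reaches by a slightly roundabout but defensible route. One small inaccuracy in your assessment of the difficulty: in \ref{it:D3} you single out uniform integrability of the midpoint density, together with the Green-potential bound \eqref{eq:boundgR}, as the crux, but since $\mathscr{F}$ is bounded the integrand is dominated by $\|\mathscr{F}\|_\infty \, p_{t/2}(x,z)p_{t/2}(z,y)/p_t(x,y)$, which integrates to $\|\mathscr{F}\|_\infty$ and is locally uniformly dominated in $(x,y)$ by elementary Gaussian estimates, so dominated convergence is immediate once pointwise continuity in $z$ is established; no control on $\BE_{x,y,t}[\rmF_{\bh,\alpha}(t)]$ is required there. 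The genuinely delicate part is instead the joint continuity of the inner expectation in the endpoints, which ultimately rests on a bridge version of the coupling argument; it would be worth stating that lemma explicitly rather than leaning on \ref{it:B5} at one remove.
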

\begin{proof}
As observed in \cite{SpecDim}, the proof consists in reproducing the arguments of the proof of Theorem~\ref{th:conLBM} with minor modifications.
\end{proof} 

\subsubsection{The Liouville Heat Kernel}
From the construction of the LBM and from the general theory of time changes of Markov processes, we can deduce some of its properties. First of all, for $d > 2$ and $Q > \sqrt{\smash[b]{2d}}$, the LBM $\rmB_{\bh,\alpha}$ is a diffusion process, i.e., a continuous strong Markov process, and it is transient (cf.\ \cite[Theorems~A.2.12~and~6.2.3]{Fukushima_Symmetric}). Furthermore, thanks to \cite[Theorem~6.2.1~(i)]{Fukushima_Symmetric}, the LBM is $\mu_{\bh,\alpha}$-symmetric, i.e., its transition semigroup $(\rmP_{\bh,\alpha, t})_{t \geq 0}$ given by
\begin{equation*}
\rmP_{\bh,\alpha, t}[f](x) \eqdef \BE_x\bigl[f(\rmB_{\bh,\alpha, t})\bigr] \;, \qquad \forall \, f \in \CB(\R^d, \R), \; \forall \, x \in \R^d 
\end{equation*}
satisfies the following equality 
\begin{equation}
\label{eq:invarianceTransition}
\int_{\R^d} \rmP_{\bh,\alpha, t}[f](x) g(x) \mu_{\bh,\alpha}(dx) = \int_{\R^d} \rmP_{\bh,\alpha, t}[g](x) f(x) \mu_{\bh,\alpha}(dx) \;, \qquad \forall \, f, g \in \CB(\R^d, \R) \;. 
\end{equation}
We are now ready to prove Proposition~\ref{pr:existenceLHK} on the absolute continuity of the semigroup $\rmP_{\bh,\alpha}$ with respect to the measure $\mu_{\bh,\alpha}$. 
\begin{proof}[Proof of Proposition~\ref{pr:existenceLHK}]
We will not provide a detailed proof of this result since it is identical to the proof of \cite[Theorem~2.5]{GRV_heat}. We observe that, since the semigroup $\rmP_{\bh,\alpha}$ is symmetric with respect to $\mu_{\bh, \alpha}$, by \cite[Theorem~4.2.4]{Fukushima_Symmetric}, it suffices to prove that the resolvent of the LBM $\rmB_{\bh,\alpha}$ is absolutely continuous with respect to $\mu_{\bh, \alpha}$. We recall that the resolvent family $(\rmR_{\bh, \alpha, \lambda})_{\lambda > 0}$ associated to the LBM is defined as follows
\begin{equation*}
\rmR_{\bh, \alpha, \lambda}[f](x) = \int_0^{\infty} e^{-\lambda t} \rmP_{\bh,\alpha, t}[f](x) dt\;, \qquad \forall \, \lambda > 0\;, \quad \forall \, f \in \CB(\R^d, \R)\;.
\end{equation*}
Therefore, it suffices to prove that $\P$-almost surely, for all $A \in \CB(\R^d)$ such that $\mu_{\bh, \alpha}(A) = 0$, it holds that $\rmR_{\bh, \alpha, \lambda}[\mathbbm{1}_{A}](x) = 0$ for all $x \in \R^d$ and $\lambda > 0$. To this end, thanks to \eqref{eq:invarianceTransition}, we have that 
\begin{equation*}
\lambda \int_{\R^d} \rmR_{\bh, \alpha, \lambda}[\mathbbm{1}_{A}](x) \mu_{\bh, \alpha}(dx) = \mu_{\bh, \alpha}(A)\;.
\end{equation*}
Hence, if $\mu_{\bh, \alpha}(A) = 0$, it follows that $\rmR_{\bh, \alpha, \lambda}[\mathbbm{1}_{A}](x) = 0$ for $\mu_{\bh, \alpha}$-almost every $x \in \R^d$. Since the measure $\mu_{\bh, \alpha}$ has full support, this implies the existence of a dense subset of $\R^d$ on which the function $\rmR_{\bh, \alpha, \lambda}[\mathbbm{1}_{A}]$ is identically zero. Therefore, the conclusion holds if the mapping $\R^d \ni x \mapsto \rmR_{\bh, \alpha, \lambda}[\mathbbm{1}_{A}](x)$ is continuous. By following the proof of \cite[Theorem~2.4]{GRV_heat}, one can show that the resolvent operator $\rmR_{\bh, \alpha, \lambda}$ maps the set of measurable bounded functions into the set of continuous bounded functions. 
More precisely, we observe that the proof of \cite[Theorem2.4]{GRV_heat} relies on Lemma~\ref{lm:couplingBB} and the fact that, for any fixed $R > 0$, it holds $\P$-almost surely that
\begin{equation*}
	\lim_{t \to 0} \sup_{x \in B(0, R)} \BE_x\bigl[\rmF_{\bh, \alpha}(t)\bigr] = 0 \;.
\end{equation*}
It is easy to see that the uniform limit above holds in any dimension $d > 2$. Indeed, fix $R > 0$, take any $x \in B(0, R)$, and let $\delta \in (0, 1/2)$. Then, by Lemma~\ref{lm:StrenghRevuz}, we can write
\begin{equation}
\label{eq:HeatExistsEqu}
\BE_x\bigl[\rmF_{\bh, \alpha}(t)\bigr] = \int_{B(x, t^{1/2-\delta})} \int_{0}^t p_{s}(x, y) ds \, \mu_{\bh, \alpha}(dy) + \int_{B(x, t^{1/2-\delta})^c} \int_{0}^t p_{s}(x, y) ds \, \mu_{\bh, \alpha}(dy) \;.
\end{equation}
We now proceed to show that both terms on the right-hand side of the above expression converge to $0$ as $t \to 0$, uniformly for all $x \in B(0, R)$. Regarding the first term, by Lemma~\ref{lm:boundftxy} and following the same approach as in the proof of Proposition~\ref{pr:contPCAF}, we obtain $\P$-almost surely that for any $\eps > 0$,\begin{equation*}
\int_{B(x, t^{1/2-\delta})} \int_{0}^t p_{s}(x, y) ds \mu_{\bh, \alpha}(dy) \lesssim \int_{B(x, t^{1/2-\delta})} \abs{x-y}^{2-d} \mu_{\bh, \alpha}(dy) \lesssim \sum_{k \geq \lfloor \abs{\log t} (1/2-\delta) \rfloor} e^{-k(2 + \alpha^2/2 - \sqrt{\smash[b]{2d}} \alpha - \eps)} \;,
\end{equation*} 
and we note that, for all $Q > \sqrt{\smash[b]{2d}}$, by choosing $\eps > 0$ sufficiently small, the term on the right-hand side of the above expression converges to $0$ as $t \to 0$.
Hence, it remains to control the second term on the right-hand side of \eqref{eq:HeatExistsEqu}. To this end, recalling the explicit expression of the heat kernel for the standard $d$-dimensional Brownian motion and applying a change of variables, we obtain that for $\abs{x - y} \geq t^{1/2 - \delta}$, there exists a constant $c > 0$ such that\begin{equation}
\label{eq:HeatExistsEqu1}
\int_{0}^t p_{s}(x, y) ds \lesssim \abs{x-y}^{2-d} \int_{\frac{\abs{x-y}^2}{t}}^{\infty} s^{\frac{d}{2}-2} e^{-\frac{s}{2}} ds \lesssim t^{(1/2-\delta)(2-d)} e^{-c \frac{\abs{x-y}^2}{t}}\;.
\end{equation} 
Moreover, we have that  
\begin{align}
\int_{B(x, t^{1/2-\delta})^c} e^{-c \frac{\abs{x-y}^2}{t}} \mu_{\bh, \alpha}(dy)
& \leq e^{-c t^{-2 \delta}} \mu_{\bh, \alpha}\bigl(B(x, 2)\bigr) + \int_{B(x, 2)^c} e^{-c \frac{\abs{x-y}^2}{t}} \mu_{\bh, \alpha}(dy) \nonumber \\
& \lesssim e^{-c t^{-2 \delta}} \mu_{\bh, \alpha}\bigl(B(x, 2)\bigr) + t^{d} \int_{\R^d} e^{-c \abs{x-y}^2} \mu_{\bh, \alpha}(dy)  \;. \label{eq:HeatExistsEqu2}
\end{align}
Therefore, combining \eqref{eq:HeatExistsEqu1} and \eqref{eq:HeatExistsEqu2}, we have that 
\begin{equation*}
\int_{B(x, t^{1/2-\delta})^c} \int_{0}^t p_{s}(x, y) ds \mu_{\bh, \alpha}(dy) \lesssim t^{(1/2-\delta)(2-d)}\Biggl(e^{-c t^{-2 \delta}} \mu_{\bh, \alpha}\bigl(B(x, 2)\bigr) + t^{d} \int_{\R^d} e^{-c \abs{x-y}^2} \mu_{\bh, \alpha}(dy)\Biggr) \;,
\end{equation*}
from which the claim follows since the expression on the right-hand side of the display converges to $0$ as $t \to 0$, $\P$-almost surely.
\end{proof}

\subsection{Spectral dimension of the LBM}
\label{sub:spectral}
In this section, we prove Theorem~\ref{th_specDim}, which concerns the spectral dimension of the LBM. In particular, we focus only on the case $d > 2$ and refer to \cite{SpecDim} for the proof of Theorem~\ref{th_specDim} in the case $d = 2$.

We begin by noting that for $Q > \sqrt{\smash[b]{2d}}$ and $\beta \in (-\infty, Q)$, the results derived in the previous section remain valid when $\bh$ is replaced by the field $\bh - \beta \log\abs{\cdot}$. For instance, one can easily check that the conclusion of Lemma~\ref{lm:boundPCAF} remains valid when $\bh$ is replaced by $\bh - \beta \log\abs{\cdot}$, i.e., that for all $R >0$ it holds $\P$-almost surely that $\smash{\sup_{x \in B(0, R)} \mathrm{g}_R[\mu_{\bh - \beta \log\abs{\cdot}, \alpha}](x) < \infty}$. Indeed, in the proof of Lemma~\ref{lm:boundPCAF}, it suffices to use Lemma~\ref{lm:modContLQG} in place of Lemma~\ref{lm:modContLQG0} to obtain the desired result. Verifying that the remaining results still hold with $\bh$ replaced by $\bh - \beta \log\abs{\cdot}$ reduces to similar considerations.

To simplify the notation, for a fixed $\beta \in (-\infty, Q)$, we introduce the following field,
\begin{equation*}
\bar \bh \eqdef \bh - \beta \log\abs{\cdot} \;. 
\end{equation*}

\subsubsection{Some preliminary results and a formal scaling argument}
We denote by $p_t(\cdot, \cdot)$ the heat kernel of the $d$-dimensional Brownian motion given by
\begin{equation}
\label{eq:EuclideanHeat}
p_t(x, y) \eqdef (2 \pi t)^{-\frac{d}{2}} e^{-\frac{\abs{x-y}^2}{2t}}\;, \qquad \forall \, t \geq 0, \; \forall \, x, y \in \R^d\;.
\end{equation} 
In what follows, we recall that for $x$, $y \in \R^d$ and $t > 0$, under $\BP_{x, y, t}$, the process $(B_s)_{s \in [0, t]}$ is a $d$-dimensional Brownian bridge of length $t$ from $x$ to $y$.
We begin with the following key lemma.
\begin{lemma}
\label{lm:keyBB}
Let $d > 2$, $Q > \sqrt{\smash[b]{2d}}$, and $\beta \in (-\infty, Q)$. Then, $\P$-almost surely, for all continuous functions $\eta: \R_0^{+} \to \R_0^{+}$, for all $x \in \R^d$, and for $\mu_{\bh, \alpha}$-almost every $y \in \R^d$, the following equality holds
\begin{equation*}
\int_0^{\infty} \eta(t) \frkp_{\bar\bh, \alpha, t}(x, y) dt = \int_0^{\infty} \BE_{x, y, t}\bigl[\eta(\rmF_{\bar\bh, \alpha}(t))\bigr] p_t(x, y) dt \;.
\end{equation*}
\end{lemma}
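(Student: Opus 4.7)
The plan is to establish the claimed identity in its integrated form: for fixed continuous $\eta$ and fixed $x \in \R^d$, I will show that after multiplying both sides by $f(y)$ and integrating against $\mu_{\bar\bh, \alpha}(dy)$, the resulting equality holds for every bounded continuous test function $f \colon \R^d \to \R$. Varying $f$ over a countable dense subfamily of $C_c(\R^d)$ then forces the two nonnegative integrands in $y$ to agree $\mu_{\bar\bh, \alpha}$-almost everywhere, which is exactly the claim. All arguments will take place on a single $\P$-full-measure event on which the conclusions of Proposition~\ref{pr:existenceLHK}, Theorem~\ref{th:conLBM}, and Theorem~\ref{th:conLBMBB} simultaneously hold.

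On the left-hand side, Proposition~\ref{pr:existenceLHK} turns the integrated object into $\int_0^\infty \eta(t) \rmP_{\bar\bh, \alpha, t}[f](x) dt = \int_0^\infty \eta(t) \BE_x[f(\rmB_{\bar\bh, \alpha, t})] dt$. By Theorem~\ref{th:conLBM}~\ref{it:B3}--\ref{it:B4}, $\BP_x$-almost surely the map $s \mapsto \rmF_{\bar\bh, \alpha}(s)$ is a continuous strictly increasing bijection of $\R^+_0$, so substituting $t = \rmF_{\bar\bh, \alpha}(s)$ inside the expectation rewrites the above as
\begin{equation*}
\BE_x\Biggl[\int_0^\infty \eta(\rmF_{\bar\bh, \alpha}(s)) f(B_s) \, d\rmF_{\bar\bh, \alpha}(s)\Biggr].
\end{equation*}
What remains is therefore a path-dependent extension of the Revuz identity: proving this expectation equals $\int_0^\infty \int_{\R^d} \BE_{x, y, s}[\eta(\rmF_{\bar\bh, \alpha}(s))] f(y) p_s(x, y) \mu_{\bar\bh, \alpha}(dy) ds$. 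Observe that Lemma~\ref{lm:StrenghRevuz} handles only integrands of the form $\eta(s) f(B_s)$ (time-indexed rather than path-indexed through $\rmF_{\bar\bh, \alpha}(s)$), so a genuine refinement is required.

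I would establish this extended identity by approximation. For each $n \in \N$ the PCAF $\rmF_{\bar\bh, \alpha, (n)}$ associated with $\mu_{\bar\bh, \alpha, (n)}$ is absolutely continuous in time with $d\rmF_{\bar\bh, \alpha, (n)}(s) = M_n(B_s) ds$, where $M_n(y) dy = \mu_{\bar\bh, \alpha, (n)}(dy)$ is the smooth density from \eqref{eq:defmuN}. For these smooth PCAFs the desired identity follows immediately from Fubini combined with the standard Brownian bridge disintegration $\BE_x[H((B_r)_{r \leq s}) g(B_s)] = \int_{\R^d} \BE_{x, y, s}[H((B_r)_{r \leq s})] g(y) p_s(x, y) dy$, applied with $H((B_r)_{r \leq s}) = \eta(\rmF_{\bar\bh, \alpha, (n)}(s))$ and $g = f \cdot M_n$. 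Then one passes $n \to \infty$: Theorem~\ref{th:conLBM}~\ref{it:B2} gives $\rmF_{\bar\bh, \alpha, (n)}(s) \to \rmF_{\bar\bh, \alpha}(s)$ under $\BP_x$ uniformly on compacts in $s$, Theorem~\ref{th:conLBMBB}~\ref{it:D1} gives the corresponding convergence under $\BP_{x, y, s}$, and $\mu_{\bar\bh, \alpha, (n)} \to \mu_{\bar\bh, \alpha}$ weakly. Together with the continuity of $\eta$ and the boundedness of $f$, these inputs handle the limit in the main bulk of the $s$-integration.

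The main technical obstacle I expect is uniform-in-$n$ integrability near $s = 0$ and $s = \infty$, needed to turn the pointwise convergences above into convergence of the integrals. For small $s$, the issue is controlling $\BE_x[\int_0^T \, d\rmF_{\bar\bh, \alpha, (n)}(s)]$ and the bridge analogue uniformly in $n$; for large $s$, one needs tightness in $y$ of $p_s(x, y) M_n(y) dy$. Both bounds can be produced from the potential-theoretic estimate of Lemma~\ref{lm:boundPCAF}, applied to the truncated fields $\bar\bh_{(n)}$, via the uniform convergence of $\mathrm{g}_R[\mu_{\bar\bh, \alpha, (n)}] \to \mathrm{g}_R[\mu_{\bar\bh, \alpha}]$ proved in Proposition~\ref{pr:contPCAF}~\ref{it:G1}, which delivers a bound on $\sup_n \BE_x[\rmF_{\bar\bh, \alpha, (n)}(t)]$ uniform on compact time intervals and in $x$ on compact subsets of $\R^d$. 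Once the limit passage is justified and the extended Revuz identity secured, the test function argument from the first paragraph delivers the lemma.
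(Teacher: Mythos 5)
Your setup coincides with the paper's up to the point where both arrive at the quantity $\BE_x\bigl[\int_0^\infty \eta(\rmF_{\bar\bh,\alpha}(s)) f(B_s)\, d\rmF_{\bar\bh,\alpha}(s)\bigr]$ and observe that Lemma~\ref{lm:StrenghRevuz} cannot be applied verbatim because $\eta(\rmF_{\bar\bh,\alpha}(s))$ is path-dependent. At this point you diverge. The paper's key move is a conditioning step: insert $\BE_x[\eta(\rmF_{\bar\bh,\alpha}(s)) \mid \sigma(B_s)]$ in place of $\eta(\rmF_{\bar\bh,\alpha}(s))$ inside the $d\rmF_{\bar\bh,\alpha}(s)$-integral (this is the optional projection identity for PCAF integrals, valid by the Markov property), which replaces the path functional by a function of $(s,B_s)$; Lemma~\ref{lm:StrenghRevuz} then applies directly, and the Brownian bridge disintegration identifies $\BE_x[\eta(\rmF_{\bar\bh,\alpha}(s)) \mid B_s = y]$ with $\BE_{x,y,s}[\eta(\rmF_{\bar\bh,\alpha}(s))]$. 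This yields the result in one line. You instead prove what you call the extended Revuz identity by approximation: establish the identity for the smooth fields $\mu_{\bar\bh,\alpha,(n)}$ via the absolutely continuous PCAF with density $M_n$, and pass $n \to \infty$ using \ref{it:B2}, \ref{it:D1}, weak convergence of $\mu_{(n)}$, and Green's function bounds for uniform integrability. Your route avoids the optional projection step and so is more self-contained in some sense, but it is substantially heavier: the limit passage you sketch needs not only $\sup_n \BE_x[\rmF_{(n)}(T)] < \infty$ but uniformity in $y$ of the bridge convergence $\BE_{x,y,s}[\eta(\rmF_{(n)}(s))] \to \BE_{x,y,s}[\eta(\rmF(s))]$ (since both the bridge-expectation integrand and the measure $\mu_{(n)}$ vary with $n$), which \ref{it:D1} gives only pointwise and which would require an equicontinuity argument along the lines of the proof of Proposition~\ref{pr:contPCAF}. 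You flag this as the technical obstacle, correctly, but it is the hardest part of your plan and the one that the paper's conditioning trick entirely sidesteps. In short: sound but not minimal; the paper buys the whole identity from a one-step application of the optional projection theorem and the already-available Lemma~\ref{lm:StrenghRevuz}, whereas your approach re-derives a bespoke version of that machinery by hand.
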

\begin{proof}
The proof is similar to the proof of \cite[Theorem~3.4]{SpecDim} and is included here for completeness. Let $\eta: \R^{+} \to \R^{+}$ be a continuous function, and let $x \in \R^d$. Then, thanks to Lemma~\ref{lm:StrenghRevuz} with $\bh$ replaced by the field $\bar\bh$, for all continuous functions $f: \R^d \to \R^{+}_0$, it holds that 
\begin{align*}
\int_{\R^d} \Biggl(\int_{0}^{\infty} \eta(t) \frkp_{\bar\bh, \alpha, t}(x, y) dt \Biggr) f(y) \mu_{\bar\bh, \alpha} (dy) 
& = \int_{0}^{\infty} \eta(t) \Biggr(\int_{\R^d} f(y) \frkp_{\bar\bh, \alpha, t}(x, y)  \mu_{\bar\bh, \alpha} (dy)\Biggr) dt \\
& = \BE_{x} \Biggl[\int_{0}^{\infty} \eta(t) f(\rmB_{\bar\bh, \alpha, t}) dt\Biggr] \\
& = \BE_{x} \Biggl[\int_{0}^{\infty} \eta\bigl(\rmF_{\bar \bh, \alpha}(t)\bigr) f(B_t) d \rmF_{\bar \bh, \alpha}(t)\Biggr] \\
& = \BE_{x} \Biggl[\int_{0}^{\infty} \BE_{x}\bigl[\eta\bigl(\rmF_{\bar \bh, \alpha}(t)\bigr) \, | \, \sigma(B_t)\bigr] f(B_t) d \rmF_{\bar \bh, \alpha}(t)\Biggr] \\
& \overset{\eqref{eq:StrenghRevuz}}{=} \int_{\R^d} \Biggl(\int_{0}^{\infty} \BE_{x, y, t}\bigl[\eta\bigl(\rmF_{\bar \bh, \alpha}(t)\bigr)\bigr] p_{t}(x, y) dt \Biggr) f(y) \mu_{\bar \bh, \alpha}(dy) \;,
\end{align*}  
and so the claim follows. 
\end{proof}

We emphasise that, for $\chi \geq 0$, by choosing $\R^{+} \ni t \mapsto \eta(t) \eqdef t^{\chi} e^{-t}$ in Lemma~\ref{lm:keyBB}, we obtain precisely the relation \eqref{eq:keyRelSpecMain}.
We are now in a position to state the main result of this section, from which Theorem~\ref{th_specDim} follows directly.
\begin{proposition}
\label{pr:MainSpec}
Let $d > 2$, $Q > \sqrt{\smash[b]{2d}}$, $\beta \in (-\infty, Q)$, and define
\begin{equation} 
\label{eq:barChi} 
\bar{\chi} = \bar \chi(\beta) \eqdef \frac{d-2}{2 + \alpha^2/2 - \alpha\beta} \;.
\end{equation}
Then, it holds $\P$-almost surely that 
\begin{equation*}
\frkm_{\bar\bh, \alpha, \chi}(0) = \infty \;, \qquad \forall \, \chi \in [0,  \bar{\chi}) \qquad \text{ and } \qquad 
\frkm_{\bar \bh, \alpha, \chi}(0)  < \infty \;, \qquad \forall \, \chi > \bar{\chi} \;,
\end{equation*}
where we recall that $\smash{\frkm_{\bar\bh, \alpha, \chi}}$ is defined in \eqref{eq:defMellinTrue}.
\end{proposition}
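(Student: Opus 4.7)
The plan is to extract the short-time asymptotics of the integrand in \eqref{eq:defMellinTrue} through a scaling identity, and then control the $\P$-almost sure random constants that appear. The contribution to $\frkm_{\bar\bh,\alpha,\chi}(0)$ from $t \in [1,\infty)$ is automatically finite since $x \mapsto x^\chi e^{-x}$ is bounded on $[0,\infty)$ and $p_t(0,0) = (2\pi t)^{-d/2}$ is integrable at infinity for $d > 2$. Everything therefore reduces to the behaviour of the integral on $(0,1]$.

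For small $t$, I would apply Proposition~\ref{pr:coordinate_change_clock} with the dilation $\phi(x) = \sqrt{t}\,x$. Writing a Brownian bridge $(B_s)_{s \in [0,t]}$ from $0$ to $0$ of length $t$ as $B_s = \sqrt{t}\,\tilde B_{s/t}$ with $\tilde B$ a unit-length bridge from $0$ to $0$, and setting $\hat\bh(\cdot) \eqdef \bh(\sqrt{t}\,\cdot) - \bh_{\sqrt{t}}(0)$ (which by scale invariance of the LGF modulo additive constant is again a whole-space LGF with the paper's normalisation), one derives the joint scaling identity
\begin{equation*}
\rmF_{\bar\bh, \alpha}(t) \eqlaw t^{\alpha(Q-\beta)/2}\, e^{\alpha \bh_{\sqrt{t}}(0)}\, \rmF^{(1)}_{\hat\bh - \beta\log|\cdot|,\,\alpha}\;,
\end{equation*}
where $\rmF^{(1)}$ denotes the clock process evaluated along the unit bridge. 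Since $\alpha Q = 2 + \alpha^2/2$, the exponent equals $1 + (\alpha^2/2 - \alpha\beta)/2$, and combining with $p_t(0,0) = (2\pi t)^{-d/2}$ makes the integrand in $\frkm$ formally scale like $t^{\chi\alpha(Q-\beta)/2 - d/2}$. This is integrable at $0$ precisely when $\chi\alpha(Q-\beta) > d-2$, i.e.\ when $\chi > (d-2)/(\alpha(Q-\beta)) = \bar\chi$.

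To upgrade this heuristic to the upper bound $\chi > \bar\chi \Rightarrow \frkm_{\bar\bh,\alpha,\chi}(0) < \infty$, I would invoke Lemma~\ref{lm:keySpec} to obtain $\P$-almost sure upper bounds $\BE_{0,0,t}[\rmF_{\bar\bh,\alpha}(t)^n] \leq C(\bh)\, t^{n\alpha(Q-\beta)/2 - \eps}$ for integer $n$ and any $\eps > 0$, then interpolate to fractional $\chi$ by Jensen's inequality applied to the convex function $x \mapsto x^{n/\chi}$ with $n$ an integer just above $\bar\chi$. The Gaussian prefactor $e^{\alpha\chi\bh_{\sqrt{t}}(0)}$ from the scaling identity is absorbed into the $t^{-\eps}$ error: by Lemma~\ref{lm:varianceSpherical}, $\bh_{\sqrt{t}}(0)$ is Gaussian with variance $O(|\log t|)$, so a Borel--Cantelli argument along dyadic scales $t_n = 2^{-n}$, combined with continuity of the spherical average in the radial parameter, gives $\P$-almost sure control. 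For the lower bound $\chi < \bar\chi \Rightarrow \frkm_{\bar\bh,\alpha,\chi}(0) = \infty$, I would use $e^{-\rmF(t)} \geq \tfrac{1}{2}\mathbbm{1}_{\{\rmF(t) \leq \log 2\}}$ together with a Paley--Zygmund-type lower bound based on the first and second $\bh$-conditional bridge moments of $\rmF_{\bar\bh,\alpha}(t)$. The first moment is identified via Lemma~\ref{lm:StrenghRevuz} as an integral of $p_\cdot(0,\cdot)$ against $\mu_{\bar\bh,\alpha}$, and the scaling of $\mu_{\bar\bh,\alpha}$ near the $\beta$-log singularity (Lemma~\ref{lm:scalingThick}) gives the matching lower bound $t^{\alpha(Q-\beta)/2 + \eps}$; the second moment is upper-bounded using Lemma~\ref{lm:keySpec}. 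Together these yield $\BE_{0,0,t}[\rmF(t)^\chi e^{-\rmF(t)}] \gtrsim t^{\chi\alpha(Q-\beta)/2 + O(\eps)}$, whose integral against $t^{-d/2}$ on $(0,1]$ diverges when $\chi < \bar\chi$.

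The principal obstacle is Lemma~\ref{lm:keySpec} itself: the scaling identity above is only a distributional statement for each fixed $t$, whereas the proposition demands $\P$-almost sure finiteness or infinity of a random integral in which the same realisation of $\bh$ appears simultaneously at every scale. Producing quantitative, $\P$-almost sure bridge moment bounds uniformly in $t \in (0,1]$ is precisely what forces the delicate GMC estimates on small balls centred near thick points of $\bh$ encapsulated in Lemma~\ref{lm:mainBoundTechNested} and carried out in Appendix~\ref{ap:keySpec}.
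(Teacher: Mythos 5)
Your upper-bound direction ($\chi > \bar\chi$) follows essentially the same route as the paper: restrict to $t\in(0,1]$, split on whether the Brownian path stays inside $B(0, t^{1/2-\delta})$, invoke Lemma~\ref{lm:keySpec} for the integer moments on the good event, and interpolate to fractional $\chi$ by Jensen. Two small imprecisions: Lemma~\ref{lm:keySpec} is a statement about $\BE_0$ (free Brownian motion restricted to the good event), not about the unrestricted bridge moment $\BE_{0,0,t}[\rmF^n]$; the paper bridges this via the absolute-continuity density $\d\BP_{x,y,t}/\d\BP_x|_{\BF_s}$ at $s=t/2$ together with the time-reversal symmetry of the bridge, and handles the bad event via the reflection principle. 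Also, in the paper's upper bound the Gaussian prefactor $e^{\alpha\chi\bh_{\sqrt t}(0)}$ never appears, because the scaling identity is not used there; Lemma~\ref{lm:keySpec} is applied directly.

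Your lower-bound direction is a genuinely different route, and this is where there is a gap. The paper avoids second moments entirely: after the $\P$-a.s.\ scaling identity it sets $\rmZ_t \eqdef \BE_{0,0,1}[(\rmF_{\bar\bh^{\sqrt t},\alpha}(1))^\chi\mathbbm{1}_{\{\rmF_{\bar\bh^{\sqrt t},\alpha}(1)\le 1\}}]$, observes that the \emph{law} of $\rmZ_t$ is independent of $t$, and then uses Markov's inequality plus downward continuity of measure to extract, with $\P$-probability at least $1-\eps$, infinitely many $n$ for which $\rmZ_t\ge s_\eps$ on a fraction $\ge 1-\eps$ of $[0,1/n]$; on that set the integrand has a deterministic power-law floor and the integral diverges. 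Your Paley--Zygmund route requires both a first-moment lower bound (correctly traced to Lemma~\ref{lm:StrenghRevuz} and Lemma~\ref{lm:scalingThick}) and a second-moment upper bound. The difficulty is that Lemma~\ref{lm:keySpec} only bounds the second moment \emph{restricted to} the event $\{[B]_t\subseteq B(0,t^{1/2-\delta})\}$. To run Paley--Zygmund on the restricted variable $\rmF(t)\mathbbm{1}_{\text{good}}$, you must also show that the restricted first moment $\BE_{0,0,t}[\rmF(t)\mathbbm{1}_{\text{good}}]$ still obeys the lower bound $\gtrsim t^{\alpha(Q-\beta)/2+\eps}$ — equivalently, that the bad paths contribute negligibly to $\BE_{0,0,t}[\rmF(t)]$ — and this does not follow from the tools you cite (Cauchy--Schwarz against the bad-event probability would require exactly the unrestricted second moment you lack). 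A direct lower bound on the restricted first moment, obtained by integrating the constrained transition density over $B(0, c\sqrt{t})$ against $\mu_{\bar\bh,\alpha}$, would close the gap, but it is an additional argument. The paper's stationarity-of-$\rmZ_t$ device sidesteps the whole issue, which is what makes it the more economical proof.
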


Before proceeding with the rigorous proof of Proposition~\ref{pr:MainSpec}, we provide a formal scaling argument to explain where the exponent $\bar \chi$ comes from. Recalling the definition \eqref{eq:defMellinTrue} of $\smash{\frkm_{\bar\bh, \alpha, \chi}(0)}$, we note that, for all $\chi \geq 0$, the mapping $[0, \infty) \ni u \mapsto u^{\chi} e^{-u}$ is uniformly bounded by a constant $c = c(\chi)$. In particular, since the heat kernel $p_t(0, 0)$ is equal to a constant times $t^{-d/2}$, which is integrable over the unbounded interval $(1, \infty)$, the part of the integral in $\smash{\frkm_{\bar\bh, \alpha, \chi}(0)}$ that determines the spectral dimension is the behaviour of the following quantity,
\begin{equation} 
\label{eq:heurSpec1}
\int_0^{1} \BE_{0, 0, t}\bigl[(\rmF_{\bar\bh,\alpha}(t))^{\chi}\bigr] p_t(0, 0) dt \;.
\end{equation} 
By the scale invariance of the law of $\bh$ modulo an additive constant, we have that $\bh^{\sqrt{t}} \eqdef \bh(\sqrt{t} \cdot) - \bh_{\sqrt{t}}(0) \eqlaw \bh$. In particular, using the coordinate change formula for the measure $\nu_{\bh, \alpha}$ (Proposition~\ref{pr:coordinate_change_clock}), we obtain that 
\begin{equation}
\label{eq:heurSpec2}
\rmF_{\bar\bh,\alpha}(t) = \nu_{\bar\bh, \alpha}\bigl([B]_{t}\bigr)  = t^{\alpha (Q -\beta)/2 + o_t(1)} \nu_{\bh^{\sqrt{t}} - \beta \log\abs{\cdot}, \alpha} \bigl(t^{-1/2} [B]_{t}\bigr) \;,
\end{equation}
Therefore, plugging \eqref{eq:heurSpec2} into \eqref{eq:heurSpec1} and using the fact that the law of $t^{-1/2}[B]_{t}$ does not depend on $t$, we obtian that 
\begin{equation*}
\int_0^{1} \BE_{0, 0, t}\bigl[(\rmF_{\bar\bh,\alpha}(t))^{\chi}\bigr] p_t(0, 0) dt \approx \int_0^1 t^{\chi \alpha(Q - \beta)/2 - d/2 + o_t(1)} dt \;.
\end{equation*}
Since $\alpha(Q - \beta) = 2 + \alpha^2/2 - \alpha\beta$, the integral on the right-hand side of the above expression converges (resp.\ diverges) if $\chi > \bar \chi$ (resp.\ $\chi \in [0, \bar \chi)$). 
However, there are several reasons why this scaling argument is not rigorous. For instance, while it is true that the law of the field $\bh^{\sqrt{t}}$ does not depend on $t$, this is insufficient for our purposes since we need statements that hold $\mathbb{P}$-almost surely, rather than just in distribution.

\subsubsection{Computation of the spectral dimension}
We now proceed with the rigorous proof of Proposition~\ref{pr:MainSpec}. The proof is divided into two parts: in Lemma~\ref{lm:SpecDimLower}, we address the case $\chi \in [0,  \bar{\chi})$, and in Lemma~\ref{lm:SpecDimUpper}, we handle the case $\chi > \bar{\chi}$.
\begin{lemma}
\label{lm:SpecDimLower}
Let $d > 2$, $Q > \sqrt{\smash[b]{2d}}$, $\beta \in (-\infty, Q)$, and let $\bar{\chi}$ be as defined in \eqref{eq:barChi}. Then, it holds $\P$-almost surely that  
\begin{equation*}
\frkm_{\bar\bh, \alpha, \chi}(0) = \infty \;, \qquad \forall \, 	\chi \in [0,  \bar{\chi}) \;.
\end{equation*}
\end{lemma}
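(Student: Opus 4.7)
The goal is to show that $\frkm_{\bar\bh,\alpha,\chi}(0)=\int_0^\infty\BE_{0,0,t}[\rmF^{\chi}e^{-\rmF}]\,p_t(0,0)\,dt$ diverges, where I write $\rmF\eqdef\rmF_{\bar\bh,\alpha}(t)$ and $p_t(0,0)=(2\uppi t)^{-d/2}$. The formal scaling heuristic following Proposition~\ref{pr:MainSpec} predicts that the integrand behaves like $t^{\chi\alpha(Q-\beta)/2-d/2+o(1)}$ near $t=0$, which is non-integrable precisely when $\chi\alpha(Q-\beta)<d-2$, i.e.\ $\chi<\bar\chi$. The plan is to rigorously produce such a lower bound at a random subsequence of scales.

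The starting point is a scaling factorisation. Applying Proposition~\ref{pr:coordinate_change_clock} to the dilation $\phi(x)=\sqrt t\,x$ and using the scale invariance $\bh(\sqrt t\,\cdot)-\xi_t\eqlaw\bh$ with $\xi_t\eqdef\bh_{\sqrt t}(0)$ yields
\begin{equation*}
\rmF_{\bar\bh,\alpha}(t)\;=\;e^{\alpha\xi_t}\,t^{\alpha(Q-\beta)/2}\,\tilde\rmF_t, \qquad \tilde\rmF_t\eqdef\nu_{\bh^{\sqrt t}-\beta\log|\cdot|,\alpha}\bigl(t^{-1/2}[B]_t\bigr),
\end{equation*}
where $\bh^{\sqrt t}\eqdef\bh(\sqrt t\,\cdot)-\xi_t\eqlaw\bh$, and under $\BP_{0,0,t}$ the rescaled trajectory $s\mapsto t^{-1/2}B_{ts}$ is a standard Brownian bridge from $0$ to $0$ on $[0,1]$ (independent of the field $\bh^{\sqrt t}$). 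The prefactor $e^{\alpha\xi_t}$ is harmless: by Lemma~\ref{lm:varianceSpherical}, $\xi_t$ is centred Gaussian with variance $\tfrac12|\log t|+O(1)$, so Gaussian tails plus Borel--Cantelli at dyadic scales give $|\xi_t|\leq|\log t|^{1/2+\eta}$ for every $\eta>0$ and every sufficiently small $t$, $\P$-almost surely, and in particular $e^{\alpha\xi_t}=t^{o(1)}$ as $t\to 0$.

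Given this, I lower-bound the integrand by restricting to the event $E_t\eqdef\{\tilde\rmF_t\in[\delta,K]\}$ for fixed $0<\delta<K$ intersected with the almost-sure event controlling $\xi_t$. On $E_t$ and for $t$ small, the factorisation forces $\rmF\leq 1$ and $\rmF^{\chi}\geq c\,t^{\chi\alpha(Q-\beta)/2+o(1)}$, so
\begin{equation*}
\BE_{0,0,t}[\rmF^{\chi}e^{-\rmF}]\;\geq\;c\,e^{-1}\,t^{\chi\alpha(Q-\beta)/2+o(1)}\cdot\BP_{0,0,t}\bigl(\tilde\rmF_t\in[\delta,K]\bigr),\qquad \P\text{-a.s.\ for small }t.
\end{equation*}
Multiplying by $p_t(0,0)$ and integrating over a dyadic interval $[2^{-n-1},2^{-n}]$, the contribution $J_n$ satisfies $J_n\geq c'\,2^{-n[1-d/2+\chi\alpha(Q-\beta)/2+o(1)]}\,p_n$, where $p_n\eqdef\BP_{0,0,2^{-n}}(\tilde\rmF_{2^{-n}}\in[\delta,K])$. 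Since $\chi<\bar\chi$, the dyadic exponent is strictly negative (the degenerate case $\chi=0$ is handled directly by $\BE_{0,0,t}[e^{-\rmF}]\geq e^{-1}\BP_{0,0,t}(E_t)$ and $\int_0^1 t^{-d/2}\,dt=\infty$ for $d>2$), so $J_n\to\infty$ along any sequence of $n$ on which $p_n$ is bounded below.

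The main obstacle is therefore to upgrade the distributional lower bound on $p_n$ to an almost-sure one for infinitely many $n$. By the law equality $\bh^{\sqrt t}\eqlaw\bh$, the random variable $p_n$ has the same $\P$-law as $\BP_{0,0,1}(\nu_{\bh-\beta\log|\cdot|,\alpha}([B']_1)\in[\delta,K])$, which, for appropriately chosen $\delta<K$, is strictly positive with $\P$-probability arbitrarily close to one. To pass from "with high probability for each $n$" to "for infinitely many $n$ almost surely", I plan a Borel--Cantelli argument along a sparse subsequence $n_k$ chosen so that the fields $\bh^{\sqrt{2^{-n_k}}}$ (on the ball of radius relevant to $\tilde\rmF_{2^{-n_k}}$) depend on nearly disjoint "annular" portions of $\bh$. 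The Markovian structure of the spherical-average process provided by Theorem~\ref{th:identSphere} gives, for even $d$, an exact decomposition of $\bh$ into an independent radial-shell component and a decorrelated angular component at each scale, from which a second-moment (Paley--Zygmund / Kochen--Stone) estimate on the events $\{p_{n_k}\geq c\}$ yields the required divergence $\sum_k J_{n_k}=\infty$ $\P$-a.s. This is the technical core of the argument, and it is where any refined control of the conditional moments of $\tilde\rmF_t$ (analogous to Lemma~\ref{lm:keySpec}) must enter.
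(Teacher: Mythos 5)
Your preliminary reductions agree with the paper's: both invoke Proposition~\ref{pr:coordinate_change_clock} together with the scale invariance of $\bh$ modulo an additive constant to extract the power $t^{\chi\alpha(Q-\beta)/2+o_t(1)}$, and both dispose of the prefactor $e^{\alpha\chi\bh_{\sqrt t}(0)}$ via the fact that $\bh_{\sqrt t}(0)/|\log\sqrt t|\to 0$ almost surely. The divergence then reduces to showing that a certain scale-invariant-in-law random variable is bounded below often enough along small $t$, and here your plan diverges from the paper's proof in a way that creates a real gap.

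The paper introduces $\rmZ_t \eqdef \BE_{0,0,1}\bigl[(\rmF_{{\bar\bh}^{\sqrt t},\alpha}(1))^{\chi}\mathbbm{1}_{\{\rmF_{{\bar\bh}^{\sqrt t},\alpha}(1)\leq 1\}}\bigr]$, which has the same $\P$-law for every $t>0$. It then applies Markov's inequality to the Lebesgue integral $\int_0^r\mathbbm{1}_{\{\rmZ_t<s_\eps\}}\,dt$ (whose mean is $<\eps^2 r$ by Tonelli and the common law), and Fatou along $r=1/n$, to conclude that with $\P$-probability at least $1-\eps$ there are infinitely many $n$ for which $\rmZ_t\geq s_\eps$ on a subset of $(0,1/n)$ of Lebesgue measure at least $(1-\eps)/n$. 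No independence, decorrelation, or second-moment structure is used anywhere: only the identical distribution across scales and a pigeonhole. You instead propose a Paley--Zygmund/Kochen--Stone argument over a sparse dyadic subsequence $n_k$, justified by ``nearly disjoint annular'' pieces of $\bh$ and the Markov structure of Theorem~\ref{th:identSphere}. This is where the proposal has a genuine gap, for two reasons. First, the regions $B(0,O(\sqrt{2^{-n_k}}))$ on which $\bh^{\sqrt{2^{-n_k}}}$ depends are nested balls, not disjoint annuli, and $\bh$ is log-correlated across them; the decorrelation of the events $\{p_{n_k}\geq c\}$ that a Kochen--Stone argument would require is not established, and is not a small technicality. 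Second, Theorem~\ref{th:identSphere} and the associated radial Markov decomposition are proved only for \emph{even} $d>2$, whereas Lemma~\ref{lm:SpecDimLower} (and Theorem~\ref{th_specDim}) is asserted for all $d>2$; basing the a.s.\ upgrade on Theorem~\ref{th:identSphere} would silently restrict the result to even dimensions. The Markov-plus-Fatou device used in the paper avoids both problems and is strictly simpler, so you should replace the Borel--Cantelli step with it.
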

\begin{proof}
Let $d > 2$, $Q > \sqrt{\smash[b]{2d}}$, $\beta \in (-\infty, Q)$, and consider $\chi \in [0,  \bar{\chi})$. We begin by observing that   
\begin{equation*}
\int_0^{\infty} \BE_{0,0,t}\bigl[(\rmF_{\bar\bh, \alpha}(t))^{\chi}e^{-\rmF_{\bar\bh,\alpha}(t)}\bigr] p_t(0, 0) dt 
\gtrsim \int_0^{\infty} \BE_{0, 0, t}\bigl[(\rmF_{\bar\bh, \alpha}(t))^{\chi}  \mathbbm{1}_{\{\rmF_{\bar\bh, \alpha}(t) \leq 1\}}\bigr] p_t(0, 0) dt\;.
\end{equation*}
Next, for any $t > 0$, we introduce the fields 
\begin{equation*}
\bh^{\sqrt{t}} \eqdef \bh(\sqrt{t} \cdot) -  \bh_{\sqrt{t}}(0) \;, \qquad {\bar\bh}^{\sqrt{t}} \eqdef \bh^{\sqrt{t}} - \beta \log\abs{\cdot} \;.	
\end{equation*}
Recall that $\bh_{\sqrt{t}}(0)$ is centred Gaussian with variance of order $\abs{\log \sqrt{t}}$ (Lemma~\ref{lm:varianceSpherical}), so it holds $\P$-almost surely that $\bh_{\sqrt{t}}(0)/\abs{\log \sqrt{t}} = o_t(1)$ as $t \to 0$. By applying the coordinate change formula for $\rmF_{\bh, \alpha}$ (Proposition~\ref{pr:coordinate_change_clock}), it follows $\P$-almost surely that, for any sufficiently small $\bar t \in (0,1)$ and for all $t \in (0,\bar t)$, 
\begin{align*}
\BE_{0, 0, t}\bigl[(\rmF_{\bar\bh, \alpha}(t))^{\chi} \mathbbm{1}_{\{\rmF_{\bar\bh, \alpha}(t) \leq 1\}}\bigr] 
& = t^{\alpha \chi (Q - \beta) /2} e^{\alpha \chi \bh_{\sqrt t}(0)} \BE_{0, 0, 1}\bigl[(\rmF_{{\bar\bh}^{\sqrt{t}}, \alpha}(1))^{\chi} \mathbbm{1}_{\{\rmF_{{\bar\bh}^{\sqrt{t}}, \alpha}(1) t^{\alpha (Q - \beta) /2} e^{\alpha \bh_{\sqrt t}(0)} \leq 1\}}\bigr] \\
& \geq t^{\chi (2 + \alpha^2/2 - \alpha\beta)/2 + o_t(1)} \underbrace{\BE_{0, 0, 1}\bigl[(\rmF_{{\bar\bh}^{\sqrt{t}} , \alpha}(1))^{\chi} \mathbbm{1}_{\{\rmF_{{\bar\bh}^{\sqrt{t}}, \alpha}(1) \leq 1\}} \bigr]}_{\rmZ_t}\;,
\end{align*}
where to get the inequality in the second line, we used that $\beta < Q$ and the fact that $\alpha(Q - \beta) = 2 + \alpha^2/2 - \alpha\beta$. Hence, it holds $\P$-almost surely that
\begin{equation}
\label{eq:SpecCase11}
\int_0^{\infty} \BE_{0,0,t}\bigl[(\rmF_{\bar\bh, \alpha}(t))^{\chi}e^{-\rmF_{\bar\bh, \alpha}(t)}\bigr] p_t(0, 0) dt  
\gtrsim \int_0^{\bar t} t^{\chi (2 + \alpha^2/2 - \alpha\beta)/2- d/2 + o_t(1)} \rmZ_t  dt \;.
\end{equation}

We next observe that by the invariance of the law of $\bh$ modulo an additive constant, the law of $\rmZ_t$ does not depend on $t > 0$. Moreover, for any $\eps \in (0, 1)$, there exists $s_{\eps} > 0$ such that $\P(\rmZ_t \geq s_{\eps}) \geq 1 - \eps^2$. Then, by Markov's inequality, for all $r \in (0, 1)$, 
\begin{equation*}
	\P\Biggl(\int_0^r\mathbbm{1}_{\{\rmZ_t < s_{\eps}\}} dt > \eps  r\Biggr) \leq \frac{1}{\eps r} \int_0^r \P(\rmZ_t < s_{\eps}) dt < \eps\;.
\end{equation*}
By downward continuity of measures, it follows that 
\begin{equation*}
\P\Biggl(\limsup_{n \to \infty} \Biggl(\int_0^{\frac{1}{n}}\mathbbm{1}_{\{\rmZ_t \geq s_{\eps}\}} dt \geq \frac{1-\eps}{n}\Biggr)\Biggr) \geq \limsup_{n \to \infty} \P\Biggl(\int_0^{\frac{1}{n}}\mathbbm{1}_{\{\rmZ_t \geq s_{\eps}\}} dt \geq \frac{1-\eps}{n}\Biggr) \geq 1-\eps \;.
\end{equation*}
Hence, with $\P$-probability at least $1- \eps$, there are infinitely many $n \in \N$ such that 
\begin{equation*}
\int_0^{\frac{1}{n}}\mathbbm{1}_{\{\rmZ_t \geq s_{\eps}\}} dt \geq \frac{1-\eps}{n} \;.	
\end{equation*} 
On that event, by choosing $n \in \N$ large enough so that $1/n < \bar t$, the right-hand side of \eqref{eq:SpecCase11} is bounded below by
\begin{equation*}
\int_0^{\frac{1}{n}} t^{\chi(2 + \alpha^2/2 - \alpha\beta)/2 - d/2 + o_t(1)} \rmZ_t dt \geq s_{\eps} \int_{\frac{\eps}{n}}^{\frac{1}{n}} t^{\chi(2 + \alpha^2/2 - \alpha\beta)/2 - d/2 + o_t(1)} dt \to \infty \;, \qquad \text{ as } n \to \infty \;,
\end{equation*}
where we used the fact that $-\chi(2 + \alpha^2/2 - \alpha\beta)/2 + d/2 - 1 > 0$ for all $\chi \in [0, \bar \chi)$. 
\end{proof}

Next, we address the case $\chi > \bar{\chi}$. A crucial element in the proof of this case is the following result, which is proved in Appendix~\ref{ap:keySpec}.
\begin{lemma}
\label{lm:keySpec}
Let $d > 2$, $Q > \sqrt{\smash[b]{2d}}$, and $\beta \in (-\infty, Q)$. Then, it holds $\P$-almost surely that for any $n \in \N$ and for all $t \in (0, 1]$,
\begin{equation}
\label{eq:keySpec1}
\BE_{0}\bigl[(\rmF_{\bar\bh, \alpha}(t))^n \mathbbm{1}_{\{[B]_{t} \subseteq B(0, t^{1/2- \delta})\}}\bigr] \lesssim t^{n[(1/2-\delta)(2+\alpha^2/2 - \alpha\beta) - \eps]} \;, \qquad \forall \, \eps, \; \delta > 0 \;,
\end{equation}
for some random implicit constant. 
\end{lemma}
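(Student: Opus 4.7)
The plan is to reduce the $n$-th moment to an iterated spatial integral against the GMC measure via the Revuz formula, and then estimate this integral by a dyadic decomposition that exploits the near-origin regularity of $\mu_{\bar\bh,\alpha}$.

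First, applying Lemma~\ref{lm:StrenghRevuz} iteratively together with the strong Markov property of Brownian motion yields
$$
\BE_0\bigl[(\rmF_{\bar\bh,\alpha}(t))^n \mathbbm{1}_{\{[B]_t \subseteq B_t\}}\bigr] \leq n! \int_{B_t^n} \int_{0 < s_1 < \cdots < s_n < t} p_{s_1}(0, y_1) \prod_{i=2}^n p_{s_i - s_{i-1}}(y_{i-1}, y_i)\, ds \prod_{i=1}^n d\mu_{\bar\bh,\alpha}(y_i),
$$
where $B_t \eqdef B(0, t^{1/2-\delta})$, after dominating the killed heat kernel by the free one and the path-staying indicator by $1$. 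Extending the time simplex to the product cube $[0,t]^n$ and applying the Newtonian-potential estimate $\int_0^t p_u(y,y')\, du \lesssim \abs{y-y'}^{2-d}$ (used in the proof of Lemma~\ref{lm:boundPCAF}) reduces matters to bounding
$$
n! \int_{B_t^n} \prod_{i=1}^n \abs{y_i - y_{i-1}}^{2-d}\, \prod_{i=1}^n d\mu_{\bar\bh,\alpha}(y_i), \qquad y_0 \eqdef 0.
$$

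Second, the core estimate is the uniform single-integral bound
$$
\sup_{y' \in B_t} \int_{B_t} \abs{y-y'}^{2-d}\, d\mu_{\bar\bh,\alpha}(y) \lesssim t^{(1/2-\delta)(2 + \alpha^2/2 - \alpha\beta) - \eps},
$$
valid for any $\eps > 0$. To prove it, I would decompose $B_t$ into dyadic annuli $A_k(y') = \{y : e^{-k-1} < \abs{y-y'} \leq e^{-k}\}$ with $k \geq K_t \eqdef (1/2-\delta)\abs{\log t}$, and bound the annular masses $\mu_{\bar\bh,\alpha}(A_k(y'))$ using the near-origin modulus of continuity of the GMC measure (Lemma~\ref{lm:modContLQG}). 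Two regimes arise depending on whether $A_k(y')$ contains the logarithmic singularity at the origin; in each regime one obtains a geometric series in $k$ whose smallest term $k = K_t$ dominates and produces the announced exponent. Iterating this single-variable estimate successively over $y_n, y_{n-1}, \ldots, y_1$, all of which remain in $B_t$ so that the bound applies uniformly at each step, yields the claimed inequality, with the combinatorial factor $n!$ absorbed since $n$ is fixed.

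The main obstacle is the uniformity of the single-integral estimate over all centre points $y' \in B_t$. Although the origin is by construction a $\beta$-thick point of $\bar\bh$, arbitrarily close to it there may exist atypical super-thick points of $\bh$ where $\mu_{\bar\bh,\alpha}$ charges small balls more heavily than the generic rate $r^{d+\alpha^2/2-\alpha\beta}$. The required simultaneous control of annular masses with a single $\P$-almost surely finite implicit constant is exactly the content of the nested bound Lemma~\ref{lm:mainBoundTechNested} cited in the excerpt; its proof relies on the scaling invariance (in law) of $\bh$ modulo additive constants combined with a multiscale union bound across dyadic radii. Propagating this uniform estimate through the $n$-fold iteration while keeping the loss $\eps$ under control is the technical crux of the argument.
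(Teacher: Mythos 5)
Your overall architecture matches the paper's: reduce the $n$-th moment to an iterated spatial integral via the Revuz formula (Lemma~\ref{lm:BasicDecoIter}), dominate the time-integrated heat kernel by the Newtonian potential $\abs{y-y'}^{2-d}$ (Lemma~\ref{lm:boundftxy}), and then iterate a single-variable estimate uniformly over centres in $B(0,t^{1/2-\delta})$ — this is exactly Lemma~\ref{lm:boundSupMass}. The identification of Lemma~\ref{lm:mainBoundTechNested} as the technical crux is also correct.

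However, the sketch of the single-variable bound is too coarse in two places, and as written it would not yield the claimed exponent $2+\alpha^2/2-\alpha\beta$. First, you cite Lemma~\ref{lm:modContLQG} for the annular masses, but its exponent involves $\beta\vee\sqrt{2d}$, which for $\beta<\sqrt{2d}$ is strictly worse than $\beta$; feeding that into the geometric series produces the exponent $2+\alpha^2/2-(\beta\vee\sqrt{2d})\alpha$ rather than the target $2+\alpha^2/2-\alpha\beta$. For the annuli that touch the origin one must instead use the pointwise bound of Lemma~\ref{lm:scalingThick}, which applies only to balls centred at $0$ but gives the sharper exponent. Second, in the regime where the annuli around $y'=x$ are far from the origin, the nested bound Lemma~\ref{lm:mainBoundTechNested} must be applied with outer scale $s\asymp\abs{x}$, not $s\asymp t^{1/2-\delta}$; since $\sqrt{2d}\alpha>0$ and $2+\alpha^2/2-\alpha\beta-\sqrt{2d}\alpha$ can be negative, using the larger scale loses the crucial factor $\abs{x}^{\sqrt{2d}\alpha}$ that combines with the log-singularity factor $\abs{x}^{-\alpha\beta}$ to give $\abs{x}^{2+\alpha^2/2-\alpha\beta-\eps}\le r^{2+\alpha^2/2-\alpha\beta-\eps}$. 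The paper's proof of Lemma~\ref{lm:boundSupMass} handles these issues by splitting $B(0,r)$ into four regions depending on the relative positions of $0$, $x$, and the integration variable, using Lemma~\ref{lm:scalingThick} in three of them and Lemma~\ref{lm:mainBoundTechNested} with outer scale $\abs{x}$ only in the ball $B(x,\abs{x}/2)$; your two-regime dyadic decomposition can be made to work, but only after incorporating both of these refinements.
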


\begin{lemma}
\label{lm:SpecDimUpper}
Let $d > 2$, $Q > \sqrt{\smash[b]{2d}}$, $\beta \in (-\infty, Q)$, and let $\bar{\chi}$ be as defined in \eqref{eq:barChi}. Then, it holds $\P$-almost surely that 
\begin{equation*}
\frkm_{\bar\bh, \alpha, \chi}(0) < \infty \;, \qquad \forall \, 	\chi > \bar{\chi} \;.
\end{equation*}
\end{lemma}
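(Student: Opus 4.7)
The plan is to split the integral defining $\frkm_{\bar\bh,\alpha,\chi}(0)$ at $t=1$. The tail $\int_1^\infty$ is trivially finite: the factor $u^\chi e^{-u}$ is uniformly bounded, and $p_t(0,0)=(2\pi t)^{-d/2}$ is integrable on $[1,\infty)$ since $d>2$. The heart of the matter is therefore showing that $\int_0^1 \BE_{0,0,t}[(\rmF_{\bar\bh,\alpha}(t))^\chi e^{-\rmF_{\bar\bh,\alpha}(t)}] p_t(0,0)\,dt<\infty$, and this is where Lemma~\ref{lm:keySpec} and the hypothesis $\chi>\bar\chi$ must be used.

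Fix an integer $n\geq \chi$. Dropping $e^{-\rmF}\leq 1$ and applying Lyapunov's inequality gives
\begin{equation*}
\BE_{0,0,t}\bigl[(\rmF_{\bar\bh,\alpha}(t))^\chi e^{-\rmF_{\bar\bh,\alpha}(t)}\bigr] \leq \BE_{0,0,t}\bigl[(\rmF_{\bar\bh,\alpha}(t))^n\bigr]^{\chi/n}\;.
\end{equation*}
Next I would convert the Brownian bridge expectation into a Brownian motion expectation. Using $(a+b)^n\leq 2^{n-1}(a^n+b^n)$ with $a=\rmF(t/2)$ and $b=\rmF(t)-\rmF(t/2)$ together with the time-reversal symmetry of the bridge from $0$ to $0$ under $\BP_{0,0,t}$, one obtains $\BE_{0,0,t}[\rmF(t)^n]\leq 2^n \BE_{0,0,t}[\rmF(t/2)^n]$. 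Then the Markov property at time $t/2$ gives
\begin{equation*}
\BE_{0,0,t}\bigl[\rmF(t/2)^n\bigr]\,p_t(0,0) = \BE_0\bigl[\rmF(t/2)^n\, p_{t/2}(B_{t/2},0)\bigr] \leq p_{t/2}(0,0)\,\BE_0\bigl[\rmF(t/2)^n\bigr]\;,
\end{equation*}
so $\BE_{0,0,t}[\rmF(t/2)^n]\leq 2^{d/2}\BE_0[\rmF(t/2)^n]$. We are reduced to bounding the $n$-th moment of $\rmF_{\bar\bh,\alpha}(s)$ under the (unconditioned) $\BP_0$ for $s=t/2\leq 1/2$.

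Fix small $\delta,\eps>0$ and decompose over $E\eqdef\{[B]_s\subseteq B(0,s^{1/2-\delta})\}$ and its complement. Lemma~\ref{lm:keySpec} directly yields $\BE_0[\rmF(s)^n \mathbbm{1}_E]\lesssim s^{n[(1/2-\delta)(2+\alpha^2/2-\alpha\beta)-\eps]}$. For the complement, Cauchy--Schwarz combined with the standard Gaussian tail $\BP_0(\sup_{r\leq s}|B_r|>s^{1/2-\delta})\leq C e^{-c s^{-2\delta}}$ gives
\begin{equation*}
\BE_0\bigl[\rmF(s)^n \mathbbm{1}_{E^c}\bigr] \leq \BE_0\bigl[\rmF(1)^{2n}\bigr]^{1/2} \cdot C^{1/2} e^{-c s^{-2\delta}/2}\;,
\end{equation*}
which decays faster than any polynomial in $s$. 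The $\P$-almost sure finiteness of $\BE_0[\rmF(1)^{2n}]$ for each $n$ follows from Fubini, the expansion $\BE_0[\rmF_R(\infty)^n]=n!\int \prod_i G_R(y_{i-1},y_i)\prod \mu_{\bar\bh,\alpha}(dy_i)$, and the uniform bound $\sup_{x\in B(0,R)}\mathrm{g}_R[\mu_{\bar\bh,\alpha}](x)<\infty$ (the analog of Lemma~\ref{lm:boundPCAF} for $\bar\bh$, which is valid for all $\beta\in(-\infty,Q)$), together with a standard ball-exit decomposition to pass from $\tau_R$ to time $1$.

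Combining these bounds,
\begin{equation*}
\BE_{0,0,t}\bigl[(\rmF(t))^\chi e^{-\rmF(t)}\bigr] p_t(0,0)\lesssim t^{\chi\,[(1/2-\delta)(2+\alpha^2/2-\alpha\beta)-\eps]-d/2}\;,
\end{equation*}
which is integrable on $(0,1)$ provided the exponent exceeds $-1$, i.e.\ $\chi[(1/2-\delta)(2+\alpha^2/2-\alpha\beta)-\eps]>d/2-1$. Since $\chi>\bar\chi=(d-2)/(2+\alpha^2/2-\alpha\beta)$, this inequality holds for $\delta,\eps>0$ chosen sufficiently small, completing the proof. The main obstacle in this argument is purely technical: controlling the complementary-event contribution requires $\P$-almost sure finiteness of all integer moments $\BE_0[\rmF(1)^{2n}]$, which in turn relies on Lemma~\ref{lm:boundPCAF} for the field $\bar\bh$ and a careful handling of the Brownian motion possibly exiting large balls before time $1$.
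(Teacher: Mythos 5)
Your overall strategy is the same as the paper's (split at $t=1$, pass from the bridge to the free Brownian motion via the Radon--Nikodym derivative at time $t/2$, split on the event $E=\{[B]_{t/2}\subseteq B(0,(t/2)^{1/2-\delta})\}$, and apply Lemma~\ref{lm:keySpec} on $E$). However, you drop the factor $e^{-\rmF_{\bar\bh,\alpha}(t)}$ at the very first step, before decomposing on $E$ and $E^c$, and this creates a genuine gap. Once $e^{-\rmF}$ is gone, on $E^c$ you are left with $\BE_0[\rmF(s)^n\mathbbm{1}_{E^c}]$, which you bound by Cauchy--Schwarz against $\BE_0[\rmF(1)^{2n}]^{1/2}$. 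This forces you to prove that $\BE_0[\rmF_{\bar\bh,\alpha}(1)^{2n}]<\infty$ $\P$-almost surely, which is not established anywhere in the paper and is not free. The moment formula $\BE_0[\rmF(\tau_R)^n]\leq n!\,C_R^n$ with $C_R=\sup_{x\in B(0,R)}\mathrm{g}_R[\mu_{\bar\bh,\alpha}](x)$ only controls the clock process stopped at the exit time of a ball; passing from $\tau_R$ to the deterministic time $1$ requires summing over $R$ the product of $C_R^{n}$ (a random quantity that grows with $R$ in a way that depends on the realisation of $\bh$) against the Gaussian tail $\P(\sup_{s\leq 1}|B_s|>R-1)^{1/2}$, and verifying a.s.\ convergence of that series. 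This is plausible but is not a two-line ``standard ball-exit decomposition''; it would require an a.s.\ polynomial (or at least sub-Gaussian) growth estimate on $R\mapsto C_R$, which is an additional lemma in its own right.

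The paper's proof avoids this entirely by holding on to $e^{-\rmF}$ until after the splitting: since $u\mapsto u^{\chi}e^{-u}$ is uniformly bounded by a constant depending only on $\chi$, the contribution of $E^c$ is simply $\lesssim \P_0(E^c)\lesssim e^{-ct^{-2\delta}}$, which is clearly integrable against $t^{-d/2}$ on $(0,1)$. The Jensen/Lyapunov step converting a $\chi$-th moment to an $n$-th moment is then applied only on the event $E$, where Lemma~\ref{lm:keySpec} applies directly. To repair your argument without introducing the extra moment lemma, you should postpone dropping $e^{-\rmF}$: first bound $\BE_0[(\rmF(s))^{\chi}e^{-\rmF(s)}\mathbbm{1}_{E^c}]\leq c(\chi)\,\P_0(E^c)$ using boundedness of $u^{\chi}e^{-u}$, then on $E$ drop $e^{-\rmF}\leq 1$ and apply Jensen with $n=\lceil\chi\rceil$, exactly as in the paper. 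With that change, the rest of your computation (the exponent bookkeeping at the end) is correct.
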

\begin{proof}
Let $d > 2$, $Q > \sqrt{\smash[b]{2d}}$, $\beta \in (-\infty, Q)$, and consider $\chi > \bar{\chi}$. Since the function $[0, \infty) \ni u \mapsto u^{\chi} e^{-u}$ is uniformly bounded by a constant depending only on $\chi$, it holds that
\begin{equation*}
\int_1^{\infty} \BE_{0, 0, t}\bigl[(\rmF_{\bar\bh, \alpha}(t))^{\chi}e^{-\rmF_{\bar\bh, \alpha}(t)}\bigr] p_t(0, 0) dt \lesssim \int_1^{\infty} t^{-d/2} dt < \infty \;. 
\end{equation*}
Therefore, it holds $\P$-almost surely that 
\begin{equation*}
\int_1^{\infty} \BE_{0, 0, t}\bigl[(\rmF_{\bar\bh, \alpha}(t))^{\chi}e^{-\rmF_{\bar\bh, \alpha}(t)}\bigr] p_t(0, 0) dt < \infty \;.
\end{equation*}
Hence, from now on, we can restrict our attention to the following quantity 
\begin{equation}
\label{eq:restrProofUpper}
\int_0^{1} \BE_{0,0,t}\bigl[(\rmF_{\bar\bh, \alpha}(t))^{\chi}e^{-\rmF_{\bar\bh, \alpha}(t)}\bigr] p_t(0, 0) dt \;.
\end{equation}
By the time-reversal symmetry of the Brownian bridge, the fact that for all $\chi > 0$ and $a$, $b > 0$ it holds that $(a+b)^{\chi} \lesssim a^{\chi} + b^{\chi}$ for some implicit constant depending only on $\chi$, and the monotonicity of the mapping $\R_0^{+} \ni t \mapsto \rmF_{\bar\bh, \alpha}(t)$, one obtains $\P$-almost surely that the term in \eqref{eq:restrProofUpper} can be bounded above by a constant multiple of the following sum,
\begin{equation*}
\int_0^1 \BE_{0, 0, t}\bigl[(\rmF_{\bar\bh, \alpha}(t/2))^{\chi} e^{-\rmF_{\bar\bh, \alpha}(t/2)} \bigr]  p_t(0, 0) dt + \int_0^1 \BE_{0, 0, t}\bigl[(\rmF_{\bar\bh, \alpha}(t/2))^{\chi} e^{-\rmF_{\bar\bh, \alpha}(t/2)} \bigr] p_t(0, 0) dt \;,
\end{equation*}
where we note that the two terms in the above display are equal.
Next, we recall that for any $x$, $y \in \R^d$ and $s \in [0, t)$, it holds that   
\begin{equation*}
\frac{d \BP_{x, y, t}}{d \BP_x}|_{\BF_{s}} = \Biggl(\frac{t}{t-s}\Biggr)^{d/2} \exp\Biggl(\frac{\abs{x-y}^2}{2t} - \frac{\abs{B_s- y}^2}{2(t-s)}\Biggr) \;,
\end{equation*}
Therefore, invoking this absolute continuity result, we obtain that  
\begin{equation}
\label{eq:usingI1andI2}
\int_0^1 \BE_{0, 0, t}\bigl[(\rmF_{\bar\bh, \alpha}(t/2))^{\chi} e^{-\rmF_{\bar\bh, \alpha}(t/2)} \bigr]  p_t(0, 0) dt \lesssim \int_0^1 \BE_{0}\bigl[(\rmF_{\bar\bh, \alpha}(t/2))^{\chi} e^{-\rmF_{\bar\bh, \alpha}(t/2)}\bigr]t^{-d/2} dt \;.
\end{equation}
For any $\delta > 0$, using again the fact that for each $\chi > 0$, the function $\R^{+}_{0} \ni u \mapsto u^{\chi} e^{-u}$ is uniformly bounded by a constant depending only on $\chi$, we observe that the term on the right-hand side of \eqref{eq:usingI1andI2} can be estimated from above by a constant multiple of
\begin{equation}
\label{eq:splitBall}
\int_0^1 \BE_{0}\bigl[(\rmF_{\bar \bh,\alpha}(t/2))^{\chi} \mathbbm{1}_{\{[B]_{t/2} \subseteq B(0, (t/2)^{1/2- \delta})\}}\bigr] t^{-d/2} dt + \int_0^1 \P_{0}\bigl([B]_{t/2} \not\subseteq  B(0, (t/2)^{1/2- \delta})\bigr) t^{-d/2} dt \;.
\end{equation} 
The second term in \eqref{eq:splitBall} is easily controlled since, by the reflection principle, for each $\delta > 0$ and some $c > 0$, 
\begin{equation*}
\P_{0}\bigl([B]_{t/2} \not \subseteq B(0, (t/2)^{1/2-\delta})\bigr) \lesssim e^{-c t^{-2 \delta}} \;.
\end{equation*}
Hence, plugging this estimate into the second term in \eqref{eq:splitBall}, we readily get that it is finite for any $\delta > 0$.
For the first term in \eqref{eq:splitBall}, let $n \in \N$ be such that $\chi \in [n-1 , n)$. By concavity of the map $\R^{+}_{0} \ni u \mapsto u^{\chi/n}$ and Jensen's inequality,
\begin{equation}
\label{eq:boundChiMoment0}
\BE_{0}\bigl[(\rmF_{\bar\bh, \alpha}(t/2))^{\chi} \mathbbm{1}_{\{[B]_{t/2} \subseteq B(0, (t/2)^{1/2- \delta})\}}\bigr]  
\leq \BE_{0}\bigl[(\rmF_{\bar\bh, \alpha}(t/2))^n \mathbbm{1}_{\{[B]_{t/2} \subseteq B(0, (t/2)^{1/2- \delta})\}}\bigr] ^{\chi/n} \;.
\end{equation}
Invoking Lemma~\ref{lm:keySpec}, for any $\eps > 0$, we find $\P$-almost surely that
\begin{equation*}
\BE_{0}\bigl[(\rmF_{\bar\bh, \alpha}(t/2))^{\chi} \mathbbm{1}_{\{[B]_{t/2} \subseteq B(0, (t/2)^{1/2- \delta})\}}\bigr] \lesssim t^{\chi[(1/2-\delta)(2+\alpha^2/2- \alpha \beta) - \eps]} \;.
\end{equation*}
In particular, for any $\chi > \bar{\chi}$ one has $\chi(2 + \alpha^2 - \alpha \beta)/2 - d/2 > -1$. Hence, by choosing $\eps$, $\delta > 0$ small enough (depending on $\chi$, $\alpha$, $\beta$), we obtain that also the first term in \eqref{eq:splitBall} is finite. Therefore, this concludes the proof.
\end{proof}

\begin{proof}[Proof of Proposition~\ref{pr:MainSpec}.]
The proof follows directly by combining Lemma~\ref{lm:SpecDimLower} with Lemma~\ref{lm:SpecDimUpper}. 
\end{proof}

\section{Quantum cones in arbitrary even dimension}
\label{sec:quantum}
In this section, we prove the results related to the quantum cone. The proof of Theorem~\ref{th:convRecentred} is divided between Sections~\ref{sub:tight} and~\ref{sub:uniq}. In Section~\ref{sub:tight}, we establish the tightness of the collection of processes $(\rmS_{b, \cdot})_{b > 0}$ defined in \eqref{eq:defRecProc}. Then, in Section~\ref{sub:uniq}, we prove the convergence in total variation distance.
In Section~\ref{sub:quantumLocal}, we prove Theorem~\ref{th:convQuantum}, and finally, Section~\ref{sub:LBMQuantum} is dedicated to the proof of Theorem~\ref{th:invarianceShift}.

\subsection{Tightness of the recentred process}
\label{sub:tight}
Fix $d > 2$ even, $\gamma \in (0, \sqrt{\smash[b]{2d}})$, and $\beta \in (-\infty, Q)$, where we recall that $Q$ is defined as in \eqref{eq:defQd} and satisfies $Q > \sqrt{\smash[b]{2d}}$. Recalling that the the collection of processes $\smash{(\rmS_{b, \cdot})_{b > 0}}$ is defined in \eqref{eq:defRecProc}, the main goal of this section is to prove the following tightness result.
\begin{proposition}
\label{pr:tightQC}
For $d > 2$ even, the collection of processes $(\rmS_{b, \cdot})_{b > 0}$ is tight in $\CC^{\cd_d}_{\loc}(\R)$ equipped with the local uniform metric.
\end{proposition}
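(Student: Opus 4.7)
The plan is to verify the standard tightness criterion for $\CC^{\cd_d}_{\loc}(\R)$: for each compact interval $[-T, T]$, the family of restrictions is tight in $\CC^{\cd_d}([-T, T])$ equipped with the uniform $\CC^{\cd_d}$ topology. By Arzelà-Ascoli combined with Kolmogorov-Chentsov, it suffices to verify (a) tightness in $\R^{\cd_d + 1}$ of the random vector $(\rmS_{b, 0}, \rmS^{(1)}_{b, 0}, \ldots, \rmS^{(\cd_d)}_{b, 0})$, and (b) uniform moment bounds of the form $\sup_{b > 0} \E[|\rmS^{(\cd_d)}_{b, s+h} - \rmS^{(\cd_d)}_{b, s}|^p] \leq C_T |h|^{1 + \varepsilon}$ for $s, s+h \in [-T, T]$, for some $p$ and $\varepsilon > 0$. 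Since $\rmS_{b, 0} = 0$ by construction, (a) reduces to showing that the random vector $\mathbf{S}_{\sigma_b}$ is tight in $\R^{\cd_d}$.

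The first step is to establish tightness of $\mathbf{S}_{\sigma_b}$ as $b \to \infty$. Using the representation \eqref{eq:defIntOU} and its differentiated form derived in the proof of Theorem~\ref{th:identSphere}, each component $\rmS^{(k)}_{t}$ admits a representation as a Wiener integral against the driving two-sided Brownian motion with a deterministic kernel that decays exponentially. By the strong Markov property of the joint Markov process $(\rmS_t, \mathbf{S}_t)_{t \in \R}$ at the stopping time $\sigma_b$, the conditional distribution of $\mathbf{S}_{\sigma_b}$ given $\sigma_b$ retains the Gaussian tail structure inherited from stationarity, with parameters uniform in $b$. Since $\sigma_b/b \to 1/(Q - \beta)$ almost surely by the law of large numbers applied to the drifted process $\rmS_t - (Q - \beta)t$, integrating over the law of $\sigma_b$ yields the desired tightness.

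The second step is the modulus of continuity bound on the top derivative. The key observation is that the stationary Gaussian Markov process $(\mathbf{S}_t)_{t \in \R}$ is reversible, so conditional on $\mathbf{S}_{\sigma_b}$, both the forward process $(\mathbf{S}_{\sigma_b + s})_{s \geq 0}$ and the backward process $(\mathbf{S}_{\sigma_b - s})_{s \geq 0}$ are distributed as the Markov chain initialised at $\mathbf{S}_{\sigma_b}$, whose transition densities are governed by the Langevin equation~\eqref{eq:SDEpOrderMat}. Combined with step one, this allows me to bound $\E[|\rmS^{(\cd_d)}_{b, s+h} - \rmS^{(\cd_d)}_{b, s}|^2]$ by a constant times the variance of the corresponding increment for the stationary process, which is $O(|h|)$ by the $\delta$-H\"older regularity from Proposition~\ref{pr:SmothSpherical} for any $\delta \in (0, 1/2)$. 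Since the increments are Gaussian conditional on $\mathbf{S}_{\sigma_b}$, all even moments satisfy the corresponding bound $\lesssim |h|^{p/2}$, which is enough to apply Kolmogorov's continuity criterion with $p$ taken sufficiently large.

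The main obstacle is step one: unlike a deterministic time, $\sigma_b$ is strongly correlated with $\mathbf{S}$, so the distribution of $\mathbf{S}_{\sigma_b}$ is not the stationary distribution of $\mathbf{S}$. The cleanest way I see forward is to exploit that the hitting condition determining $\sigma_b$ constrains only the scalar quantity $\rmS_{\sigma_b} - (Q-\beta)\sigma_b = -b$, while the vector $\mathbf{S}_{\sigma_b}$ of derivatives is one more dimension of freedom; explicitly computing the Gaussian conditional law of $\mathbf{S}_\tau$ given $\sigma_b = \tau$ from the covariance kernel derived via \eqref{eq:defIntOU} shows that this conditional law has bounded covariance uniformly in $\tau$. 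This is the technical heart of the argument, and I expect it to require careful bookkeeping with the kernel $g(u) = (d-2)e^{2u}(1-e^{2u})^{(d-4)/2}$ and its derivatives.
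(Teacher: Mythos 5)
Your high-level strategy (tightness of the initial derivative vector plus a uniform modulus-of-continuity bound) is a natural reduction, but both of your two steps contain gaps that are, in fact, the crux of the difficulty, and neither is resolved by the argument sketched.

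For step one, you claim that the conditional law of $\mathbf{S}_{\sigma_b}$ given $\sigma_b = \tau$ can be obtained by ``explicitly computing the Gaussian conditional law of $\mathbf{S}_\tau$ given $\sigma_b = \tau$'' and that this law ``has bounded covariance uniformly in $\tau$.'' This conflates conditioning on the event $\{\rmS_\tau - (Q-\beta)\tau = -b\}$ with conditioning on the first-passage event $\{\sigma_b = \tau\}$. The latter carries the additional (non-Gaussian, path-level) constraint that $\rmS_s - (Q-\beta)s > -b$ for all $s < \tau$, so the conditional law of $\mathbf{S}_\tau$ is \emph{not} a Gaussian conditional law of a Gaussian vector given a linear functional, and its covariance cannot be read off the kernel $g$. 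The invocation of the strong Markov property is also misplaced here: it controls the future of the process from $\sigma_b$, not the law of $\mathbf{S}_{\sigma_b}$ itself.

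For step two, the assertion that the stationary Gaussian Markov process $(\mathbf{S}_t)$ is reversible is not correct for multivariate Ornstein--Uhlenbeck processes of this type (reversibility requires the detailed-balance condition $\mathbf{A}\Sigma = \Sigma\mathbf{A}^\top$, which fails for the companion-matrix drift here; the dynamics $\mathrm{d}\rmS^{(k)}_t = \rmS^{(k+1)}_t \mathrm{d}t$ for $k < \cd_d$ are degenerate and the time-reversed process flips signs on odd derivatives). More seriously, even if the forward chain from $\mathbf{S}_{\sigma_b}$ were stationary, the backward process $(\mathbf{S}_{\sigma_b - s})_{s \geq 0}$ is \emph{not} a copy of the Markov chain started at $\mathbf{S}_{\sigma_b}$: it is additionally conditioned on the non-crossing of the level $-b$ prior to $\sigma_b$. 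So the $O(|h|)$ increment bound you want to borrow from the stationary process does not follow from stationarity plus reversibility; it requires controlling the conditioned process, which is exactly the issue.

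The paper avoids both obstacles by introducing an auxiliary process $\tilde\rmS$ built from the two-sided process $\tilde B$ whose negative half is \emph{already} conditioned to stay above the line $-(Q-\beta)t$. For this reference process the shift invariance \eqref{eq:IVBM}, and hence \eqref{eq:IVSPH}, holds \emph{exactly} at its first-passage time $\tilde\tau_b$, so the law of $\tilde\rmS$ recentred at $\tilde\tau_b$ does not depend on $b$ and tightness is immediate once one shows (Lemma~\ref{lm:StoppingClose}, Lemma~\ref{lm:StoppingCloseFin}) that the hitting times $\tilde\tau_b$, $\tilde\sigma_b$, $\sigma_b$ stay within a tight distance of each other. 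Your attempt to directly estimate the law at the first-passage time, without any such structural invariance, is where the argument breaks down.
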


\begin{remark}
\label{rm:QCd2}
In dimension $d = 2$, the circle average process of the whole-plane GFF is a two-sided Brownian motion. In this case, it follows from \cite[Proposition~4.13]{DMS21} that the collection of processes $\smash{(\rmS_{b, \cdot})_{b > 0}}$ converges weakly as $b \to \infty$ to a process $\smash{(\tilde{B}_t)_{t \in \R}}$, defined as follows. Let $\smash{(B^1_t)_{t \geq 0}}$ be a standard Brownian motion, and let $\smash{(B^2_t)_{t \geq 0}}$ be a Brownian motion conditioned to stay above the line $t \mapsto -(Q - \beta)t$ for all $t > 0$. Then, we let
\begin{equation}
\label{eq:defBMQC}
\tilde B_t \eqdef
\begin{cases}	
B^1_t \;, \qquad & \forall t \geq 0 \;, \\
B^2_{-t} \;, \qquad & \forall t < 0 \;.
\end{cases}
\end{equation}
\end{remark}

The proof of Proposition~\ref{pr:tightQC} relies on the observation that the spherical average process can be viewed, in a loose sense, as a smoothed version of a two-sided Brownian motion (see Theorem~\ref{th:identSphere} and Figure~\ref{fig:sph}). In particular, to prove Proposition~\ref{pr:tightQC}, we first introduce a family of processes $\smash{(\tilde \rmS_{b, \cdot})_{b > 0}}$ for which we can prove tightness by leveraging a suitable invariance in law. We then show that the tightness of $\smash{(\tilde \rmS_{b, \cdot})_{b > 0}}$ implies the tightness of $\smash{(\rmS_{b, \cdot})_{b > 0}}$. 

We introduce here some notation that will be used throughout this section. Recalling the representation \eqref{eq:defIntOU} of the spherical average process, we introduce the function $g: \R \to \R$ defined as
\begin{equation*}
g(s) \eqdef (d-2) e^{2s}(1-e^{2s})^{\frac{d-4}{2}} \;, \qquad \forall \, s \in \R \;.
\end{equation*}
We define the process $\smash{(\tilde \rmS_t)_{t \in \R}}$ by letting
\begin{equation}
\label{eq:defrmStilde}
\tilde \rmS_t \eqdef \int_{-\infty}^{0} g(s) \bigl(\tilde B_{s+t} - \tilde B_s\bigr) ds  \;, \qquad \forall \, t \in \R \;,
\end{equation}
where $(\tilde B_t)_{t \in \R}$ is defined as in \eqref{eq:defBMQC}. Furthermore, for $i \in [\cd_d]$, let $\smash{(\tilde{\rmS}_t^{(i)})_{t \in \R}}$ denote the $i$-th derivative of $\smash{(\tilde{\rmS}_t)_{t \in \R}}$. A straightforward computation yields that
\begin{equation}
\label{eq:defrmStildeDervs}
\begin{alignedat}{2}
\tilde \rmS^{(i)}_t & = (-1)^{i} \int_{-\infty}^{0} g^{(i)}(s) \tilde B_{s+t} ds \;, \qquad && \forall \, t \in \R \; , \; \forall \, i \in [\cd_d - 1] \;, \\
\tilde \rmS^{(\cd_d)}_t & = (-1)^{\cd_d + 1} g^{(\cd_d - 1)}(0) \tilde{B}_t + (-1)^{\cd_d} \int_{-\infty}^{0} g^{(\cd_d)}(s) \tilde B_{s+t} ds \;, \qquad && \forall \, t \in \R \;.
\end{alignedat}
\end{equation}
For $b > 0$, we introduce the stopping times $\tilde \tau_b$ and $\tilde \sigma_b$ by letting
\begin{equation}
\label{eq:defTildeTb}
\tilde \tau_b \eqdef \inf\bigl\{t \in \R \, : \,  \tilde B_t - (Q-\beta) t = -b\bigr\} \;, \qquad\qquad \tilde \sigma_b \eqdef \inf\bigl\{t \in \R \, : \,  \tilde \rmS_t - (Q - \beta) t = -b\bigr\} \;.
\end{equation}
For each $b > 0$ and $i \in [\cd_d]$, we also define recentred process $(\tilde\rmS_{b, t})_{t \in \R}$ and its $i$-th derivative $\smash{(\tilde\rmS^{(i)}_{b, t})_{t \in \R}}$ by letting 
\begin{equation}
\label{eq:defTildeSb}
\tilde{\rmS}_{b, t} \eqdef \tilde{\rmS}_{\tilde \sigma_b + t}  - \tilde{\rmS}_{\tilde \sigma_b}\;, \qquad \qquad \tilde{\rmS}^{(i)}_{b, t} \eqdef \tilde{\rmS}^{(i)}_{\tilde \sigma_b + t} \;, \qquad  \forall \, t \in \R \;. 
\end{equation}
The main reason why it is convenient to work with the process $(\tilde B_t)_{t \in \R}$ is that it is invariant under the following random recentring 
\begin{equation}
\label{eq:IVBM}
\bigl(\tilde B_{\tilde \tau_b + t} - \tilde B_{\tilde \tau_b}\bigr)_{t \in \R} \eqlaw \bigl(\tilde B_{t}\bigr)_{t \in \R} \;.
\end{equation}
The invariance in law \eqref{eq:IVBM} follows directly from the definition of the process $\smash{(\tilde{B}_t)_{t \in \R}}$ or from \cite[Proposition~4.13]{DMS21}.
Moreover, from the definition \eqref{eq:defrmStilde} of $\smash{(\tilde \rmS_t)_{t \in \R}}$ and from \eqref{eq:defrmStildeDervs}, it is readily seen that \eqref{eq:IVBM} implies that 
\begin{equation}
\label{eq:IVSPH}
\bigl(\tilde \rmS^{(i)}_{\tilde \tau_b + t} - \tilde \rmS^{(i)}_{\tilde \tau_b}\bigr)_{t \in \R} \eqlaw \bigl(\tilde \rmS^{(i)}_{t} - \tilde \rmS^{(i)}_{0}\bigr)_{t \in \R} \;, \qquad \forall \, i \in [\cd_d]_0 \;,
\end{equation}
where here and in what follows, with a slight abuse of notation, we use the superscript $(0)$ to denote the process itself.

For $b$, $c > 0$, we define the moduli of continuity $\smash{\omega_{b, c}}$, $\smash{\tilde \omega_{b, c}:[0,1]\to \R^{+}_0}$ of $\smash{(\rmS_{b, t})_{t \in [-c, c]}}$ and $\smash{(\tilde \rmS_{b, t})_{t \in [-c, c]}}$, respectively, by letting 
\begin{equation}
\label{eq:modContRec}
\omega_{b, c}(\delta) \eqdef \sum_{i = 0}^{\cd_d} \sup_{s, t \in [-c, c], \, \abs{t-s} \leq \delta} \abs{\rmS^{(i)}_{b, t} - \rmS^{(i)}_{b, s}} \;, \qquad \tilde \omega_{b, c}(\delta) \eqdef \sum_{i = 0}^{\cd_d} \sup_{s, t \in [-c, c], \, \abs{t-s} \leq \delta} \abs{\tilde \rmS^{(i)}_{b, t} - \tilde \rmS^{(i)}_{b, s}} \;.
\end{equation}
Similarly, for $c > 0$, we define the modulus of continuity $\tilde \omega_{c}:[0,1]\to \R^{+}_0$ of $(\tilde \rmS_t)_{t \in [-c, c]}$ by letting 
\begin{equation}
\label{eq:modContPlain}
\tilde \omega_{c}(\delta) \eqdef \sum_{i = 0}^{\cd_d} \sup_{s, t \in [-c, c], \, \abs{t-s} \leq \delta} \abs{\tilde \rmS^{(i)}_{t} - \tilde \rmS^{(i)}_{s}} \;.
\end{equation}

\subsubsection{The collection $(\tilde \rmS_{b, \cdot})_{b > 0}$ is tight}
We now proceed to show that the collection of processes $(\tilde \rmS_{b, \cdot})_{b > 0}$ is tight.
\begin{lemma}
\label{lm:tightQCTilde}
For $d > 2$ even, the collection of processes $(\tilde \rmS_{b, \cdot})_{b > 0}$ is tight in $\CC^{\cd_d}_{\loc}(\R)$ equipped with the local uniform metric.
\end{lemma}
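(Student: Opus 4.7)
The plan is to prove tightness in $\CC^{\cd_d}_{\loc}(\R)$ by combining a modulus-of-continuity argument with the invariance-in-law \eqref{eq:IVSPH} of $\tilde \rmS$ (and its derivatives) under recentring at $\tilde \tau_b$. The essential difficulty is that the recentring in the definition of $\tilde \rmS_{b,\cdot}$ happens at $\tilde \sigma_b$, a stopping time for $\tilde \rmS$, whereas the usable invariance sits at $\tilde \tau_b$, a stopping time for the driving process $\tilde B$. The first key step will therefore be to show that $\{|\tilde \sigma_b - \tilde \tau_b|\}_{b > 0}$ is tight. For this I would use the representation \eqref{eq:defrmStilde} to write $\tilde \rmS_t - \tilde B_t$ as an integral of the kernel $g(s) = (d-2)e^{2s}(1-e^{2s})^{(d-4)/2}$ against $\tilde B_{s+t} - \tilde B_s$ for $s < 0$. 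Since $g$ is exponentially decaying as $s \to -\infty$ and vanishes at $s = 0$ (for $d > 4$, and is bounded for $d = 4$), this integral is effectively a smoothing of $\tilde B$ over a window of size $\OO(1)$ around $t$, and its fluctuations admit moments bounded uniformly in $t$. Combined with the strictly positive slope $Q - \beta > 0$ of the descending line, a tight discrepancy between $\tilde \rmS_t$ and $\tilde B_t$ translates into a tight discrepancy between the first hitting times of $-b + (Q-\beta)t$.

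Once Step~1 is in hand, the equicontinuity comes for free. Fixing $c, \eps > 0$, I would pick $c' > c$ large enough that $\P(|\tilde \sigma_b - \tilde \tau_b| > c' - c) < \eps/2$ uniformly in $b$; on the complement event one has the containment $[\tilde \sigma_b - c, \tilde \sigma_b + c] \subseteq [\tilde \tau_b - c', \tilde \tau_b + c']$, so that the modulus $\tilde \omega_{b, c}(\delta)$ defined in \eqref{eq:modContRec} is dominated by the modulus of $\tilde \rmS$ and its derivatives up to order $\cd_d$ on $[\tilde \tau_b - c', \tilde \tau_b + c']$. Applying the invariance \eqref{eq:IVSPH}, this majorising modulus has the same law as $\tilde \omega_{c'}(\delta)$ from \eqref{eq:modContPlain}. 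Since $\tilde \rmS$ is $\P$-almost surely $\cd_d$-times continuously differentiable (cf.\ Proposition~\ref{pr:SmothSpherical}), $\tilde \omega_{c'}(\delta) \to 0$ $\P$-a.s.\ as $\delta \to 0$, yielding $\lim_{\delta \to 0} \sup_b \P(\tilde \omega_{b, c}(\delta) > \eps) = 0$.

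To conclude via Arzelà--Ascoli I still need tightness of $(\tilde \rmS^{(i)}_{b,0})_{b > 0}$ for each $i \in [\cd_d]_0$. The case $i = 0$ is trivial since $\tilde \rmS_{b, 0} \equiv 0$ by construction. For $i \geq 1$, I would exploit that on the positive half-line $\tilde B$ coincides with the standard Brownian motion $B^1$, so that for $t > 0$ large the tail $\int_{-\infty}^{-t} g^{(i)}(s) \tilde B_{s+t} ds$ is negligible and $\tilde \rmS^{(i)}_t$ is asymptotically driven by \eqref{eq:SDEpOrderMat}; because the matrix $\mathbf{A}$ is stable (all eigenvalues negative real), $\tilde \rmS^{(i)}_t$ converges in distribution to the stationary Gaussian law identified in Theorem~\ref{th:identSphere}. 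Since $\tilde \sigma_b \to \infty$ $\P$-a.s.\ as $b \to \infty$ (Step~1 plus $\tilde \tau_b \to \infty$), this yields tightness of $\tilde \rmS^{(i)}_{\tilde \sigma_b}$. Combining these three ingredients, the restrictions of $\tilde \rmS_{b,\cdot}$ to $[-c,c]$ are tight in $\CC^{\cd_d}([-c,c])$ for every $c > 0$, which gives tightness in $\CC^{\cd_d}_{\loc}(\R)$.

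The hard part is Step~1: the stopping time $\tilde \sigma_b$ is adapted only to the filtration of the smoothed process, so the deviation $|\tilde \sigma_b - \tilde \tau_b|$ is not a priori controlled by a single increment of $\tilde B$. One must rule out the scenario where $\tilde B$ crosses the line $t \mapsto -b + (Q-\beta)t$ a large number of times near its first hitting, which is precisely where the strict positivity of the slope $Q - \beta$ and the quantitative comparison $\tilde \rmS - \tilde B = \OO(1)$ in probability become indispensable.
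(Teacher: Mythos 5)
Your overall route coincides with the paper's: first establish tightness of $\{|\tilde\sigma_b - \tilde\tau_b|\}_{b>0}$ (this is the paper's Lemma~\ref{lm:StoppingClose}), then exploit the invariance \eqref{eq:IVSPH} around $\tilde\tau_b$ together with a containment $[\tilde\sigma_b - c, \tilde\sigma_b + c]\subseteq[\tilde\tau_b - \tilde c, \tilde\tau_b + \tilde c]$ to transfer the modulus-of-continuity of $(\tilde\rmS_{b,\cdot})$ to that of the unshifted process. Your Step~2 is essentially the paper's argument culminating in inequality \eqref{eq:ArAsStep2}. You also correctly flag Step~1 as the delicate point; the paper's proof of Lemma~\ref{lm:StoppingClose} handles it via a two-sided comparison of the hitting times and dyadic Gaussian tail estimates, but your high-level sketch (exponential decay of $g$ gives $\tilde\rmS - \tilde B$ of size $\OO(1)$ in probability, and then the strictly positive slope $Q-\beta$ converts this into a tight discrepancy of first-passage times) captures the right mechanism.

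You make a valid point in Step~3: since for $i\geq 1$ the definition \eqref{eq:defTildeSb} does \emph{not} recentre $\tilde\rmS^{(i)}_{b,\cdot}$ at $t=0$, the Arzel\`a--Ascoli criterion for $\CC^{\cd_d}([-c,c])$ also requires tightness of the marginals $(\tilde\rmS^{(i)}_{\tilde\sigma_b})_{b>0}$, which the paper's proof only records the modulus-of-continuity condition and leaves implicit. However, your justification of this extra step has a genuine gap: convergence in distribution of $\tilde\rmS^{(i)}_t$ as $t\to\infty$, combined with $\tilde\sigma_b\to\infty$ almost surely, does \emph{not} give tightness of $\tilde\rmS^{(i)}_{\tilde\sigma_b}$, because $\tilde\sigma_b$ is a stopping time adapted to the very process whose marginal you want to control; what is needed is a uniform-in-$t$ tail bound, which is exactly the claim at stake. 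Moreover, the intermediate assertion that $\tilde\rmS^{(i)}_t$ has a tight distribution uniformly over large $t$ is itself not automatic from \eqref{eq:defrmStildeDervs}, since $\tilde B_{s+t}$ grows without bound and the Riemann integral $\int_{-\infty}^0 g^{(i)}(s)\tilde B_{s+t}\,ds$ is only finite because of a hidden cancellation.

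The clean fix uses ingredients already at hand. Splitting $\tilde B_{s+\tilde\tau_b}=(\tilde B_{s+\tilde\tau_b}-\tilde B_{\tilde\tau_b})+\tilde B_{\tilde\tau_b}$ in \eqref{eq:defrmStildeDervs} and using $\int_{-\infty}^0 g^{(i)}(s)\,ds = g^{(i-1)}(0)$, the coefficient of $\tilde B_{\tilde\tau_b}$ in $\tilde\rmS^{(i)}_{\tilde\tau_b}$ vanishes: for $i\leq\cd_d-1$ because $g(s)=(d-2)e^{2s}(1-e^{2s})^{\cd_d-1}$ vanishes to order $\cd_d-1$ at $s=0$, and for $i=\cd_d$ because the explicit boundary term $(-1)^{\cd_d+1}g^{(\cd_d-1)}(0)\tilde B_{\tilde\tau_b}$ exactly cancels the $\tilde B_{\tilde\tau_b}$-contribution from the integral. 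The invariance \eqref{eq:IVBM} then gives $\tilde\rmS^{(i)}_{\tilde\tau_b}\eqlaw\tilde\rmS^{(i)}_0$ for every $b$, so $(\tilde\rmS^{(i)}_{\tilde\tau_b})_{b>0}$ is trivially tight; tightness of $(\tilde\rmS^{(i)}_{\tilde\sigma_b})_{b>0}$ then follows from this together with the tightness of $|\tilde\sigma_b-\tilde\tau_b|$ and the equicontinuity established in Step~2. Replacing your Step~3 with this argument makes the proof complete.
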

The proof of Lemma~\ref{lm:tightQCTilde} relies on the fact that, with high probability and uniformly for $b > 0$, $\tilde \tau_b$ and $\tilde \sigma_b$ are close to each other. 
\begin{lemma}
\label{lm:StoppingClose}
For $b > 0$ let $\tilde \sigma_b$ and $\tilde \tau_b$ as defined in \eqref{eq:defTildeTb}. Then, it holds that  
\begin{equation*}
\lim_{\lambda \to \infty} \sup_{b > 0} \P\bigl(\abs{\tilde \sigma_b - \tilde \tau_b} \geq \lambda \bigr) = 0 \;.
\end{equation*}
\end{lemma}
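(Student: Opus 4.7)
Set $D_t := \tilde\rmS_t - \tilde B_t$, so that $V_t := \tilde\rmS_t - (Q-\beta)t = \tilde W_t + D_t$ and hence $\tilde\tau_b$, $\tilde\sigma_b$ are the first passage times to $-b$ of $\tilde W$ and $V$, respectively. Since $\tilde\rmS$ is obtained from $\tilde B$ through the shift-equivariant functional in \eqref{eq:defrmStilde},
\[
\tilde\rmS_{\tau+t} - \tilde\rmS_\tau \;=\; \int_{-\infty}^0 g(s)\bigl(B^\tau_{s+t} - B^\tau_s\bigr)\,ds, \qquad B^\tau_u := \tilde B_{\tau+u} - \tilde B_\tau,
\]
the invariance \eqref{eq:IVBM} upgrades to the joint invariance $(\tilde B_{\tau+\cdot} - \tilde B_\tau,\ \tilde\rmS_{\tau+\cdot} - \tilde\rmS_\tau) \eqlaw (\tilde B,\, \tilde\rmS)$ with $\tau = \tilde\tau_b$. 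Writing $V_{\tau+u} = -b + D_\tau + V^\tau_u$ where $V^\tau_u := \tilde\rmS_{\tau+u} - \tilde\rmS_\tau - (Q-\beta)u$, the equation $V_{\tau+u} = -b$ rearranges to $V^\tau_u = -D_\tau$, so
\[
\tilde\sigma_b - \tilde\tau_b \;=\; \inf\bigl\{u\in\R : V^\tau_u = -D_\tau\bigr\}.
\]

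I would then split the estimate via the union bound
\[
\P(|\tilde\sigma_b - \tilde\tau_b| > \lambda) \;\le\; \P(|D_\tau| > M) \;+\; \P\Bigl(\sup_{|c|\le M}\bigl|\inf\{u\in\R : V^\tau_u = c\}\bigr| > \lambda\Bigr),
\]
and show that both terms can be made small uniformly in $b$ by choosing $M$ large and then $\lambda$ large. For the first term, write $D_\tau = \tilde Y - Y$ with
\[
\tilde Y := \int_{-\infty}^0 g(s)\bigl(\tilde B_{s+\tau} - \tilde B_\tau\bigr)\,ds, \qquad Y := \int_{-\infty}^0 g(s)\,\tilde B_s\,ds.
\]
By \eqref{eq:IVBM} one has $\tilde Y \eqlaw Y$, so $\E[D_\tau] = 0$ and $\Var(D_\tau) \le 4\,\Var(Y)$ uniformly in $b$ via the $L^2$ triangle inequality. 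Finiteness of $\Var(Y)$ follows from the exponential decay of $g(s) = (d-2)\,e^{2s}(1-e^{2s})^{(d-4)/2}$ as $s\to-\infty$, combined with the at-most-quadratic growth $\E[\tilde B_s^2] = O(s^2)$ for $s\to-\infty$ (since $\tilde B_s = B^2_{-s}$ for a conditioned Brownian motion with asymptotically linear mean). Chebyshev's inequality then gives tightness of $|D_\tau|$ uniformly in $b$.

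For the second term, by the joint invariance $V^\tau \eqlaw V$, where $V_u := \tilde\rmS_u - (Q-\beta)u$ is the fresh drifting process. From the representation \eqref{eq:defIntOU}, $V$ is a smooth Gaussian process with $V_0 = 0$, linear drift $-(Q-\beta) < 0$ as $u\to+\infty$, and $\E[V_u] = -2(Q-\beta)u \to +\infty$ with variance growing only linearly as $u\to-\infty$ (since $\tilde\rmS$ has stationary increments by Theorem~\ref{th:identSphere}). Consequently $\sup_{|c|\le M}|\inf\{u\in\R : V_u = c\}|$ is almost surely finite with a law independent of $b$, and its tails can be bounded using standard first-passage estimates for drifting Brownian motion applied to the driving two-sided BM in \eqref{eq:defrmStilde}. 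Combining the two bounds and optimising over $M$ and $\lambda$ yields the claim. The main technical obstacle I expect is the uniform-in-$c$ first-passage tail for the second term, which requires splitting on the sign of $c$ and carefully combining forward-time hitting estimates (for $c\le 0$) with backward-time hitting estimates (for $c > 0$) via the Gaussian concentration of $V$ around its deterministic asymptotic profile.
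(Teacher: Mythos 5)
Your reformulation is correct and takes a genuinely different route from the paper's proof. The algebraic identity $\tilde\sigma_b - \tilde\tau_b = \inf\{u \in \R : V^{\tau}_u = -D_\tau\}$ (using $V_\tau = D_\tau - b$ and shift-invariance) is a clean reorganisation that lets you decouple the two sources of discrepancy: the random level $D_\tau$, which you control uniformly in $b$ by Chebyshev (valid: $\tilde Y \eqlaw Y$ by \eqref{eq:IVBM}, so $\E[D_\tau]=0$ and $\Var(D_\tau) \le 4\Var(Y) < \infty$ since $g$ decays exponentially at $-\infty$ and $\E[\tilde B_s^2]$ grows polynomially), and the uniform hitting time $\sup_{|c|\le M}|\inf\{u : V^{\tau}_u = c\}|$, whose law is $b$-independent by the upgraded joint invariance and which is a.s.\ finite because $V_0 = 0$ and $V_u \to \mp\infty$ as $u \to \pm\infty$. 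The union bound and the order of limits ($M$ large, then $\lambda$ large) are organised correctly. By contrast, the paper's proof bounds each of $\{\tilde\sigma_b - \tilde\tau_b \ge \lambda\}$ and $\{\tilde\tau_b - \tilde\sigma_b \ge \lambda\}$ directly via event inclusions through the auxiliary hitting times $\tilde\tau_{b\pm c}$ plus explicit Gaussian tails, then takes $c = \sqrt{\lambda}$; in particular its Step~2 controls the deviation of $\tilde\rmS$ from $\tilde B$ on the entire half-line $(-\infty, \tilde\tau_{b-c}]$ via a dyadic decomposition over blocks $[\tilde\tau - n, \tilde\tau - (n-1)]$. Your version packages the same deviation into a single random variable $D_\tau$ and pushes the remaining work into the $b$-independent hitting-time tail; this is conceptually lighter but, as you correctly flag, the "standard first-passage estimates" needed there are not entirely off-the-shelf because $V$ is not a drifting Brownian motion.

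A few asides in your argument are incorrect, though none are load-bearing: $\tilde\rmS$ does \emph{not} have stationary increments (Theorem~\ref{th:identSphere} applies to $\rmS$, driven by a two-sided BM; $\tilde\rmS$ is driven by $\tilde B$, which for $t<0$ is the conditioned process $B^2_{-t}$); for the same reason $V$ is not a Gaussian process on the negative half-line; and the asymptotic $\E[V_u] = -2(Q-\beta)u$ as $u \to -\infty$ is off by a factor of two, since the conditioned motion $B^2_t$ has bounded (not linear) mean, so $\E[V_u] \sim -(Q-\beta)u$. The qualitative conclusions you draw — linear growth of the mean of $V_u$ to $+\infty$ and sublinear fluctuations as $u\to-\infty$, a.s.\ finiteness of the two-sided hitting times — all survive, but the route to the uniform-in-$c$ first-passage tail should be rephrased as a comparison of $V$ to $\tilde B$ over blocks followed by Gaussian tails for $\tilde B$ and for the bounded linear functionals of $\tilde B$ appearing in the smoothing, which is essentially the content of the paper's events $\rmE_1$, $\rmE_2$.
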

\begin{proof}
For $\lambda \geq 0$, we have the following trivial inclusion
\begin{equation*}
\bigl\{\abs{\tilde \sigma_b - \tilde \tau_b} \geq \lambda \bigr\} \subseteq \bigl\{\tilde \sigma_b - \tilde \tau_b \geq \lambda \bigr\} \cup \bigl\{\tilde \tau_b - \tilde \sigma_b \geq \lambda \bigr\} \;,
\end{equation*}
and so we can focus on finding suitable upper bounds for the probabilities of the two events on the right-hand side of the above display. 

\textbf{Step 1:} We start by bounding the probability of the event $\{\tilde \sigma_b - \tilde \tau_b \geq \lambda\}$. To this end, for any $c > 0$, we note that  
\begin{equation}
\label{eq:Step1Comp1}
\bigl\{\tilde \sigma_b - \tilde \tau_b \geq \lambda\bigr\} \subseteq \bigl\{\abs{\tilde \rmS_{\tilde \tau_{b+c}} - \tilde B_{\tilde \tau_{b+c}}} \geq c\bigr\} \cup \bigl\{\tilde \tau_{b+c} - \tilde \tau_b \geq \lambda\bigr\} \;.
\end{equation}
To see why this is true, it is more convenient to consider the complement of the given relation and prove that the intersection of the complements of the two events on the right-hand side is contained within the complement of the event on the left-hand side. Indeed, noting that
\begin{equation*}
\abs{\tilde \rmS_{\tilde \tau_{b+c}} - \tilde B_{\tilde \tau_{b+c}}} < c \quad \implies \quad \tilde \sigma_b < \tilde \tau_{b+c}	\;,
\end{equation*}
and combining this fact with the complement of the second event on the right-hand side of \eqref{eq:Step1Comp1}, we readily obtain that it must hold that $\tilde \sigma_b - \tilde \tau_b < \lambda$.

Hence, our task now is to bound the two probabilities appearing on the right-hand side of \eqref{eq:Step1Comp1}. By recalling the definition \eqref{eq:defrmStilde} of $(\tilde\rmS_t)_{t \in \R}$, we can write
\begin{equation*}
\abs{\tilde \rmS_{\tilde \tau_{b+c}} - \tilde{B}_{\tilde \tau_{b+c}}} =   \Biggl\lvert\int_{-\infty}^{0} g(s) (\tilde B_{s+\tilde \tau_{b+c}} - \tilde B_{\tilde \tau_{b+c}}) ds - \int_{-\infty}^{0} g(s) \tilde B_s ds\Biggr\rvert  \;.
\end{equation*}
Therefore, thanks to the triangle inequality, the union bound, and the invariance in law \eqref{eq:IVBM}, we have that 
\begin{equation}
\label{eq:TightStep1Eq1}
\P\bigl(\abs{\tilde \rmS_{\tau_{b+c}} -   \tilde{B}_{\tau_{b+c}}} \geq c \bigr) \lesssim \P\Biggl(\Biggl\lvert \int_{-\infty}^0 g(s) \tilde{B}_s ds\Biggr\rvert \geq \frac{c}{2  }\Biggr) \;.
\end{equation}
Moreover, using the strong Markov property of the Brownian motion, we have that 
\begin{equation}
\label{eq:TightStep1Eq2}
\P\bigl(\tilde \tau_{b+c} - \tilde \tau_b \geq \lambda \bigr) = \P\Biggl(\inf_{s \in [0, \lambda]} \bigl(\tilde B_s - (Q-\beta) s\bigr) \geq -c\Biggr) \leq \P\bigl(\tilde B_{\lambda} \geq (Q-\beta) \lambda - c \bigr) \lesssim e^{- a ((Q-\beta)\lambda -c)^2/\lambda} \;,
\end{equation}
for some constant $a > 0$.
Therefore, choosing $c = c(\lambda) = \sqrt{\lambda}$, using the union bound, and estimates \eqref{eq:TightStep1Eq1},~\eqref{eq:TightStep1Eq2}, we readily get that 
\begin{equation*}
\lim_{\lambda \to \infty} \sup_{b > 0} \P\bigl(\tilde\sigma_b - \tilde \tau_b \geq \lambda\bigr) = 0 \;.
\end{equation*}

\textbf{Step 2:} We now have to bound the probability of the event $\{\tilde \tau_b - \tilde \sigma_b \geq \lambda\}$. To this end, we note that, for any $c > 0$, it holds that 
\begin{equation}
\label{eq:Step2Comp1}
\bigl\{\tilde \tau_b - \tilde \sigma_b \geq \lambda\bigr\}	\subseteq \Bigl\{\inf_{t \leq \tilde \tau_{b-c}} \tilde \rmS_t - (Q - \beta) t \leq -b\Bigr\} \cup \bigl\{\tilde\tau_b - \tilde \tau_{b-c} \geq \lambda\bigr\} \;.
\end{equation}
Also here, to see why this is true, it is more convenient to consider the complement of the given relation. Indeed, noting that
\begin{equation*}
\inf_{t \leq \tilde \tau_{b-c}} \tilde \rmS_t - (Q - \beta) t > -b \quad \implies \quad \tilde \sigma_b > \tilde \tau_{b+c}	\;,
\end{equation*}
and combining this fact with the complement of the second event on the right-hand side of \eqref{eq:Step2Comp1}, we readily obtain that it must hold that $\tilde \tau_b - \tilde \sigma_b < \lambda$.

Now, writing $\tilde \tau$ as a shorthand for $\tilde \tau_{b-c}$, we note  that the first event on the right-hand side of \eqref{eq:Step2Comp1} is contained in the following union of events
\begin{equation*}
\underbrace{\bigcup_{n \in \N} \Biggl\{\sup_{t \in [\tilde\tau-n, \tilde\tau - (n-1)]} \abs{\tilde \rmS_t - \tilde{B}_t} \geq (\log n)^2 + c/2\Biggr\}}_{\rmE_1} \cup \underbrace{\bigcup_{n \in \N}\Biggl\{\inf_{t \in [\tilde\tau-n, \tilde\tau-(n-1)] } (\tilde{B}_t - (Q - \beta) t) < (\log n)^2 -(b-c/2)\Biggr\}}_{\rmE_2}\;. 
\end{equation*}
We now proceed to bound the probabilities of the events $\rmE_1$ and $\rmE_2$. We start by noting that, for any $n \in \N$, it holds that  
\begin{equation*}
\sup_{t \in [\tilde \tau - n, \tilde \tau - (n-1)]} \abs{\tilde \rmS_t - \tilde{B}_t} = \sup_{t \in [- n, - (n-1)]}   \abs*{\int_{-\infty}^{0} g(s) (\tilde{B}_{s + t + \tilde \tau} - \tilde{B}_{\tilde \tau + t}) ds - \int_{-\infty}^0 g(s) \tilde{B}_s ds} \;.
\end{equation*}
By using the invariance in law \eqref{eq:IVBM}, it holds that 
\begin{equation*}
\int_{-\infty}^{0} g(s) (\tilde{B}_{s + t + \tilde \tau} - \tilde{B}_{\tilde \tau + t}) ds \eqlaw	\int_{-\infty}^{0} g(s) (\tilde{B}_{s+t} - \tilde{B}_{t}) ds \;, \qquad \forall \, t \in \R \;.
\end{equation*}
Therefore, applying the triangle inequality and the union bound, we obtain that the probability of the event $\rmE_1$ is bounded from above, up to a finite multiplicative constant, by
\begin{equation}
\label{eq:giantUnionStep21}
\sum_{n \in \N} \P\Biggl(\sup_{t \in [- n, - (n-1)]} \abs*{\int_{-\infty}^{0} g(s) (\tilde{B}_{s+t} - \tilde{B}_{t}) ds} \geq (\log n)^2/2 + c/4\Biggr) \lesssim \sum_{n \in \N} e^{- a ((\log n)^2+c)} \;,
\end{equation}
for some constant $a > 0$ and where the implicit constant is independent of everything else. The upper bound on the right-hand side of \eqref{eq:giantUnionStep21} follows from the Gaussian tail bound.
 
On the other hand, using once again the invariance in law \eqref{eq:IVBM} and recalling that $\tilde \tau$ is a shorthand for $\tilde \tau_{b-c}$, for any $n \in \N$, we have that 
\begin{equation*}
\inf_{t \in [\tilde\tau-n, \tilde\tau-(n-1)] } (\tilde{B}_t - (Q- \beta) t) \leq (\log n)^2 -(b-c/2) \eqlaw \inf_{t \in [-n, -(n-1)] } (\tilde{B}_t - (Q-\beta) t) \leq (\log n)^2 -c/2 \;.
\end{equation*}
In particular, using the union bound, we obtain that the probability of the event $\rmE_2$ is bounded from above, up to a finite multiplicative constant, by
\begin{equation}
\label{eq:giantUnionStep22}
\sum_{n \in \N} \P\Biggl(\inf_{t \in [-n, -(n-1)] } \tilde{B}_t \leq -  (Q-\beta) (n-1) + (\log n)^2 -c/2\Biggr) \lesssim \sum_{n \in \N} e^{-a(n + c)} \;,
\end{equation}
for some constant $a > 0$ and where the implicit constant is independent of everything else. Also here, the upper bound on the right-hand side of \eqref{eq:giantUnionStep22} follows from the Gaussian tail bound.

Furthermore, proceeding exactly as in \eqref{eq:TightStep1Eq2}, we have that 
\begin{equation}
\label{eq:giantUnionStep23}
\P\bigl(\tilde\tau_b - \tilde \tau_{b-c} \geq \lambda\bigr) \lesssim e^{- a ((Q-\beta)\lambda -c)^2/\lambda} \;,
\end{equation}
for some constant $a > 0$.
Finally, the conclusion follows thanks to estimates \eqref{eq:giantUnionStep21}, \eqref{eq:giantUnionStep22}, and \eqref{eq:giantUnionStep23} by choosing $c = c(\lambda) = \sqrt{\lambda}$.
\end{proof}

We are now ready to prove Lemma~\ref{lm:tightQCTilde}.
\begin{proof}[Proof of Lemma~\ref{lm:tightQCTilde}]
By recalling the definitions in \eqref{eq:modContRec} and \eqref{eq:modContPlain}, and applying the Arzel\`a--Ascoli theorem, to prove the tightness of $(\tilde \rmS_{b, \cdot})_{b > 0}$ in $\CC^{\cd_d}_{\loc}(\R)$ equipped with the local uniform metric, it suffices to verify that for all $c > 0$ and any $\eps > 0$, it holds that  
\begin{equation*}
\lim_{\delta \to 0} \limsup_{b \to \infty} \P\bigl(\tilde \omega_{b, c}(\delta) > \eps \bigr) = 0 \;.
\end{equation*}
To this end, we note that for $c > 0$ and $\tilde c > c$, on the event $\smash{\{[\tilde \sigma_b - c, \tilde \sigma_b + c] \subseteq [\tilde \tau_b - \tilde c, \tilde \tau_b + \tilde c]\}}$, it holds that 
\begin{align*}
\tilde \omega_{b, c}(\delta)  
& = \sum_{i = 0}^{\cd_d} \sup_{s, t \in [\tilde \sigma_b - c, \tilde \sigma_b + c], \, \abs{t-s} \leq \delta} \abs{\tilde \rmS^{(i)}_{t} - \tilde \rmS^{(i)}_{s}}
\leq \sum_{i = 0}^{\cd_d} \sup_{s, t \in [\tilde \tau_b - \tilde c, \tilde \tau_b + \tilde c], \, \abs{t-s} \leq \delta} \abs{\tilde \rmS^{(i)}_{t} - \tilde \rmS^{(i)}_{s}} \\
& = \sum_{i = 0}^{\cd_d} \sup_{s, t \in [- \tilde c, \tilde c], \, \abs{t-s} \leq \delta} \abs{\tilde \rmS^{(i)}_{\tilde \tau_b + t} - \tilde \rmS^{(i)}_{\tilde \tau_b + s}} 
 \eqlaw \sum_{i = 0}^{\cd_d} \sup_{s, t \in [- \tilde c, \tilde c], \, \abs{t-s} \leq \delta} \abs{\tilde \rmS^{(i)}_{t} - \tilde \rmS^{(i)}_{s}} \;,
\end{align*}
where, in order to get the last equality, we used the invariance in law \eqref{eq:IVSPH}. 
Therefore, this fact implies that, for any $\eps > 0$, 
\begin{equation}
\label{eq:ArAsStep2}
\P\bigl(\tilde \omega_{b, c}(\delta) > \eps \bigr) \leq  \P\bigl(\tilde \omega_{\tilde c}(\delta) > \eps \bigr) + \P\bigl([\tilde \sigma_b - c, \tilde \sigma_b + c] \not\subseteq [\tilde \tau_b - \tilde c, \tilde \tau_b + \tilde c]\bigr) \;.
\end{equation}
The conclusion follows by first taking $\limsup_{b \to \infty}$, then $\lim_{\delta \to 0}$, and finally $\limsup_{\tilde{c} \to \infty}$ in the above expression. Indeed, since for any fixed $\tilde c > 0$ it holds that $\lim_{\delta \to 0} \P(\tilde \omega_{\tilde c}(\delta) > \eps) = 0$, it suffices to apply Lemma~\ref{lm:StoppingClose} to obtain the desired result.
\end{proof}

\subsubsection{The collection $(\rmS_{b, \cdot})_{b > 0}$ is tight}
We now prove that tightness of $(\tilde \rmS_{b, \cdot})_{b > 0}$ implies tightness of $(\rmS_{b, \cdot})_{b > 0}$, from which Proposition~\ref{pr:tightQC} follows directly.  
We start by proving that the stopping times $\sigma_b$ and $\tilde \sigma_b$ are close to each other. 
\begin{lemma}
\label{lm:StoppingCloseFin}
For $b > 0$ let $\sigma_b$ and $\tilde \sigma_b$ as defined in \eqref{eq:defTb} and in \eqref{eq:defTildeTb}, respectively. Then, it holds that  
\begin{equation*}
\lim_{\lambda \to \infty} \limsup_{b \to \infty} \P\bigl(\abs{\sigma_b - \tilde \sigma_b} \geq \lambda \bigr) = 0 \;.
\end{equation*}
\end{lemma}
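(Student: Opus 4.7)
The strategy is to couple $B$ and $\tilde B$ so that they agree on $[0,\infty)$; under such a coupling, the difference $\rmS_t - \tilde\rmS_t$ depends only on the (different) behaviour of the two driving processes on $(-\infty, 0)$, and the exponential decay of the kernel $g$ then yields a tight bound on $\sup_{t \geq 0} |\rmS_t - \tilde\rmS_t|$; combined with Lemma~\ref{lm:StoppingClose}, this will imply the claim.

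Concretely, we realise $(B, \tilde B)$ on a common probability space by taking $B^1 = B|_{[0, \infty)}$ in \eqref{eq:defBMQC} and letting $B|_{(-\infty, 0)}$ be independent of $B^2$. Under this coupling, $\tilde B_t - (Q - \beta) t \geq 0$ for every $t \leq 0$ (by the conditioning of $B^2$), so that $\tilde\tau_b = \tau_b := \inf\{t \geq 0 : B_t - (Q - \beta) t = -b\}$ for every $b > 0$; moreover, for $b$ large the minima of $\rmS_t - (Q - \beta)t$ and $\tilde\rmS_t - (Q - \beta) t$ over $(-\infty, 0]$ remain tight, so $\P(\sigma_b > 0, \, \tilde\sigma_b > 0) \to 1$ as $b \to \infty$. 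Setting $\eta(s) := B_s - \tilde B_s$, so that $\eta \equiv 0$ on $[0, \infty)$, a direct manipulation of the representation \eqref{eq:defIntOU} gives
\begin{equation*}
\Delta_t := \rmS_t - \tilde\rmS_t = \int_{-\infty}^{0} g(u - t)\, \eta(u) \, du \;-\; \int_{-\infty}^{0} g(s)\, \eta(s) \, ds, \qquad t \geq 0.
\end{equation*}
Since $|g(s)| \lesssim e^{2s}$ for $s \leq 0$ and $|\eta|$ grows at most linearly almost surely (as the difference of a two-sided Brownian motion and a drift-conditioned Brownian motion), both integrals are finite random variables; in particular the first is dominated in absolute value by $(d-2) e^{-2t} \int_{-\infty}^{0} e^{2u} |\eta(u)|\, du$, which shows that $\sup_{t \geq 0} |\Delta_t|$ is tight.

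It then remains to deduce tightness of $|\sigma_b - \tilde\sigma_b|$. Via the triangle inequality $|\sigma_b - \tilde\sigma_b| \leq |\sigma_b - \tau_b| + |\tau_b - \tilde\sigma_b|$ together with the identity $\tau_b = \tilde\tau_b$ valid under our coupling, Lemma~\ref{lm:StoppingClose} already controls the second summand, so the task reduces to proving that $|\sigma_b - \tau_b|$ is tight. For a large constant $C > 0$, on the event $\Omega_C := \{\sup_{t \geq 0} |\Delta_t| \leq C\}$ one has $\rmS_{\tau_b} - (Q - \beta) \tau_b = -b + \Delta_{\tau_b} \in [-b - C, -b + C]$, so that $\{\sigma_b > \tau_b + \lambda\} \cap \Omega_C$ forces $\rmS_t - (Q - \beta) t$ to stay strictly above $-b$ throughout $[\tau_b, \tau_b + \lambda]$, and $\{\sigma_b < \tau_b - \lambda\} \cap \Omega_C$ yields an analogous constraint on $[\sigma_b, \sigma_b + \lambda]$. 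The main obstacle is to bound these two probabilities; we would do so by mimicking the proof of Lemma~\ref{lm:StoppingClose}, applying the strong Markov property of $B$ at $\tau_b$ (which is a stopping time under the coupling) and splitting $\rmS_{\tau_b + t}$ into a piece driven by the fresh post-$\tau_b$ Brownian motion $(B_{\tau_b + s} - B_{\tau_b})_{s \geq 0}$ and a pre-$\tau_b$ remainder that is negligible thanks to the exponential decay of $g$. This adaptation is the main technical difficulty, since $\rmS$ does not enjoy the convenient distributional invariance \eqref{eq:IVBM} that $\tilde B$ does, and one must instead extract the required independence by hand at each step.
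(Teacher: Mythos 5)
Your proof starts off along the same lines as the paper: you couple $B$ and $\tilde B$ so that they agree on $[0,\infty)$, derive the representation
$\rmS_t - \tilde\rmS_t = \int_{-\infty}^0 g(r-t)(B_r-\tilde B_r)\,dr - \int_{-\infty}^0 g(r)(B_r-\tilde B_r)\,dr$,
and use the exponential decay of $g$ to control the first (time-varying) integral. Up to here the approaches coincide, and your observation that $\tilde\tau_b = \tau_b$ under this coupling is correct.

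The gap is in the final step, and you flag it yourself as ``the main technical difficulty.'' Your plan is to prove tightness of $|\sigma_b - \tau_b|$ by applying the strong Markov property of $B$ at $\tau_b$ and ``mimicking the proof of Lemma~\ref{lm:StoppingClose}.'' But that proof hinges entirely on the distributional invariance \eqref{eq:IVBM}/\eqref{eq:IVSPH} of $(\tilde B,\tilde\rmS)$ under recentring at $\tilde\tau_b$; the process $\rmS$ has no such invariance. Moreover, $\rmS_{\tau_b+s}$ depends on the entire past $(B_r)_{r\leq\tau_b+s}$, so strong Markov at $\tau_b$ alone does not reset the state: one would need to control a solution of the Langevin system \eqref{eq:SDEpOrderMat} started from the (unbounded) random initial condition $\mathbf{S}_{\tau_b}$, which is not a short argument. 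As written, this step does not close.

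The paper avoids the strong Markov detour entirely. Instead of comparing $\sigma_b$ with $\tau_b$, it compares $\sigma_b$ directly with $\tilde\sigma_{b+c}$ and $\tilde\sigma_{b-c}$, taking the \emph{random} constant $c = |\rmD| + 1$ where $\rmD := \int_{-\infty}^0 g(r)(B_r-\tilde B_r)\,dr$ is your ``constant shift'' term. The key inclusion for the upper tail is
\begin{equation*}
\{\sigma_b - \tilde\sigma_b \geq \lambda\} \subseteq \Bigl\{\sup_{s \geq \tilde\sigma_b} |\rmS_s - \tilde\rmS_s + \rmD| > 1\Bigr\} \cup \{\tilde\sigma_{b+c} - \tilde\sigma_b \geq \lambda\},
\end{equation*}
because on the complement of the first event one has $\rmS_{\tilde\sigma_{b+c}} - (Q-\beta)\tilde\sigma_{b+c} \leq -(b+c)+|\rmD|+1 = -b$, forcing $\sigma_b \leq \tilde\sigma_{b+c}$. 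The first event has vanishing probability because $\tilde\sigma_b \to \infty$ in probability as $b \to \infty$ and $\sup_{s\geq t}|\rmS_s - \tilde\rmS_s + \rmD| \to 0$ by the exponential decay you already established; the second is controlled by the tightness of $(\tilde\rmS_{b,\cdot})_{b>0}$ from Lemma~\ref{lm:tightQCTilde}, which makes $\tilde\sigma_{b+c}-\tilde\sigma_b$ tight. The lower tail is handled similarly with $\tilde\sigma_{b-c}$ and one additional event. This route uses only ingredients that are already at hand (the decay bound you derived, plus tightness of the recentred $\tilde\rmS$-process) and requires no Markov renewal argument for $\rmS$. You should replace your $\tau_b$-based strong Markov plan with this direct comparison to $\tilde\sigma_{b\pm c}$.
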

\begin{proof}
For $\lambda \geq 0$, we have the following trivial inclusion
\begin{equation*}
\bigl\{\abs{\sigma_b -  \tilde \sigma_b} \geq \lambda \bigr\} \subseteq \bigl\{\sigma_b - \tilde \sigma_b \geq \lambda \bigr\} \cup \bigl\{\tilde \sigma_b - \sigma_b \geq \lambda \bigr\} \;,
\end{equation*}
and so we can focus on finding suitable upper bounds for the probabilities of the two events on the right-hand side of the above display. Throughout the proof we can assume that that the two-sided Brownian motion $(B_t)_{t \in \R}$ used in the construction of the spherical average process $(\rmS_t)_{t \in \R}$ and the process $(\tilde B_t)_{t \in \R}$ introduced in \eqref{eq:defBMQC} are coupled in such a way that $B_t = \tilde B_t$ for all $t \geq 0$.

We start by observing that, for all $s \in \R$, one has that 
\begin{align*}
\rmS_s - \tilde \rmS_s 
= \int_{-\infty}^{0} g(r-s)\bigl(B_{r} - \tilde B_{r}\bigr) dr - \underbrace{\int_{-\infty}^0 g(r)\bigl(B_{r} - \tilde B_r\bigr) dr}_{\rmD} \;.
\end{align*}
In particular, by using the exponential decay of $g(s)$ as $s \to -\infty$, it holds that
\begin{equation}
\label{eq:CloseTildeandNot}
\lim_{t \to \infty} \P\Biggl(\sup_{s \geq t} \abs{\rmS_s - \tilde \rmS_s + \rmD} > 1\Biggr) = 0 \;.
\end{equation}
Furthermore, thanks to the tightness of $(\tilde \rmS_{b, \cdot})_{b > 0}$ proved in Proposition~\ref{lm:tightQCTilde}, for any $c > 0$, it holds that 
\begin{equation}
\label{eq:tightStopTilde}
\lim_{\lambda \to \infty} \limsup_{b \to \infty} \P\bigl(\tilde \sigma_{b+c} - \tilde \sigma_b > \lambda\bigr) = 0 \qquad \text{ and } \qquad \lim_{\lambda \to \infty} \limsup_{b \to \infty} \P\bigl(\tilde \sigma_{b} - \tilde \sigma_{b-c} > \lambda\bigr) = 0 \;.
\end{equation}

\textbf{Step 1:} We start by bounding the probability of the event $\{\sigma_b - \tilde \sigma_b \geq \lambda\}$. To this end, by letting  $c = \abs{\rmD} + 1$, we note that
\begin{equation}
\label{eq:Step1Comp2}
\{\sigma_b - \tilde \sigma_b \geq \lambda\} \subseteq \Biggl\{\sup_{s \geq \tilde \sigma_b} \abs{\rmS_s - \tilde \rmS_s + \rmD} > 1\Biggr\} \cup \bigl\{\tilde \sigma_{b + c} - \tilde \sigma_b \geq \lambda\bigr\} \;.	
\end{equation}
To see why this is true, it is more convenient to consider the complement of the given relation. Indeed, we note that
\begin{equation*}
\sup_{s \geq \tilde \sigma_b} \abs{\rmS_s - \tilde \rmS_s + \rmD} \leq 1 \quad \implies \quad \rmS_{\tilde \sigma_{b + c}} - \tilde \rmS_{\tilde \sigma_{b + c}} \leq \abs{\rmD} + 1 \;,
\end{equation*}
from which we can deduce that 
\begin{equation*}
\tilde \rmS_{\tilde \sigma_{b+c}} - (Q-\beta) \tilde \sigma_{b+c} = -(b+c)  \implies \rmS_{\tilde \sigma_{b+c}} - (Q-\beta) \tilde \sigma_{b+c} \leq -(b+c) + \abs{\rmD} + 1 \implies \rmS_{\tilde \sigma_{b+c}} - (Q-\beta) \tilde \sigma_{b+c} \leq - b \;,
\end{equation*}
or, in other words, that $\sigma_b \leq \tilde\sigma_{b+c}$. Combining this fact with the fact that we are on the complement of the second event on the right-hand side of \eqref{eq:Step1Comp2}, we readily obtain that $\sigma_b - \tilde \sigma_b < \lambda$. Therefore, the desired result follows by observing that, thanks to \eqref{eq:CloseTildeandNot} and \eqref{eq:tightStopTilde}, it holds that 
\begin{equation*}
\limsup_{b \to \infty} \P\Biggl(\sup_{s \geq \tilde \sigma_b} \abs{\rmS_s - \tilde \rmS_s + \rmD} > 1\Biggr) = 0 \qquad \text{ and } \qquad \lim_{\lambda \to \infty} \limsup_{b \to \infty} \P\bigl(\tilde \sigma_{b + c} - \tilde \sigma_b \geq \lambda\bigr) = 0 \;,
\end{equation*}
where here we implicitly used the fact that $\tilde \sigma_b \to \infty$ in probability as $b \to \infty$. 

\textbf{Step 2:} We now focus on bounding the probability of the event $\{\tilde \sigma_b - \sigma_b \geq \lambda\}$. To this end, by letting  $c = \abs{\rmD} + 1$ and by fixing $T > 0$, we note that
\begin{equation}
\label{eq:Step2Comp2}
\{\tilde \sigma_b - \sigma_b \geq \lambda\} \subseteq \Biggl\{\inf_{s \in [0,T]} \bigl(\rmS_s - (Q - \beta) s\bigr) \leq - b \Biggr\} \cup \Biggl\{\sup_{s \geq T} \abs{\rmS_s - \tilde \rmS_s + \rmD} > 1\Biggr\} \cup \bigl\{\tilde \sigma_{b} - \tilde \sigma_{b-c} \geq \lambda\bigr\} \;.	
\end{equation}
Also here, to see why this is true, it is more convenient to consider the complement of the given relation. Indeed, we note that by proceeding analogously to the previous step, one can easily verify that on the intersection of the complement of first two events on the right-hand side of \eqref{eq:Step2Comp2}, it holds that 
\begin{equation*}
\inf_{t \in [0, \tilde \sigma_{b-c}]} \bigl(\rmS_s - (Q - \beta) s\bigr) \geq -b \qquad \implies \qquad \sigma_b \geq \tilde \sigma_{b-c} \;.
\end{equation*}
Combining this fact with the fact that we are on the complement of the third event on the right-hand side of \eqref{eq:Step2Comp2}, we readily obtain that $\tilde \sigma_b - \sigma_b < \lambda$. Therefore, the desired result follows by observing that, thanks to \eqref{eq:CloseTildeandNot} and \eqref{eq:tightStopTilde}, it holds that
\begin{gather*}
\lim_{T \to \infty} \P\Biggl(\sup_{s \geq T} \abs{\rmS_s - \tilde \rmS_s + \rmD} > 1\Biggr) = 0 \qquad \text{ and } \qquad
\lim_{\lambda \to \infty} \limsup_{b \to \infty} \P\bigl(\tilde \sigma_{b} - \tilde \sigma_{b-c} \geq \lambda\bigr) = 0 \;,
\end{gather*}
and from the fact that for any fixed $T> 0$,
\begin{equation*}
\limsup_{b \to \infty} \P\Biggl(\inf_{s \in [0,T]} \bigl(\rmS_s - (Q - \beta) s\bigr) \leq - b\Biggr) = 0 \;.
\end{equation*}
\end{proof}

We are finally ready to prove tightness of of the collections of processes $(\rmS_{b, \cdot})_{b > 0}$, i.e., to prove Proposition~\ref{pr:tightQC}. 

\begin{proof}[Proof of Proposition~\ref{pr:tightQC}]
The proof relies on similar ideas to those used in the proof of Lemma~\ref{lm:tightQCTilde}. Indeed, by recalling the definitions in \eqref{eq:modContRec} and \eqref{eq:modContPlain}, and applying the Arzel\`a--Ascoli theorem, to prove the tightness of $(\rmS_{b, \cdot})_{b > 0}$ in $\CC^{\cd_d}_{\loc}(\R)$ equipped with the local uniform metric, it suffices to verify that for all $c > 0$ and any $\eps > 0$, it holds that  
\begin{equation*}
\lim_{\delta \to 0} \limsup_{b \to \infty} \P\bigl(\omega_{b, c}(\delta) > \eps \bigr) = 0 \;.
\end{equation*}
For $i \in [\cd_d]_0$, we introduce the process $\smash{(\rmD^{(i)}_t)_{t \in \R}}$ defined as 
\begin{equation*}
\rmD^{(i)}_t \eqdef \int_{-\infty}^{0} g^{(i)}(s) \bigl(B_{s+t} - \tilde B_{s+t}\bigr) ds\;, \qquad \forall \, t \in \R\;.
\end{equation*}
Proceeding similarly to the proof of Lemma~\ref{lm:StoppingCloseFin}, and by using the exponential decay of $g^{(i)}(s)$ as $s \to \infty$ for all $i \in [\cd_d]_0$, it is straightforward to check that, for any $\eps > 0$,
\begin{equation}
\label{eq:tight1Aux}
\lim_{t \to \infty} \P\Biggl(\sum_{i = 0}^{\cd_d}\sup_{s \geq t} \abs{\rmD^{(i)}_s} > \eps \Biggr) = 0 \;.
\end{equation}
Now, for $b > 0$ and $\tilde c > c$, on the following event 
\begin{equation*}
\Biggl\{\sum_{i = 0}^{\cd_d} \sup_{s \geq \sigma_b - c} \abs{\rmD^{(i)}_s} \leq \frac{\eps}{4}\Biggr\} \cap \bigl\{[\sigma_b - c, \sigma_b + c]  \subseteq [\tilde \sigma_b - \tilde c, \tilde  \sigma_b + \tilde c] \bigr\} \;,
\end{equation*}
one has that
\begin{align*}
\omega_{b, c}(\delta) 
& = \sum_{i = 0}^{\cd_d} \sup_{s, t \in [\sigma_b - c, \sigma_b + c], \, \abs{t-s} \leq \delta} \abs{\rmS^{(i)}_{t} - \rmS^{(i)}_{s}} 
 \leq \sum_{i = 0}^{\cd_d} \sup_{s, t \in [\sigma_b - c, \sigma_b + c], \, \abs{t-s} \leq \delta} \abs{\tilde \rmS^{(i)}_{t} - \tilde \rmS^{(i)}_{s}} + \frac{\eps}{2}\\
& \leq \sum_{i = 0}^{\cd_d} \sup_{s, t \in [\tilde \sigma_b - \tilde c, \tilde \sigma_b  + \tilde c], \, \abs{t-s} \leq \delta} \abs{\tilde \rmS^{(i)}_{t} - \tilde \rmS^{(i)}_{s}} + \frac{\eps}{2}
 = \sum_{i = 0}^{\cd_d} \sup_{s, t \in [- \tilde c, \tilde c], \, \abs{t-s} \leq \delta} \abs{\tilde \rmS^{(i)}_{b, t} - \tilde \rmS^{(i)}_{b, s}} + \frac{\eps}{2} \;. 
\end{align*}
In particular, this fact implies that 
\begin{align}
& \P\bigl(\omega_{b, c}(\delta) > \eps\bigr) \nonumber \\
& \hspace{10mm} \leq \P\bigl(\tilde \omega_{b, \tilde c}(\delta) > \eps/2\bigr) + \P\Biggl(\sum_{i = 0}^{\cd_d} \sup_{s \geq \sigma_b - c} \abs{\rmD^{(i)}_s} > \frac{\eps}{4}\Biggr) + \P\bigl([\tilde \sigma_b - \tilde c, \tilde  \sigma_b + \tilde c] \not\subseteq [\sigma_b - c, \sigma_b + c]\bigr) \;. \label{eq:ArAsStep1}
\end{align}
The conclusion follows by first taking $\limsup_{b \to \infty}$, then $\lim_{\delta \to 0}$, and finally $\limsup_{\tilde{c} \to \infty}$ in the above expression.
Specifically, the vanishing of the first and third probabilities on the right-hand side of \eqref{eq:ArAsStep1} is a consequence of Lemma~\ref{lm:tightQCTilde} and Lemma~\ref{lm:StoppingCloseFin}, respectively. The vanishing of the second probability on the right-hand side of \eqref{eq:ArAsStep1} follows from \eqref{eq:tight1Aux} and the fact that $\sigma_b \to \infty$ in probability as $b \to \infty$.
\end{proof}

\subsection{Convergence of the recentred process}
\label{sub:uniq}
The main goal of this section is to prove Theorem~\ref{th:convRecentred}. 
To this end, we consider a process $(\rmR_t)_{t \in \R}$ with the same law of the spherical average process $(\rmS_t)_{t \in \R}$. For each $\frkb > 0$, we introduce the following quantities
\begin{equation*}
\rho_{\frkb} \eqdef \inf\bigl\{t \in \R \, : \, \rmR_t - (Q - \beta) t = -\frkb \bigr\} \;, \qquad \rmR_{\frkb, t} \eqdef \rmR_{t+\rho_{\frkb}} - \rmR_{\rho_{\frkb}}\;, \quad \forall \, t \in \R\;.
\end{equation*}
For $\frkb$, $b > 0$, we also introduce the following stopping time 
\begin{equation}
\label{eq:defDobleStop}
\rho_{\frkb, b} \eqdef \inf\bigl\{t \in \R \, : \, \rmR_{\frkb, t} - (Q-\beta)t = - b \bigr\} \;.
\end{equation}

\begin{lemma}
\label{lm:distanceLawOriginal}
For any $\eps > 0$, there exists $u \geq 0$ large enough such that for all $\frkb > 0$, it holds that 
\begin{equation*}
	\dTV\bigl[(\rmR_{\frkb, u + t})_{t \geq 0}, (\rmS_{u + t})_{t \geq 0} \bigr] \leq \eps \;.
\end{equation*}
\end{lemma}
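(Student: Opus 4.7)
The plan is to exploit the Markov structure of the augmented process $(\rmR_t, \mathbf{R}_t)_{t \in \R}$ --- where $\mathbf{R}_t$ denotes the vector of the first $\cd_d$ derivatives of $\rmR$ at time $t$, defined in analogy with $\mathbf{S}_t$ of Theorem~\ref{th:identSphere} --- combined with the mixing estimate of Lemma~\ref{lm:TVdistannceSpherical}. First I would note that $\rho_\frkb$ is a stopping time in the natural filtration of the two-sided Brownian motion driving the Langevin equation~\eqref{eq:SDEpOrderMat}, since the stationary solution $\mathbf{R}_t$ is adapted to that filtration. By the strong Markov property (and using that $\rmR_{\frkb, 0} = 0 = \rmS_0$), both $(\rmR_{\frkb, t}, \mathbf{R}_{\frkb, t})_{t \geq 0}$ and $(\rmS_t, \mathbf{S}_t)_{t \geq 0}$ are then instances of the augmented SDE~\eqref{eq:SDEpOrderMatTot}: the first started from $(0, \mathbf{R}_{\rho_\frkb})$ and driven by a Brownian motion independent of $\mathbf{R}_{\rho_\frkb}$, and the second started from $(0, \mathbf{S}_0)$ with $\mathbf{S}_0$ in its stationary Gaussian law.

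Next, for any $\eps > 0$, I would apply the tightness of $(\rmS_{b, \cdot})_{b > 0}$ in $\CC^{\cd_d}_{\loc}(\R)$ established in Proposition~\ref{pr:tightQC} --- applied to $\rmR$, which has the same law as $\rmS$ --- together with the continuity of the point-evaluation map $f \mapsto (f(0), f^{(1)}(0), \ldots, f^{(\cd_d)}(0))$, to find a radius $R > 0$ such that $\P(\mathbf{R}_{\rho_\frkb} \notin B(0, R)) \leq \eps/4$ uniformly in $\frkb > 0$. Enlarging $R$ if necessary, I may also assume $\P(\mathbf{S}_0 \notin B(0, R)) \leq \eps/4$. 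Then, invoking Lemma~\ref{lm:TVdistannceSpherical} with $p = \cd_d$ and with the Frobenius companion matrix $\mathbf{A}$ and vector $\mathbf{b}$ of Theorem~\ref{th:identSphere} (whose eigenvalues $-(d - 2k)$, $k \in [\cd_d]$, are distinct negative reals, as required), I would pick $u = u(R, \eps)$ large enough that
\begin{equation*}
\sup_{\mathbf{x}, \mathbf{y} \in B(0, R)} \dTV\bigl[(\bar \rmX_{\mathbf{x}, u}, \mathbf{X}_{\mathbf{x}, u}), (\bar \rmX_{\mathbf{y}, u}, \mathbf{X}_{\mathbf{y}, u})\bigr] \leq \eps/4 \;.
\end{equation*}
A triangle inequality, decomposing each random initial condition into a part supported on $B(0, R)$ plus a remainder and integrating the mixing bound above against each, then yields $\dTV[\law(\rmR_{\frkb, u}, \mathbf{R}_{\frkb, u}), \law(\rmS_u, \mathbf{S}_u)] \leq \eps$ uniformly in $\frkb > 0$.

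Finally, the Markov property upgrades this one-time bound to a bound on the full future trajectories: $(\rmR_{\frkb, u + t}, \mathbf{R}_{\frkb, u + t})_{t \geq 0}$ and $(\rmS_{u + t}, \mathbf{S}_{u + t})_{t \geq 0}$ are obtained from their respective time-$u$ values by applying the same SDE transition kernel driven by Brownian motions that are independent of those values, so the data-processing inequality for total variation yields the same bound $\eps$ on the TV distance of the augmented path laws; projecting onto the first coordinate only tightens it and gives the claim. The one mildly subtle step is the stopping-time / adaptedness justification for the strong Markov property in the first paragraph; the remainder of the argument is a compact combination of the tightness of Section~\ref{sub:tight} and the mixing lemma of Section~\ref{sub:SDEs}.
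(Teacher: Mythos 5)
Your proposal is correct and follows essentially the same route as the paper's proof: invoke the strong Markov property of the augmented process to reduce to an initial-condition problem for the SDE~\eqref{eq:SDEpOrderMatTot}, use the tightness from Proposition~\ref{pr:tightQC} to confine those initial conditions to a compact set with high probability uniformly in $\frkb$, and then apply the mixing estimate of Lemma~\ref{lm:TVdistannceSpherical}. You spell out two points that the paper leaves implicit --- the triangle-inequality decomposition into a compact-good set and a small-probability remainder, and the data-processing step upgrading the one-time marginal bound to a bound on the full forward path law --- but these are routine completions of the same argument rather than a different approach.
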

\begin{proof}
It suffices to show that there exists $u \geq 0$ large enough such that for all $\frkb > 0$, the conditional law of $\smash{(\rmR_{\frkb, u + t})_{t \geq 0}}$ given $\smash{(\rmR_{t})_{t \leq \rho_{\frkb}}}$ is close in total variation distance to the law of $\smash{(\rmS_{u + t})_{t \geq 0}}$. Since $\smash{\rho_{\frkb}}$ is a stopping time for $\smash{(\rmR_{t})_{t \in \R}}$, we have that the conditional law of $\smash{(\rmR_{\frkb, u + t})_{t \geq 0}}$ given $\smash{(\rmR_{t})_{t \leq \rho_{\frkb}}}$ depends only on $\smash{(\rmR_{\rho_{\frkb}}, \mathbf{R}_{\rho_{\frkb}})}$, where $(\mathbf{R}_t)_{t \in \R}$ denotes the vector of derivatives of $(\rmR_t)_{t \in \R}$ up to order $\cd_d$. Moreover, recalling that $(\rmR_t)_{t \in \R}$ has the same law of $(\rmS_t)_{t \in \R}$, then thanks to Proposition~\ref{pr:tightQC}, the collection $\smash{(\rmR_{\rho_{\frkb}}, \mathbf{R}_{\rho_{\frkb}})_{\frkb > 0}}$ is tight in $\CC^{\cd_d}_{\loc}(\R)$ equipped with the local uniform metric\footnote{We emphasise that this is the sole instance where the tightness of $(\rmS_{b, \cdot})_{b > 0}$ (and consequently of $(\rmR_{\frkb, \cdot})_{\frkb > 0}$) is used.}. The conclusion is then a consequence of Lemma~\ref{lm:TVdistannceSpherical}.
\end{proof}

We are now ready to prove the desired convergence result. 
\begin{proof}[Proof of Theorem~\ref{th:convRecentred}]
Fix $\rmT \in  \R$. Thanks to Lemma~\ref{lm:distanceLawOriginal}, for $u \geq 0$ large enough, we can find a coupling of $(\rmR_t)_{t \in \R}$ and $(\rmS_t)_{t \in \R}$ such that, for all $\frkb > 0$, with probability at least $1-\eps$, it holds that  
\begin{equation}
\label{eq:couplingTechUniq}
\rmR_{\frkb, u + t} = \rmS_{u+t} \;, \qquad \forall \, t \geq 0 \;.
\end{equation}
By choosing $b > 0$ sufficiently large (depending on $u$ but not on $\frkb$), it holds with probability at least $1-\eps$ that $\rho_{\frkb, b} = \sigma_{b}$. Indeed, this follows since, by taking $b$ large enough (depending on $u$ but not on $\frkb$), we can ensure that there is not a time $t < u$ such that $\rmS_t - (Q-\beta) t = -b$. Moreover, by the definition of $\rho_{\frkb, b}$, it holds that
\begin{equation*}
\rmS_{\rho_{\frkb, b}} - (Q - \beta) \rho_{\frkb, b} \overset{\eqref{eq:couplingTechUniq}}{=} \rmR_{\frkb, \rho_{\frkb, b}}  - (Q - \beta) \rho_{\frkb, b} =  - b \;.
\end{equation*}
In particular, this fact implies that if $b > 0$ is large enough (not depending on $\frkb$), then with probability at least $1-\eps$, it holds that 
\begin{equation*}
\rmR_{\frkb, \rho_{\frkb, b} + t} = \rmS_{\sigma_{b} + t} \;, \qquad \forall \, t \geq \rmT \;.
\end{equation*}
Therefore, by noting that $\rho_{\frkb, b} + \rho_{\frkb} = \rho_{b + \frkb}$, we can use the previous identity to write 
\begin{equation*}
\rmR_{\rho_{b + \frkb} + t} - \rmR_{\rho_{b+\frkb}} = \rmS_{\sigma_{b} + t} - \rmS_{\sigma_b}\;, \qquad \forall \, t \geq \rmT \;.
\end{equation*}
This fact implies that the collection $((\rmS_{b, t})_{t \geq \rmT})_{b > 0}$ is Cauchy in total variation distance, and so the conclusion follows.
\end{proof}

\subsection{The quantum cone as a local limit}
\label{sub:quantumLocal}
In this short section, we prove that the $\beta$-quantum cone field can be identified as a local limit of a whole-space LGF with an additional $\beta$-log singularity.

\begin{proof}[Proof of Theorem~\ref{th:convQuantum}]
Consider the setting described in the theorem's statement. For $t \in \R$, $z \in \R^d$, and $b > 0$, we have that
\begin{equation*}
\bh^{t, b}_{e^{-s}}(0) = \bh_{e^{-(s+t)}}(z) + \beta s - (Q - \beta)t  + b \;, \qquad \forall \, s \in \R \;. 	
\end{equation*}
Thus, by definition \eqref{eq:defSigmaBTh} of $\sigma_b$, and recalling the definitions in \eqref{eq:defVecDer} and \eqref{eq:defRecProc}, we obtain that
\begin{equation*}
\bh^{\sigma_b, b}_{e^{-s}}(0) = \bh_{e^{-(s+\sigma_b)}}(z) + \beta s - (Q - \beta)\sigma_b  + b = \bh_{e^{-(s+\sigma_b)}}(z) - \bh_{e^{-\sigma_b}}(z)  + \beta s \eqlaw \rmS_{b, s} + \beta s  \;, \qquad \forall \, s \in \R \;.	
\end{equation*}
Hence, thanks to Theorem~\ref{th:convRecentred}, if we take the limit as $b \to \infty$ in the previous display, we readily obtain that, for any $\rmT \in \R$, the process $(\bh^{\sigma_b, b}_{e^{-s}}(0))_{s \geq \rmT}$ converges in total variation distance to the process $\smash{(\rmS_{\infty, s} + \beta s)_{s \geq \rmT}}$ as $b \to \infty$ . The desired result follows since the shifting and rescaling procedure we performed do not affect the law of the projection of $\bh$ onto the subspace $\smash{\H_{d, \sph}}$.
\end{proof}

\subsection{Interaction between LBM and quantum cone}
\label{sub:LBMQuantum}
The main goal of this section is to prove Theorem~\ref{th:invarianceShift}, i.e., to prove that the law of the $d$-dimensional quantum cone $\bh^{\star}$ is invariant under shifting along the trajectories of the LBM. 

For $d > 2$ (resp.\ $d = 2$) even, $Q > \sqrt{2d}$, and $\beta \in (-\infty, Q)$, let $\bh^{\star}$ be the $d$-dimensional $\beta$-quantum cone field as specified in Definition~\ref{def:quantumCone} (resp.\ \cite[Definition~4.10]{DMS21}), and let $\rmB_{\bh^{\star}\!, \alpha}$ be the associated LBM as introduced in Definition~\ref{def:LBMQuantum}. Before proceeding,
we need to rule out that $\rmB_{\bh^{\star}\!, \alpha}$ does not stay ``stuck'' at zero due to the $\beta$-log singularity at the origin of $\bh^{\star}$. To confirm this fact, we note that the expected time for $\rmB_{\bh^{\star}\!, \alpha}$ to reach the boundary of the unit disk is given by $\BE_0[\nu_{\bh^{\star}\!, \alpha}(B(0, 1))]$. Hence, it suffices to verify that it holds $\P$-almost surely that $\BE_0[\nu_{\bh^{\star}\!, \alpha}(B(0, 1))] < \infty$. To this end, we need the following intermediate result related to the growth of the radial part of the quantum cone field. 

\begin{lemma}
\label{lm:boundGrowthRadialCone}
For $d \geq 2$ even, $Q > \sqrt{\smash[b]{2d}}$, and $\beta \in (-\infty , Q)$, consider the collection of recentred processes $(\rmS_{b, \cdot})_{b > 0}$ defined in \eqref{eq:defRecProc}. Then, it holds that 
\begin{equation*}
\lim_{t \to \infty} \limsup_{b \to \infty} \P\Biggl(\sup_{s \geq t} \abs{\rmS_{b, s}} s^{-3/4} > 1\Biggr) = 0 \;,
\end{equation*}
where we recall that the recentred process $(\rmS_{b, s})_{s \in \R}$ is defined in \eqref{eq:defRecProc}. 
\end{lemma}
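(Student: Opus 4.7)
The plan is to reduce the problem in two steps: first, to the analogous bound for the auxiliary recentred process $(\tilde \rmS_{b,s})_{b > 0}$ of \eqref{eq:defTildeSb}; and second, using the invariance in law \eqref{eq:IVSPH}, to a tail estimate for the base process $(\tilde \rmS_s)_{s \in \R}$. For the first reduction, I will couple the two-sided Brownian motions $B$ and $\tilde B$ so that $B_t = \tilde B_t$ for all $t \geq 0$, as in the proof of Lemma~\ref{lm:StoppingCloseFin}. An argument parallel to the one in the proof of Proposition~\ref{pr:tightQC}---relying on the process $\rmD^{(0)}_t = \int_{-\infty}^0 g(v)(B_{v+t} - \tilde B_{v+t})dv$, whose tail property \eqref{eq:tight1Aux} follows from the exponential decay of $g$, together with Lemma~\ref{lm:StoppingCloseFin} to switch between $\sigma_b$ and $\tilde \sigma_b$---yields
\begin{equation*}
\lim_{t \to \infty} \limsup_{b \to \infty} \P\bigl(\sup_{s \geq t}\abs{\rmS_{b,s} - \tilde \rmS_{b,s}} > 1\bigr) = 0 \;.
\end{equation*}
Since $\sup_{s \geq t}s^{-3/4} \to 0$, it is then enough to prove the analogous estimate for $(\tilde \rmS_{b,s})_{b > 0}$.

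For this, Lemma~\ref{lm:StoppingClose} ensures that for every $\eps > 0$ there is $R > 0$ with $\sup_{b > 0}\P(\abs{\tilde \sigma_b - \tilde \tau_b} > R) < \eps$. Setting $\Delta := \tilde \sigma_b - \tilde \tau_b$ and $\tilde Y_u := \tilde \rmS_{\tilde \tau_b + u} - \tilde \rmS_{\tilde \tau_b}$, one has $\tilde \rmS_{b,s} = \tilde Y_{s + \Delta} - \tilde Y_{\Delta}$, and on the event $\{\abs{\Delta} \leq R\}$ and for $t \geq 2R$ the following pathwise estimate holds:
\begin{equation*}
\sup_{s \geq t} \abs{\tilde \rmS_{b,s}} s^{-3/4} \leq 2 \sup_{u \geq t - R} \abs{\tilde Y_u} u^{-3/4} + t^{-3/4} \sup_{u \in [-R, R]} \abs{\tilde Y_u} \;.
\end{equation*}
The two quantities on the right-hand side depend only on the process $\tilde Y$, whose law by the marginal invariance \eqref{eq:IVSPH} (with $i = 0$ and $\tilde \rmS_0 = 0$) coincides with that of $\tilde \rmS$; the joint law of $(\tilde Y, \Delta)$ plays no role in the argument. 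A union bound then reduces the matter to proving that $\sup_{u \geq t}\abs{\tilde \rmS_u}u^{-3/4} \to 0$ in probability as $t \to \infty$.

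For this last step, the representation \eqref{eq:defrmStilde} combined with Minkowski's inequality, the exponential decay of $g$ at $-\infty$, and the polynomial growth of $\|\tilde B_v\|_{L^2}$ in $\abs{v}$ (standard for Brownian motion and for Brownian motion conditioned to avoid a linear lower boundary) gives the variance bound $\E[\tilde \rmS_u^2] \leq C u$ for all sufficiently large $u$, together with the increment estimate $\E[(\tilde \rmS_{u_1} - \tilde \rmS_{u_2})^2] \lesssim \abs{u_1 - u_2}$. Splitting the integrand of \eqref{eq:defrmStilde} into a Gaussian piece coming from positive times of $\tilde B$ and a non-Gaussian but well-concentrated piece coming from negative times (handled via Bessel-type tail estimates for the conditioned Brownian motion), one obtains sub-Gaussian tails of the form $\P(\abs{\tilde \rmS_u} > u^{3/4}/2) \lesssim e^{-c \sqrt{u}}$, and a standard dyadic chaining argument over the blocks $[2^n, 2^{n+1}]$ then yields $\sup_{u \geq t}\abs{\tilde \rmS_u}u^{-3/4} \to 0$ almost surely. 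The main technical difficulty lies in the first reduction, where the coupling of $B$ and $\tilde B$, the mismatch between $\sigma_b$ and $\tilde \sigma_b$, and uniformity in $b$ must be handled simultaneously; by contrast, the invariance argument and the tail estimate are more-or-less routine given the groundwork of Sections~\ref{sub:tight} and~\ref{sub:uniq}.
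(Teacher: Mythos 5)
Your proposal is correct and follows essentially the same route as the paper's proof: both arguments reduce from $\rmS_{b,\cdot}$ to the auxiliary process $\tilde\rmS$ via coupling and the stopping-time comparison lemmas (Lemmas~\ref{lm:StoppingClose} and~\ref{lm:StoppingCloseFin}), then shift the recentering from $\tilde\sigma_b$ to $\tilde\tau_b$ so that the invariance in law~\eqref{eq:IVSPH} applies, and finish with Gaussian-type tail bounds exploiting the exponential decay of~$g$. Your explicit pathwise estimate in terms of $\Delta = \tilde\sigma_b - \tilde\tau_b$ and $\tilde Y$ is a modest repackaging of the paper's triangle-inequality decomposition~\eqref{eq:ZeroTermTailsCQ}--\eqref{eq:firstTermTailsCQ}, but the underlying ideas and the lemmas invoked are identical.
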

\begin{proof}
We focus on the case $d > 2$ even, as the case $d = 2$ is trivial. For $t \geq 0$ and $b > 0$, we introduce the following events,
\begin{equation*}
\rmE_{1, b, t} \eqdef \Biggl\{\sup_{s \geq t} \abs{\rmS_{b, s} - (\tilde \rmS_{s + \sigma_{b}} -  \tilde \rmS_{\sigma_b})} < 1 \Biggr\}, \qquad \rmE_{2, b, t} \eqdef \bigl\{\abs{\sigma_b - \tilde \sigma_b} < t\bigr\}, \qquad \rmE_{3, b, t} \eqdef \bigl\{\abs{\tilde \sigma_b - \tilde \tau_b} < t\bigr\} \;,
\end{equation*}
where we recall that $\sigma_b$ is defined in \eqref{eq:defTb}, $(\tilde\rmS_s)_{s \in \R}$ is defined in \eqref{eq:defrmStilde}, and $\tilde \tau_b$ and $\tilde \sigma_b$ are defined in \eqref{eq:defTildeTb}.
We claim that it suffices to estimate the probability of the event in the statement, restricted to the event $\cap_{i=1}^3 \rmE_{i, b, t}$. Indeed, by applying Lemma~\ref{lm:StoppingClose} and Lemma~\ref{lm:StoppingCloseFin}, and following reasoning similar to the proof of \eqref{eq:CloseTildeandNot}, it follows that
\begin{equation*}
\lim_{t \to \infty} \limsup_{b \to \infty}\sum_{i = 1}^{3} \P\bigl(\rmE^c_{i, b, t}\bigr) = 0 \;.
\end{equation*} 
We start by observing that, on the event $\rmE_{1, b, t}$, it holds that
\begin{equation*}
\sup_{s \geq t} \abs{\rmS_{b, s}}  \leq 	\sup_{s \geq t} \abs{\tilde \rmS_{s + \sigma_{b}} -  \tilde \rmS_{\sigma_b}} + 1 \;,
\end{equation*}
and so, we can focus our attention on the following event, 
\begin{equation*}
\Biggl\{\sup_{s \geq t} \abs{\tilde \rmS_{s + \sigma_{b}} -  \tilde \rmS_{\sigma_b}} s^{-3/4} > 1, \; \bigcap_{i = 1}^3 \rmE_{i, b, t}\Biggr\}\;.
\end{equation*}
To this end, we begin by observing that, thanks to the triangle inequality,
\begin{equation}
\label{eq:ZeroTermTailsCQ}
\sup_{s \geq t} \abs{\tilde \rmS_{s +\sigma_{b}} -  \tilde \rmS_{\sigma_b}} \leq 
\sup_{s \geq t} \abs{\tilde \rmS_{s + \tilde \sigma_b} - \tilde \rmS_{\tilde \sigma_b}} 
+ \sup_{s \geq t} \abs{\tilde \rmS_{s + \tilde \sigma_{b}} -  \tilde \rmS_{s + \sigma_b}} 
+ \abs{\tilde \rmS_{\tilde \sigma_{b}} -  \tilde \rmS_{\sigma_b}} \;.
\end{equation}
Next, we proceed to bound each of the three terms on the right-hand side of \eqref{eq:ZeroTermTailsCQ} individually.

\textbf{First term:} 
By applying the triangle inequality, on the event $\rmE_{3, b, t}$, we observe that the first term on the right-hand side of \eqref{eq:ZeroTermTailsCQ} is bounded above by
\begin{equation}
\label{eq:firstTermTailsCQ}
\sup_{s \geq t} \abs{\tilde \rmS_{s + \tilde \tau_{b}} -  \tilde \rmS_{\tilde \tau_b}}  + \sup_{s \geq t} \sup_{u \in [-t, t]} \abs{\tilde \rmS_{s + u + \tilde \tau_{b}} -  \tilde \rmS_{s + \tilde \tau_b}} + \sup_{u \in [-t, t]} \abs{\tilde \rmS_{u + \tilde \tau_{b}} -  \tilde \rmS_{\tilde \tau_b}} \;. 
\end{equation}
We can analyse the three terms on the right-hand side of the above display separately. Beginning with the first term, leveraging the invariance in law \eqref{eq:IVSPH}, we observe that
\begin{equation*}
\P\Biggl(\sup_{s \geq t} \abs{\tilde \rmS_{s + \tilde \tau_b} - \tilde \rmS_{\tilde \tau_b}} s^{-3/4} > 1\Biggr)
\leq \sum_{n = \lfloor t \rfloor}^{\infty} \P\Biggl(\sup_{s \in [n, n+1]} \abs{\tilde \rmS_{s}} > n^{3/4}\Biggr) = \sum_{n = \lfloor t \rfloor}^{\infty} \P\Biggl(\sup_{s \in [n, n+1]} \abs{\tilde \rmS_{s}} > n^{3/4}\Biggr) \;.
\end{equation*}
By first taking $\limsup_{b \to \infty}$ and then $\lim_{t \to \infty}$ in the above inequality, we deduce that the probability on the left-hand side converges to zero. This conclusion follows directly from the Gaussian tail bound and the fact that the function $g$, used in the definition \eqref{eq:defrmStilde} of the process $(\tilde \rmS_s)_{s \in \R}$, decays to zero exponentially fast as $s \to -\infty$. The remaining two terms in \eqref{eq:firstTermTailsCQ} can be handled in a similar manner.

\textbf{Second term:} Proceeding similarly to the first term and applying the triangle inequality, on the event $\rmE_{2, b, t} \cap \rmE_{3, b, t}$, we observe that the second term on the right-hand side of \eqref{eq:ZeroTermTailsCQ} is bounded above by
\begin{equation*}
2 \sup_{s \geq t} \sup_{u \in [-t, t]} \abs{\tilde \rmS_{s + u + \tilde \tau_{b}} - \tilde \rmS_{s + \tilde \tau_{b}}} + \sup_{s \geq t} \sup_{u \in [-t, t]} \sup_{r \in [-t, t]} \abs{\tilde \rmS_{s + u + r + \tilde \tau_{b}} -  \tilde \rmS_{s + u + \tilde \tau_{b}}} \;.
\end{equation*}
Now, leveraging the invariance in law \eqref{eq:IVSPH}, we can control each term in the above display by following an approach analogous to that used for the first term.

\textbf{Third term:} Proceeding similarly to the second term and applying the triangle inequality, on the event $\rmE_{2, b, t} \cap \rmE_{3, b, t}$, we observe that the third term on the right-hand side of \eqref{eq:ZeroTermTailsCQ} is bounded above by
\begin{equation*}
2 \sup_{u \in [-t, t]} \abs{\tilde \rmS_{u + \tilde \tau_{b}} - \tilde \rmS_{\tilde \tau_{b}}} + \sup_{u \in [-t, t]} \sup_{r \in [-t, t]} \abs{\tilde \rmS_{u + r + \tilde \tau_{b}} -  \tilde \rmS_{u + \tilde \tau_{b}}} \;.
\end{equation*}
Once again, leveraging the invariance in law \eqref{eq:IVSPH}, we can control each term in the above display by proceeding in a manner analogous to the first term. Hence, the conclusion follows.
\end{proof}

We now proceed to prove the following result which guarantees that $\P$-almost surely, the LBM $\rmB_{\bh^{\star}\!, \alpha}$ does not stay stuck at the origin. 
\begin{lemma}
\label{lm:controlTimeLBMQuantum}
Let $d \geq 2$ be even, $Q > \sqrt{\smash[b]{2d}}$, $\beta \in (-\infty, Q)$, and let $\bh^{\star}$ denote the law of the $d$-dimensional $\beta$-quantum cone. Then, it holds $\P$-almost surely that, 
\begin{equation*}
\BE_0\bigl[\nu_{\bh^{\star}\!, \alpha}\bigl(B(0, 1)\bigr)\bigr] < \infty \;.
\end{equation*}
\end{lemma}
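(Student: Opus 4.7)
The plan is to reduce the claim, via the Revuz identity, to an almost sure finiteness statement for a potential integral of the GMC measure $\mu_{\bh^{\star}\!,\alpha}$ near the origin, and then to exploit the decomposition of the quantum cone given in Definition~\ref{def:quantumCone} together with the growth bound on the radial process coming from Lemma~\ref{lm:boundGrowthRadialCone}. For $d > 2$, the construction of Section~\ref{sub:LBMMarkov} carries over to $\bh^{\star}$ on each ball $B(0,R)$ not touching the origin, so that $\mu_{\bh^{\star}\!,\alpha}$ is the Revuz measure of $\rmF_{\bh^{\star}\!,\alpha}$. Applying Lemma~\ref{lm:StrenghRevuz} with $\bh$ replaced by $\bh^{\star}$, $\eta \equiv 1$, and $f = \mathbbm{1}_{B(0,1)}$, together with the standard asymptotic $\int_0^{\infty} p_s(0,y)\,ds \lesssim \abs{y}^{2-d}$ valid for $d > 2$, one obtains
\begin{equation*}
\BE_0\bigl[\nu_{\bh^{\star}\!,\alpha}(B(0,1))\bigr] = \int_{B(0,1)} \int_0^{\infty} p_s(0,y)\,ds\, \mu_{\bh^{\star}\!,\alpha}(dy) \lesssim \int_{B(0,1)} \abs{y}^{2-d} \mu_{\bh^{\star}\!,\alpha}(dy) \;.
\end{equation*}
A dyadic decomposition into spherical annuli around the origin then bounds the right-hand side, up to a multiplicative constant, by $\sum_{n \in \N_0} e^{n(d-2)} \mu_{\bh^{\star}\!,\alpha}(B(0, e^{-n}))$, reducing matters to a quantitative $\P$-almost sure decay estimate on $\mu_{\bh^{\star}\!,\alpha}(B(0, e^{-n}))$.

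To establish that decay, I would argue as follows. By Definition~\ref{def:quantumCone}, the spherical average of $\bh^{\star}$ over $\partial B(0, e^{-s})$ equals $\rmS_{\infty,s} + \beta s$, while by the orthogonal decomposition of Lemma~\ref{lm:radialDeco} the spherical-mean-zero component $\bh^{\star}_{\sph}$ is independent of the radial part and has the law of the corresponding projection of the whole-space LGF. Passing Lemma~\ref{lm:boundGrowthRadialCone} to the limit $b \to \infty$ by means of the total variation convergence in Theorem~\ref{th:convRecentred}, combined with a Borel--Cantelli argument along a sequence $\eps_k \downarrow 0$, yields $\abs{\rmS_{\infty,s}} \leq s^{3/4}$ for all sufficiently large $s$, $\P$-almost surely. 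Combining this a.s.\ control on the radial fluctuations with a scaling estimate for the GMC of $\bh^{\star}_{\sph}$ on small balls in the spirit of Lemma~\ref{lm:modContLQG}, and using the factorisation $\mu_{\bh^{\star}\!,\alpha}(dx) = e^{\alpha\bh^{\star}_{\rad}(x)} \mu_{\bh^{\star}_{\sph}\!,\alpha}(dx)$ valid away from the origin, yields for every $\delta > 0$
\begin{equation*}
\mu_{\bh^{\star}\!,\alpha}\bigl(B(0, e^{-n})\bigr) \leq e^{-n(d + \alpha^2/2 - \alpha\beta - \delta)}
\end{equation*}
for all sufficiently large $n$, $\P$-almost surely. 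Plugging this into the dyadic sum and using the identity $2 + \alpha^2/2 = \alpha Q$ which follows from \eqref{eq:relGammaAlpha}, the sum is bounded by $\sum_n e^{-n(\alpha(Q-\beta) - \delta)}$, and the latter converges for $\delta > 0$ small enough since $\alpha > 0$ and $\beta < Q$.

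The main technical obstacle is obtaining the pointwise a.s.\ exponential decay estimate on $\mu_{\bh^{\star}\!,\alpha}(B(0, e^{-n}))$: one must ensure that the sub-polynomial growth $\abs{\rmS_{\infty,s}} \leq s^{3/4}$ of the radial perturbation is too weak to absorb the gap $\alpha(Q-\beta) > 0$, and this step relies crucially on the independence of $\bh^{\star}_{\rad}$ and $\bh^{\star}_{\sph}$ coming from Lemma~\ref{lm:radialDeco} together with the Borel--Cantelli upgrade of Lemma~\ref{lm:boundGrowthRadialCone} to an a.s.\ envelope. The case $d = 2$ must be handled separately, since Brownian motion is recurrent and the potential $\int_0^\infty p_s(0,y)\,ds$ diverges; however, the analogous non-degeneracy of LBM on the classical two-dimensional $\alpha$-quantum cone is standard, following from the explicit Brownian description of the radial part recalled in Remark~\ref{rm:QCd2} together with the potential-theoretic framework developed in \cite{GRV_LBM, Ber_LBM}.
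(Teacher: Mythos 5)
Your proposal is correct and follows the paper's overall shape, but it deviates at the central step. Both arguments reduce via the Revuz identity and the potential bound $\GG_{\infty}(0,y) \lesssim \abs{y}^{2-d}$ to controlling a dyadic sum over annuli, and both exploit the decomposition $\mu_{\bh^{\star}\!,\alpha}(dx) = e^{\alpha\bh^{\star}_{\rad}(x)}\mu_{\bh^{\star}_{\sph}\!,\alpha}(dx)$ together with the almost sure envelope $\abs{\rmS_{\infty,s}} \leq s^{3/4}$ for $s$ large, which indeed follows from Lemma~\ref{lm:boundGrowthRadialCone} and Theorem~\ref{th:convRecentred}, though by a straightforward monotone limit in $t$ rather than by the Borel--Cantelli along $\eps_k \downarrow 0$ you sketch. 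The genuine difference is in how the spherical component is handled. The paper takes the conditional expectation given $\bh^{\star}_{\rad}$, so that the spherical GMC enters only through the exact first-moment scaling $\E[\mu_{\bh^{\star}_{\sph}\!,\alpha}(B(0,e^{-k}))] = e^{-k(d+\alpha^2/2)}\E[\mu_{\bh^{\star}_{\sph}\!,\alpha}(B(0,1))]$, and then concludes by observing that a non-negative random variable with a.s.\ finite conditional expectation is itself a.s.\ finite. You instead ask for an almost sure pointwise decay $\mu_{\bh^{\star}_{\sph}\!,\alpha}(B(0,e^{-k})) \lesssim e^{-k(d+\alpha^2/2-\delta)}$. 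That estimate is not among the stated lemmas (Lemma~\ref{lm:modContLQG0} concerns $\mu_{\bh,\alpha}$ built from the full LGF, not the spherical projection), so you would have to supply an additional Markov-plus-Borel--Cantelli step from the first-moment scaling, at the cost of a $\delta$-loss in the exponent. Both routes close because $2 + \alpha^2/2 - \alpha\beta = \alpha(Q-\beta) > 0$ dominates both the sub-polynomial $s^{3/4}$ fluctuation and, in your version, the extra $\delta$; but the paper's conditioning device is lighter, spending only a first moment where you need a quantified almost sure bound. Your remark on $d=2$ is a good catch: the $\abs{y}^{2-d}$ bound degenerates and one must instead use the logarithmic Green's function of a finite ball, which is what "analogously" in the paper's proof glosses over.
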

\begin{proof}
We focus on the case $d > 2$ even, as the case $d = 2$ can be treated analogously. Proceeding as in Section~\ref{sec:LMB} and recalling Defintion~\ref{def:quantumCone}, we have that 
\begin{align*}
\E\bigl[\BE_0\bigl[\nu_{\bh^{\star}\!, \alpha}\bigl(B(0, 1)\bigr) \bigr]\, \big| \, \bh^{\star}_{\rad}\bigr] 
& \lesssim \E\Biggl[\int_{B(0, 1)} \abs{x}^{2-d} \mu_{\bh^{\star}\!, \alpha}(dx) \, \Bigg| \, \bh^{\star}_{\rad} \Biggr] \\
& = \E\Biggl[\int_{B(0, 1)} \abs{x}^{2-d} e^{\alpha \bh^{\star}_{\rad}(x)} \mu_{\bh_{\sph}^{\star}, \alpha}(dx) \, \Bigg| \, \bh^{\star}_{\rad} \Biggr] \\
& = \sum_{k = 0}^{\infty} \E\Biggl[\int_{B(0, e^{-k}) \setminus B(0, e^{-(k+1)})} \abs{x}^{2-d} e^{\alpha \bh^{\star}_{\rad}(x)} \mu_{\bh_{\sph}^{\star}, \alpha}(dx) \, \Bigg| \, \bh^{\star}_{\rad} \Biggr] \\
& \leq \sum_{k = 0}^{\infty} e^{k(d-2)} e^{\alpha \sup_{t \in [k, k+1]}(\rmS_{\infty, t} + \beta t)} \E\bigl[\mu_{\bh_{\sph}^{\star}, \alpha}\bigl(B(0, e^{-k})\bigr)\bigr] \\
& \lesssim \sum_{k = 0}^{\infty} e^{-k(2+\alpha^2/2 - \alpha \beta) + \sup_{t \in [k, k+1]}\rmS_{\infty, t}} \E\bigl[\mu_{\bh_{\sph}^{\star}, \alpha}\bigl(B(0, 1)\bigr)\bigr] \;,
\end{align*}
where the last inequality follows from the coordinate change formula Proposition~\ref{pr:coordinate_change_measure}. Therefore, the conclusion follows thanks to Lemma~\ref{lm:boundGrowthRadialCone} and the fact that for all $Q > \sqrt{2d}$ and $\beta \in (-\infty, Q)$, it holds that $2+\alpha^2/2 - \alpha \beta > 0$.
\end{proof}

We can now focus on the proof of Theorem~\ref{th:invarianceShift}. We begin with the following proposition, which can be regarded as an extended version of Theorem~\ref{th:convQuantum}. 
\begin{proposition}
\label{pr:augmJointConv}
Let $d \geq 2$ be even, $Q > \sqrt{\smash[b]{2d}}$, and $\beta \in (-\infty, Q)$.  For $t \in \R$, $z \in \R^d$, and $b > 0$, consider the field $\smash{\bh^{t, b}}$ introduced in the statement of Theorem~\ref{th:convQuantum} and let $\sigma_b$ be as defined in \eqref{eq:defSigmaBTh}.
Furthermore, let $\smash{\rmB_{\bh - \alpha \log\abs{\cdot - z}, \alpha, \cdot}}$ be a two-sided LBM associated to the field $\smash{\bh - \alpha \log\abs{\cdot - z}}$ started from $z$ at time $0$, and let $\rmB_{\bh^{\star}\!, \alpha, \cdot}$ be a two-sided LBM associated to a $d$-dimensional $\alpha$-quantum cone field $\bh^{\star}$ started from the origin at time $0$. Then, as $b \to \infty$, it holds that 
\begin{equation*}
\bigl(\smash{\bh^{\sigma_b, b}}, e^{\sigma_b} (\rmB_{\bh - \alpha\log\abs{\cdot - z}, \alpha, e^{-\alpha b} \cdot}-z)\bigr) \overset{\law}{\longrightarrow} (\bh^{\star}\!, \rmB_{\bh^{\star}, \alpha, \cdot})\;,
\end{equation*}
with respect to the product topology, where the first component converges in the space of distributions, and the second component converges in $\CC(\R)$ equipped with the local uniform metric. 
\end{proposition}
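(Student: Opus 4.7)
The plan is to reduce the joint convergence to the field convergence of Theorem~\ref{th:convQuantum} by exploiting the conformal covariance of the LBM (Proposition~\ref{pr:conformalLBM}) together with a locality argument for the clock process, and then to control exit times uniformly in $b$.

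\textbf{Step 1 (conformal reduction).} Let $\phi_s(x) \eqdef e^{-s}x + z$, so $|\phi_s'| = e^{-s}$. A direct computation from the definition of $\bh^{s,b}$ gives
\begin{equation*}
\bh^{\sigma_b, b} = (\bh - \alpha \log\abs{\cdot - z}) \circ \phi_{\sigma_b} + Q \log\abs{\phi_{\sigma_b}'} + b .
\end{equation*}
Since $\sigma_b$ is a functional of $\bh$ alone and is independent of the driving Brownian motion, Proposition~\ref{pr:conformalLBM} applied at this random scale yields, jointly with the field,
\begin{equation*}
\phi_{\sigma_b}^{-1}\bigl(\rmB_{\bh - \alpha\log\abs{\cdot-z}, \alpha, \cdot}\bigr) \eqlaw \bar{\rmB}_{\bh^{\sigma_b, b} - b, \alpha, \cdot} .
\end{equation*}
A deterministic additive shift $-b$ scales the clock process by $e^{-\alpha b}$, so $\bar{\rmB}_{\bh^{\sigma_b, b} - b, \alpha, e^{-\alpha b} s} = \bar{\rmB}_{\bh^{\sigma_b, b}, \alpha, s}$, and using $\phi_{\sigma_b}^{-1}(y) = e^{\sigma_b}(y-z)$, the claim reduces to proving joint convergence in law
\begin{equation*}
\bigl(\bh^{\sigma_b, b}, \bar{\rmB}_{\bh^{\sigma_b, b}, \alpha, \cdot}\bigr) \longrightarrow (\bh^{\star}, \rmB_{\bh^{\star}, \alpha, \cdot}) .
\end{equation*}

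\textbf{Step 2 (field coupling and locality of the LBM).} For each fixed $R > 0$, Theorem~\ref{th:convQuantum} gives $\bh^{\sigma_b, b}|_{B(0,R)} \to \bh^{\star}|_{B(0,R)}$ in total variation. By the maximal coupling characterisation of the TV distance, we may realise both fields on a common probability space so that the two restrictions agree on an event $\mathcal{E}_{b,R}$ with $\P(\mathcal{E}_{b,R}) \to 1$ as $b \to \infty$. Running both LBMs from the \emph{same} two-sided driving Brownian motion $B$, the clock process $\rmF_{\cdot, \alpha}(t)$ is a measurable functional only of the field restricted to the range of $B$ on $[0, t]$ (and analogously for $t < 0$), so on $\mathcal{E}_{b,R}$ the two LBMs coincide up to the first exit of $B$ from $B(0,R)$.

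\textbf{Step 3 (exit-time tightness, the main obstacle).} The chief remaining task is to show that for every $T > 0$,
\begin{equation*}
\lim_{R \to \infty}\, \limsup_{b \to \infty}\, \P\Bigl(\sup_{\abs{s} \leq T} \bigl|\bar{\rmB}_{\bh^{\sigma_b, b}, \alpha, s}\bigr| > R\Bigr) = 0,
\end{equation*}
together with the analogous statement for $\rmB_{\bh^{\star}, \alpha, \cdot}$. For the limit process, one combines Lemma~\ref{lm:controlTimeLBMQuantum} (which rules out the LBM getting stuck at the origin in spite of the log-singularity of $\bh^{\star}$) with the strong Markov property and the sublinear growth of the radial part $\rmS_{\infty, \cdot}$ from Lemma~\ref{lm:boundGrowthRadialCone}, forcing $\rmF_{\bh^{\star}, \alpha}(s) \to \infty$ as $\abs{s} \to \infty$ and hence preventing explosion of the LBM in finite time. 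For the prelimit process, the same mechanism applies with $\bh^{\sigma_b, b}$ in place of $\bh^{\star}$; the uniformity in $b$ is inherited from the tightness of the recentred radial processes $(\rmS_{b, \cdot})_{b>0}$ established in Proposition~\ref{pr:tightQC}, together with the independence of the spherical part of $\bh^{\sigma_b, b}$ from $b$ in law. Combining the conformal identity of Step~1, the field coupling on $B(0, R)$ from Step~2, the locality of the clock process, and the exit-time bound above yields the joint convergence in the product topology; the two-sided portion $s < 0$ is identical after time reversal of $B$.
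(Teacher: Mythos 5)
Your proposal is correct and follows essentially the same route as the paper: (i) a conformal reduction of the time-changed LBM at the random scale $e^{-\sigma_b}$, turning the statement into convergence of $(\bh^{\sigma_b,b}, \bar{\rmB}_{\bh^{\sigma_b,b},\alpha,\cdot})$; (ii) Theorem~\ref{th:convQuantum} to couple the restricted fields on $B(0,R)$; (iii) locality of the clock process so the two LBMs coincide until exit from $B(0,R)$. The paper packages step (ii) slightly differently — it asserts joint total-variation convergence of $\bh^{\sigma_b,b}|_{B(0,R)}$ together with the stopped driving Brownian motion and then observes that the LBM stopped at $\partial B(0,R)$ is a measurable functional of this pair — but the content is the same as your maximal coupling. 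One small remark: your Step~3 (uniform-in-$b$ exit-time tightness) is not really an independent obstacle requiring Lemma~\ref{lm:boundGrowthRadialCone} and Proposition~\ref{pr:tightQC}; once the stopped LBMs converge in total variation for each fixed $R$, the probability that the prelimit LBM exits $B(0,R)$ before time $T$ converges, as $b\to\infty$, to the corresponding probability for $\rmB_{\bh^\star,\alpha}$, which in turn vanishes as $R\to\infty$ by Lemma~\ref{lm:controlTimeLBMQuantum} and the strict increase of the limit clock process. So the $\limsup_b$ tightness is automatic from Step~2, and your separate argument, while not wrong, is heavier than needed.
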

\begin{proof}
To simplify the notation, we assume, without loss of generality, that $z = 0$, and define $\bar{\bh} \eqdef \bh - \alpha \log\abs{\cdot}$.
For $b > 0$, we define $\smash{(\bar B_t)_{t \in \R} \eqdef (e^{\sigma_b} B_{e^{-2 \sigma_b} t})_{t \in \R}}$ and consider the dilation $\smash{\phi_{b}: \R^d \to \R^d}$ given by $\smash{\phi_{b}(x) = e^{-\sigma_b} x}$. Then, by arguing similarly to the proof of Proposition~\ref{pr:conformalLBM}, it holds that 
\begin{equation*}
e^{\sigma_b} \rmB_{\bar \bh, \alpha, e^{-\alpha b} t} = e^{\sigma_b} B_{\rmF^{-1}_{\bar \bh, \alpha}(e^{-\alpha b} t)}  = \bar{B}_{e^{2 \sigma_b} \rmF^{-1}_{\bar \bh, \alpha}(e^{-\alpha b} t)} = \bar{B}_{\rmF^{-1}_{\bar \bh, \alpha, b}(t)}\;, \qquad \forall \, t \in \R \;,
\end{equation*}
where we set $\rmF_{\bar \bh, \alpha, b}(t) \eqdef e^{\alpha b} \rmF_{\bar \bh, \alpha}(e^{-2\sigma_b}t)$. Furthermore, thanks to Proposition~\ref{pr:coordinate_change_clock}, for all $t \in \R$, we observe that
\begin{align*}
\rmF_{\bar \bh, \alpha, b}(t) = e^{\alpha b} \nu_{\bar \bh, \alpha}\bigl([B]_{e^{-2\sigma_b}t}\bigr) \overset{\eqref{eq:coordinate_change_clock}}{=} e^{\alpha b} \nu_{\bar \bh \circ \phi_{b} + Q \log\abs{\phi_b'} , \alpha}\bigl(e^{\sigma_b} [B]_{e^{-2\sigma_b}t}\bigr) = \bar{\nu}_{\bh^{\sigma_b, b}\!, \alpha}\bigl([\bar{B}]_{t}\bigr) \eqdef \bar\rmF_{\bh^{\sigma_b, b}\!, \alpha}(t)\;,
\end{align*}
where $\smash{\bar{\nu}_{\bh^{\sigma_b, b}, \alpha}}$ denotes the GMC with respect to the field $\smash{\bh^{\sigma_b, b}}$ along the trajectory of the Brownian motion $\smash{(\bar{B}_t)_{t \in \R}}$. 
In particular, for any $b > 0$ and $t \geq 0$, we showed that  
\begin{equation*}
\bar \rmB_{\bh^{\sigma_b, b}\!,  \alpha, t} \eqdef \bar B_{\bar\rmF^{-1}_{\bh^{\sigma_b, b}\!, \alpha}(t)} \quad \implies \quad e^{\sigma_b} \rmB_{\bar \bh, \alpha, e^{-\alpha b} t} = \bar \rmB_{\bh^{\sigma_b, b}\!,  \alpha, t} \;.
\end{equation*}
Now, for any $R > 0$, consider the stopping times $\bar \tau_{R, +}$ and $\bar \tau_{R, -}$ defined as 
\begin{equation*}
\bar \tau_{R, +} \eqdef \inf\bigl\{t \geq 0 \, : \, \bar{B}_t \not\in B(0, R)\bigr\} \;, \qquad 
\bar \tau_{R, -} \eqdef \inf\bigl\{t \geq 0 \, : \, \bar{B}_{-t} \not\in B(0, R)\bigr\}
\end{equation*}
By Theorem~\ref{th:convQuantum} and Brownian scaling, for any fixed $R > 0$, the joint law of $\bh^{\sigma_b,b}|_{B(0,R)}$ and $(\bar B_{t})_{t \in [\bar \tau_{R, -}, \bar \tau_{R, +}]}$ converges in total variation distance to the joint law of $\bh^{\star}|_{B(0,R)}$ and an independent two-sided Brownian motion stopped at its exit times from $B(0,R)$. Since, the LBM associated to $\bh^{\sigma_b, b}$ (resp.\ $\bh^{\star}$), stopped at its exit time from $B(0,R)$, is a measurable function of $\bh^{\sigma_b,b}|_{B(0,R)}$ (resp.\ $\bh^*|_{B(0,R)}$) and an independent two-sided Brownian motion stopped at its exit times from $B(0,R)$, the conclusion follows.
\end{proof}

We are finally ready to prove Theorem~\ref{th:invarianceShift}.
\begin{proof}[Proof of Theorem~\ref{th:invarianceShift}]
Let $d \geq 2$ be even and $Q > \sqrt{\smash[b]{2d}}$. For $\bh$ a whole-space LGF, we consider the associated two-sided LBM $(\rmB_{\bh, \alpha, t})_{t \in \R}$ started from the origin at time 0. Moreover, let $\tau_{\bh, \alpha, +}$ (resp.\ $\tau_{\bh, \alpha, -}$) be the first positive (resp.\ negative) time that $\rmB_{\bh, \alpha, \cdot}$ exits the ball $B(0, 1)$. Let $\rmU$ be a point sampled uniformly at random from the interval $[\tau_{\bh, \alpha, -}, \tau_{\bh, \alpha, +}]$. For $b > 0$, we let
\begin{equation*}
\sigma_{b} \eqdef \inf\bigl\{s \in \R \, : \,  \bh_{e^{-s}}(\rmB_{\bh, \alpha, \rmU}) - Q s = - b\bigr\} \;,
\end{equation*}
and we consider the conformal automorphism $\smash{\phi_b(x) =  e^{- \sigma_b} x} + \rmB_{\bh, \alpha, \rmU}$. Thanks to \cite[Corollary~3.17]{BP24}, we know that if we condition on the Brownian motion modulo time change, then the conditional law of $\bh$ near $\rmB_{\bh, \alpha, \rmU}$ locally looks like a whole-space LGF plus an $\alpha$-log singularity at $\rmB_{\bh, \alpha, \rmU}$. Therefore, by Proposition~\ref{pr:augmJointConv}, we have the following joint convergence in law as $b \to \infty$, 
\begin{equation}
\label{eq:firstTra}
\bigl(\bh \circ \phi_b + Q \log\abs{\phi'_b} + b, \, e^{\sigma_b}(\rmB_{\bh, \alpha, \rmU + e^{-\alpha b}\cdot} - \rmB_{\bh, \alpha, \rmU}) \bigr) \overset{\law}{\longrightarrow} (\bh^{\star}, \rmB_{\bh^{\star}\!, \alpha}) \;.
\end{equation}
We note that for any $t \in \R$, the total variation distance between $(\bh^{\star}\!, \rmB_{\bh^{\star}\!, \alpha}, \rmU + e^{-\alpha b} t)$ and $(\bh^{\star}\!, \rmB_{\bh^{\star}\!, \alpha}, \rmU)$ tends to $0$ as $b \to \infty$. 
This is a consequence of the fact that the conditional law of $\rmU$ given $(\bh, B)$ is that of a uniform random variable. 
Hence, by defining $\sigma_{b, t}$ as follows
\begin{equation*}
\sigma_{b, t} \eqdef \inf\bigl\{s \in \R \, : \,  \bh_{e^{-s}}(\rmB_{\bh, \alpha, \rmU + e^{-\alpha b} t}) - Q s = - b\bigr\} \;,
\end{equation*}
and by considering the conformal automorphism $\smash{\phi_{b, t}(x) =  e^{- \sigma_{b, t}} x} + \rmB_{\bh, \alpha, \rmU + e^{-\alpha b} t}$, we get that 
\begin{equation}
\label{eq:secondTra}
\bigl(\bh \circ \phi_{b, t} + Q \log\abs{\phi'_{b, t}} + b, e^{\sigma_{b, t}}(\rmB_{\bh, \alpha, \rmU + e^{-\alpha b}t + e^{-\alpha b} \cdot} - \rmB_{\bh, \alpha, \rmU + e^{-\alpha b}t})\bigr) \overset{\law}{\longrightarrow} (\bh^{\star}, \rmB_{\bh^{\star}\!, \alpha}) \;,
\end{equation}
and this it sufficient to conclude. Indeed, the couple on the left side of \eqref{eq:secondTra} is obtained from the couple on the left side of \eqref{eq:firstTra} by translating time for the LBM by $t$, translating space so that the position of the LBM at time $t$ is mapped to the origin, and re-scaling to get the usual normalisation for the field. Hence, by \eqref{eq:firstTra}, the joint law of the left sides of \eqref{eq:secondTra} and \eqref{eq:firstTra} converges to the joint law of $(\bh^{\star}, \rmB_{\bh^{\star}\!, \alpha})$ and the field/process pair obtained by translating time by $t$, translating space by $\rmB_{\bh^*\!, \alpha, t}$, and re-scaling to get the usual normalisation for the field. Therefore, the conclusion follows since by \eqref{eq:secondTra}, the second field/process pair has the same law as $(\bh^{\star}, \rmB_{\bh^{\star}\!, \alpha})$. 
\end{proof}

\appendix

\section{Some basic estimates for GMC measures}
In this appendix, we collect some basic estimates for GMC measures. We recall that for each $\gamma \in (0, \sqrt{\smash[b]{2d}})$, the collection of regularised GMC measures $(\mu_{\bh, \gamma, (n)})_{n \in \N}$ is defined in \eqref{eq:defmuN}. Furthermore, with a slight abuse of notation, we denote by $\mu_{\bh, \gamma, (\infty)}$ the limiting measure $\mu_{\bh, \gamma}$.

\begin{lemma}
\label{lm:scalingThick}
Let $d \geq 2$, $\gamma \in (0, \sqrt{\smash[b]{2d}})$, $\beta \in (-\infty , Q)$, and $x \in \R^d$. It holds $\P$-almost surely for any $r \in (0, 1]$ that 
\begin{equation*}
r^{(d + \gamma^2/2 - \beta \gamma) + \eps} \lesssim \mu_{\bh - \beta \log\abs{\cdot-x}, \gamma}\bigl(B(x, r)\bigr) \lesssim r^{(d + \gamma^2/2 - \beta \gamma) - \eps} \;, \qquad \forall \, \eps > 0 \;,
\end{equation*}
for some random implicit constants. 
\end{lemma}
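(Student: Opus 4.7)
The plan is to reduce the estimate to the behavior of a single positive random variable at unit scale via the coordinate change formula. Applying Proposition~\ref{pr:coordinate_change_measure} to the translation-plus-dilation $\phi_r(z) = x + rz$ (for which $|\phi_r'| = r$), and writing $\bh(x + rz) = \bh_r(x) + \bh^{(r)}(z)$ with $\bh^{(r)} \eqdef \bh(x + r\cdot) - \bh_r(x) \eqlaw \bh$ (by stationarity and scale invariance of the LGF modulo additive constant), a short calculation yields
\begin{equation}
\label{eq:factorLemmaGMC}
\mu_{\bh - \beta\log\abs{\cdot - x}, \gamma}\bigl(B(x, r)\bigr) = r^{d + \gamma^2/2 - \beta\gamma} \, e^{\gamma \bh_r(x)} \, M_r,
\end{equation}
where $M_r \eqdef \mu_{\bh^{(r)} - \beta\log\abs{\cdot}, \gamma}(B(0, 1))$ has the same law as the positive random variable $M \eqdef \mu_{\bh - \beta\log\abs{\cdot}, \gamma}(B(0, 1))$. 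The exponent $\gamma(Q - \beta) = d + \gamma^2/2 - \beta\gamma$ arises from combining the Jacobian factor $Q \log r$ with the $-\beta \log r$ produced by rescaling the log-singularity.

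It then remains to verify that the two random factors $e^{\gamma \bh_r(x)}$ and $M_r$ are each $r^{o(1)}$ almost surely, uniformly in $r \in (0, 1]$. For the first, Lemma~\ref{lm:varianceSpherical} gives $\Var(\bh_r(x)) = |\log r| + O(1)$, so Gaussian tail bounds and Borel--Cantelli along the dyadic sequence $r_n = e^{-n}$ yield $|\bh_{r_n}(x)| \leq n^{1/2 + \delta}$ for all large $n$ and any fixed $\delta > 0$, and the H\"older regularity of $r \mapsto \bh_r(x)$ (Proposition~\ref{pr:SmothSpherical}) propagates this bound to all $r \in (0, 1]$. For $M_r$, since $M_{r_n} \eqlaw M$ for every $n$, it suffices to show that $M$ admits finite positive and negative moments of some small order $\kappa > 0$. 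Both are standard consequences of subcritical GMC theory under $\beta < Q$: the positive moment bound follows from Kahane's convexity inequality together with the well-known moments of GMC measures in a punctured neighbourhood of the singularity, while the negative moment bound follows from a Girsanov-type argument (equivalently, from uniform integrability of $M^{-\kappa}$ for small $\kappa$). Markov's inequality then gives $r_n^{\eps} \leq M_{r_n} \leq r_n^{-\eps}$ for all large $n$, almost surely.

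Finally, to pass from dyadic scales to arbitrary $r \in (0, 1]$, I would use the monotonicity of $r \mapsto \mu_{\bh - \beta\log\abs{\cdot - x}, \gamma}(B(x, r))$: for $r \in [r_{n+1}, r_n]$, the mass is sandwiched between the values at $r_{n+1}$ and $r_n$, which by \eqref{eq:factorLemmaGMC} differ by at most a fixed multiplicative constant, and this constant can be absorbed into a small perturbation of $\eps$. The main technical input I expect to be the negative moment bound for $M$; the positive moment bound is a routine application of Kahane's convexity inequality, but the negative moment estimate genuinely uses the subcriticality $\beta < Q$ (equivalently, $d + \gamma^2/2 - \beta\gamma > 0$) to exclude mass concentration at the log-singularity of strength $\beta$. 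Once both factors in \eqref{eq:factorLemmaGMC} are controlled, the two-sided bound claimed in the lemma follows immediately.
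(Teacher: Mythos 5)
Your proposal is correct and carries out precisely what the paper's one-line proof calls ``standard'': the coordinate change formula gives the factorization $\mu_{\bh-\beta\log\abs{\cdot-x},\gamma}(B(x,r)) = r^{\gamma(Q-\beta)}e^{\gamma\bh_r(x)}M_r$ with $M_r\eqlaw M$, after which Gaussian tails and small positive/negative moments of $M$ plus Borel--Cantelli along $r_n=e^{-n}$ and monotonicity of $r\mapsto\mu(B(x,r))$ give the two-sided bound. The only blemishes are cosmetic: the small positive moment of a GMC with a $\beta$-log singularity for $\beta<Q$ is usually obtained via the Seiberg-type bound and $L^p$-theory rather than Kahane's convexity inequality, and the closing remark that the masses at $r_n$ and $r_{n+1}$ ``differ by a fixed multiplicative constant'' should instead read that the target powers $r_n^{\gamma(Q-\beta)\pm\eps}$ and $r_{n+1}^{\gamma(Q-\beta)\pm\eps}$ do.
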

\begin{proof}
The proof of this result is standard and follows from a union bound over dyadic scales, and the coordinate change formula (Proposition~\ref{pr:coordinate_change_measure}).
\end{proof}

\begin{lemma}
\label{lm:modContLQG0}
Let  $d \geq 2$, $\gamma \in (0, \sqrt{\smash[b]{2d}})$, and $R > 0$. It holds $\P$-almost surely for any $n \in \N \cup \{\infty\}$, all $x \in B(0, R)$, and any $r \in (0,1]$ that 
\begin{equation*}
\mu_{\bh, \gamma, (n)}\bigl(B(x, r)\bigr)  \lesssim r^{(d + \gamma^2/2 -\sqrt{\smash[b]{2d}}  \gamma) - \eps} \;, \qquad \forall \, \eps > 0\;,
\end{equation*}
for some random implicit constant.
\end{lemma}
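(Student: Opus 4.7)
The plan is to establish the bound via a standard moment method combined with a dyadic covering argument, handling the uniformity in $n \in \N \cup \{\infty\}$ through the martingale structure of the approximations. For brevity, write $\mu_n$ for $\mu_{\bh, \gamma, (n)}$ and set $\alpha_\eps \eqdef d + \gamma^2/2 - \sqrt{2d}\gamma - \eps$.

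The first key input is a moment bound that is uniform in $n$: for every $p \in (1, 2d/\gamma^2)$, there exists a constant $C = C(R, p, \gamma, d)$ such that for all $n \in \N$, all $x \in B(0, R)$, and all $r \in (0, 1]$,
\begin{equation*}
\E\bigl[\mu_n(B(x, r))^p\bigr] \leq C \, r^{\xi(p)} \;, \qquad \xi(p) \eqdef \Bigl(d + \frac{\gamma^2}{2}\Bigr) p - \frac{\gamma^2 p^2}{2} \;.
\end{equation*}
This is a classical subcritical GMC moment estimate (cf.\ the techniques of \cite{RVReview, BP24}), and it holds uniformly in $n$ because it is derived from the covariance of $\bh_{(n)}$, which is dominated by that of $\bh$. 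Moreover, since for each fixed Borel set $A$ the process $(\mu_n(A))_{n \in \N}$ is a non-negative martingale with respect to the filtration generated by $(\alpha_i, f_i)_{i \leq n}$ (a direct calculation using the independence of the Karhunen--Lo\`eve increments), Doob's $L^p$ maximal inequality yields the same bound for $\sup_{n \in \N \cup \{\infty\}} \mu_n(B(x, r))$ up to the universal constant $(p/(p-1))^p$.

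Next, fix a dyadic scale $r_k = 2^{-k}$ and choose an $r_k$-net $\NN_k \subset B(0, R)$ with $|\NN_k| \lesssim 2^{kd}$; any ball $B(x, r)$ with $x \in B(0, R)$ and $r \in (r_{k+1}, r_k]$ is contained in $B(y, 2 r_k)$ for some $y \in \NN_k$. By Markov's inequality applied to the maximal function and a union bound,
\begin{equation*}
\P\Biggl(\max_{y \in \NN_k} \sup_{n \in \N \cup \{\infty\}} \mu_n\bigl(B(y, 2 r_k)\bigr) > r_k^{\alpha_\eps}\Biggr) \lesssim 2^{kd} \, r_k^{\xi(p) - p \alpha_\eps} = 2^{-k(\xi(p) - p\alpha_\eps - d)} \;.
\end{equation*}
A direct computation gives $\xi(p) - p\alpha_\eps - d = -\frac{\gamma^2}{2}(p - \sqrt{2d}/\gamma)^2 + p\eps$. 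The choice $p = \sqrt{2d}/\gamma$ lies in the admissible range $(1, 2d/\gamma^2)$ precisely because $\gamma < \sqrt{2d}$, and at this value the exponent equals $\sqrt{2d}\eps/\gamma > 0$. Hence the probabilities above are summable in $k$, and the Borel--Cantelli lemma produces a $\P$-almost sure random $k_0$ after which the bound $\mu_n(B(x, r)) \lesssim r^{\alpha_\eps}$ holds for all $x \in B(0, R)$, all $r \leq 2^{-k_0}$, and all $n \in \N \cup \{\infty\}$ simultaneously. Absorbing the finitely many remaining scales $r \in [2^{-k_0}, 1]$ into the random implicit constant (using the crude bound $\mu_n(B(x, r)) \leq \mu_n(B(0, R+1))$, which is $\P$-a.s.\ bounded uniformly in $n$ by Doob's inequality) completes the argument.

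The only non-trivial technical point is the uniform-in-$n$ moment estimate; once this is in hand via the martingale structure and the classical GMC moment formula for $\xi(p)$, the rest is a routine Kolmogorov-type argument, and the fact that the optimal $p = \sqrt{2d}/\gamma$ sits on the boundary of the admissible moment range is exactly what forces the exponent $d + \gamma^2/2 - \sqrt{2d}\gamma$ to be sharp.
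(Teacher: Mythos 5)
Your argument is correct and follows the same route that the paper uses in the explicit proofs of Lemmas~\ref{lm:modContLQG} and~\ref{lm:mainBoundTechNested} — a GMC moment bound at $p = \sqrt{2d}/\gamma$ with multifractal exponent $\xi(p)$, a dyadic net, Markov, and Borel--Cantelli — with the Doob maximal-inequality device supplying the uniformity in $n$ that the paper itself simply defers to \cite[Theorem~2.2]{GRV_LBM}. The only small slip is in your closing remark: $p = \sqrt{2d}/\gamma$ is not on the boundary of the admissible range but strictly interior to $(1, 2d/\gamma^2)$, since $2d/\gamma^2 = (\sqrt{2d}/\gamma)^2 > \sqrt{2d}/\gamma$ when $\gamma < \sqrt{2d}$; it is simply the point at which the quadratic $-\frac{\gamma^2}{2}(p - \sqrt{2d}/\gamma)^2 + p\eps$ is maximised.
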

\begin{proof}
For $d = 2$, this fact is proved in \cite[Theorem~2.2]{GRV_LBM}, and the proof easily generalises to any dimension $d > 2$. 
\end{proof}

\begin{lemma}
\label{lm:modContLQG}
Let  $d \geq 2$, $\gamma \in (0, \sqrt{\smash[b]{2d}})$, $\beta \in (-\infty, Q)$, and $R > 0$. It holds $\P$-almost surely for all $x \in B(0, R)$, and any $r \in (0,1]$ that 
\begin{equation*}
\mu_{\bh - \beta \log\abs{\cdot}, \gamma}\bigl(B(x, r)\bigr)  \lesssim r^{(d + \gamma^2/2 - (\beta \vee \sqrt{2d}) \gamma) - \eps} \;, \qquad \forall \, \eps > 0\;,
\end{equation*}
for some random implicit constant.
\end{lemma}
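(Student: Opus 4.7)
The strategy splits into two cases according to whether the ball $B(x, r)$ is close to or far from the log-singularity at the origin. Throughout, set $\alpha_* \eqdef d + \gamma^2/2 - (\beta \vee \sqrt{\smash[b]{2d}})\gamma$ and $\alpha_1 \eqdef d + \gamma^2/2 - \sqrt{\smash[b]{2d}}\gamma$, and use the deterministic identity $\mu_{\bh - \beta\log\abs{\cdot}, \gamma}(dy) = \abs{y}^{-\beta\gamma}\mu_{\bh, \gamma}(dy)$, which follows from the fact that $-\beta \log\abs{\cdot}$ is smooth away from the origin.

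In the regime $\abs{x} \leq 2r$, one has $B(x, r) \subseteq B(0, 3r)$, and Lemma~\ref{lm:scalingThick} applied with $x = 0$ gives $\mu_{\bh - \beta\log\abs{\cdot}, \gamma}(B(0, 3r)) \lesssim r^{d + \gamma^2/2 - \beta\gamma - \eps}$. Since $\beta \leq \beta \vee \sqrt{\smash[b]{2d}}$ and $r \leq 1$, the resulting exponent is at least $\alpha_*$, so $\mu_{\bh - \beta\log\abs{\cdot}, \gamma}(B(x, r)) \lesssim r^{\alpha_* - \eps}$.

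In the regime $\abs{x} > 2r$, the plan is to apply the coordinate change formula (Proposition~\ref{pr:coordinate_change_measure}) with the dilation $\phi(y) = \abs{x} y$. Writing $\hat\bh(y) \eqdef \bh(\abs{x} y) - \bh_{\abs{x}}(0)$, which has the same law as $\bh$ by scale invariance, this yields the identity
\begin{equation*}
\mu_{\bh - \beta\log\abs{\cdot}, \gamma}(B(x, r)) = \abs{x}^{d + \gamma^2/2 - \beta\gamma}e^{\gamma \bh_{\abs{x}}(0)} \mu_{\hat\bh - \beta\log\abs{\cdot}, \gamma}\bigl(B(x/\abs{x}, r/\abs{x})\bigr).
\end{equation*}
Since the rescaled ball lies at distance of order one from the origin with radius $r/\abs{x} \leq 1/2$, the log-insertion contributes only a bounded multiplicative factor, so Lemma~\ref{lm:modContLQG0} applied to $\hat\bh$ gives $\mu_{\hat\bh - \beta\log\abs{\cdot}, \gamma}(B(x/\abs{x}, r/\abs{x})) \lesssim (r/\abs{x})^{\alpha_1 - \eps}$. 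The prefactor $e^{\gamma \bh_{\abs{x}}(0)}$ is controlled by a law-of-iterated-logarithm bound for the Gaussian process $t \mapsto \bh_{e^{-t}}(0)$ (which is a standard Brownian motion in $d = 2$ and, for even $d > 2$, is its smoothed analogue from Theorem~\ref{th:identSphere}), yielding $e^{\gamma \bh_{\abs{x}}(0)} \lesssim \abs{x}^{-\eps'}$ uniformly in $\abs{x} \in (0, 1]$, $\P$-almost surely, for any $\eps' > 0$. Combining these ingredients leads to $\mu_{\bh - \beta\log\abs{\cdot}, \gamma}(B(x, r)) \lesssim \abs{x}^{(\sqrt{\smash[b]{2d}} - \beta)\gamma + \eps - \eps' - \eps''} r^{\alpha_1 - \eps}$, where the further small term $\eps''$ accounts for the uniformity discussed below. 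For $\beta \leq \sqrt{\smash[b]{2d}}$, one can choose $\eps', \eps''$ so that the exponent of $\abs{x}$ is non-negative; then $\abs{x}^{\cdots} \leq R^{\cdots}$ is bounded and the bound reduces to $r^{\alpha_1 - \eps} = r^{\alpha_* - \eps}$. For $\beta > \sqrt{\smash[b]{2d}}$, this exponent is negative, but the constraint $\abs{x} > 2r$ yields $\abs{x}^{\mathrm{neg}} \lesssim r^{\mathrm{neg}}$, and the exponents of $r$ combine to give exactly $r^{\alpha_* - (\eps' + \eps'')}$.

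The main technical obstacle is ensuring that the implicit constant arising from Lemma~\ref{lm:modContLQG0} applied to $\hat\bh$ is uniform as $\abs{x}$ varies over $(0, R]$, since $\hat\bh$ itself depends on $\abs{x}$. I would handle this by establishing polynomial tails for the random constant $C_\eps(\bh) \eqdef \sup_{y \in B(0, 2),\, s \in (0, 1]} s^{-(\alpha_1 - \eps)} \mu_{\bh, \gamma}(B(y, s))$ via a moment estimate together with a union bound over dyadic balls, and then applying Borel--Cantelli across the dyadic scales $\abs{x_k} = 2^{-k}$, supplemented by a continuity argument within each dyadic shell, to conclude that $\sup_{\abs{x} \in (0, R]} C_\eps(\hat\bh_{\abs{x}})$ grows at most polylogarithmically in $1/\abs{x}$. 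Such growth is absorbable into the factor $\abs{x}^{-\eps''}$ used above, closing the argument.
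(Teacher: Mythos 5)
Your proposal is correct in its core strategy and reaches the conclusion by essentially the same route as the paper: split into a region near the origin, where Lemma~\ref{lm:scalingThick} applies directly, and a region away from the origin, where a dilation moves the ball to unit scale so that the uniform modulus of continuity for $\mu_{\bh,\gamma}$ (Lemma~\ref{lm:modContLQG0}) can be invoked for the rescaled field, with the prefactor $e^{\gamma\bh_{\abs{x}}(0)}$ absorbed by a LIL-type estimate. Your exponent bookkeeping is right in both sub-cases $\beta\le\sqrt{\smash[b]{2d}}$ and $\beta>\sqrt{\smash[b]{2d}}$ (using $\abs{x}>2r$ in the latter), and you correctly pinpoint the crux: the implicit constant in Lemma~\ref{lm:modContLQG0} applied to $\hat\bh$ must be controlled uniformly in $\abs{x}$, since $\hat\bh$ depends on $\abs{x}$.

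Where you diverge is in how that uniformity is obtained. You propose to factor through a polynomial-tail bound for the random constant $C_{\eps}(\bh)$ underlying Lemma~\ref{lm:modContLQG0}, then do Borel--Cantelli over dyadic $\abs{x}$-scales plus a within-shell continuity step. The paper instead carries out a single Markov-inequality-plus-Borel--Cantelli argument over a two-parameter dyadic lattice (ball centres $\Sigma_n$ lying in annuli $A_k$), choosing the moment exponent $q=(\beta\vee\sqrt{\smash[b]{2d}})/\gamma<2d/\gamma^2$ and absorbing the $k$-dependence directly through the multifractal spectrum $\xi_\gamma(q)$ in the moment estimate; the near-origin point is then handled separately via Lemma~\ref{lm:scalingThick}, and arbitrary balls are recovered by covering with lattice balls. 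The paper's organisation is more economical: it avoids proving a separate quantitative tail estimate for $C_\eps$ (Lemma~\ref{lm:modContLQG0} as stated only asserts a.s.\ finiteness of the constant, not polynomial tails), and replaces the within-shell interpolation by the usual covering of arbitrary balls by lattice balls. Your route would also work, but you would need to flesh out the polynomial-tail bound for $C_\eps$, and the phrase ``continuity argument within each dyadic shell'' would need to be made precise --- for instance by observing that for $\abs{x}\in[2^{-(k+1)},2^{-k}]$ the constant $C_\eps(\hat\bh_{\abs{x}})$ is comparable to $C_\eps(\hat\bh_{2^{-k}})$ taken over a slightly enlarged domain, which in practice brings you back to the same covering argument.
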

\begin{proof}
For $\beta \leq 0$ this is an immediate consequence of Lemma~\ref{lm:modContLQG0}. 
We fix $\beta \in (0, Q)$ and, for simplicity, we take $R = 1$. For $n \in \N$, let $\Sigma_{n} \eqdef 2^{-n}\Z^d \cap B(0, 1)$ be the lattice $2^{-n}\Z^d$ restricted to the ball $B(0, 1)$. For $k \in \N$, we define the annulus $A_{k} \eqdef B(0, 2^{-k}) \setminus B(0, 2^{-(k+1)})$. 
By using the coordinate change formula (Proposition~\ref{pr:coordinate_change_measure}), we have that for all $n \geq 2$, $k \in [n-2]_0$, and any $x \in \Sigma_{n} \cap A_{k}$, it holds that 
\begin{align*}
\mu_{\bh - \beta \log\abs{\cdot}, \gamma}\bigl(B(x, 2^{-n})\bigr) 
& = 2^{-k \gamma (Q-\beta)} e^{\gamma \bh_{2^{-k}}(0)} \mu_{\bh^k - \beta \log\abs{\cdot}} \bigl(B(2^k x, 2^{k-n})\bigr) \\
& \lesssim 2^{-k \gamma (Q-(\beta \vee \sqrt{\smash[b]{2d}}))} e^{\gamma \bh_{2^{-k}}(0)} \mu_{\bh^k}\bigl(B(2^k x, 2^{k-n})\bigr) \;.
\end{align*}
where, by the scale invariance of the law of $\bh$ modulo additive constant, $\bh^k(\cdot) \eqdef \bh(2^{-k} \cdot) - \bh_{2^{-k}}(0) \eqlaw \bh$. Moreover, the last inequality in the previous display follows from the fact that for all $n \geq 2$, $k \in [n-2]_0$, and any $x \in \Sigma_{n} \cap A_{k}$, the ball $B(2^k x, 2^{k-n})$ is at distance at most $1/4$ from the origin, and so the $\beta$-$\log$ singularity can be bounded above by a finite constant. Then, for any $\eps > 0$, by letting $q = (\beta \vee \sqrt{\smash[b]{2d}})/\gamma $, which is strictly less than $2d/\gamma^2$, we have that 
\begin{equation}
\label{eq:fineEstMarkov0}
\begin{alignedat}{1}
& \P\Bigl(\max_{x \in \Sigma_{n} \cap A_{k}} \mu_{\bh - \beta \log\abs{\cdot}, \gamma}\bigl(B(x, 2^{-n})\bigr) \geq 2^{-n(d+\gamma^2/2 - (\beta \vee\sqrt{\smash[b]{2d}}) \gamma - \eps)}, \; e^{\gamma \bh_{2^{-k}}(0)} \leq 2^{n \eps/2}\Bigr) \\
& \hspace{20mm} \lesssim \sum_{x \in \Sigma_{n} \cap A_{k}} 2^{(n-k)[q (d+\gamma^2/2 - (\beta \vee\sqrt{\smash[b]{2d}})\gamma)]} 2^{- q n \eps/2} \E\bigl[\mu_{\bh^k}\bigl(B(2^k x, 2^{k-n})\bigr)^q\bigr] \\
& \hspace{20mm} \leq  2^{(n-k)[q(d+\gamma^2/2 - (\beta \vee\sqrt{\smash[b]{2d}}) \gamma) + d]} 2^{- q n \eps/2} \E\bigl[\mu_{\bh^k}\bigl(B(2^k x, 2^{k-n})\bigr)^q\bigr] \\
& \hspace{20mm} \lesssim 2^{(n-k)[d + q(d+\gamma^2/2 - (\beta \vee\sqrt{\smash[b]{2d}}) \gamma) - \xi_{\gamma}(q)]} 2^{-  n q \eps/2} \leq  2^{-n q \eps/2} \;,
\end{alignedat}
\end{equation}
where we used Markov's inequality, and the fact that thanks to \cite[Theorem~3.26]{BP24}, the multifractal spectrum of GMC measures $\mu_{\bh, \gamma}$ is given by $\smash{\xi_{\gamma}(q) \eqdef (d+\gamma^2/2)q - \gamma^2 q^2/2}$. Moreover, since $\bh_{2^{-k}}(0)$ is centred Gaussian with variance of order $k$ (see Lemma~\ref{lm:varianceSpherical}), the probability that $\smash{e^{\gamma \bh_{2^{-k}}(0)} > 2^{n \eps/2}}$ is summable over $n \geq 2$ and $k \in [n-2]_0$. Therefore, combining this fact with the bound in \eqref{eq:fineEstMarkov0}, we obtain that 
\begin{equation*}
\sum_{n \geq 2} \sum_{k = 0}^{n-2} \P\Bigl(\max_{x \in \Sigma_{n} \cap A_{k}} \mu_{\bh - \beta \log\abs{\cdot}, \gamma}\bigl(B(x, 2^{-n})\bigr) \geq 2^{-n(d+\gamma^2/2 - (\beta \vee\sqrt{\smash[b]{2d}}) \gamma - \eps)}\Bigr) < \infty \;.
\end{equation*}
Hence, thanks to Borel--Cantelli lemma, we get that $\P$-almost surely, for all $n \geq 2$ and $k \in [n-2]_0$, it holds that
\begin{equation}
\label{eq:combBasicEst11}	
\max_{x \in \Sigma_{n} \cap A_{k}}\mu_{\bh - \beta \log\abs{\cdot}, \gamma}\bigl(B(x, 2^{-n})\bigr) \lesssim 2^{-n(d+\gamma^2/2 - (\beta \vee\sqrt{\smash[b]{2d}}) \gamma - \eps)} \;,	
\end{equation}
for some random implicit constant. Thanks to Lemma~\ref{lm:scalingThick}, we also have that $\P$-almost surely, for all $n \in \N_0$, it holds that 
\begin{equation}
\label{eq:combBasicEst12}	
\mu_{\bh - \beta \log\abs{\cdot}, \gamma}\bigl(B(0, 2^{-n})\bigr) \lesssim 2^{-n(d+\gamma^2/2 - (\beta \vee\sqrt{\smash[b]{2d}}) \gamma - \eps)} \;,	
\end{equation}
Therefore, the conclusion follows by combining \eqref{eq:combBasicEst11} and \eqref{eq:combBasicEst12} and to the fact that for any $r \in (0, 1]$, and $x \in B(0, 1)$, there exist $n \in \N_0$ such that $2^{-(n+1)} < r \leq 2^{-n}$, and $B(x, r)$ is contained in a finite numbers of balls of the form $B(z, 2^{-n})$ with $z \in \Sigma_{n}$.
\end{proof}

\begin{lemma}
\label{lm:mainBoundTechNested}
Let $d \geq 2$ and $\gamma \in (0, \sqrt{\smash[b]{2d}})$. It holds $\P$-almost surely for any $s \in (0,1]$, any $r \in [0, s]$, and all $x \in B(0, s)$ that 
\begin{equation*}
\mu_{\bh, \gamma}\bigl(B(x, r)\bigr) \lesssim s^{\sqrt{\smash[b]{2d}}\gamma - \eps} r^{(d+\gamma^2/2 - \sqrt{\smash[b]{2d}}\gamma) - \eps}   \;, \qquad \forall \, \eps > 0 \;,
\end{equation*}
for some random implicit constant.
\end{lemma}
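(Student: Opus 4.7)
The target bound is a refinement of Lemma~\ref{lm:modContLQG0} by a factor $s^{\sqrt{\smash[b]{2d}}\gamma}$ that exploits the fact that the centre $x$ lies in the small ball $B(0, s)$ near the origin, where the spherical average $\bh_s(0)$ is a \emph{single} centred Gaussian of moderate variance rather than a supremum over many scales. The plan is to proceed via a double dyadic decomposition at scales $s \sim 2^{-n}$ and $r \sim 2^{-(n+m)}$, combined with two Borel--Cantelli arguments. Setting $\alpha_\gamma := d+\gamma^2/2 - \sqrt{\smash[b]{2d}}\gamma$ and discretising the centre on the lattice $\Sigma_{n,m} := 2^{-(n+m)}\Z^d \cap B(0, 2^{-n})$, which has cardinality $\lesssim 2^{md}$, I would start from the scaling identity
\begin{equation*}
\mu_{\bh,\gamma}\bigl(B(\bar x, 2^{-(n+m)})\bigr) = 2^{-n(d+\gamma^2/2)} \, e^{\gamma \bh_{2^{-n}}(0)} \, \mu_{\bh^{(n)},\gamma}\bigl(B(2^n \bar x, 2^{-m})\bigr) \;,
\end{equation*}
which follows from Proposition~\ref{pr:coordinate_change_measure}, where $\bh^{(n)}(z) := \bh(2^{-n} z) - \bh_{2^{-n}}(0) \eqlaw \bh$ by the scale invariance of the whole-space LGF modulo an additive constant.

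I would then control the two random factors on the right-hand side separately. For the spherical average, since $\bh_{2^{-n}}(0)$ is a centred Gaussian with variance of order $n$ (Lemma~\ref{lm:varianceSpherical}), the Gaussian tail bound gives $\P(|\bh_{2^{-n}}(0)| > n\eps/\gamma) \leq e^{-c n}$, which is summable in $n$; Borel--Cantelli then ensures that $e^{\gamma \bh_{2^{-n}}(0)} \leq 2^{n\eps}$ holds $\P$-almost surely for all sufficiently large $n$. This is substantially sharper than the crude $2^{\sqrt{\smash[b]{2d}}\gamma n}$ bound used in the proofs of Lemmas~\ref{lm:modContLQG0} and~\ref{lm:modContLQG}, and it is precisely this improvement that produces the extra factor $s^{\sqrt{\smash[b]{2d}}\gamma}$ in the target. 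For the GMC factor, using $\bh^{(n)} \eqlaw \bh$ and the standard multifractal moment bound $\E[\mu_{\bh,\gamma}(B(y, 2^{-m}))^q] \lesssim 2^{-m\xi_\gamma(q)}$ (uniform in $y \in B(0,1)$) with $\xi_\gamma(q) := q(d+\gamma^2/2) - q^2\gamma^2/2$, the sharp choice $q = \sqrt{\smash[b]{2d}}/\gamma \in (0, 2d/\gamma^2)$ gives $q\alpha_\gamma - \xi_\gamma(q) = -d$. Markov's inequality at the threshold prescribed by the target, together with a union bound over the $\lesssim 2^{md}$ grid points of $\Sigma_{n,m}$, yields an overall probability $\lesssim 2^{-q(n+m)\eps}$, which is summable over $(n, m) \in \N_0^2$; a second appeal to Borel--Cantelli then gives the target bound on the dyadic grid almost surely for all sufficiently large $(n, m)$.

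Arbitrary $s \in (0, 1]$, $r \in [0, s]$, and $x \in B(0, s)$ are then handled by sandwiching: choosing $n, m \in \N_0$ with $2^{-(n+1)} < s \leq 2^{-n}$ and $2^{-(n+m+1)} < r \leq 2^{-(n+m)}$ and approximating $x$ by its nearest point $\bar x \in \Sigma_{n,m}$, the ball $B(x, r)$ is contained in $B(\bar x, c_d\, 2^{-(n+m)})$ for a dimensional constant $c_d$, so the grid bound applies up to a harmless multiplicative factor. The finitely many pairs $(n, m)$ with $n$ below the Borel--Cantelli threshold, together with the regime where $s$ is bounded away from $0$, are absorbed into the random implicit constant using Lemma~\ref{lm:modContLQG0}. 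The main obstacle is the exponent matching: the optimal moment $q = \sqrt{\smash[b]{2d}}/\gamma$ is forced by the multifractal spectrum, and it must combine exactly with the single-scale Gaussian bound on $\bh_{2^{-n}}(0)$ (not the worst-case thickness bound $\sqrt{\smash[b]{2d}}\,n$) in order to recover the claimed $s^{\sqrt{\smash[b]{2d}}\gamma - \eps}$ prefactor; any looser control of either factor would sacrifice the improvement over Lemma~\ref{lm:modContLQG0}.
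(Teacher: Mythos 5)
Your proposal follows essentially the same route as the paper: rescale by the coordinate change formula so the spherical average $\bh_{2^{-n}}(0)$ appears as a single Gaussian factor, bound it via its $O(n)$ variance and Borel--Cantelli, then apply Markov's inequality at the critical moment $q = \sqrt{\smash[b]{2d}}/\gamma$ together with the multifractal moment bound $\E[\mu_{\bh,\gamma}(B(y,2^{-m}))^q] \lesssim 2^{-m\xi_\gamma(q)}$ and a union bound over the $\asymp 2^{md}$ lattice points, noting that $d + q\alpha_\gamma - \xi_\gamma(q) = 0$ makes the volume factor cancel and leaves a summable $2^{-q(n+m)\eps}$. The only (purely cosmetic) difference is that you run the spherical-average control as a separate Borel--Cantelli over $n$ alone, whereas the paper folds the event $\{e^{\gamma\bh_{2^{-k}}(0)} \leq 2^{(k+n)\eps/2}\}$ into the same probability estimate before summing over pairs $(k,n)$.
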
 
\begin{proof}
For $k \leq n \in \N_0$, by using the coordinate change formula (Proposition~\ref{pr:coordinate_change_measure}), we have that for all $x \in B(0, 2^{-k})$, it holds that 
\begin{equation*}
\mu_{\bh, \gamma}\bigl(B(x, 2^{-n})\bigr) 
\eqlaw 2^{-k \gamma Q} e^{\gamma \bh_{2^{-k}}(0)} \mu_{\bh^k}\bigl(B(2^k x, 2^{k-n})\bigr) \;,
\end{equation*} 
where, by the scale invariance of the law of $\bh$ modulo additive constant, $\bh^k(\cdot) \eqdef \bh(2^{-k} \cdot) - \bh_{2^{-k}}(0) \eqlaw \bh$. We let $\smash{\Sigma_{k, n} \eqdef2^{-n}\Z^d \cap B(0, 2^{-k})}$ be the lattice $2^{-n}\Z^d$ restricted to the ball $B(0, 2^{-k})$. For any $\eps > 0$, by letting $q = \sqrt{\smash[b]{2d}}/\gamma$, we have that 
\begin{equation}
\label{eq:fineEstMarkov}
\begin{alignedat}{1}
& \P\Bigl(\max_{x \in \Sigma_{k, n}} \mu_{\bh, \gamma}\bigl(B(x, 2^{-n})\bigr) \geq 2^{-k(\sqrt{\smash[b]{2d}} \gamma - \eps)} 2^{-n(d+\gamma^2/2 - \sqrt{\smash[b]{2d}}\gamma - \eps)}, \; e^{\gamma \bh_{2^{-k}}(0)} \leq 2^{(k+n) \eps/2}\Bigr) \\
& \hspace{20mm} \leq \P\Bigl(\max_{x \in \Sigma_{k, n}}  \mu_{\bh^k, \gamma}\bigl(B(2^k x, 2^{k-n})\bigr) \geq 2^{(k-n) (d+\gamma^2/2 - \sqrt{\smash[b]{2d}}\gamma) + (k+n)\eps/2}\Bigr)	\\
& \hspace{20mm} \leq \sum_{x \in \Sigma_{k, n}} 2^{-(k+n) q \eps/2} 2^{(n-k) (d+\gamma^2/2 - \sqrt{\smash[b]{2d}}\gamma) + (k+n)\eps/2}  \E\bigl[\mu_{\bh^k, \gamma}\bigl(B(2^k x, 2^{k-n})\bigr)^q \bigr]\\
& \hspace{20mm} \leq 2^{-(k+n) q \eps/2} 2^{(n-k)[d + q (d+\gamma^2/2 - \sqrt{\smash[b]{2d}}\gamma) - \xi_{\gamma}(q)]} = 2^{-(k+n) q \eps/2} \;,
\end{alignedat}
\end{equation}
where we used Markov's inequality, and the fact that thanks to \cite[Theorem~3.26]{BP24}, the multifractal spectrum of GMC measures $\mu_{\bh, \gamma}$ is given by $\smash{\xi_{\gamma}(q) \eqdef (d+\gamma^2/2)q - \gamma^2 q^2/2}$. Moreover, since $\bh_{2^{-k}}(0)$ is centred Gaussian with variance of order $k$ (see Lemma~\ref{lm:varianceSpherical}), the probability that $\smash{e^{\gamma \bh_{2^{-k}}(0)} > 2^{(k+n) \eps/2}}$ is summable over $k \leq n \in \N_0$. Therefore, combining this fact with the bound in \eqref{eq:fineEstMarkov}, we get that 
\begin{equation*}
\sum_{k = 0}^{\infty} \sum_{n = k}^{\infty} \P\Bigl(\max_{x \in \Sigma_{k, n}} \mu_{\bh, \gamma}\bigl(B(x, 2^{-n})\bigr) \geq 2^{-k(\sqrt{\smash[b]{2d}} \gamma - \eps)} 2^{-n(d+\gamma^2/2 - \sqrt{\smash[b]{2d}}\gamma - \eps)}\Bigr) < \infty \;.
\end{equation*}
Hence, thanks to Borel--Cantelli lemma, we get that $\P$-almost surely, for all $k \leq n \in \N_0$, it holds that 
\begin{equation*}
\max_{x \in \Sigma_{k, n}} \mu_{\bh, \gamma}\bigl(B(x, 2^{-n})\bigr) \lesssim 2^{-k(\sqrt{\smash[b]{2d}} \gamma - \eps)} 2^{-n(d+\gamma^2/2 - \sqrt{\smash[b]{2d}}\gamma - \eps)} \;, 
\end{equation*}
for some random implicit constant. This is sufficient to conclude since for any $s \in (0, 1]$, $r \in (0, s]$, and $x \in B(0, s)$, there exist $k \leq n \in \N_0$ such that $2^{-(k+1)} < s \leq 2^{-k}$, $2^{-(n+1)} < r \leq 2^{-(n+1)}$, and $B(x, r)$ is contained in a finite numbers of balls of the form $B(z, 2^{-n})$ with $z \in \Sigma_{n, k}$.
\end{proof}

\section{Proof of Lemma~\ref{lm:keySpec}}
\label{ap:keySpec}
Throughout this appendix, we fix $\beta \in (-\infty, Q)$, and we define the field
\begin{equation*}
\bar \bh \eqdef \bh - \beta \log\abs{\cdot} \;. 
\end{equation*}

For $x_1, \ldots, x_n \in \R^d$ and $0 < s_1 < \ldots < s_n \in \R_{0}^{+}$, we denote by $p_{s_1, \ldots, s_n}(0, x_1, \ldots, x_n)$ the transition probability density of the $d$-dimensional Brownian motion started from $0$, i.e., 
\begin{equation*}
\BP_{0}(B_{s_1} \in dx_1, \; \ldots B_{s_n} \in dx_n) = p_{s_1, \ldots, s_n}(0, x_1, \ldots, x_n) dx_1 \cdots d x_n \;.
\end{equation*}
Furthermore, for $t \geq 0$ and $x \in \R^d$ we define the function $\GG_{t}(x, \cdot) : \R^d \to \R$ by letting 
\begin{equation}
\label{eq:defftx}
\GG_{t}(x, y) \eqdef \int_{0}^{t}  p_{s}(x, y) ds \;.
\end{equation}

\begin{lemma}
\label{lm:BasicDecoIter}
For $d > 2$, $Q > \sqrt{\smash[b]{2d}}$, $n \in \N$, $t \geq 0$, and $\delta >0$ it holds $\P$-almost surely that 
\begin{equation}
\label{eq:boundChiMoment}
\BE_{0}\bigl[(\rmF_{\bar\bh,\alpha}(t))^n \mathbbm{1}_{\{[B]_{t} \subseteq B(0, t^{1/2- \delta})\}} \bigr] \lesssim \int_{(B(0, t^{1/2 - \delta}))^n} \bigl(\GG_{t}(0, x_1) \cdots 
\GG_{t}(x_{n-1}, x_n)\bigr) \mu_{\bar\bh, \alpha}(dx_n)\cdots\mu_{\bar\bh, \alpha}(dx_1) \;,
\end{equation}
where the implicit constant only depends on $n$. 
\end{lemma}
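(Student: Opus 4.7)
The plan is to compute the $n$-th moment of the additive functional $\rmF_{\bar\bh,\alpha}(t)$ by iterating the Revuz duality from Lemma~\ref{lm:StrenghRevuz}, and then estimate the resulting time integrals.

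First I would write
\begin{equation*}
\bigl(\rmF_{\bar\bh,\alpha}(t)\bigr)^n = n! \int_{0 < s_1 < \cdots < s_n < t} d\rmF_{\bar\bh,\alpha}(s_1) \cdots d\rmF_{\bar\bh,\alpha}(s_n)\;,
\end{equation*}
and dominate the path indicator by its values at the sample times, namely
\begin{equation*}
\mathbbm{1}_{\{[B]_t \subseteq B(0, t^{1/2-\delta})\}} \leq \prod_{i=1}^n \mathbbm{1}_{\{B_{s_i} \in B(0, t^{1/2-\delta})\}}\;.
\end{equation*}
Taking $\BE_0$ and conditioning successively on $\BF_{s_{n-1}}, \BF_{s_{n-2}},\ldots, \BF_{s_1}$ (using the strong Markov property of $(B_s)_{s\geq 0}$ together with the additivity of the PCAF $\rmF_{\bar\bh,\alpha}$), and applying Lemma~\ref{lm:StrenghRevuz} at each step to transform the expectation of an increment $d\rmF_{\bar\bh,\alpha}(s_i)$ into an integral against $\mu_{\bar\bh,\alpha}$ weighted by the Brownian transition density, I would arrive at
\begin{equation*}
\BE_0\bigl[(\rmF_{\bar\bh,\alpha}(t))^n \mathbbm{1}_{\{[B]_t \subseteq B(0,t^{1/2-\delta})\}}\bigr] \leq n! \int_{\substack{0<s_1<\cdots<s_n<t \\ x_i \in B(0,t^{1/2-\delta})}} \!\!\!\!\!\!\! p_{s_1}(0,x_1) \prod_{i=2}^n p_{s_i - s_{i-1}}(x_{i-1},x_i) \prod_{i=1}^n \mu_{\bar\bh,\alpha}(dx_i) \, ds\;.
\end{equation*}

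Next, I would perform the change of variables $u_1 = s_1$ and $u_i = s_i - s_{i-1}$ for $i \geq 2$. Under this substitution the simplex $\{0 < s_1 < \cdots < s_n < t\}$ becomes $\{u_i > 0, \; u_1 + \cdots + u_n < t\}$, which is contained in the cube $(0,t)^n$. Dropping this constraint gives the upper bound
\begin{equation*}
\int_{(0,t)^n} p_{u_1}(0,x_1) \prod_{i=2}^n p_{u_i}(x_{i-1},x_i) \, du_1 \cdots du_n = \GG_t(0,x_1) \prod_{i=2}^n \GG_t(x_{i-1},x_i)\;,
\end{equation*}
by definition \eqref{eq:defftx} of $\GG_t$. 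Combining this with the restriction $x_i \in B(0,t^{1/2-\delta})$ on the spatial integrations yields exactly \eqref{eq:boundChiMoment} with implicit constant $n!$.

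The only non-routine step is justifying the iterative use of Lemma~\ref{lm:StrenghRevuz} under the quenched measure $\P$, since one must verify that the identity \eqref{eq:StrenghRevuz} can be applied with an integrand that itself depends on the future of the Brownian motion beyond $s_i$. This is handled by first freezing the later variables, applying the Markov property at time $s_i$ to reduce to an expectation under $\BP_{B_{s_i}}$, invoking Lemma~\ref{lm:StrenghRevuz} with the Borel function $f(\cdot)=p_{s_{i+1}-s_i}(\cdot,x_{i+1})\mathbbm{1}_{B(0,t^{1/2-\delta})}(\cdot)$ in place of $f$, and then reassembling using Fubini. This is standard for PCAFs with bounded potential, which is ensured by Lemma~\ref{lm:boundPCAF} (applied to the field $\bar\bh$, as noted at the beginning of Section~\ref{sub:spectral}).
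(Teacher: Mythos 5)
Your proof is correct and follows essentially the same route as the paper's: iterate the Revuz duality of Lemma~\ref{lm:StrenghRevuz} to convert the $n$-th moment into a weighted integral against $\mu_{\bar\bh,\alpha}^{\otimes n}$, factor the finite-dimensional Brownian density via the Markov property, and then decouple the time integrals by the change of variables $u_i = s_i - s_{i-1}$ and drop the simplex constraint. The paper compresses the iterative application into the phrase ``an immediate generalisation of Lemma~\ref{lm:StrenghRevuz},'' whereas you spell it out (including the $n!$ constant and the technical justification via conditioning), but the underlying argument is the same.
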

\begin{proof}
By an immediate generalisation of Lemma~\ref{lm:StrenghRevuz}, we observe that for all $t \geq 0$, the left-hand side of \eqref{eq:boundChiMoment} is bounded above by a constant (only depending on $n$) times
\begin{equation*}
\int_{(B(0, t^{1/2 - \delta}))^n} \Biggl(\int_{0}^{t} \int_{s_1}^{t} \cdots \int_{s_{n-1}}^{t}  p_{s_1, \ldots, s_n}(0, x_1, \ldots, x_n) ds_n \cdots ds_1\Biggr)\mu_{\bar \bh,\alpha}(dx_n)\cdots\mu_{\bar\bh, \alpha}(dx_1)\;.
\end{equation*}
For $0 < s_1 < \ldots < s_n$, we have that the following decomposition holds 
\begin{equation*}
p_{s_1, \ldots, s_n}(0, x_1, \ldots, x_n) = p_{s_1}(0, x_1) p_{s_2 - s_1}(x_1, x_2) \cdots p_{s_n - s_{n-1}}(x_{n-1}, x_n) \;,
\end{equation*}
and so, by performing the change of variables $s_j \mapsto s_j - s_{j-1}$ for all $j \in [n]$, the desired result follows readily.
\end{proof}

Thanks to Lemma~\ref{lm:BasicDecoIter}, in order to prove Lemma~\ref{lm:keySpec}, is suffices to estimate the quantity on the right-hand of \eqref{eq:boundChiMoment}. We start with the following lemma.
\begin{lemma}
\label{lm:boundftxy}
For $t \geq 0$ and $x\in \R^d$, consider the function $\GG_{t}(x, \cdot) :\R^d \to \R$ defined in \eqref{eq:defftx}. Then, it holds that  
\begin{equation}
\label{eq:trivialBoundGreen}
\GG_{t}(x, y) \lesssim \abs{y-x}^{2-d} \;, \qquad \forall \, x, y \in \R^d \;.
\end{equation} 
\end{lemma}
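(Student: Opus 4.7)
The plan is to recognize $\GG_t(x,y)$ as a truncated version of the full Newtonian potential kernel on $\R^d$ and then simply compute this potential in closed form. Specifically, since the integrand is non-negative, we immediately bound
\begin{equation*}
\GG_t(x, y) = \int_0^t p_s(x,y)\, ds \leq \int_0^\infty p_s(x,y)\, ds \;,
\end{equation*}
so it suffices to show that this last integral equals a constant multiple of $\abs{x-y}^{2-d}$.

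To compute it, I would set $r = \abs{x-y}$ and perform the change of variables $u = r^2/(2s)$, which gives $s = r^2/(2u)$ and $ds = -r^2/(2u^2)\, du$. After substitution into $(2\pi s)^{-d/2} e^{-r^2/(2s)}$ and simplification, the integral reduces to
\begin{equation*}
\int_0^\infty (2\pi s)^{-d/2} e^{-r^2/(2s)}\, ds = \frac{1}{2\pi^{d/2}} \, r^{2-d} \int_0^\infty u^{d/2 - 2} e^{-u}\, du = \frac{\Gamma(d/2 - 1)}{2\pi^{d/2}} \, r^{2-d} \;.
\end{equation*}
Since we are in the regime $d > 2$, the Gamma factor $\Gamma(d/2-1)$ is finite, so we obtain the claimed bound $\GG_t(x,y) \lesssim \abs{y-x}^{2-d}$ with an explicit dimensional constant that is independent of $t$.

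There is essentially no obstacle here: this is a classical computation giving the Newton kernel as the Green's function of $-\tfrac{1}{2}\Delta$ on $\R^d$ for $d \geq 3$. The only point worth flagging is that the restriction $d > 2$ is essential, since in dimension two the integral at infinity diverges logarithmically (equivalently, $\Gamma(d/2-1)$ has a pole at $d=2$), which reflects the recurrence of planar Brownian motion; however this case is outside the scope of the section, which explicitly assumes $d > 2$.
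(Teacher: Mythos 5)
Your proof is correct and follows exactly the same route as the paper: bound $\GG_t$ by $\GG_\infty$ and identify the latter as the Newtonian kernel with constant $\Gamma(d/2-1)/(2\pi^{d/2})$. The paper simply cites the reference \cite[Theorem~3.33]{Peres} for the explicit computation you carry out by hand.
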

\begin{proof}
For $t \geq 0$ and $x$, $y \in \R^d$, we have that 
\begin{equation*}
\GG_{t}(x, y) \leq \GG_{\infty}(x, y) = \frac{\Gamma(d/2 -1)}{2 \pi^{d/2}}\abs{y-x}^{2-d} \;,
\end{equation*}
where the last equality in the above display follows from a simple computation (see e.g.\ \cite[Theorem~3.33]{Peres}).
\end{proof}

Now, combining \eqref{eq:boundChiMoment} with \eqref{eq:trivialBoundGreen}, it suffices to estimate and get the ``right asymptotic'' as $t \to 0$ of the following quantity
\begin{equation*}
\int_{B(0, t^{1/2-\delta})} \abs{x_1}^{2-d} \cdots  \int_{B(0, t^{1/2-\delta})} \abs{x_n - x_{n-1}}^{2-d} \mu_{\bar\bh, \alpha}(dx_n)\cdots\mu_{\bar\bh, \alpha}(dx_1)\;.
\end{equation*}
To this end, we start with the following lemma.
\begin{lemma}
\label{lm:boundSupMass}
Let $d > 2$, $Q > \sqrt{\smash[b]{2d}}$, and $\beta \in (-\infty, Q)$. It holds $\P$-almost surely for any $r \in (0, 1]$ and all $x \in B(0, r)$ that  
\begin{equation}
\label{eq:boundSup}
\int_{B(0, r)} \abs{x-y}^{2-d} \mu_{\bar\bh, \alpha}(dy) \lesssim r^{(2 + \alpha^2/2 - \alpha \beta) - \eps} \;, \qquad \forall \, \eps > 0 \;,
\end{equation}
for some random implicit constant. 
\end{lemma}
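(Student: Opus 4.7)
The plan is to bound the integral via a dyadic decomposition of $B(0, r)$ into shells around $x$, with separate treatment for scales larger and smaller than $\abs{x}$. Grouping the points $y \in B(0, r)$ into annuli where $\abs{x - y} \sim 2^{-k} r$ for $k \geq 0$, the integral is controlled by
\begin{equation*}
\sum_{k \geq 0} (2^{-k} r)^{2-d}\, \mu_{\bar\bh, \alpha}\bigl(B(x, 2^{-(k-1)} r) \cap B(0, r)\bigr),
\end{equation*}
and I would split this sum into the ranges $2^{-k} r \geq \abs{x}$ and $2^{-k} r < \abs{x}$.

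For the large-scale range $2^{-k} r \geq \abs{x}$, the ball $B(x, 2^{-(k-1)} r)$ is contained in $B(0, 2^{-(k-2)} r)$, and Lemma~\ref{lm:scalingThick} (with $\gamma = \alpha$ and singularity at the origin) provides $\mu_{\bar\bh, \alpha}(B(x, 2^{-(k-1)} r)) \lesssim (2^{-k} r)^{(d + \alpha^2/2 - \alpha\beta) - \eps}$. The resulting sum is a geometric series of $(2^{-k} r)^{(2 + \alpha^2/2 - \alpha\beta) - \eps}$, convergent since $\beta < Q$ yields $2 + \alpha^2/2 - \alpha\beta = \alpha(Q - \beta) > 0$, and is dominated by its $k = 0$ term, giving $\lesssim r^{(2 + \alpha^2/2 - \alpha\beta) - \eps}$. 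For the small-scale range $2^{-k} r < \abs{x}$, every $y \in B(x, 2^{-(k-1)} r)$ satisfies $\abs{y} \sim \abs{x}$, so via the pointwise identity $\mu_{\bar\bh, \alpha}(dy) = \abs{y}^{-\alpha\beta} \mu_{\bh, \alpha}(dy)$ valid away from the origin, $\mu_{\bar\bh, \alpha}(B(x, 2^{-(k-1)} r)) \lesssim \abs{x}^{-\alpha\beta} \mu_{\bh, \alpha}(B(x, 2^{-(k-1)} r))$. Applying Lemma~\ref{lm:mainBoundTechNested} with outer scale $s = 2\abs{x}$ yields $\mu_{\bh, \alpha}(B(x, \rho)) \lesssim \abs{x}^{\sqrt{\smash[b]{2d}}\alpha - \eps} \rho^{(d + \alpha^2/2 - \sqrt{\smash[b]{2d}}\alpha) - \eps}$, and the resulting geometric series $\abs{x}^{-\alpha\beta + \sqrt{\smash[b]{2d}}\alpha - \eps}\sum_k (2^{-k} r)^{(2 + \alpha^2/2 - \sqrt{\smash[b]{2d}}\alpha) - \eps}$ converges thanks to $Q > \sqrt{\smash[b]{2d}}$, is dominated by its boundary term $2^{-k} r \sim \abs{x}$, and telescopes to $\abs{x}^{(2 + \alpha^2/2 - \alpha\beta) - 2\eps} \leq r^{(2 + \alpha^2/2 - \alpha\beta) - 2\eps}$ since $\abs{x} \leq r$ and the exponent is positive.

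The main obstacle will be justifying the pointwise identity $\mu_{\bar\bh, \alpha}(dy) = \abs{y}^{-\alpha\beta} \mu_{\bh, \alpha}(dy)$ on regions bounded away from the origin, which follows because the spherical-average regularisation of $-\beta\log\abs{\cdot}$ at $y$ converges to $-\beta\log\abs{y}$ for $y \neq 0$ as $\eps \to 0$, together with handling both signs of $\beta$ when comparing $\abs{y}^{-\alpha\beta}$ to $\abs{x}^{-\alpha\beta}$ on the shell (both directions hold up to constants depending only on $\beta$ since $\abs{y}$ and $\abs{x}$ are comparable). A minor edge case is $\abs{x} > 1/2$, where $s = 2\abs{x}$ exceeds the range $(0, 1]$ required by Lemma~\ref{lm:mainBoundTechNested}; this can be handled by restricting attention to $r \leq 1/2$ and treating $r \in (1/2, 1]$ separately by a trivial bound. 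The two positivity conditions $2 + \alpha^2/2 - \alpha\beta > 0$ (from $\beta < Q$) and $2 + \alpha^2/2 - \sqrt{\smash[b]{2d}}\alpha > 0$ (from $Q > \sqrt{\smash[b]{2d}}$) are exactly what make the two geometric series sum correctly.
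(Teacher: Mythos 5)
Your overall strategy --- dyadic decomposition of the integral by $\abs{x-y}$, split at the scale $\abs{x}$, Lemma~\ref{lm:scalingThick} for the coarse scales, and Lemma~\ref{lm:mainBoundTechNested} combined with the pointwise comparison $\mu_{\bar\bh,\alpha}(dy)\approx\abs{x}^{-\alpha\beta}\mu_{\bh,\alpha}(dy)$ for the fine scales --- is essentially the paper's proof recast in a cleaner two-case form, and the exponent bookkeeping is correct in both ranges. The one step that fails as written is in the small-scale range: you assert that whenever $2^{-k}r<\abs{x}$, every $y\in B(x,2^{-(k-1)}r)$ has $\abs{y}\sim\abs{x}$. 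But for the boundary scale with $\abs{x}/2\leq 2^{-k}r<\abs{x}$, the radius $2^{-(k-1)}r=2\cdot 2^{-k}r$ is at least $\abs{x}$, so $B(x,2^{-(k-1)}r)$ contains the origin and $\abs{y}$ ranges down to $0$; on that ball the weight $\abs{y}^{-\alpha\beta}$ (for $\beta>0$) is unbounded and cannot be replaced by $\abs{x}^{-\alpha\beta}$.

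This is exactly the transition region the paper treats separately as its Region~3, the set $B(\midpoint(0,x),2\abs{x})\setminus B(0,\abs{x}/2)\setminus B(x,\abs{x}/2)$. The fix in your scheme is cheap: move the threshold to $2^{-k}r<\abs{x}/2$, so the fine-scale balls stay a distance $\gtrsim\abs{x}$ from the origin, and for the finitely many intermediate scales with $\abs{x}/2\leq 2^{-k}r<\abs{x}$ argue exactly as in your large-scale range, using $B(x,2^{-(k-1)}r)\subseteq B(0,3\abs{x})$ and Lemma~\ref{lm:scalingThick}, which gives a contribution $\lesssim\abs{x}^{2-d}\mu_{\bar\bh,\alpha}(B(0,3\abs{x}))\lesssim\abs{x}^{(2+\alpha^2/2-\alpha\beta)-\eps}\leq r^{(2+\alpha^2/2-\alpha\beta)-\eps}$. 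With that adjustment the argument goes through; note also that the paper disposes of $\beta\leq 0$ directly via Lemma~\ref{lm:scalingThick} and invokes the pointwise identity only in the case $\beta>0$, which sidesteps the sign bookkeeping you flag.
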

\begin{proof}
For $\beta < 0$ the result is a straightforward consequence of Lemma~\ref{lm:scalingThick}. Hence, we can focus on the case $\beta \in (0, Q)$.
In what follows, for $x \in \R^d$ and $r_2 > r_1 \geq 0$, we write $A(x, r_2, r_1)$ as a shorthand for the annulus $B(x, r_2) \setminus B(x, r_1)$. 
For $r \in (0, 1]$, we fix a point $x \in B(0,r)$ and we split the ball $B(0, r)$ into four different regions as depicted in Figure~\ref{fig:balls}.
\begin{figure}[h]
\centering
\includegraphics[scale=0.6]{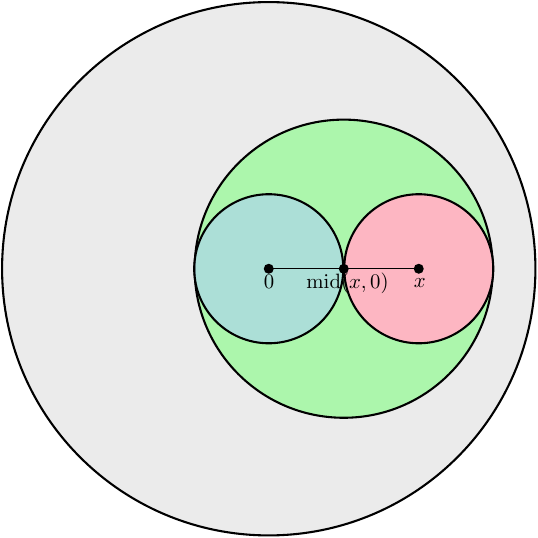}
\caption{\small Subdivision of the ball $B(0, r)$ into four different regions. The first region consists in the blue ball $B(0, \abs{x}/2)$. The second region consists in the pink ball $B(x, \abs{x}/2)$. The third region consists in the green set $B(\midpoint(0, x), 2 \abs{x}) \setminus B(0, \abs{x}/2) \setminus B(x, \abs{x}/2)$. The fourth region consists in the grey set $B(0, r) \setminus B(\midpoint(0, x), 2 \abs{x})$.}
\label{fig:balls}
\end{figure}

\textbf{Region 1:} The first region consists in the blue ball $B(0, \abs{x}/2)$. In this case, using Lemma~\ref{lm:scalingThick}, for any $\eps > 0$ small enough, we have $\P$-almost surely that  
\begin{equation*}
\int_{B(0, \abs{x}/2)} \abs{x - y}^{2-d} \mu_{\bar\bh, \alpha}(dy) 
\lesssim \abs{x}^{2-d} \mu_{\bar\bh, \alpha}\bigl(B(0, \abs{x}/2)\bigr) \lesssim \abs{x}^{2-d} \abs{x}^{(d + \alpha^2/2 - \alpha \beta) - \eps} 
\lesssim r^{(2 + \alpha^2/2 - \alpha \beta) - \eps}  \;.
\end{equation*}

\textbf{Region 2:} The second region consists in the pink ball $B(x, \abs{x}/2)$. In this case, using the fact that all points in this region are at distance at least $\abs{x}/2$ from $0$, and using Lemma~\ref{lm:mainBoundTechNested}, for any $\eps > 0$ small enough, we obtain $\P$-almost surely that 
\begin{align*}
\int_{B(x, \abs{x}/2)} \abs{x - y}^{2-d} \mu_{\bar\bh, \alpha}(dy)	
& = \sum_{k = 0}^{\infty} \int_{A(x, e^{-k} \abs{x}/2, e^{-(k+1)} \abs{x}/2)} \abs{x - y}^{2-d} \mu_{\bar\bh, \alpha}(dy)	\\
& \lesssim \sum_{k = 0}^{\infty} (e^{-k} \abs{x})^{2-d} \mu_{\bar\bh, \alpha}\bigl(B(x, e^{-k} \abs{x}/2)\bigr) \\
& \lesssim \abs{x}^{2-d-\alpha\beta}\sum_{k = 0}^{\infty} e^{-k(2-d)} \mu_{\bh, \alpha}\bigl(B(x, e^{-k} \abs{x}/2)\bigr) \\
& \lesssim \abs{x}^{(2 + \alpha^2/2 - \alpha \beta) - \eps} \sum_{k = 0}^{\infty} e^{-k[(d+ \alpha^2/2 - \sqrt{\smash[b]{2d}}\alpha)-\eps]}\\
& \lesssim \abs{x}^{(2+\alpha^2/2 - \alpha \beta) - \eps} \leq r^{(2+\alpha^2/2 - \alpha \beta) - \eps} \;,
\end{align*} 
where here we used the fact that $d+\alpha^2/2-\sqrt{\smash[b]{2d}}\alpha > 0$, for all $\alpha \in (0, 2)$.

\textbf{Region 3:} The third region consists in the green set $B(\midpoint(0, x), 2 \abs{x}) \setminus B(0, \abs{x}/2) \setminus B(x, \abs{x}/2)$. In this case, using the fact that all points in this region are at distance at least $\abs{x}/2$ from $x$, and using Lemma~\ref{lm:scalingThick}, for any $\eps > 0$ small enough, we have $\P$-almost surely that 
\begin{align*}
\int_{B(\midpoint(0, x), 2\abs{x}) \setminus B(0, \abs{x}/2) \setminus B(x, \abs{x}/2)} \abs{x - y}^{2-d} \mu_{\bar\bh, \alpha}(dy) 
\lesssim \abs{x}^{2-d} \mu_{\bar\bh, \alpha}\bigl(B(0, 2 \abs{x})\bigr)  
& \lesssim \abs{x}^{2-d} \abs{x}^{(d+\alpha^2/2 - \alpha\beta) - \eps} \\
& \leq r^{(2+\alpha^2/2 - \alpha \beta) - \eps} \;.
\end{align*}

\textbf{Region 4:} The fourth region consists in the grey set $B(0, r) \setminus B(\midpoint(0, x), 2 \abs{x})$. In this case, using Lemma~\ref{lm:scalingThick}, we get that, for any $\eps > 0$ small enough, it holds $\P$-almost surely that 
\begin{align*}
\int_{B(0, r) \setminus B(\midpoint(0, x), 2 \abs{x})} \abs{x - y}^{2-d} \mu_{\bar\bh, \alpha}(dy)	
& \lesssim  \sum_{k=0}^{-\log(\abs{x}/(2r)) - 1} \int_{A(x, 2 e^{-k} r, 2 e^{-(k+1)} r)} \abs{x - y}^{2-d} \mu_{\bar\bh, \alpha}(dy) \\
& \leq \sum_{k=0}^{-\log(\abs{x}/(2r)) - 1} (e^{-k} r)^{2-d} \mu_{\bar\bh, \alpha}\bigl(B(0, \abs{x} + 2 e^{-k} r)\bigr)  \\
& \lesssim \sum_{k=0}^{-\log(\abs{x}/(2r)) - 1} (e^{-k} r)^{2-d} (\abs{x} + 2 e^{-k} r)^{(d+\alpha^2/2 - \alpha \beta) - \eps}   \\
& \lesssim \sum_{k=0}^{-\log(\abs{x}/(2r)) - 1} (e^{-k} r)^{2-d} (e^{-k} r)^{(d+\alpha^2/2 - \alpha \beta) - \eps} \\
& \lesssim r^{(d+\alpha^2/2-\alpha\beta) - \eps} \;,  
\end{align*}
where to get the penultimate inequality, we used the fact that $\abs{x} \lesssim e^{-k} r$ for all $k \leq -\log(\abs{x}/(2r)) - 1$, and in the last inequality we used the fact that $2 + \alpha^2/2-\alpha\beta > 0$, for all $\beta \in (-\infty, Q)$.

Therefore, the conclusion follows by the arbitrariness of $x \in B(0, r)$ and by combining the four bounds in the different regions.
\end{proof}

We are now ready to prove Lemma~\ref{lm:keySpec}.
\begin{proof}[Proof of Lemma~\ref{lm:keySpec}]
Let $d \geq 2$, $Q > \sqrt{\smash[b]{2d}}$, and $\beta \in (-\infty, Q)$. For all $n \in \N$, $\eps > 0$, $\delta > 0$, and $t \in (0, 1]$, we have $\P$-almost surely that 
\begin{align}
\label{eq:proofMainTechSpecLast}	
& \BE_{0}\bigl[(\rmF_{\bar \bh, \alpha}(t))^n \mathbbm{1}_{\{[B]_{t} \subseteq B(0, t^{1/2- \delta})\}}\bigr] \nonumber\\
& \hspace{20mm}  \overset{\eqref{eq:boundChiMoment}}{\lesssim} \int_{(B(0, t^{1/2 - \delta}))^n} \bigl(\GG_{t}(0, x_1) \cdots 
\GG_{t}(x_{n-1}, x_n)\bigr) \mu_{\bar\bh, \alpha}(dx_n)\cdots\mu_{\bar\bh, \alpha}(dx_1) \nonumber \\
& \hspace{20mm} \overset{\eqref{eq:trivialBoundGreen}}{\lesssim} \int_{B(0, t^{1/2-\delta})} \abs{x_1}^{2-d} \cdots  \int_{B(0, t^{1/2-\delta})} \abs{x_n - x_{n-1}}^{2-d} \mu_{\bar \bh, \alpha}(dx_n)\cdots\mu_{\bar\bh, \alpha}(dx_1) \;.
\end{align}
Thanks to Lemma~\ref{lm:boundSupMass}, since $x_{n-1}$ is sampled inside the ball $B(0, t^{1/2-\delta})$, for any $\eps > 0$, the inner most integral in \eqref{eq:proofMainTechSpecLast} can be bounded above $\P$-almost surely by
\begin{equation*}
\sup_{x \in B(0, t^{1/2-\delta})} \int_{B(0, t^{1/2-\delta})} \abs{x_n - x}^{2-d} \mu_{\bar \bh, \alpha}(dx_n) \lesssim t^{(1/2 - \delta)(2 + \alpha^2/2 -\alpha \beta) - \eps} \;,
\end{equation*}
for some  random implicit constant. By applying the same techniques $n-1$ more times to bound the remaining integrals in \eqref{eq:proofMainTechSpecLast}, the desired conclusion follows. 
\end{proof}

\small
\bibliography{ref}{}
\bibliographystyle{Martin}
\end{document}